\let\@wraptoccontribs\wraptoccontribs
\newtheorem{cor}[equation]{Corollary}
\newtheorem{lem}[equation]{Lemma}
\newtheorem{prop}[equation]{Proposition}
\newtheorem{defn}[equation]{Definition}
\newtheorem{rem}[equation]{Remark}
\numberwithin{equation}{section}
\theoremstyle{definition}
\theoremstyle{remark}
\newcommand{\nc}{\newcommand}
\nc{\renc}{\renewcommand} \nc{\ssec}{\subsection}
\nc{\sssec}{\subsubsection}
\nc{\on}{\operatorname} \nc{\wh}{\widehat}
\nc\ol{\overline} \nc\ul{\underline} \nc\wt{\widetilde}
\nc{\BA}{{\mathbb{A}}} \nc{\BC}{{\mathbb{C}}}
\nc{\BD}{{\mathbb{D}}} \nc{\BQ}{{\mathbb{Q}}}
\nc{\BM}{{\mathbb{M}}} \nc{\BN}{{\mathbb{N}}}
\nc{\BP}{{\mathbb{P}}} \nc{\BR}{{\mathbb{R}}}
\nc{\BZ}{{\mathbb{Z}}} \nc{\BS}{{\mathbb{S}}} \nc{\BW}{{\mathbb{W}}}
\nc{\CA}{{\mathcal{A}}} \nc{\CB}{{\mathcal{B}}} \nc{\CalD}{{\mathcal{D}}}
\nc{\CalC}{{\mathcal{C}}} \nc{\CE}{{\mathcal{E}}} \nc{\CF}{{\mathcal{F}}}
\nc{\CG}{{\mathcal{G}}} \nc{\CH}{{\mathcal{H}}}
\nc{\CI}{{\mathcal{I}}} \nc{\CK}{{\mathcal{K}}} \nc{\CL}{{\mathcal{L}}}
\nc{\CM}{{\mathcal{M}}} \nc{\CN}{{\mathcal{N}}}
\nc{\CO}{{\mathcal{O}}} \nc{\CP}{{\mathcal{P}}}
\nc{\CQ}{{\mathcal{Q}}} \nc{\CR}{{\mathcal{R}}}
\nc{\CS}{{\mathcal{S}}} \nc{\CT}{{\mathcal{T}}}
\nc{\CU}{{\mathcal{U}}} \nc{\CV}{{\mathcal{V}}}  \nc{\CY}{{\mathcal Y}}
\nc{\CW}{{\mathcal{W}}} \nc{\CZ}{{\mathcal{Z}}}
\nc{\cM}{{\check{\mathcal M}}{}} \nc{\csM}{{\check{\mathcal A}}{}}
\nc{\oM}{{\overset{\circ}{\mathcal M}}{}}
\nc{\obM}{{\overset{\circ}{\mathbf M}}{}}
\nc{\oCA}{{\overset{\circ}{\mathcal A}}{}}
\nc{\obA}{{\overset{\circ}{\mathbf A}}{}}
\nc{\ooM}{{\overset{\circ}{M}}{}}
\nc{\osM}{{\overset{\circ}{\mathsf M}}{}}
\nc{\vM}{{\overset{\bullet}{\mathcal M}}{}}
\nc{\nM}{{\underset{\bullet}{\mathcal M}}{}}
\nc{\oD}{{\overset{\circ}{\mathcal D}}{}}
\nc{\obD}{{\overset{\circ}{\mathbf D}}{}}
\nc{\oA}{{\overset{\circ}{\mathbb A}}{}}
\nc{\op}{{\overset{\bullet}{\mathbf p}}{}}
\nc{\cp}{{\overset{\circ}{\mathbf p}}{}}
\nc{\oU}{{\overset{\bullet}{\mathcal U}}{}}
\nc{\ff}{{\mathfrak{f}}} \nc{\fv}{{\mathfrak{v}}}
\nc{\fa}{{\mathfrak{a}}} \nc{\fb}{{\mathfrak{b}}}
\nc{\fd}{{\mathfrak{d}}} \nc{\fe}{{\mathfrak{e}}}
\nc{\fg}{{\mathfrak{g}}} \nc{\fgl}{{\mathfrak{gl}}}
\nc{\fh}{{\mathfrak{h}}} \nc{\fri}{{\mathfrak{i}}}
\nc{\fj}{{\mathfrak{j}}} \nc{\fk}{{\mathfrak{k}}} \nc{\fl}{{\mathfrak{l}}}
\nc{\fm}{{\mathfrak{m}}} \nc{\fn}{{\mathfrak{n}}}
\nc{\ft}{{\mathfrak{t}}} \nc{\fu}{{\mathfrak{u}}}
\nc{\fw}{{\mathfrak{w}}} \nc{\fz}{{\mathfrak{z}}}
\nc{\fp}{{\mathfrak{p}}} \nc{\frr}{{\mathfrak{r}}}
\nc{\fs}{{\mathfrak{s}}} \nc{\fsl}{{\mathfrak{sl}}}
\nc{\hsl}{{\widehat{\mathfrak{sl}}}}
\nc{\hgl}{{\widehat{\mathfrak{gl}}}}
\nc{\hg}{{\widehat{\mathfrak{g}}}}
\nc{\chg}{{\widehat{\mathfrak{g}}}{}^\vee}
\nc{\hn}{{\widehat{\mathfrak{n}}}}
\nc{\chn}{{\widehat{\mathfrak{n}}}{}^\vee}
\nc{\fA}{{\mathfrak{A}}} \nc{\fB}{{\mathfrak{B}}} \nc{\fC}{{\mathfrak{C}}}
\nc{\fD}{{\mathfrak{D}}} \nc{\fE}{{\mathfrak{E}}}
\nc{\fF}{{\mathfrak{F}}} \nc{\fG}{{\mathfrak{G}}} \nc{\fH}{{\mathfrak{H}}}
\nc{\fI}{{\mathfrak{I}}} \nc{\fJ}{{\mathfrak{J}}}
\nc{\fK}{{\mathfrak{K}}} \nc{\fL}{{\mathfrak{L}}}
\nc{\fM}{{\mathfrak{M}}} \nc{\fN}{{\mathfrak{N}}}
\nc{\frP}{{\mathfrak{P}}} \nc{\fQ}{{\mathfrak{Q}}}
\nc{\fT}{{\mathfrak{T}}} \nc{\fU}{{\mathfrak{U}}}
\nc{\fV}{{\mathfrak{V}}} \nc{\fW}{{\mathfrak{W}}}
\nc{\fX}{{\mathfrak{X}}} \nc{\fY}{{\mathfrak{Y}}}
\nc{\fZ}{{\mathfrak{Z}}}
\nc{\ba}{{\mathbf{a}}}
\nc{\bb}{{\mathbf{b}}} \nc{\bc}{{\mathbf{c}}}
\nc{\be}{{\mathbf{e}}} \nc{\bj}{{\mathbf{j}}}
\nc{\bn}{{\mathbf{n}}} \nc{\bp}{{\mathbf{p}}}
\nc{\bq}{{\mathbf{q}}} \nc{\br}{{\mathbf{r}}} \nc{\bt}{{\mathbf{t}}}
\nc{\bfu}{{\mathbf{u}}} \nc{\bv}{{\mathbf{v}}}
\nc{\bx}{{\mathbf{x}}} \nc{\by}{{\mathbf{y}}}
\nc{\bw}{{\mathbf{w}}} \nc{\bA}{{\mathbf{A}}}
\nc{\bB}{{\mathbf{B}}} \nc{\bC}{{\mathbf{C}}}
\nc{\bD}{{\mathbf{D}}} \nc{\bF}{{\mathbf{F}}}
\nc{\bH}{{\mathbf{H}}} \nc{\bJ}{{\mathbf{J}}} \nc{\bK}{{\mathbf{K}}}
\nc{\bM}{{\mathbf{M}}} \nc{\bN}{{\mathbf{N}}}
\nc{\bO}{{\mathbf{O}}} \nc{\bS}{{\mathbf{S}}} \nc{\bT}{{\mathbf{T}}}
\nc{\bV}{{\mathbf{V}}} \nc{\bW}{{\mathbf{W}}}
\nc{\bX}{{\mathbf{X}}}
\nc{\bY}{{\mathbf{Y}}} \nc{\bP}{{\mathbf{P}}}
\nc{\bZ}{{\mathbf{Z}}} \nc{\bh}{{\mathbf{h}}}
\nc{\sA}{{\mathsf{A}}} \nc{\sB}{{\mathsf{B}}}
\nc{\sC}{{\mathsf{C}}} \nc{\sD}{{\mathsf{D}}}
\nc{\sE}{{\mathsf{E}}} \nc{\sF}{{\mathsf{F}}} \nc{\sG}{{\mathsf{G}}}
\nc{\sI}{{\mathsf{I}}} \nc{\sK}{{\mathsf{K}}} \nc{\sL}{{\mathsf{L}}}
\nc{\sfm}{{\mathsf{m}}} \nc{\sM}{{\mathsf{M}}} \nc{\sO}{{\mathsf{O}}}
\nc{\sQ}{{\mathsf{Q}}} \nc{\sP}{{\mathsf{P}}}
\nc{\sT}{{\mathsf{T}}} \nc{\sZ}{{\mathsf{Z}}}
\nc{\sV}{{\mathsf{V}}} \nc{\sW}{{\mathsf{W}}}
\nc{\sfp}{{\mathsf{p}}} \nc{\sr}{{\mathsf{r}}}
\nc{\st}{{\mathsf{t}}} \nc{\sfb}{{\mathsf{b}}}
\nc{\sfc}{{\mathsf{c}}} \nc{\sd}{{\mathsf{d}}}
\nc{\sz}{{\mathsf{z}}}
\nc{\tA}{{\widetilde{\mathbf{A}}}}
\nc{\tB}{{\widetilde{\mathcal{B}}}}
\nc{\tg}{{\widetilde{\mathfrak{g}}}} \nc{\tG}{{\widetilde{G}}}
\nc{\TM}{{\widetilde{\mathbb{M}}}{}}
\nc{\tO}{{\widetilde{\mathsf{O}}}{}}
\nc{\tU}{{\widetilde{\mathfrak{U}}}{}} \nc{\TZ}{{\tilde{Z}}}
\nc{\tx}{{\tilde{x}}} \nc{\tbv}{{\tilde{\bv}}}
\nc{\tfP}{{\widetilde{\mathfrak{P}}}{}} \nc{\tz}{{\tilde{\zeta}}}
\nc{\tmu}{{\tilde{\mu}}}
\nc{\urho}{\underline{\rho}} \nc{\uB}{\underline{B}}
\nc{\uC}{{\underline{\mathbb{C}}}} \nc{\ui}{\underline{i}}
\nc{\uj}{\underline{j}} \nc{\ofP}{{\overline{\mathfrak{P}}}}
\nc{\oB}{{\overline{\mathcal{B}}}}
\nc{\og}{{\overline{\mathfrak{g}}}} \nc{\oI}{{\overline{I}}}
\nc{\eps}{\varepsilon} \nc{\hrho}{{\hat{\rho}}}
\nc{\blambda}{{\bar\lambda}} \nc{\bmu}{{\bar\mu}} \nc{\bnu}{{\bar\nu}}
\nc{\one}{{\mathbf{1}}} \nc{\two}{{\mathbf{t}}}
\nc{\Sym}{{\mathop{\operatorname{\rm Sym}}}}
\nc{\Tot}{{\mathop{\operatorname{\rm Tot}}}}
\nc{\Spec}{{\mathop{\operatorname{\rm Spec}}}}
\nc{\Ker}{{\mathop{\operatorname{\rm Ker}}}}
\nc{\Hilb}{{\mathop{\operatorname{\rm Hilb}}}}
\nc{\End}{{\mathop{\operatorname{\rm End}}}}
\nc{\Ext}{{\mathop{\operatorname{\rm Ext}}}}
\nc{\Hom}{{\mathop{\operatorname{\rm Hom}}}}
\nc{\CHom}{{\mathop{\operatorname{{\mathcal{H}}\it om}}}}
\nc{\GL}{{\mathop{\operatorname{\rm GL}}}}
\nc{\gr}{{\mathop{\operatorname{\rm gr}}}}
\nc{\Id}{{\mathop{\operatorname{\rm Id}}}}
\nc{\defi}{{\mathop{\operatorname{\rm def}}}}
\nc{\length}{{\mathop{\operatorname{\rm length}}}}
\nc{\supp}{{\mathop{\operatorname{\rm supp}}}}
\nc{\HC}{{\mathcal H}{\mathcal C}}
\nc{\Cliff}{{\mathsf{Cliff}}}
\nc{\Fl}{{\mathcal{F}\ell}} \nc{\Fib}{{\mathsf{Fib}}}
\nc{\Coh}{{\mathsf{Coh}}} \nc{\FCoh}{{\mathsf{FCoh}}}
\nc{\reg}{{\text{\rm reg}}}
\nc{\pos}{{\text{\rm pos}}}
\nc{\gvee}{{\mathfrak g}^{\!\scriptscriptstyle\vee}}
\nc{\tvee}{{\mathfrak t}^{\!\scriptscriptstyle\vee}}
\nc{\nvee}{{\mathfrak n}^{\!\scriptscriptstyle\vee}}
\nc{\bvee}{{\mathfrak b}^{\!\scriptscriptstyle\vee}}
\nc{\cplus}{{\mathbf{C}_+}} \nc{\cminus}{{\mathbf{C}_-}}
\nc{\cthree}{{\mathbf{C}_*}} \nc{\Qbar}{{\bar{Q}}}
\newcommand{\oZ}{\vphantom{j^{X^2}}\smash{\overset{\circ}{\vphantom{\rule{0pt}{0.55em}}\smash{Z}}}}
\newcommand{\ofZ}{\vphantom{j^{X^2}}\smash{\overset{\circ}{\vphantom{\rule{0pt}{0.55em}}\smash{\mathfrak Z}}}}
\newcommand\iso{\,\vphantom{j^{X^2}}\smash{\overset{\sim}{\vphantom{\rule{0pt}{0.20em}}\smash{\longrightarrow}}}\,}
\nc{\Gtimes}{\vphantom{j^{X^2}}\smash{\overset{\mathsf G}{\vphantom{\rule{0pt}{0.30em}}\smash{\times}}}}
\nc{\bOmega}{{\overline{\Omega}}}
\nc{\seq}[1]{\stackrel{#1}{\sim}}
\nc{\Sch}{{\operatorname{Sch}}}
\nc{\aff}{{\operatorname{aff}}}
\nc{\fin}{{\operatorname{fin}}}
\nc{\Gr}{{\operatorname{Gr}}}
\nc{\GR}{{\mathbf{Gr}}}
\nc{\Perv}{{\operatorname{Perv}}}
\nc{\Rep}{{\operatorname{Rep}}}
\nc{\IC}{{\operatorname{IC}}}
\nc{\Bun}{{\operatorname{Bun}}}
\nc{\Proj}{{\operatorname{Proj}}}
\nc{\pt}{{\operatorname{pt}}}
\nc{\bfmu}{{\boldsymbol{\mu}}}
\nc{\bfomega}{{\boldsymbol{\omega}}}
\nc{\la}{\lambda}
\nc{\La}{\Lambda}
\nc{\ra}{\rightarrow}
\nc{\BG}{\mathbb{G}}
\nc{\BX}{\mathbb{X}}
\nc{\al}{\alpha}
\nc{\Maps}{{\bf{Maps}}}
\nc{\lvee}{\!\scriptscriptstyle\vee}
\nc{\longonto}{\ontoover{\ }}
\nc{\longinto}{\lhook\joinrel\longrightarrow}
\nc{\longintointo}{\lhook\joinrel\lhook\joinrel\lhook\joinrel\longrightarrow}
\begin{document}

\title
[Drinfeld-Gaitsgory-Vinberg interpolation Grassmannian]
{Drinfeld-Gaitsgory-Vinberg interpolation Grassmannian and geometric Satake equivalence}

\author{Michael Finkelberg}
\address{
National Research University Higher School of Economics, Russian Federation\newline
Department of Mathematics, 6 Usacheva st., Moscow 119048;\newline
Skolkovo Institute of Science and Technology;\newline
Institute for Information Transmission Problems of RAS}
\email{fnklberg@gmail.com}
\author{Vasily Krylov}
\address{National Research University Higher School of Economics, Russian Federation\newline
Department of Mathematics, 6 Usacheva st., Moscow 119048;\newline
Skolkovo Institute of Science and Technology
}
\email{kr-vas57@yandex.ru}
\author{Ivan Mirkovi\'c}
\address{University of Massachusetts, Department of Mathematics and Statistics, 710 N.~Pleasant
  st., Amherst MA 01003}
\email{mirkovic@math.umass.edu}
\contrib[with Appendix by]{Dennis Gaitsgory}

\dedicatory{To Ernest Borisovich Vinberg with admiration}




\begin{abstract}
  Let $G$ be a reductive complex algebraic group. We fix a pair of opposite Borel subgroups
  and consider the corresponding semiinfinite orbits in the affine Grassmannian $\Gr_G$.
  We prove Simon Schieder's conjecture identifying his bialgebra formed by the top compactly
  supported cohomology of the intersections of opposite semiinfinite orbits with $U(\nvee)$
  (the universal enveloping algebra of the positive nilpotent subalgebra of the Langlands dual
  Lie algebra $\gvee$). To this end we construct an action of Schieder bialgebra on the
  geometric Satake fiber functor. We propose a conjectural construction of Schieder bialgebra
  for an arbitrary symmetric Kac-Moody Lie algebra in terms of Coulomb branch of the corresponding
  quiver gauge theory.
\end{abstract}
\maketitle

\section{Introduction}
\subsection{}
\label{Schubert}
Let $\Lambda=\bigoplus_n\Lambda^n$ be the ring of symmetric functions equipped with the
base of Schur functions $s_\lambda$. It also carries a natural coproduct.
The classical Schubert calculus is the based isomorphism of the bialgebra $\Lambda$ with
$H^\bullet(\Gr,\BZ)$ (doubling the degrees) taking $s_\lambda$ to the fundamental class of
the corresponding Schubert variety $\sigma_\lambda$. Here $\Gr$ is the infinite
Grassmannian $\Gr=\lim\limits_\to\Gr(k,m)\simeq BU(\infty)$, and the coproduct on
$H^\bullet(\Gr,\BZ)$ comes from the $H$-space structure on the classifying space
$BU(\infty)$.

Here is a more algebraic geometric construction of the coproduct on $H^\bullet(\Gr,\BZ)$.
We have $H^{2n}(\Gr,\BZ)=H^{2n}(\ol{\Sch}_n,\BZ)=H^{2n}_c(\ol{\Sch}_n,\BZ)=H^{2n}_c(\Sch_n,\BZ)$
where $\Sch_n\subset\Gr$ (resp.\ $\ol{\Sch}_n\subset\Gr$) stands for the union of all
$n$-dimensional (resp. $\leq n$-dimensional) Schubert cells
(with respect to a fixed flag).

Recall the Calogero-Moser phase space $\CalC_n$: the space of pairs of $n\times n$-matrices
$(X,Y)$ such that $[X,Y]+\on{Id}$ has rank 1, modulo the simultaneous conjugation
of $X,Y$. The integrable system $\pi_n\colon\CalC_n\to\BA^{(n)}$ takes $(X,Y)$ to the
spectrum of $X$. Wilson~\cite{w} has discovered the following two key properties of
the Calogero-Moser integrable system:

\textup{(a)} for $n_1+n_2=n$, a {\em factorization} isomorphism
\begin{equation*}
\CalC_n\times_{\BA^{(n)}}(\BA^{(n_1)}\times\BA^{(n_2)})_{\on{disj}}\iso(\CalC_{n_1}\times\CalC_{n_2})
\times_{(\BA^{(n_1)}\times\BA^{(n_2)})}(\BA^{(n_1)}\times\BA^{(n_2)})_{\on{disj}}.
\end{equation*}

\textup{(b)} For $x\in\BA^1$, an isomorphism $\pi_n^{-1}(n\cdot x)\iso\Sch_n$.\\
Now the desired coproduct
\begin{equation*}
  \Delta=\bigoplus_{n_1+n_2=n}\Delta_{n_1,n_2}\colon
H^{2n}_c(\Sch_n,\BZ)\to\bigoplus_{n_1+n_2=n}H^{2n_1}_c(\Sch_{n_1},\BZ)\otimes
H^{2n_2}_c(\Sch_{n_2},\BZ)
\end{equation*}
is nothing but the cospecialization\footnote{terminology of~\cite[6.2.7]{s3}.} morphism for the
compactly supported cohomology of the fibers of $\pi_n$ restricted to the subfamily
$\pi_n^{-1}(n_1\cdot x+n_2\cdot y)\subset\CalC_n$ (from the fibers over the diagonal
$x=y$ to the off-diagonal fibers $x\ne y$), cf.~\cite{fg}.

\subsection{}
\label{setup}
Given a reductive complex algebraic group $G$, Schieder~\cite{s3} constructed a bialgebra
$\CA$ playing the role of $\bigoplus_nH^{2n}_c(\Sch_n,\BC)$ for the affine Grassmannian
$\Gr_G$ in place of $\Gr$. In order to explain his construction, we set up the basic
notations for $G$ and $\Gr_G$.

We fix a Borel and a Cartan subgroup $G\supset B\supset T$, and denote by $W$
the Weyl group of $(G,T)$. Let $N$ denote the unipotent radical of
the Borel $B$, and let $N_-$ stand for the unipotent radical of
the opposite Borel $B_-$. Let $\Lambda$ (resp.\ $\Lambda^\vee$)
be the coweight (resp.\ weight) lattice, and let
$\Lambda^+\subset\Lambda$ (resp.\ $\Lambda^{\vee+}\subset\Lambda^\vee$) be the
cone of dominant coweights (resp.\ weights).
Let also $\Lambda^\pos\subset\Lambda$ (resp.\ $\Lambda^{\vee,\pos}\subset\Lambda^\vee$)
be the submonoid spanned by the simple coroots (resp.\ roots)
$\alpha_i,\ i\in I$ (resp.\ $\alpha^{\!\scriptscriptstyle\vee}_i,\ i\in I$).
We denote by $G^\vee\supset T^\vee$ the Langlands dual
group, so that $\Lambda$ (resp.\ $\Lambda^\vee$)
is the weight (resp.\ coweight) lattice of $G^\vee$.

Let $\CO$ denote the formal power series ring $\BC[[z]]$, and let $\CK$ denote
its fraction field $\BC((z))$.
The affine Grassmannian $\Gr_G=G_\CK/G_\CO$ is an ind-projective scheme,
the union $\bigsqcup_{\lambda\in\Lambda^+}\Gr_G^\lambda$ of $G_\CO$-orbits.
The closure of $\Gr_G^\lambda$ is a projective variety
$\ol\Gr{}^\lambda_G=\bigsqcup_{\mu\leq\lambda}\Gr^\mu_G$. The fixed point set
$\Gr^T_G$ is naturally identified with the coweight lattice $\Lambda$;
and $\mu\in\Lambda$ lies in $\Gr_G^\lambda$ iff $\mu\in W\lambda$.

For a coweight
$\nu\in\Lambda=\Gr_G^T$, we denote by $S_\nu\subset\Gr_G$
(resp.\ $T_\nu\subset\Gr_G$) the orbit of $N(\CK)$
(resp.\ of $N_-(\CK)$) through $\nu$. The intersections $S_\nu\cap\ol\Gr{}^\lambda_G$
(resp.\ $T_\nu\cap\ol\Gr{}^\lambda_G$) are the
{\em attractors} (resp.\ {\em repellents}) of $\BC^\times$ acting via
its homomorphism $2\rho$ to the Cartan torus
$T\curvearrowright\ol\Gr{}^\lambda_G\colon
S_\nu\cap\ol\Gr{}^\lambda_G=\{x\in\ol\Gr{}^\lambda_G :
\lim\limits_{c\to0}2\rho(c)\cdot x=\nu\}$ and
$T_\nu\cap\ol\Gr{}^\lambda_G=\{x\in\ol\Gr{}^\lambda_G :
\lim\limits_{c\to\infty}2\rho(c)\cdot x=\nu\}$. Going to the limit
$\Gr_G=\lim\limits_{\lambda\in\Lambda^+}\ol\Gr{}^\lambda_G,\ S_\nu$
(resp.\ $T_\nu$) is the attractor (resp.\ repellent) of $\nu$ in $\Gr_G$.
The closure $\ol{S}_\nu$ is the union $\bigsqcup_{\mu\leq\nu}S_\mu$, while
$\ol{T}_\nu=\bigsqcup_{\mu\geq\nu}T_\mu$.

\begin{defn}
  {\em
  \textup{(a)} For $\theta\in\Lambda^\pos$ we denote by $\Sch_\theta$ (resp.\ $\ol\Sch_\theta$)
  the intersection $S_\theta\cap T_0$ (resp.\ $\ol{S}_\theta\cap\ol{T}_0$).\footnote{Here Sch
    stands for Schieder.} It is equidimensional of dimension
  $\langle\rho^{\!\scriptscriptstyle\vee},\theta\rangle$ (see~\cite[\S6.3]{bfgm}).\footnote{Strictly
    speaking, only the inequality
    $\dim(S_\theta\cap T_0)\leq\langle\rho^{\!\scriptscriptstyle\vee},\theta\rangle$ is verified
    right after the proof of~\cite[Proposition~6.4]{bfgm}. The opposite inequality follows e.g.\
    from the existence of the factorizable family $\pi_\theta\colon\oZ^\theta\to X^\theta$ mentioned
    in the next paragraph.}

  \textup{(b)} We set
  $\CA_\theta:=H_c^{\langle2\rho^{\!\scriptscriptstyle\vee},\theta\rangle}(\Sch_\theta,\BC)=
  H_c^{\langle2\rho^{\!\scriptscriptstyle\vee},\theta\rangle}(\ol\Sch_\theta,\BC)$, and
  $\CA:=\bigoplus_{\theta\in\Lambda^\pos}\CA_\theta$.}
\end{defn}

Given a smooth curve $X$ and $\theta\in\Lambda^\pos$, the open zastava space $\oZ^\theta$
(see e.g.~\cite{bfgm}) is equipped with the projection
$\pi_\theta\colon\oZ^\theta\to X^\theta$ to the
degree $\theta$ configuration space of $X$. It enjoys the factorization property, and
for any $x\in X$, we have a canonical isomorphism
$\pi_\theta^{-1}(\theta\cdot x)\iso\Sch_\theta$. Given $\theta_1,\theta_2\in\Lambda^\pos$ such that
$\theta_1+\theta_2=\theta$, the coproduct
$\Delta_{\theta_1,\theta_2}\colon\CA_\theta\to\CA_{\theta_1}\otimes\CA_{\theta_2}$ is defined
just like in~\ref{Schubert} via the cospecialization morphism for the subfamily
$\pi_\theta^{-1}(\theta_1\cdot x+\theta_2\cdot y)$.

To construct the product
$\sfm\colon \bigoplus_{\theta_1+\theta_2=\theta}\CA_{\theta_1}\otimes
\CA_{\theta_2}\to\CA_\theta$
we need the Drinfeld-Gaitsgory interpolation $\widetilde{\Sch}_\theta\to\BA^1$~\cite[\S2.2]{dg2}
constructed with respect to the $\BC^\times$-action on $\ol{\Sch}_\theta$ arising from
the cocharacter $2\rho$ of $T$. The key property of $\widetilde{\Sch}_\theta\to\BA^1$
is that the fibers over $a\ne0$ are all isomorphic to $\ol{\Sch}_\theta$, while the
zero fiber $(\widetilde{\Sch}_\theta)_0$ is isomorphic to the disjoint union
$\bigsqcup\limits_\lambda\ol{\Sch}{}_\theta^{+,\lambda}\times\ol{\Sch}{}_\theta^{-,\lambda}$.
Here $\lambda$ (a coweight in $\Lambda^\pos$ such that $\lambda\leq\theta$)
runs through the set of
$\BC^\times$-fixed points of $\ol{\Sch}_\theta$, and $\ol{\Sch}{}_\theta^{+,\lambda}$
(resp.\ $\ol{\Sch}{}_\theta^{-,\lambda}$) stands for the corresponding attractor
(resp.\ repellent), equal to $S_\lambda\cap\ol{T}_0$ (resp.\ to $\ol{S}_\theta\cap T_\lambda$).
It is easy to see that
$H^{\langle2\rho^{\!\scriptscriptstyle\vee},\theta\rangle}_c((\widetilde{\Sch}_\theta)_0,\BC)=
\bigoplus_{\theta_1+\theta_2=\theta}
H^{\langle2\rho^{\!\scriptscriptstyle\vee},\theta_1\rangle}_c(\Sch_{\theta_1},\BC)\otimes
H^{\langle2\rho^{\!\scriptscriptstyle\vee},\theta_2\rangle}_c(\Sch_{\theta_2},\BC)$, and the desired
product $\sfm$ is nothing but the cospecialization morphism for the compactly
supported cohomology of the fibers of the Drinfeld-Gaitsgory family.

Schieder conjectured that the bialgebra $\CA$ is isomorphic to the universal enveloping
algebra $U(\nvee)$ of $\on{Lie}(N^\vee)$ where $N^\vee\subset B^\vee\subset G^\vee$ is the
unipotent radical of Borel subgroup of $G^\vee$. The goal of the present work is a proof
of Schieder's conjecture.

\subsection{}
\label{inter Gra}
In order to produce an isomorphism $U(\nvee)\iso\CA$, we construct an action of $\CA$ on
the geometric Satake fiber functor. More precisely, we denote by $r_{\nu,+}$
(resp.\ $r_{\nu,-}$) the locally closed embedding
$S_\nu\hookrightarrow\Gr_G$ (resp.\ $T_\nu\hookrightarrow\Gr_G$).
We also denote by $\iota_{\nu,+}$ (resp.\ $\iota_{\nu,-}$) the closed embedding
of the point $\nu$ into $S_\nu$ (resp.\ into $T_\nu$).

According to~\cite{br,dg2}, there is a canonical isomorphism of functors
$\iota_{\nu,-}^*r_{\nu,-}^!\simeq\iota_{\nu,+}^!r_{\nu,+}^*\colon
D^b_{G_\CO}(\Gr_G)\to D^b(\on{Vect})$.
For a sheaf $\CP\in D^b_{G_\CO}(\Gr_G)$, its hyperbolic stalk at $\nu$ is
defined as
$\Phi_\nu(\CP):=\iota_{\nu,-}^*r_{\nu,-}^!\CP\simeq\iota_{\nu,+}^!r_{\nu,+}^*\CP$.
According to~\cite{mv}, for $\CP\in\on{Perv}_{G_\CO}(\Gr_G)$,
the hyperbolic stalk $\Phi_\nu(\CP)$
is concentrated in degree $\langle2\rho^{\!\scriptscriptstyle\vee},\nu\rangle$,
and there is a canonical direct sum decomposition $H^\bullet(\Gr_G,\CP)=
\bigoplus_{\nu\in\Lambda}\Phi_\nu(\CP)$. Moreover, the abelian category
$\on{Perv}_{G_\CO}(\Gr_G)$ is monoidal with respect to the convolution
operation $\star$, and the functor
$H^\bullet(\Gr_G,-)\colon(\on{Perv}_{G_\CO}(\Gr_G),\star)\to(\on{Vect},\otimes)$
is a fiber functor identifying $(\on{Perv}_{G_\CO}(\Gr_G),\star)$ with the tensor
category $\on{Rep}(G^\vee)$ (geometric Satake equivalence).

We define a morphism of functors $\CA_\theta\otimes\Phi_\nu\to\Phi_{\nu+\theta}$
in~\ref{The action}. To this end (and also in order to check various tensor
compatibilities) we consider the {\em Drinfeld-Gaitsgory-Vinberg interpolation Grassmannian}:
a relative compactification $\on{VinGr}_G^{\on{princ}}$ of the Drinfeld-Gaitsgory
interpolation $\widetilde\Gr_G\to\BA^1$.
We also consider an extended version
$\on{VinGr}_G\to T^+_{\on{ad}}:=\on{Spec}\BC[\Lambda^{\vee,\pos}]$ and its version
$\on{VinGr}_{G,X^n}$ for the Beilinson-Drinfeld Grassmannian.
It was implicit already in Schieder's work, and it was made explicit
by D.~Gaitsgory and D.~Nadler, cf.\ an earlier work~\cite{gn}.
We believe it is a very interesting object
in its own right. For example, let $\omega\in\Lambda^+$ be a minuscule dominant coweight.
Then the Schubert variety $\Gr^\omega_G$ is isomorphic to a parabolic flag variety
$G/P_\omega$, and the corresponding subvariety $\on{VinGr}_G^\omega$ of $\on{VinGr}_G$
is isomorphic to Brion's degeneration of $\Delta_{G/P_\omega}$ in
$\on{Hilb}(G/P_\omega\times G/P_\omega)\times T^+_{\on{ad}}$~\cite[\S3]{bri}.

\begin{rem}
  {\em \textup{(a)} By the geometric Satake equivalence, for
    $\CP\in\on{Perv}_{G_\CO}(\Gr_G)$, the cohomology $H^\bullet(\Gr_G,\CP)$ is
    equipped with an action of $U(\gvee)$. For example, the action of the Cartan
    subalgebra $U(\tvee)\subset U(\gvee)$ comes from the grading
    $H^\bullet(\Gr_G,\CP)=\bigoplus_{\nu\in\Lambda}\Phi_\nu(\CP)$. The action of
    $U(\nvee)$ comes from the geometric action of the Schieder bialgebra $\CA$
    on the geometric Satake fiber functor, and the isomorphism $U(\nvee)\iso\CA$.
    Finally, the action of $U(\nvee_-)$ is conjugate to the action of $U(\nvee)$ with
    respect to the Lefschetz bilinear form on $H^\bullet(\Gr_G,\CP)$.

    \textup{(b)} By construction, the Schieder algebra $\CA$ comes equipped with a
    basis (fundamental classes of irreducible components of $\Sch_\theta$). The corresponding
    integral form is denoted $\CA_\BZ\subset\CA$.
    On the other hand, $U(\nvee)$ is equipped with 
    the {\em semicanonical basis}~\cite{lus}.\footnote{constructed under
      the assumption that $G$ is simply laced.} The corresponding integral form is nothing but
    the Chevalley-Kostant integral form $U(\nvee)_\BZ$. According to~Proposition~\ref{integr},
    the isomorphism $U(\nvee)\iso\CA$ gives rise to an isomorphism of their integral forms:
    $U(\nvee)\supset U(\nvee)_\BZ\iso\CA_\BZ\subset\CA$.
 In the simplest example when $G=SL(2),\ \CA$ is $\BN$-graded, and each graded component $\CA_n$
 is one-dimensional with the basis vector $e_n$; one can check
 $e_ne_m=\binom{n+m}{n}e_{n+m}$. Hence the two bases match under the isomorphism
 $U(\nvee)_\BZ\iso\CA_\BZ$. However, for general $G$ the two bases do {\em not} match, as
 seen in an example for $G$ of type $A_5$ in degree $(2,4,4,4,2)$ 
 in~\cite[Appendix~A]{bkk}. Thus the Higgs branch realization~\cite{lus}
 of $U(\nvee)$ is different from the Coulomb branch realization~\cite{s3} of $U(\nvee)$.

\textup{(c)} According to~\cite[\S4.4]{bag}, the irreducible components of the intersection
$\overline{S}_\theta\cap T_0$ are closely related to the algebraic varieties introduced
by G.~Lusztig in~\cite[\S5]{lu}.}
\end{rem}

\subsection{}
The paper is organized as follows. In a lengthy~\S\ref{two} we review the
works of Schieder and other related materials. To simplify the exposition somewhat we assume
that the derived subgroup $[G,G]\subset G$ is simply connected.
The Schieder algebra $\CA$ is defined in~\S\ref{schieder bialgebra}.
Note that the construction of multiplication in~\S\ref{multiplication} does not use the
Drinfeld-Gaitsgory interpolation in contrast to the construction at the end of~\S\ref{setup}.
However, the two constructions are equivalent as a consequence of~Proposition~\ref{DG completion}.
We define the Drinfeld-Gaitsgory-Vinberg interpolation Grassmannian in~\S\ref{three}.
We define the action of the Schieder
algebra $\CA$ on the geometric Satake fiber functor $\Phi$ and check various compatibilities
in~\S\ref{four}. We deduce the Schieder conjecture $\CA\simeq U(\nvee)$
in~Corollary~\ref{five}. In~\S\ref{ivan} we identify the above action of $\CA$ on $\Phi$
with another action going back to~\cite{ffkm}.
In the last~\S\ref{5five} we explain the changes needed in the case
of arbitrary reductive $G$. In~\S\ref{Coulomb}, for a quiver $Q$ without loop edges, we propose
a conjectural geometric construction of $U(\fg_Q^+)$ (positive subalgebra of the corresponding
symmetric Kac-Moody Lie algebra) in the framework of Coulomb branch of the corresponding
quiver gauge theory. Appendix~\ref{app} written by D.~Gaitsgory contains proofs
of~Proposition~\ref{proper over Bun} stating that the Drinfeld-Lafforgue-Vinberg compactification
$\ol{\on{Bun}}_G$ of $\on{Bun}_G$ is proper over $\on{Bun}_G\times\on{Bun}_G$,
and of~Proposition~\ref{DG completion} stating that the Drinfeld-Gaitsgory-Vinberg interpolation
Grassmannian $\on{VinGr}^{\on{princ}}_G$ is a relative compactification of the
Drinfeld-Gaitsgory interpolation $\widetilde\Gr_G$.

This work is a result of generous explanations by D.~Gaitsgory and S.~Schieder.
We are deeply grateful to them. We would also like to thank R.~Bezrukavnikov, A.~Braverman,
A.~Kuznetsov, G.~Lusztig, H.~Nakajima and V.~Ostrik for useful discussions.
Finally, thanks are due to Lin Chen and Ekaterina Bogdanova for spotting a few inaccuracies in the published version,
corrected in the present version of the text.
M.F.\ was partially funded within the framework of the HSE University Basic Research
Program and the Russian Academic Excellence Project `5-100'.
The research of V.K.\ was supported by the grant RSF-DFG 16-41-01013.

\section{Review of Schieder's work}
\label{two}

Till~\S\ref{5five} we assume that the derived subgroup $[G,G]\subset G$ is simply connected.
It implies in particular that $\Lambda^\pos:=\bigoplus_{i\in I}\BN\alpha_i$ is equal to
$\Lambda_{\geq0}:=\{\mu\in\Lambda : \langle\lambda^{\!\scriptscriptstyle\vee},\mu\rangle\geq0\
\forall \lambda^{\!\scriptscriptstyle\vee}\in\Lambda^{\vee+}\}$.

\subsection{General recollections}

\sssec{The affine Grassmannian~\cite[\S4.5]{bd}, \cite[\S1,2,~1.4]{zh}}
Let $X$ be a smooth projective curve over $\BC$. Let us fix a point $x \in X$. The functor of points
\begin{equation*}
  \Gr_G \colon {\bf{Sch}} \ra {\bf{Set}},~S \mapsto \Gr_G(S)
\end{equation*}
can be described as follows. For a scheme $S$ the set $\Gr_G(S)$ consists of the following data:

1) a $G$-bundle $\CF$ on $S \times X$,

2) a trivialization $\sigma$ of $\CF$ on $S \times (X\setminus \{x\})$.

\begin{rem}
{\em{For a space $\CY$, one defines a space
$\on{Maps}(X,\CY) $
parametrizing morphisms from the curve $X$ to $\CY$ as
$\on{Maps}(X,\CY)(S):=\CY(X\times S).$
Note that the ind-scheme $\on{Gr}_{G}$
is isomorphic to the fibre product
\begin{equation*}
\on{Maps}(X,\on{pt}\!/G)
\underset{\on{Maps}(X\setminus \{x\},\on{pt}\!/G)}\times \on{pt}
\end{equation*}
where the morphism
$\on{pt} \ra \on{Maps}(X\!\!\!\setminus\!\!\!\{x\},\on{pt}\!/G)$
is the composition of the isomorphism
$\on{pt} \iso \on{Maps}(X\!\!\setminus\!\!\{x\},\on{pt})$
with the morphism
$\on{Maps}(X\!\!\setminus\!\!\{x\},\on{pt}) \ra \on{Maps}(X\!\!\setminus\!\!\{x\},\on{pt}\!/G)$
induced by the morphism $\on{pt}\ra \on{pt}\!/G$.
}}
\end{rem}

\sssec{The Beilinson-Drinfeld Grassmannian~\cite[\S5.3.10,~5.3.11]{bd},~\cite[\S3.1]{zh},~\cite[\S5]{mv}}
\label{BD Grass}
For $n \in \BN$, $\Gr_{G,X^n}$ is the moduli space of the following data: it associates to a scheme $S$

1) a collection of $S$-points $\ul{x}=(x_1,\dots, x_n) \in X^n(S)$ of the curve $X$,

2) a $G$-bundle $\CF$ on $S\times X$,

3) a trivialization $\sigma$ of $\CF$ on
$(S \times X) \setminus \{\varGamma_{x_1}\cup\dots\cup\varGamma_{x_n}\}$,
where $\varGamma_{x_k} \subset S \times X$ is the graph of $x_k$.

We have a projection $\pi_{n}\colon \Gr_{G,X^n} \ra X^n$
that forgets the data of $\CF$ and $\sigma$.

Let us denote by $\Delta_X \hookrightarrow X^n$ the diagonal embedding. Take a point $x \in X$. Note that the fiber of the morphism $\pi_{n}$ over the point $(x,\dots,x) \in \Delta_X$ is isomorphic to $\Gr_G$. We will denote this fiber by the same symbol.

\begin{rem}\label{second def of GrBD}{\em{Let us fix $n \in \BN$. Let us denote by
$\on{Bun}_{G}(\mathfrak{U}_n)$
the following stack over $X^{n}$:
it associates to a scheme $S$

1) a collection of points $\ul{x}=(x_1,\dots,x_n) \in X^{n}(S)$
of the curve X,

2) a $G$-bundle $\CF$ on $(S\times X) \setminus \{\varGamma_{x_{1}} \cup \dots \cup \varGamma_{x_{n}}\}$.

We have a restriction morphism
$X^n \times \on{Maps}(X,\on{pt}\!/G) \ra \on{Bun}_{G}(\mathfrak{U}_n)$.
We also have a morphism $X^{n} \ra \on{Bun}_{G}(\mathfrak{U}_n)$
that sends a collection of $S$-points $\ul{x}\in X^{n}(S)$
to the data $(\ul{x},\CF_{(S\times X) \setminus \{\varGamma_{x_{1}} \cup \dots \cup \varGamma_{x_{n}}\}}^{\on{triv}}) \in \on{Bun}_G(\mathfrak{U}_n)(S)$.
The Beilinson-Drinfeld Grassmannian $\on{Gr}_{G,X^n}$ is nothing but
\begin{equation*}
(X^n \times \on{Maps}(X,\on{pt}\!/G))
\underset{\on{Bun}_G(\mathfrak{U}_n)}\times
X^{n}.
\end{equation*}

}
}
\end{rem}

\sssec{} Let $G_{X,\CO}$ be the group-scheme (over $X$) that represents the following functor. It associates to a scheme $S$

1) an $S$-point $x\in X(S)$ of the curve $X$,

2) a point $\sigma \in G(\hat{\varGamma}_{x})$, where $\hat{\varGamma}_x$
is the completion of $\varGamma_x$ in $S\times X$.

We have a projection $G_{X,\CO} \ra X$ that
forgets the data of $\sigma$.
Recall the projection
$\pi_1\colon \Gr_{G,X} \ra X$ of \S\ref{BD Grass}.
Thus schemes $\Gr_{G,X}, G_{X,\CO}$ are endowed with structures
of schemes over the curve $X$. We have an action
$G_{X,\CO} \curvearrowright \Gr_{G,X}$ by changing the trivialization.
Hence we can define a category
$\on{Perv}_{G_{X,\CO}}(\Gr_{G,X})$
as the category of $G_{X,\CO}$-equivariant perverse sheaves on $\Gr_{G,X}$.
Let us fix a point $x \in X$. Consider the closed embedding
$\iota_x\colon \Gr_G \hookrightarrow \Gr_{G,X}$.
It follows from \cite[Remark~5.1]{mv}, \cite[\S5.4]{zh} that
there exists a functor $\mathfrak{p}^0\colon\on{Perv}_{G_{\CO}}(\Gr_{G}) \ra \on{Perv}_{G_{X,\CO}}(\Gr_{G,X})$ such that the composition
$\iota_x^*[-1] \circ \mathfrak{p}^0$ is isomorphic to $\on{Id}$.

\sssec{Tensor structure on $\on{Perv}_{G_\CO}(\Gr_G)$ via Beilinson-Drinfeld Grassmanian} \label{tensor structure} Let $\CP_1, \CP_2 \in \on{Perv}_{G_{\CO}}(\Gr_G)$.
Let us describe the convolution $\CP_1 \star \CP_2$.
Recall the projection
$
\pi_2\colon\Gr_{G,X^2} \ra X^2
$
of \S\ref{BD Grass}. Let $\Delta_{X} \hookrightarrow X^2$ be
the closed embedding of the diagonal. Let
$U \hookrightarrow X^2$ be
the open embedding of the complement to the diagonal.
It follows from~\cite[\S5.3.12]{bd},~\cite[Proposition~3.1.13]{zh} that
the restriction of the family
$\pi_2\colon\Gr_{G,X^2} \ra X^2$
to the open subvariety $U \hookrightarrow X^2$
is isomorphic to $(\Gr_{G,X}\times \Gr_{G,X})|_U$
and the restriction of the family
$\pi_2\colon\Gr_{G,X^2} \ra X^2$
to the closed subvariety $\Delta_{X} \hookrightarrow X^2$
is isomorphic to $\Gr_{G,X}$.

Let us denote by $\mathfrak{j}$ the open embedding
\begin{equation*}
(\Gr_{G,X}\times \Gr_{G,X})|_U\simeq
\pi_2^{-1}(U) \hookrightarrow \Gr_{G,X^2}.
\end{equation*}
Set \begin{equation*}\CP_1\underset{X}\circ \CP_2:=
(\mathfrak{p}^0\CP_1\boxtimes \mathfrak{p}^0\CP_2)|_U,~
\CP_1 \underset{X}\star \CP_2:=\mathfrak{j}_{!*}(\CP_1\underset{X}\circ \CP_2).
\end{equation*}
Then according to~\cite[\S5]{mv},
we have $\mathfrak{p}^0(\CP_1 \star \CP_2) \simeq \CP_1\underset{X}\star \CP_2$, $\CP_1 \star \CP_2 \simeq \iota^{*}_x[-1](\CP_1\underset{X}\star \CP_2)$.

\sssec{Rational morphisms}

Let $\CF$ be a coherent sheaf on $S\times X$.
Let us fix two numbers $n,m \in \BN$. Let us also fix
a collection of $S$-points $\ul{x}=(x_1,\dots,x_n) \in X^n(S)$
of the curve $X$. Let us denote by
$\on{QM}^{m}_{\ul{x}}(\CO_{S\times X},\CF)$
the set of morphisms
$\CO_{S\times X}(-m\cdot(\varGamma_{x_{1}}\cup\dots\cup \varGamma_{x_{n}})) \ra \CF$.
For $m_1, m_2 \in \BN$, $m_1 \leq m_2$, we have the natural embeddings
$\on{QM}^{m_1}_{\ul{x}}(\CO_{S\times X},\CF) \hookrightarrow
\on{QM}^{m_2}_{\ul{x}}(\CO_{S\times X},\CF)$
given by the composition with the morphism
\begin{equation*}
\CO_{S\times X}(-m_2 \cdot(\varGamma_{x_{1}}\cup\dots\cup \varGamma_{x_{n}}))
\hookrightarrow
\CO_{S\times X}(-m_1 \cdot(\varGamma_{x_{1}}\cup\dots\cup \varGamma_{x_{n}})).
\end{equation*}
Let us denote by $\on{RM}_{\ul{x}}(\CO_{S\times X},\CF)$
the inductive limit
$\lim\limits_{m\in \BN}\on{QM}^m_{\ul{x}}(\CO_{S\times X},\CF)$.
The elements of the set $\on{RM}_{\ul{x}}(\CO_{S\times X},\CF)$
will be called rational morphisms from $\CO_{S\times X}$ to $\CF$
regular on $U:=(S\times X)\setminus \{\varGamma_{x_{1}}\cup\dots\cup \varGamma_{x_{n}}\}$.

The set of rational morphisms from $\CF$ to $\CO_{S\times X}$ regular on $U$
is defined analogously as the limit of the sets of morphisms
$\CF \ra \CO_{S\times X}(m \cdot(\varGamma_{x_{1}}\cup\dots\cup \varGamma_{x_{n}}))$.

\sssec{The Tannakian approach} \label{Tannakian def of BD}
For an algebraic group $H$, a right $H$-torsor $\CE$ over a scheme $X$, and $V \in \on{Rep}(H)$ we denote by $\CV_\CE$ the
associated vector bundle over $X\colon \CV_\CE:=\CE\stackrel{H}{\times}V$. For every $\la^{\lvee} \in \La^{\vee+}$ let us fix a highest weight vector $v_{\la^{\lvee}} \in V^{\la^{\lvee}}$ with respect to the Borel subgroup $B \subset G$. For $\la^{\lvee}, \mu^{\lvee} \in \La^{\vee+}$ we get the $G$-morphism $\on{pr}_{\la^{\lvee},\mu^{\lvee}}\colon V^{\la^{\lvee}}\otimes V^{\mu^{\lvee}} \twoheadrightarrow  V^{\la^{\lvee}+\mu^{\lvee}}$ that is uniquely determined by the following property: $\on{pr}_{\la^{\lvee},\mu^{\lvee}}(v_{\la^{\lvee}}\otimes v_{\mu^{\lvee}})=v_{\la^{\lvee}+\mu^{\lvee}}$.
Let $\Gr'_{G,X^n}$ be the following moduli space.
It associates to a scheme $S$

1) a collection of $S$-points $\ul{x}=(x_1,\dots,x_n) \in X^{n}(S)$ of the curve $X$,

2) a $G$-bundle $\CF$ on $S\times X$,

3) rational $N_-$ and $N$-structures on $\CF$, i.e.\ (cf.~\cite[Theorem~1.1.2]{bg})
for every $\la^{\!\scriptscriptstyle\vee} \in \Lambda^{\vee+}$, rational morphisms \begin{equation*} \CO_{S\times X} \xrightarrow{\eta_{\la^{\!\scriptscriptstyle\vee}}}
\CV^{\la^{\!\scriptscriptstyle\vee}}_{\CF} \xrightarrow{\zeta_{\la^{\!\scriptscriptstyle\vee}}} \CO_{S\times X}\end{equation*}
regular on
$U:=(S\times X)\setminus\{\varGamma_{x_1}\cup\dots\cup\varGamma_{x_n}\},$
satisfying the following conditions.

a) For every $\la^{\!\scriptscriptstyle\vee} \in \Lambda^{\vee+}$ the composition \begin{equation*}(\zeta_{\la^{\!\scriptscriptstyle\vee}} \circ \eta_{\la^{\!\scriptscriptstyle\vee}})|_{(S\times X)\setminus\{\varGamma_{x_1}\cup\dots\cup\varGamma_{x_n}\}}\end{equation*}
is the identity morphism.

b) For every $\lambda^{\!\scriptscriptstyle\vee}, \mu^{\!\scriptscriptstyle\vee} \in \Lambda^{\vee+}$ let $\on{pr}_{\la^{\!\scriptscriptstyle\vee},\mu^{\!\scriptscriptstyle\vee}}\colon  V^{\la^{\!\scriptscriptstyle\vee}} \otimes V^{\mu^{\!\scriptscriptstyle\vee}} \twoheadrightarrow  V^{\la^{\!\scriptscriptstyle\vee}+\mu^{\!\scriptscriptstyle\vee}}$ be the projection morphism. We have the corresponding morphisms \begin{equation*}\on{pr}^{\CF}_{\la^{\!\scriptscriptstyle\vee},\mu^{\!\scriptscriptstyle\vee}}\colon  \CV^{\la^{\!\scriptscriptstyle\vee}}_{\CF} \otimes \CV^{\mu^{\!\scriptscriptstyle\vee}}_{\CF} \ra \CV^{\la^{\!\scriptscriptstyle\vee}+\mu^{\!\scriptscriptstyle\vee}}_{\CF}.\end{equation*}

Then the following diagrams are commutative:

\begin{equation*}\begin{CD}
\CO_{U}\otimes \CO_{U}@>\on{Id}\otimes \on{Id}>> \CO_{U}\\
@VV\eta_{\la^{\!\scriptscriptstyle\vee}} \otimes \eta_{\mu^{\!\scriptscriptstyle\vee}}V @VV\eta_{\la^{\!\scriptscriptstyle\vee}+\mu^{\!\scriptscriptstyle\vee}}V\\
(\CV^{\la^{\!\scriptscriptstyle\vee}}_{\CF} \otimes \CV^{\mu^{\!\scriptscriptstyle\vee}}_{\CF})|_U @>{\on{pr}^{\CF}_{\la^{\!\scriptscriptstyle\vee},\mu^{\!\scriptscriptstyle\vee}}}|_U>> (\CV^{\la^{\!\scriptscriptstyle\vee}+\mu^{\!\scriptscriptstyle\vee}}_{\CF})|_U,
\end{CD}\end{equation*}

\bigskip

\begin{equation*}\begin{CD}
(\CV^{\la^{\!\scriptscriptstyle\vee}}_{\CF} \otimes \CV^{\mu^{\!\scriptscriptstyle\vee}}_{\CF})|_U @>{\on{pr}^{\CF}_{\la^{\!\scriptscriptstyle\vee},\mu^{\!\scriptscriptstyle\vee}}}|_U>> (\CV^{\la^{\!\scriptscriptstyle\vee}+\mu^{\!\scriptscriptstyle\vee}}_{\CF})|_U\\
@VV\zeta_{\la^{\!\scriptscriptstyle\vee}} \otimes \zeta_{\mu^{\!\scriptscriptstyle\vee}}V @VV\zeta_{\la^{\!\scriptscriptstyle\vee}+\mu^{\!\scriptscriptstyle\vee}}V\\
\CO_{U}\otimes \CO_{U} @>\on{Id}\otimes \on{Id}>> \CO_{U}.\end{CD} \end{equation*}

c) Given a morphism $\on{pr}\colon  V^{\la^{\!\scriptscriptstyle\vee}} \otimes
V^{\mu^{\!\scriptscriptstyle\vee}} \ra V^{\nu^{\!\scriptscriptstyle\vee}}$ for
$\la^{\!\scriptscriptstyle\vee},\mu^{\!\scriptscriptstyle\vee},\nu^{\!\scriptscriptstyle\vee} \in \La^{\vee+},\
\nu^{\!\scriptscriptstyle\vee} < \la^{\!\scriptscriptstyle\vee}+\mu^{\!\scriptscriptstyle\vee}$, we have
\begin{equation*}
  \on{pr}^{\CF} \circ (\eta_{\la^{\!\scriptscriptstyle\vee}} \otimes \eta_{\mu^{\!\scriptscriptstyle\vee}})=0,\
  (\zeta_{\la^{\!\scriptscriptstyle\vee}} \otimes \zeta_{\mu^{\!\scriptscriptstyle\vee}}) \circ \on{pr}^{\CF}=0.
\end{equation*}

d) For $\la^{\!\scriptscriptstyle\vee}=0$ we have $\zeta_{\la^{\!\scriptscriptstyle\vee}} = \on{Id}$ and $\eta_{\la^{\!\scriptscriptstyle\vee}} = \on{Id}$.

\begin{prop} \label{tannak via ordinary def of Gr}
For $n \in \BN$, the functors $\Gr_{G,X^n}$ and $\Gr'_{G,X^n}$
are isomorphic.
\end{prop}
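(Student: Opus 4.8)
The plan is to establish an isomorphism between the two moduli functors $\Gr_{G,X^n}$ and $\Gr'_{G,X^n}$ by constructing mutually inverse natural transformations, the key being a Tannakian reformulation of the gluing data. A point of $\Gr_{G,X^n}(S)$ is a $G$-bundle $\CF$ together with a trivialization $\sigma$ away from the graphs $\varGamma_{x_1},\dots,\varGamma_{x_n}$, while a point of $\Gr'_{G,X^n}(S)$ replaces the trivialization by a collection of rational $N_-$- and $N$-structures, namely rational sections $\eta_{\la^{\lvee}},\zeta_{\la^{\lvee}}$ for each dominant weight satisfying the compatibility conditions (a)--(d). The core observation is that specifying a full trivialization of $\CF$ on $U=(S\times X)\setminus\{\varGamma_{x_1}\cup\dots\cup\varGamma_{x_n}\}$ is, by the Tannakian formalism, the same as specifying compatible trivializations of all associated bundles $\CV^{\la^{\lvee}}_\CF$, functorially in $\la^{\lvee}$.

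First I would produce the map $\Gr_{G,X^n}\to\Gr'_{G,X^n}$. Given the trivialization $\sigma$ on $U$, the associated bundle $\CV^{\la^{\lvee}}_\CF$ acquires a trivialization on $U$; composing the fixed highest weight vector $v_{\la^{\lvee}}\in V^{\la^{\lvee}}$ (and its dual) with this trivialization yields rational morphisms $\eta_{\la^{\lvee}}\colon\CO_{S\times X}\to\CV^{\la^{\lvee}}_\CF$ and $\zeta_{\la^{\lvee}}\colon\CV^{\la^{\lvee}}_\CF\to\CO_{S\times X}$ regular on $U$. Conditions (a)--(d) then follow formally: (a) from the normalization of the dual pairing at the highest weight line, (b) from the multiplicativity $\on{pr}_{\la^{\lvee},\mu^{\lvee}}(v_{\la^{\lvee}}\otimes v_{\mu^{\lvee}})=v_{\la^{\lvee}+\mu^{\lvee}}$, (c) from the fact that $v_{\la^{\lvee}}\otimes v_{\mu^{\lvee}}$ maps to the highest weight vector and hence to zero under any projection onto a strictly lower summand $V^{\nu^{\lvee}}$, and (d) from the triviality of the zero representation.

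For the inverse direction, the essential point is that the data $(\eta_{\la^{\lvee}},\zeta_{\la^{\lvee}})_{\la^{\lvee}}$ satisfying (a)--(d) reconstruct the trivialization of $\CF$ on $U$. Here I would invoke the Tannakian description of $\Gr_{G,X^n}$ from Remark~\ref{second def of GrBD}: a $G$-bundle with a generic trivialization is equivalent to a tensor-compatible system of trivializations of associated bundles. The pair of a highest and lowest structure, compatible via the projections $\on{pr}^\CF_{\la^{\lvee},\mu^{\lvee}}$ as in (b), together with the duality (a), pins down a $T$-reduction on $U$, and the regularity on $U$ of all the $\eta,\zeta$ forces this reduction to extend the trivialization across $U$; this is precisely the content of the Drinfeld--Gaitsgory interpretation of generalized $B$- and $B_-$-structures, cf.~\cite[Theorem~1.1.2]{bg}. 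One checks the two constructions are mutually inverse by unwinding definitions, and functoriality in $S$ is immediate since every construction is natural in the base.

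The main obstacle, and the step requiring genuine care rather than formal bookkeeping, is the inverse direction: showing that the collection of rational morphisms genuinely recovers a \emph{trivialization} of the $G$-bundle on $U$ and not merely a $B\times_T B_-$-type reduction. The subtlety is that one must use both the $N$- and the $N_-$-structures simultaneously, and verify that their transversality on $U$ (encoded in condition (a), the identity $\zeta_{\la^{\lvee}}\circ\eta_{\la^{\lvee}}=\on{Id}$) cuts the structure group down to the trivial group on $U$ rather than to the torus; condition (c) is what guarantees compatibility across all $\la^{\lvee}$ so that the reconstructed trivialization is well defined independently of the chosen representation. I expect this verification, together with the bookkeeping that the rationality bounds match up across the tensor structure, to be the technical heart of the argument.
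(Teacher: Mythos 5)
Your proposal is correct and takes essentially the same route as the paper: both directions are constructed exactly as in the paper's proof, namely by identifying a trivialization of $\CF$ on $U$ with a pair of transversal $N$- and $N_-$-structures (the rational Pl\"ucker data $\eta_{\la^{\lvee}},\zeta_{\la^{\lvee}}$), with condition (a) pinning the structure group down to the trivial group rather than $T$, and with the same bookkeeping that morphisms regular on $U$ extend to morphisms of sheaves twisted by multiples of the graphs $\varGamma_{x_k}$. The paper's own proof is in fact terser than yours, asserting the key transversality step and the mutual inverseness without elaboration, so your identification of where the genuine content lies is consistent with, and slightly more explicit than, the published argument.
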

\begin{proof}
Let us construct a morphism of functors
$\Xi\colon\Gr'_{G,X^n} \ra \Gr_{G,X^n}$.
Take an $S$-point $(\ul{x},\CF,\eta_{\la^{\lvee}},\zeta_{\la^{\lvee}}) \in \Gr'_{G,X^n}(S)$. Restrictions of the morphisms $\eta_{\la^{\lvee}},\zeta_{\la^{\lvee}}$
to the open subvariety $U \subset S\times X$
define the transversal $N$ and $N_{-}$-structures in the $G$-bundle $\CF|_U$.
They define a trivialization $\sigma$ of $\CF|_U$.
Set $\Xi(\ul{x},\CF,\eta_{\la^{\lvee}},\zeta_{\la^{\lvee}}):=(\ul{x},\CF,\sigma) \in \Gr_{G,X^n}$.

Let us construct the inverse morphism $\Xi^{-1}\colon\Gr_{G,X^n} \ra \Gr'_{G,X^n}$.
Take an $S$-point $(\ul{x},\CF,\sigma)\in \Gr_{G,X^n}$. Note that the standard $N,N_{-}$-structures in
the trivial $G$-bundle on $S\times X$ define
via $\sigma$ the $N$ and $N_{-}$-structures in $\CF|_{U}$.
Thus for every $\la^{\lvee} \in \La^{\vee+}$ we have morphisms of vector bundles
$\CO_{U} \xrightarrow{\eta'_{\la^{\!\scriptscriptstyle\vee}}} {\CV^{\la^{\!\scriptscriptstyle\vee}}_{\CF}}|_{U}
\xrightarrow{\zeta'_{\la^{\!\scriptscriptstyle\vee}}}\CO_{U}$.
The morphisms $\eta'_{\la^{\lvee}},~\zeta'_{\la^{\lvee}}$ come from
morphisms
\begin{equation*}
  \CO_{S\times X}(l_{\la^{\lvee}}\cdot(\varGamma_{x_1}\cup\dots\cup\varGamma_{x_n})) \xrightarrow{\eta_{\la^{\lvee}}} {\CV^{\la^{\lvee}}_{\CF}} \xrightarrow{\zeta_{\la^{\lvee}}}
 \CO_{S\times X}(k_{\la^{\lvee}}\cdot(\varGamma_{x_1}\cup\dots\cup\varGamma_{x_n}))
\end{equation*}
for some integers $l_{\la^{\lvee}},k_{\la^{\lvee}}$.

Set $\Xi^{-1}(\ul{x},\CF,\sigma):=(\ul{x},\CF,\eta_{\la^{\lvee}},\zeta_{\la^{\lvee}})$.
It is easy to see that the morphisms $\Xi,\Xi^{-1}$ are mutually inverse.
\end{proof}

\sssec{Definition of $\ol{S}_\nu$ via Tannakian approach} We fix a point $x \in X$.
Let us give the Tannakian definition of the ind-scheme $\ol{S}_\nu\subset\Gr_G$.
The corresponding functor of points associates to a scheme $S$

1) a $G$-bundle $\CF$ on $S\times X$,

2) for every $\la^{\!\scriptscriptstyle\vee} \in \Lambda^{\vee+}$, morphisms of sheaves
$
\eta_{\la^{\lvee}}\colon\CO_{S\times X}(-\langle\la^{\lvee},\nu\rangle \cdot (S\times x)) \ra \CV^{\la^{\!\scriptscriptstyle\vee}}_{\CF}
$ and rational morphisms
$
\zeta_{\la^{\!\scriptscriptstyle\vee}}\colon \CV^{\la^{\!\scriptscriptstyle\vee}}_{\CF} \ra \CO_{S\times X}
$
regular on $S\times (X\setminus \{x\}),$
satisfying the same conditions as in the definition of $\Gr'_{G,X^n}$
in~\S\ref{Tannakian def of BD}.

Allowing $x\in X$ to vary, we obtain an ind-scheme $\ol{S}_{\nu,X}\to X$.

\begin{rem} {\em{Note that the open ind-subscheme $S_{\nu} \subset \ol{S}_{\nu}$ consists of such $(\CF,\eta_{\la^{\lvee}},\zeta_{\la^{\lvee}}) \in \ol{S}_{\nu}$
that $\eta_{\la^{\lvee}}$ are injective morphisms of vector bundles.}}
\end{rem}

\sssec{Definition of $\ol{T}_\nu$ via Tannakian approach} We fix a point $x \in X$. Let us give the Tannakian definition of the ind-scheme $\ol{T}_\nu$. The corresponding functor of points associates to a scheme $S$

1) a $G$-bundle $\CF$ on $X\times S$,

2) for every $\la^{\!\scriptscriptstyle\vee} \in \Lambda^{\vee+}$, rational morphisms $\eta_{\la^{\lvee}}\colon\CO_{S\times X} \ra \CV^{\la^{\!\scriptscriptstyle\vee}}_{\CF}$ regular on $S \times (X\setminus \{x\}),$ and morphisms of sheaves $\zeta_{\la^{\!\scriptscriptstyle\vee}}\colon \CV^{\la^{\!\scriptscriptstyle\vee}}_{\CF} \ra \CO_{S\times X}(-\langle\la^{\lvee},\nu\rangle \cdot (S\times x))$,
satisfying the same conditions as in the definition of $\Gr'_{G,X^n}$
in~\S\ref{Tannakian def of BD}.

Allowing $x\in X$ to vary, we obtain an ind-scheme $\ol{T}_{\nu,X}\to X$.

\begin{rem} {\em{Note that the open ind-subscheme $T_{\nu} \subset \ol{T}_{\nu}$ consists of such $(\CF,\eta_{\la^{\lvee}},\zeta_{\la^{\lvee}}) \in \ol{T}_{\nu}$
that $\zeta_{\la^{\lvee}}$ are surjective morphisms of vector bundles.}}
\end{rem}






\sssec{Definition of $\ol{S}_{\theta_1,\theta_2}$} Fix two cocharacters
$\theta_1,\theta_2 \in \La$. Let $\ol{S}_{\theta_1,\theta_2}$
be the following moduli space: it
associates to a scheme $S$

1) a pair of $S$-points $(x_1,x_2) \in X^2(S)$ of the curve $X$,

2) a $G$-bundle $\CF$ on $S\times X$,

3) for every $\la^{\lvee} \in \La^{\vee+}$,  morphisms of sheaves
$\eta_{\la^{\lvee}}$,
\begin{equation*}
\eta_{\la^{\lvee}}\colon
\CO_{S\times X}(-\langle\la^{\lvee},\theta_1\rangle\cdot\varGamma_{x_1}
-\langle\la^{\lvee},\theta_2\rangle\cdot\varGamma_{x_2})
\ra \CV^{\la^{\lvee}}_{\CF},
\end{equation*}
and rational  morphisms
$
\zeta_{\la^{\lvee}}\colon\CV^{\la^{\lvee}}_{\CF} \ra
\CO_{S\times X}
$
regular on $(S\times X) \setminus
\{\varGamma_{x_1} \cup \varGamma_{x_2}\},$
satisfying the same conditions as in the definition of $\Gr'_{G,X^n}$ in \S\ref{Tannakian def of BD}.






\sssec{Definition of $\ol{T}_{\theta_1,\theta_2}$} Fix two cocharacters
$\theta_1,\theta_2 \in \La$. Let $\ol{T}_{\theta_1,\theta_2}$
be the following moduli space: it
associates to a scheme $S$

1) a pair of $S$-points $(x_1,x_2) \in X^2(S)$ of the curve $X$,

2) a $G$-bundle $\CF$ on $S\times X$,

3) for every $\la^{\lvee} \in \La^{\vee+}$,  morphisms of sheaves,
\begin{equation*}
\zeta_{\la^{\lvee}}\colon\CV^{\la^{\lvee}}_{\CF} \ra
\CO_{S\times X}(-\langle\la^{\lvee},\theta_1\rangle\cdot\varGamma_{x_1}
-\langle\la^{\lvee},\theta_2\rangle\cdot\varGamma_{x_2})
\end{equation*}
and rational  morphisms
$\eta_{\la^{\lvee}}\colon \CO_{S\times X}\ra \CV^{\la^{\lvee}}_{\CF}$
regular on $(S\times X) \setminus
\{\varGamma_{x_1} \cup \varGamma_{x_2}\},$
satisfying the same conditions as in the definition of $\Gr'_{G,X^n}$ in \S\ref{Tannakian def of BD}.

\subsection{The Vinberg semigroup}
In~\cite{v,v2} Vinberg has defined a multi-parameter degeneration $\on{Vin}_G \ra \BA^r$ of a reductive group $G$ of semisimple rank $r$. Let us recall two equivalent constructions of $\on{Vin}_G$.

\sssec{Rees construction of the Vinberg semigroup~\cite{v2}}
Consider the regular action of $G\times G$ on $G\colon (g_1, g_2) \cdot g:=
g_1 \cdot g \cdot g_2^{-1}$.

\begin{prop}[Peter-Weyl theorem]
\label{PW}
The morphism \begin{equation*}\psi\colon \bigoplus_{\lambda^{\!\scriptscriptstyle\vee}\in\Lambda^{\vee+}} 
V^{\lambda^{\!\scriptscriptstyle\vee}} \otimes (V^{\lambda^{\!\scriptscriptstyle\vee}})^* \iso\BC[G],\
  v \otimes v^{\!\scriptscriptstyle\vee} 
  \mapsto [g \mapsto\langle v,gv^{\!\scriptscriptstyle\vee}\rangle]\end{equation*}
is an isomorphism of $G \times G$-modules.
\end{prop}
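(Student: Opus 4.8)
The plan is to prove this standard algebraic Peter--Weyl statement in three stages---equivariance of $\psi$, injectivity, and surjectivity---with complete reducibility of finite-dimensional $G$-modules (valid since $G$ is reductive over $\BC$) as the central input.

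First I would check that $\psi$ is $G\times G$-equivariant. Since $G\times G$ acts on $\BC[G]$ through the biregular action $(g_1,g_2)\cdot g=g_1gg_2^{-1}$ on $G$, the induced action on functions is $((g_1,g_2)\cdot f)(g)=f(g_1^{-1}gg_2)$. A one-line computation then gives $((g_1,g_2)\cdot\psi(v\otimes v^{\lvee}))(g)=\langle g(g_2 v),g_1 v^{\lvee}\rangle$, where $g_1$ acts on the dual $(V^{\la^{\lvee}})^*$ contragrediently; matching this against the $G\times G$-action on the source (second factor on $V^{\la^{\lvee}}$, first factor on the dual) shows that $\psi$ intertwines the two actions. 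This step is routine.

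The hard part is bijectivity, which I would treat in two directions. For injectivity, observe that each summand $V^{\la^{\lvee}}\otimes(V^{\la^{\lvee}})^*$ is irreducible as a $G\times G$-module, being an external tensor product of two irreducible $G$-modules. Evaluating $\psi(v\otimes v^{\lvee})$ at $g=e$ already returns the pairing $\langle v,v^{\lvee}\rangle$, which can be made nonzero, so $\psi$ is nonzero on each summand; Schur's lemma then forces $\psi$ to be injective there. Because the summands are pairwise non-isomorphic $G\times G$-modules (their highest weights differ), their images in $\BC[G]$ meet only in $0$, and the whole direct sum embeds. For surjectivity, I would use that $\BC[G]$ is locally finite under right translation: every $f\in\BC[G]$ lies in a finite-dimensional $G$-submodule $M_f\subset\BC[G]$. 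By complete reducibility $M_f$ splits into irreducibles, each isomorphic to some $V^{\la^{\lvee}}$, and the inclusion of such an irreducible submodule followed by evaluation at $g=e$ exhibits $f$ as a finite sum of matrix coefficients $\langle gv,v^{\lvee}\rangle$, hence as an element of the image of $\psi$.

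The main obstacle I anticipate is conceptual rather than computational: one must invoke the highest-weight classification to know that every irreducible $G$-module is some $V^{\la^{\lvee}}$ with $\la^{\lvee}\in\La^{\vee+}$, together with complete reducibility of the locally finite module $\BC[G]$. Both are consequences of $G$ being reductive in characteristic zero, and granting them the argument assembles into the asserted isomorphism of $G\times G$-modules.
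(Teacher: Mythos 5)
Your proof is correct. Note that the paper does not prove this Proposition at all: it is stated as the classical algebraic Peter--Weyl theorem and immediately used as the starting point for the Rees construction of the Vinberg semigroup, so there is no argument in the paper to compare against; yours is the standard one (equivariance, Schur's lemma applied to the pairwise non-isomorphic irreducible $G\times G$-summands $V^{\la^{\lvee}}\otimes(V^{\la^{\lvee}})^*$, and local finiteness of $\BC[G]$ under right translation together with complete reducibility and the highest-weight classification for surjectivity). The only point worth tightening is the passage from ``the images meet pairwise in $0$'' to ``the whole direct sum embeds'': for arbitrary subspaces this would be a non sequitur, but it is valid here because the summands are non-isomorphic irreducibles, so each image meets the (completely reducible) sum of the others in $0$.
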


The isomorphism $\psi$ induces a $\Lambda^{\vee+}$-grading on $\BC[G]$. This grading is not
compatible with the algebra structure on $\BC[G]$. It is easy to see that the algebra
structure on $\BC[G]$ is compatible with the corresponding $\Lambda^\vee$-filtration
(the Peter-Weyl filtration):
\begin{equation*}\BC[G]_{\leq \lambda^{\!\scriptscriptstyle\vee}}:=
\psi(\bigoplus_{\mu^{\!\scriptscriptstyle\vee} \leq \lambda^{\!\scriptscriptstyle\vee}}
V^{\mu^{\!\scriptscriptstyle\vee}} \otimes (V^{\mu^{\!\scriptscriptstyle\vee}})^*).
\end{equation*}

Let $\BC[\La^\vee]$ denote the group algebra of $\La^\vee$: it is generated by the formal variables $t^{\lambda^{\!\scriptscriptstyle\vee}}$ with relations $t^{\lambda^{\!\scriptscriptstyle\vee}}
\cdot t^{\mu^{\!\scriptscriptstyle\vee}} = t^{\lambda^{\!\scriptscriptstyle\vee}+\mu^{\!\scriptscriptstyle\vee}}$
for $\lambda^{\!\scriptscriptstyle\vee}, \mu^{\!\scriptscriptstyle\vee} \in \Lambda^\vee$.

\begin{defn} \label{Vinberg via Rees}
  \label{Vin} The Vinberg semigroup $\on{Vin}_G$ for $G$ is defined as the spectrum of the Rees algebra for $\BC[G]$ with the Peter-Weyl filtration:
  \begin{equation*}\on{Vin}_G:= \on{Spec}(\bigoplus_{\lambda^{\!\scriptscriptstyle\vee} \in \Lambda^\vee}
  \BC[G]_{\leq \lambda^{\!\scriptscriptstyle\vee}}t^{\lambda^{\!\scriptscriptstyle\vee}}).\end{equation*}
\end{defn}

Note that $\on{Vin}_G$ is equipped with a natural $G \times G$-action. Let us also note that the algebra $\BC[\on{Vin}_G]$ can be equipped with the comultiplication morphism \begin{equation*} \label{1} \Delta\colon \BC[\on{Vin}_G] \ra \BC[\on{Vin}_G] \otimes \BC[\on{Vin}_G]\end{equation*}
that is induced from the comultiplication morphisms for Hopf algebras $\BC[G],~\BC[\La^\vee]$.
It follows that $\BC[\on{Vin}_G]$ carries a bialgebra structure. So $\on{Vin}_G$ is an algebraic
monoid (semigroup).

\sssec{Construction of the morphism $\Upsilon\colon \on{Vin}_G \ra T^+_{\on{ad}}$}
\label{Morphism v for Vin} We consider the polynomial subalgebra
$\BC[t^{\alpha^{\!\scriptscriptstyle\vee}_i},\ i\in I]$ of
$\BC[\La^\vee]$ generated by the elements $t^{\alpha^{\!\scriptscriptstyle\vee}_i},\ i \in I$.
Set $T_{\on{ad}}^+:=\on{Spec}(\BC[t^{\alpha^{\!\scriptscriptstyle\vee}_i},\ i\in I])\simeq\BA^r$.
Let us denote by $Z_G \subset T$ the center of the group $G$. Set $T_{\on{ad}}:=T/Z_G$. We have a natural open embedding
$T_{\on{ad}}=\on{Spec}(\BC[t^{\pm\alpha^{\!\scriptscriptstyle\vee}_i},\ i\in I])\hookrightarrow T_{\on{ad}}^+$.
Thus $T_{\on{ad}}^+$ is the toric variety for $T_{\on{ad}}$, and
the corresponding cone is $\Lambda^{\vee,\pos}$.
Note that the variety $T_{\on{ad}}^+$ has a unique monoid structure extending the multiplication in
$T_{\on{ad}}$.

Observe that $t^{\alpha_i^{\!\scriptscriptstyle\vee}} \in \BC[\on{Vin}_G]$ for any simple root
$\alpha^{\!\scriptscriptstyle\vee}_i$, so there is a homomorphism
$\BC[t^{\alpha^{\!\scriptscriptstyle\vee}_i},\ i\in I] \hookrightarrow \BC[\on{Vin}_G]$.
Thus we get a homomorphism of monoids $\Upsilon\colon \on{Vin}_G \ra T_{\on{ad}}^+$.
Note that the algebra of functions $\BC[\on{Vin}_G]$ is $\La^\vee$-graded.
It follows that the torus $\on{Spec}(\BC[\La^\vee])=T$ acts on the scheme $\on{Vin}_G$.
It also acts on the affine space $T_{\on{ad}}^+$ in the following way:
\begin{equation*}t\cdot (a_i):=(\alpha^{\!\scriptscriptstyle\vee}_i(t)\cdot a_i)~\on{in~coordinates}~
\alpha^{\!\scriptscriptstyle\vee}_i,\ i\in I.\end{equation*}
The morphism $\Upsilon$ is $T$-equivariant. Recall that $Z_G \subset T$ is the center of the group $G$.
It acts trivially on $T_{\on{ad}}^+$, thus the action of $T$ on $T_{\on{ad}}^+$ factors
through the action of $T_{\on{ad}}:=T/Z_G$.

The fiber $\Upsilon^{-1}(1)$ over $1\in T_{\on{ad}}\subset T_{\on{ad}}^+$
naturally identifies with the group $G$. Recall that the morphism $\Upsilon$ is $T$-equivariant.
It follows that the preimage $\Upsilon^{-1}(T_{\on{ad}})$ is isomorphic to
\begin{equation*} (G \times T)/Z_G=:G_{\on{enh}}\end{equation*} where the action of $Z_G \curvearrowright G \times T$ is given by the formula
$(g,t)\cdot z:= (z\cdot g, z^{-1} \cdot t)$.

We denote by $j$ the corresponding open embedding
$G_{\on{enh}} \hookrightarrow \on{Vin}_G$.

\sssec{The section of the morphism $\Upsilon$} \label{section}
Let us construct a surjective morphism
$\mathfrak{s}^{*}\colon\BC[\on{Vin}_G] \twoheadrightarrow \BC[T^+_{\on{ad}}]$.
Decompose \begin{equation*}\BC[\on{Vin}_G]=\bigoplus\limits_{\mu^{\!\scriptscriptstyle\vee} \leq \lambda^{\!\scriptscriptstyle\vee} \in\Lambda^{\vee}}
 \psi(
V^{\mu^{\!\scriptscriptstyle\vee}} \otimes(V^{\mu^{\!\scriptscriptstyle\vee}})^*)t^{\lambda^{\!\scriptscriptstyle\vee}}
=
\bigoplus\limits_{\mu^{\!\scriptscriptstyle\vee} \leq \lambda^{\!\scriptscriptstyle\vee} \in\Lambda^{\vee},~\nu^{\lvee}_1,\nu^{\lvee}_2 \in \La^{\vee}}
 \psi(
V^{\mu^{\!\scriptscriptstyle\vee}}_{\nu^{\lvee}_1}
\otimes(V^{\mu^{\!\scriptscriptstyle\vee}}_{\nu^{\lvee}_2})^*)t^{\lambda^{\!\scriptscriptstyle\vee}}.
\end{equation*}
Set
$\mathfrak{s}^{*}(\psi(w_{\nu^{\lvee}_1}\otimes f_{\nu^{\lvee}_2})t^{\la^{\lvee}}):=
\langle w_{\nu^{\lvee}_1},f_{\nu^{\lvee}_2}\rangle t^{\la^{\lvee}-\nu^{\lvee}_1}$
for $w_{\nu^{\lvee}_1} \in V^{\mu^{\!\scriptscriptstyle\vee}}_{\nu^{\lvee}_1}, f_{\nu^{\lvee}_2} \in
(V^{\mu^{\!\scriptscriptstyle\vee}}_{\nu^{\lvee}_2})^*$.
The morphism $\mathfrak{s}^{*}$ corresponds to the section
$\mathfrak{s}\colon T^+_{\on{ad}} \hookrightarrow \on{Vin}_G$
of the morphism $\Upsilon$.

\sssec{The nondegenerate locus of the Vinberg semigroup~\cite[\S0.8]{v2},~\cite[Section~D.4]{dg1}}
\label{nondeg locus}
The Vinberg semigroup contains a dense open subvariety
$_0\!\!\on{Vin}_G \subset \on{Vin}_G$, the {\em{nondegenerate locus}} of $\on{Vin}_G$.
It is uniquely characterized by the fact that it meets
each fiber of the morphism
$\Upsilon\colon\on{Vin}_G \ra T^+_{\on{ad}}$ in the open
$G \times G$-orbit of that fiber; i.e., for any
$t \in T^+_{\on{ad}}$ we have:
\begin{equation*}
{\on{Vin}_G}|_{t} \cap~_0\!\!\on{Vin}_G= (G\times G)  \cdot\mathfrak{s}(t).
\end{equation*}
The Tannakian characterization of $_0\!\!\on{Vin}_G$ will be given
in \S\ref{nondegenerate in Tannak}.

\sssec{The zero fiber of $\Upsilon$}
Let us describe the zero fiber of the morphism $\Upsilon\colon \on{Vin}_G \ra T^+_{\on{ad}}$. Let us denote
$
(\on{Vin}_G)_{0}:=\Upsilon^{-1}(0).
$

\begin{lem} (cf.~\cite[Lemma~2.1.11]{s2}) \label{fiber for Vin} The scheme $(\on{Vin}_G)_{0}$ is isomorphic to \begin{equation*}(\ol{G/N_{-}} \times \ol{G/N})/\!\!/T:=\on{Spec}((\BC[G]^{{N_-}} \otimes \BC[G]^{{N}})^T) \end{equation*}
where the action of $T\curvearrowright G/N_{-},~G/N$ is given by the right multiplication. The $G\times G$-action on $(\on{Vin}_G)_{0}$ corresponds to
the action via the left multiplication $G\times G \curvearrowright(\ol{G/N_{-}} \times \ol{G/N})/\!\!/T$.
\end{lem}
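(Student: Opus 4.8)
The plan is to compute the coordinate ring of the scheme-theoretic fiber $\Upsilon^{-1}(0)$ directly from the Rees description of $\on{Vin}_G$, recognize it as the associated graded $\gr\BC[G]$ of the Peter--Weyl filtration, and then match $\gr\BC[G]$ with the invariant ring $(\BC[G]^N\otimes\BC[G]^{N_-})^T$ as $G\times G$-algebras. Rather than merely cite~\cite[Lemma~2.1.11]{s2} I would carry out this computation.

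First I would observe that, since $\Upsilon$ corresponds to the subalgebra $\BC[t^{\alpha_i^\vee},\ i\in I]\hookrightarrow\BC[\on{Vin}_G]$ and the point $0\in T^+_{\on{ad}}$ is cut out by the maximal ideal $(t^{\alpha_i^\vee}:i\in I)$, the fiber is
\[
(\on{Vin}_G)_0=\Spec\bigl(R/(t^{\alpha_i^\vee}:i\in I)R\bigr),\qquad R=\bigoplus_{\lambda^\vee\in\Lambda^\vee}\BC[G]_{\leq\lambda^\vee}\,t^{\lambda^\vee}.
\]
Writing $t^{\alpha_i^\vee}=1\cdot t^{\alpha_i^\vee}$ in $R$, multiplication by $t^{\alpha_i^\vee}$ is, in $\Lambda^\vee$-degree $\lambda^\vee$, the filtered inclusion $\BC[G]_{\leq\lambda^\vee-\alpha_i^\vee}\hookrightarrow\BC[G]_{\leq\lambda^\vee}$; hence the degree-$\lambda^\vee$ part of the ideal is $\sum_{i\in I}\BC[G]_{\leq\lambda^\vee-\alpha_i^\vee}$. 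By the very definition of the order via $\Lambda^{\vee,\pos}=\bigoplus_i\BN\alpha_i^\vee$, one has $\mu^\vee<\lambda^\vee$ iff $\mu^\vee\leq\lambda^\vee-\alpha_i^\vee$ for some $i$, so this sum is exactly $\BC[G]_{<\lambda^\vee}$. Therefore the quotient in degree $\lambda^\vee$ is $\BC[G]_{\leq\lambda^\vee}/\BC[G]_{<\lambda^\vee}$, and summing over $\Lambda^\vee$ gives $R/(t^{\alpha_i^\vee})\cong\gr\BC[G]=\bigoplus_{\mu^\vee\in\Lambda^{\vee+}}V^{\mu^\vee}\otimes(V^{\mu^\vee})^*$ with the residual $G\times G$-action. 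No reducedness issue arises, since the coordinate ring is obtained on the nose.

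Next I would identify the induced (associated-graded) product. For matrix coefficients $\psi(v_1\otimes f_1)$, $\psi(v_2\otimes f_2)$ of $V^{\mu_1^\vee}$ and $V^{\mu_2^\vee}$, the product $\psi(v_1\otimes f_1)\psi(v_2\otimes f_2)=\psi\bigl((v_1\otimes v_2)\otimes(f_1\otimes f_2)\bigr)$ lies in $\BC[G]_{\leq\mu_1^\vee+\mu_2^\vee}$, and its class in $\gr_{\mu_1^\vee+\mu_2^\vee}$ is obtained by projecting to the Cartan component: it equals $\psi\bigl(\on{pr}_{\mu_1^\vee,\mu_2^\vee}(v_1\otimes v_2)\otimes\on{pr}^*_{\mu_1^\vee,\mu_2^\vee}(f_1\otimes f_2)\bigr)$, where $\on{pr}_{\mu_1^\vee,\mu_2^\vee}\colon V^{\mu_1^\vee}\otimes V^{\mu_2^\vee}\twoheadrightarrow V^{\mu_1^\vee+\mu_2^\vee}$ is the projection of~\S\ref{Tannakian def of BD} and $\on{pr}^*_{\mu_1^\vee,\mu_2^\vee}$ is its transpose on dual spaces. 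Thus $\gr\BC[G]$ is precisely the ``Cartan-multiplication'' algebra on $\bigoplus_{\mu^\vee}V^{\mu^\vee}\otimes(V^{\mu^\vee})^*$.

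Finally I would match this with $(\BC[G]^N\otimes\BC[G]^{N_-})^T$. Here $\BC[\ol{G/N}]=\BC[G]^N=\bigoplus_{\mu^\vee\in\Lambda^{\vee+}}V^{\mu^\vee}$, with multiplication given by the projections $\on{pr}_{\mu_1^\vee,\mu_2^\vee}$ and with the summand $V^{\mu^\vee}$ of pure right $T$-weight $\mu^\vee$; dually $\BC[\ol{G/N_-}]=\BC[G]^{N_-}=\bigoplus_{\mu^\vee}(V^{\mu^\vee})^*$, with multiplication $\on{pr}^*$ and with the opposite $T$-weights. The diagonal right $T$-action on the tensor product then has invariants supported exactly on the matched summands $\mu_1^\vee=\mu_2^\vee$, producing $\bigoplus_{\mu^\vee}V^{\mu^\vee}\otimes(V^{\mu^\vee})^*$ with componentwise product --- which is exactly the algebra of the previous paragraph --- while the two left multiplications (on $\ol{G/N}$ and on $\ol{G/N_-}$) become the $G\times G$-action on $\gr\BC[G]$. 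I expect this last step to be the main obstacle: one must verify that the intrinsic products on $\BC[\ol{G/N}]$ and $\BC[\ol{G/N_-}]$ coincide with the Cartan projections occurring in the associated graded (including that the dual side is genuinely the transpose projection), and carry out the $T$-weight bookkeeping so that invariance selects precisely the diagonal $\mu_1^\vee=\mu_2^\vee$. The remaining points --- that $\BC[\ol{G/N}]=\BC[G]^N$ with no extra functions, and the $G\times G$-equivariance --- are then routine.
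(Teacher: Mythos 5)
Your proof is correct and takes essentially the same route as the paper's: both compute $\BC[(\on{Vin}_G)_0]$ from the Rees construction as the associated graded $\bigoplus_{\lambda^{\lvee}\in\Lambda^{\vee+}}V^{\lambda^{\lvee}}\otimes(V^{\lambda^{\lvee}})^*$ of the Peter--Weyl filtration, with multiplication given by the Cartan-component projections, and then identify this algebra with $(\BC[G]^N\otimes\BC[G]^{N_-})^T$ via the weight decomposition under the right $T$-action. The only difference is one of detail: you carry out explicitly the steps the paper compresses into ``It follows from Definition\ldots'' and ``It is easy to see\ldots'' (the identification of the degree-$\lambda^{\lvee}$ part of the ideal $(t^{\alpha_i^{\lvee}})$ with $\BC[G]_{<\lambda^{\lvee}}$, and the $T$-weight bookkeeping selecting the diagonal summands).
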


\begin{proof} It follows from Definition~\ref{Vinberg via Rees} that the $G\times G$-module $\BC[(\on{Vin}_G)_{0}]$ is isomorphic to
$
\bigoplus_{\lambda^{\!\scriptscriptstyle\vee}\in\Lambda^{\vee+}}
V^{\lambda^{\!\scriptscriptstyle\vee}} \otimes (V^{\lambda^{\!\scriptscriptstyle\vee}})^*,
$
  and the algebra structure on $\BC[(\on{Vin}_G)_{0}]$ is given by the projection morphisms \begin{equation*}(V^{\lambda_1^{\!\scriptscriptstyle\vee}} \otimes (V^{\lambda_1^{\!\scriptscriptstyle\vee}})^*) \otimes (V^{\lambda_2^{\!\scriptscriptstyle\vee}} \otimes (V^{\lambda_2^{\!\scriptscriptstyle\vee}})^*) \twoheadrightarrow V^{\lambda_1^{\!\scriptscriptstyle\vee}+\lambda_2^{\!\scriptscriptstyle\vee}} \otimes (V^{\lambda_1^{\!\scriptscriptstyle\vee}+\lambda_2^{\!\scriptscriptstyle\vee}})^*. \end{equation*}
So there is an embedding of algebras \begin{equation*}\BC[(\on{Vin}_G)_{0}]=\bigoplus_{\lambda^{\!\scriptscriptstyle\vee}\in\Lambda^{\vee+}}
V^{\lambda^{\!\scriptscriptstyle\vee}} \otimes (V^{\lambda^{\!\scriptscriptstyle\vee}})^* \hookrightarrow \bigoplus_{\lambda^{\!\scriptscriptstyle\vee}, \mu^{\!\scriptscriptstyle\vee} \in\Lambda^{\vee+}}
V^{\lambda^{\!\scriptscriptstyle\vee}} \otimes (V^{\mu^{\!\scriptscriptstyle\vee}})^*=\BC[G]^{N_{-}}\otimes \BC[G]^{N}.\end{equation*}  It is easy to see that the image of $\BC[(\on{Vin}_G)_{0}]$ coincides with $(\BC[G]^{N_-} \otimes \BC[G]^{N})^T$.
\end{proof}

\begin{rem}\label{fiber for def free Vin}{\em{
It follows from Lemma~\ref{fiber for Vin} that the open subscheme
$_{0}\!\on{Vin}_G \cap (\on{Vin}_G)_0~\subset (\on{Vin}_G)_0$ is isomorphic to
$(G/N_{-} \times G/N_{})/T \subset (\ol{G/N_{-}} \times \ol{G/N})/\!\!/T$}}.
\end{rem}

\sssec{The scheme $\on{Vin}_{G}/\!\!/(N\times N_{-})$} Let us denote by $\ol{T}$ the monoid
\begin{equation*}
  \ol{T}:=(\on{Vin}_G)_{0}/\!\!/(N\times N_{-})=
  \on{Spec}((\BC[G]^{N\times N_{-}} \otimes \BC[G]^{N_{-} \times N})^T)=
  \on{Spec}\BC[\Lambda^{\vee+}].\end{equation*}

The monoid $\ol{T}$ is the toric variety for the torus $T$.

\begin{rem} {\em Let us point out the difference between the monoids $T^+_{\on{ad}},~\ol{T}$.
The monoid $T^+_{\on{ad}}$ is the toric variety for the torus $T_{\on{ad}}$
and the corresponding cone is $\Lambda^{\vee,\pos}$.
The monoid $\ol{T}$ is the toric variety for the torus $T$, the corresponding cone is $\La^{\vee+}$.}
\end{rem}

\begin{lem} (cf.~\cite[Lemma~4.1.3]{s3}) \label{Factor by unipotent} The scheme $\on{Vin}_{G}/\!\!/(N\times N_{-})$ is isomorphic to \begin{equation*} ((\on{Vin}_{G})_{0}/\!\!/(N\times N_{-})) \times T^+_{\on{ad}}=\ol{T}\times T^+_{\on{ad}}.\end{equation*}
\end{lem}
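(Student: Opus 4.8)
The plan is to compute the invariant ring $\BC[\on{Vin}_G]^{N\times N_-}$ directly from the Rees presentation of Definition~\ref{Vinberg via Rees}, in the same spirit as the proof of Lemma~\ref{fiber for Vin}, and to recognize it as a monoid algebra. Since $N\times N_-\subset G\times G$ acts only on the $\BC[G]$-factor of the Rees algebra and fixes every $t^{\la^\lvee}$, the $\La^\vee$-grading is preserved, so
\[
\BC[\on{Vin}_G]^{N\times N_-}=\bigoplus_{\la^\lvee\in\La^\vee}\big(\BC[G]_{\leq\la^\lvee}\big)^{N\times N_-}t^{\la^\lvee}.
\]
By Proposition~\ref{PW} and the $G\times G$-equivariance of $\psi$, taking $N\times N_-$-invariants of each Peter--Weyl block $\psi(V^{\mu^\lvee}\otimes(V^{\mu^\lvee})^*)$ singles out the line $(V^{\mu^\lvee})^{N_-}\otimes((V^{\mu^\lvee})^*)^{N}$, which is one-dimensional and spanned by the matrix coefficient $b_{\mu^\lvee}:=\psi(v^-_{\mu^\lvee}\otimes\xi_{\mu^\lvee})$, where $v^-_{\mu^\lvee}$ is a lowest weight vector of $V^{\mu^\lvee}$ and $\xi_{\mu^\lvee}$ is the $N$-invariant covector dual to the lowest weight line. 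Hence $\BC[\on{Vin}_G]^{N\times N_-}$ has basis $\{b_{\mu^\lvee}t^{\la^\lvee}\}$ indexed by $\mu^\lvee\in\La^{\vee+}$ and $\la^\lvee\in\La^\vee$ with $\mu^\lvee\leq\la^\lvee$; writing $\la^\lvee=\mu^\lvee+\beta$ with $\beta\in\La^{\vee,\pos}$, the basis is indexed by $\La^{\vee+}\times\La^{\vee,\pos}$ (here I use that $[G,G]$ is simply connected, so $\La^{\vee,\pos}=\bigoplus_i\BN\alpha^\lvee_i$ is free and $\beta$ is uniquely determined).

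Next I would determine the multiplication. In the Rees algebra $(a\,t^{\la^\lvee_1})(b\,t^{\la^\lvee_2})=(ab)\,t^{\la^\lvee_1+\la^\lvee_2}$, with $ab$ the pointwise product in $\BC[G]$. The crucial identity is $b_{\mu^\lvee_1}b_{\mu^\lvee_2}=b_{\mu^\lvee_1+\mu^\lvee_2}$ with no lower Peter--Weyl terms. Indeed, $b_{\mu^\lvee_1}b_{\mu^\lvee_2}$ is the matrix coefficient of $V^{\mu^\lvee_1}\otimes V^{\mu^\lvee_2}$ paired against the vector $v^-_{\mu^\lvee_1}\otimes v^-_{\mu^\lvee_2}$ and the covector $\xi_{\mu^\lvee_1}\otimes\xi_{\mu^\lvee_2}$. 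The vector has weight $w_0(\mu^\lvee_1+\mu^\lvee_2)$, which is extremal (the lowest weight), so it lies entirely in the Cartan component $V^{\mu^\lvee_1+\mu^\lvee_2}$ and is its lowest weight vector; dually, $\xi_{\mu^\lvee_1}\otimes\xi_{\mu^\lvee_2}$ is $N$-invariant of weight $-w_0(\mu^\lvee_1+\mu^\lvee_2)$ and, being supported on lowest weight vectors of that weight, is dual to the lowest weight vector of the Cartan component and vanishes on all other isotypic pieces. With a normalization of the $v^-$ compatible with $\on{pr}_{\mu^\lvee_1,\mu^\lvee_2}$, this gives $b_{\mu^\lvee_1}b_{\mu^\lvee_2}=b_{\mu^\lvee_1+\mu^\lvee_2}$, and therefore $(b_{\mu^\lvee_1}t^{\la^\lvee_1})(b_{\mu^\lvee_2}t^{\la^\lvee_2})=b_{\mu^\lvee_1+\mu^\lvee_2}t^{\la^\lvee_1+\la^\lvee_2}$. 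In the coordinates $(\mu^\lvee,\beta)$ this is precisely addition, so $\BC[\on{Vin}_G]^{N\times N_-}$ is the monoid algebra of $\La^{\vee+}\times\La^{\vee,\pos}$, that is $\BC[\La^{\vee+}]\otimes\BC[\La^{\vee,\pos}]$.

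Finally I would identify the two tensor factors geometrically. The subalgebra $\BC[\La^{\vee,\pos}]=\BC[t^{\alpha^\lvee_i},\,i\in I]$ is exactly the image of $\BC[T^+_{\on{ad}}]$ under the monoid homomorphism $\Upsilon\colon\on{Vin}_G\to T^+_{\on{ad}}$, while the complementary factor $\BC[\La^{\vee+}]$ is, by the discussion preceding this lemma, the coordinate ring $\BC[\ol T]$; concretely this factor is recovered by restricting $b_{\mu^\lvee}t^{\la^\lvee}$ to the zero fiber $(\on{Vin}_G)_0=\Upsilon^{-1}(0)$, which kills all terms with $\beta\neq0$ and matches the Peter--Weyl computation of Lemma~\ref{fiber for Vin}. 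This produces the asserted isomorphism $\on{Vin}_G/\!\!/(N\times N_-)\iso\ol T\times T^+_{\on{ad}}$. The one step that demands genuine care, and which I regard as the main obstacle, is the vanishing of the lower Peter--Weyl terms in $b_{\mu^\lvee_1}b_{\mu^\lvee_2}$: it is this extremal-weight argument that upgrades the (easy) identification of the associated graded into an honest isomorphism of algebras, rather than merely a filtered degeneration. Everything else is bookkeeping with the $\La^\vee$-grading and the Peter--Weyl filtration.
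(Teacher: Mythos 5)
Your proof is correct and follows essentially the same route as the paper: take $N\times N_-$-invariants of the Rees algebra degreewise, use Peter--Weyl to see a one-dimensional invariant line in each block, and identify the result with the monoid algebra of $\La^{\vee+}\times\La^{\vee,\pos}=\BC[\ol T\times T^+_{\on{ad}}]$, which is exactly the paper's shift isomorphism $v_{\mu^{\lvee}}\otimes v^{\lvee}_{\mu^{\lvee}}\otimes t^{\la^{\lvee}}\mapsto (v_{\mu^{\lvee}}\otimes v^{\lvee}_{\mu^{\lvee}})\,t^{\la^{\lvee}+\mu^{\lvee}}$. The only difference is that you spell out, via the Cartan-component/extremal-weight argument, the multiplicativity of the extremal matrix coefficients, a fact the paper absorbs without proof into the assertion $\BC[G]^{N\times N_-}\simeq\BC[\ol T]$ as algebras.
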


\begin{proof} Note that \begin{equation*}\BC[\on{Vin}_{G}/\!\!/(N\times N_{-})]:=\BC[\on{Vin}_{G}]^{N\times N_{-}}=\bigoplus_{\lambda^{\!\scriptscriptstyle\vee} \in \Lambda^{\vee}}
  \BC[G]_{\leq \lambda^{\!\scriptscriptstyle\vee}}^{N \times N_{-}}t^{\lambda^{\!\scriptscriptstyle\vee}}.\end{equation*}
The algebra $\BC[G]^{N\times N_{-}}$ is isomorphic to
$\bigoplus_{\lambda^{\!\scriptscriptstyle\vee} \in \Lambda^{\vee+}}
\BC\cdot (v_{\la^{\lvee}} \otimes v_{\la^{\lvee}}^{\lvee})
\simeq \BC[\ol{T}]$, where $v_{\la^{\lvee}}\in V^{\lambda^{\lvee}}$ (resp.\
$v_{\la^{\lvee}}^{\lvee}\in(V^{\lambda^{\lvee}})^*$) is the highest (resp.\ lowest) vector.
Thus we have \begin{equation*}\BC[G]_{\leq \lambda^{\!\scriptscriptstyle\vee}}^{N\times N_{-}}=
  \bigoplus_{\mu^{\lvee} \leq \lambda^{\!\scriptscriptstyle\vee} \in \Lambda^{\vee}}
\BC\cdot (v_{\mu^{\lvee}} \otimes v_{\mu^{\lvee}}^{\lvee}).\end{equation*}
Now the isomorphism \begin{equation*}
\BC[\ol{T}
    \times T^+_{\on{ad}}] \simeq
    \bigoplus_{\mu^{\!\scriptscriptstyle\vee} \in \Lambda^{\vee+},~\la^{\lvee} \in \Lambda^{\vee,\pos}} \BC
  \cdot (v_{\mu^{\lvee}} \otimes v_{\mu^{\lvee}}^{\lvee} \otimes t^{\la^{\lvee}}) \iso
  \bigoplus_{\lambda^{\!\scriptscriptstyle\vee} \in \Lambda^{\vee}}
  \BC[G]_{\leq \lambda^{\!\scriptscriptstyle\vee}}^{N\times N_{-}}t^{\lambda^{\!\scriptscriptstyle\vee}}=
\BC[\on{Vin}_{G}/\!\!/(N\times N_{-})]
  \end{equation*}
is given by
$
v_{\mu^{\lvee}} \otimes
v_{\mu^{\lvee}}^{\lvee} \otimes t^{\la^{\lvee}} \mapsto
(v_{\mu^{\lvee}} \otimes v_{\mu^{\lvee}}^{\lvee})\cdot t^{\la^{\lvee}+\mu^{\lvee}}.
$

\end{proof}

\sssec{The example $G=\on{SL}_2$}
For $G=\on{SL}_2$ the semigroup $\on{Vin}_G$ is equal to the semigroup of $2\times 2$ matrices $\on{Mat}_{2\times 2}$. The $\on{SL}_2 \times \on{SL}_2$-action is given by the left and right multiplication, the action of $T\simeq \BC^{\times}$ is given by the scalar multiplication. The morphism $\Upsilon$ is equal to the determinant map:
\begin{equation*} \Upsilon\colon \on{Vin}_G=\on{Mat}_{2\times 2} \xrightarrow{\on{det}} \BA^1=T^+_{\on{ad}}.\end{equation*}
It follows that the preimage $\Upsilon^{-1}(T_{\on{ad}})$ is equal to $\on{GL}_2$.
The nondegenerate locus $_0\!\!\on{Vin}_G$ is equal to $\on{Mat}_{2\times 2} \setminus \{0\}$. The section $\mathfrak{s}$ of \S\ref{section} is
given by $a \mapsto (\begin{smallmatrix} 1&0 \\ 0& a \end{smallmatrix})$.

\sssec{Tannakian definition of the Vinberg semigroup} \label{Tannakian def of Vin}
Recall the open dense embedding $j\colon G_{\on{enh}} \hookrightarrow \on{Vin}_G$
of algebraic monoids~(\S\ref{Morphism v for Vin}). Its image coincides with the group of units in $\on{Vin}_G$. The group $G_{\on{enh}}$ is reductive, and
$\on{Vin}_G$ is irreducible and affine. The tensor category $\on{Rep}(\on{Vin}_G)$ of finite-dimensional representations of $\on{Vin}_G$ is the full tensor subcategory in the category $\on{Rep}(G_{\on{enh}})$.
Let us describe this subcategory $\on{Rep}(\on{Vin}_G) \subset \on{Rep}(G_{\on{enh}})$.

To do so, we first introduce the following notation. Any representation $V$ of $G_{\on{enh}}$
admits a canonical decomposition as $G_{\on{enh}}$-representations
$V=\bigoplus\limits_{\lambda^{\!\scriptscriptstyle\vee} \in \La^\vee} V_{\lambda^{\!\scriptscriptstyle\vee}}$
according to the action of the center $Z_{G_{\on{enh}}}=(Z_G \times T)/Z_G \simeq T$:
the center $Z_{G_{\on{enh}}}$ acts on the summand $V_{\lambda^{\!\scriptscriptstyle\vee}}$
via the character $\lambda^{\!\scriptscriptstyle\vee}$. Each summand
$V_{\lambda^{\!\scriptscriptstyle\vee}}$ in this decomposition is a $G$-representation via the
inclusion $G \hookrightarrow G_{\on{enh}},~g\mapsto (g,1)$.

The subcategory $\on{Rep}(\on{Vin}_G) \subset \on{Rep}(G_{\on{enh}})$ consists of
$V \in \on{Rep}(G_{\on{enh}})$ such that for each
$\lambda^{\!\scriptscriptstyle\vee} \in \La^\vee$ the weights of the summand
$V_{\lambda^{\!\scriptscriptstyle\vee}}$ considered as a $G$-representation, are all
$\leq \lambda^{\!\scriptscriptstyle\vee}$.

According to the Tannakian formalism, the functor of points
\begin{equation*}\on{Vin}_G\colon{\bf{Sch}} \ra {\bf{Set}},~S \mapsto \on{Vin}_G(S)\end{equation*}
can be described as follows. The set $\on{Vin}_G(S)$ consists of the following data:

1) for every $\la^{\!\scriptscriptstyle\vee} \in \La^{\vee+}$, a morphism
$g_{\la^{\!\scriptscriptstyle\vee}}\colon S \ra \on{End}(V^{\la^{\!\scriptscriptstyle\vee}})$,

2) for every $\mu^{\!\scriptscriptstyle\vee} \in \Lambda^{\vee,\pos}$, a regular function
$\tau_{\mu^{\!\scriptscriptstyle\vee}}$ on $S$,\\
such that the following conditions hold.

a) For any $\mu_1^{\lvee}, \mu_2^{\lvee} \in \Lambda^{\vee,\pos}$ we have $\tau_{\mu_1^{\lvee}} \cdot \tau_{\mu_2^{\lvee}}=\tau_{\mu_1^{\lvee}+\mu_2^{\lvee}}.$

b) For any $\la^{\!\scriptscriptstyle\vee}_{1}, \la^{\!\scriptscriptstyle\vee}_{2},
\nu^{\!\scriptscriptstyle\vee} \in\Lambda^{\vee+}$ such that $V^{\nu^{\!\scriptscriptstyle\vee}}$
enters $V^{\la^{\!\scriptscriptstyle\vee}_{1}} \otimes V^{\la^{\!\scriptscriptstyle\vee}_{2}}$ with nonzero
multiplicity, we denote by $\iota\colon  W^{\nu^{\!\scriptscriptstyle\vee}} \hookrightarrow
V^{\la^{\!\scriptscriptstyle\vee}_{1}} \otimes V^{\la^{\!\scriptscriptstyle\vee}_{2}}$ the embedding of
the corresponding isotypical component, and by $\on{pr}\colon  V^{\la^{\!\scriptscriptstyle\vee}_{1}}
\otimes V^{\la^{\!\scriptscriptstyle\vee}_{2}} \twoheadrightarrow W^{\nu^{\!\scriptscriptstyle\vee}}$ the corresponding
projection. Then we have \begin{equation*}\on{pr} \circ (g_{\la_{1}^{\!\scriptscriptstyle\vee}}
  \otimes g_{\la_{2}^{\!\scriptscriptstyle\vee}}) \circ \iota=
  \tau_{\la^{\!\scriptscriptstyle\vee}_{1}+\la^{\!\scriptscriptstyle\vee}_{2}-\nu^{\!\scriptscriptstyle\vee}}
  \cdot g_{\nu^{\!\scriptscriptstyle\vee}},\end{equation*}

c) $g_0$ sends $S$ to $\on{Id} \in \on{End}(\BC)$, $\tau_0$ sends $S$ to $1 \in \BC$.

\sssec{The morphism $\Upsilon$ and the section $\mathfrak{s}$ via
Tannakian approach} \label{Upsilon and s via Tannak}
The functor of points
\begin{equation*}
T^+_{\on{ad}}\colon {\bf{Sch}} \ra {\bf{Set}},~ S \mapsto T^+_{\on{ad}}(S),
\end{equation*}
can be described as follows. The set $T^+_{\on{ad}}(S)$
consists of the following data:

For every $\mu^{\lvee} \in \Lambda^{\vee,\pos}$, a regular function $\tau_{\mu^{\lvee}}$ on $S$,
such that for any $\mu^{\lvee}_1, \mu^{\lvee}_2 \in \Lambda^{\vee,\pos}$ we have
$
\tau_{\mu^{\lvee}_1}\cdot\tau_{\mu^{\lvee}_2}=
\tau_{\mu^{\lvee}_1+\mu^{\lvee}_2}.
$
and $\tau_0$ sends $S$ to $1 \in \BC$.

The morphism $\Upsilon\colon \on{Vin}_G \ra T^+_{\on{ad}}$ of \S\ref{Morphism v for Vin}
forgets the data of $g_{\la^{\lvee}}.$ The morphism $\mathfrak{s}$ of
\S\ref{section} sends an $S$-point $(\tau_{\mu^{\lvee}}) \in T^+_{\on{ad}}(S)$ to an $S$-point
$(g_{\la^{\lvee}},\tau_{\mu^{\lvee}}) \in \on{Vin}_G(S)$
where $g_{\la^{\lvee}}\colon S \ra \on{End}(V^{\la^{\lvee}})$
acts on the weight component
$(V^{\la^{\lvee}})_{\la^{\lvee}-\mu^{\lvee}}$ via the multiplication by
$\tau_{\mu^{\lvee}}$.

\sssec{Nondegenerate locus of the Vinberg semigroup via Tannakian aproach~\cite[\S D.4]{dg1}}
\label{nondegenerate in Tannak}
Recall the nondegenerate locus
$
_0\!\!\on{Vin}_G \subset \on{Vin}_G
$
of \S\ref{nondeg locus}.

In Tannakian terms it corresponds to $(g_{\la^{\!\scriptscriptstyle\vee}},
\tau_{\mu^{\lvee}}) \in \on{Vin}_G(S)$
such that $g_{\la^{\!\scriptscriptstyle\vee}}\ne0$ for any $\la^{\lvee} \in \La^{\vee+}$.

\begin{rem}{\em
The scheme $_0\!\!\on{Vin}_G/T$ is nothing but the
De Concini - Procesi wonderful compactification of
$G_{\on{ad}}:=G/Z_G$~\cite[8.6]{v2}.}
\end{rem}

\sssec{The principal degeneration} Let us denote by
$\on{Vin}_G^{\on{princ}}$ the restriction of the multi-parameter family
$\Upsilon\colon \on{Vin}_G \ra T_{\on{ad}}^{+}$ of \S\ref{Morphism v for Vin} to the~``principal'' line
\begin{equation*}\BA^1 \hookrightarrow T_{\on{ad}}^+,\ a \mapsto
(a, \dots, a)~\on{in~coordinates}~\al^{\lvee}_i,\ i \in I.
\end{equation*}
Let us denote the corresponding morphism $\on{Vin}^{\on{princ}}_G \ra \BA^1$ by
$\Upsilon^{\on{princ}}$. The morphism $\Upsilon^{\on{princ}}$ is $\BC^{\times}$-equivariant with
respect to the ``diagonal'' $\BC^{\times}$-action via \begin{equation*}\BC^{\times} \hookrightarrow T,~c \mapsto 2\rho(c).\end{equation*}
It follows from \S\ref{Morphism v for Vin} that the preimage $(\Upsilon^{\on{princ}})^{-1}(\BG_m)$
is isomorphic to $G \times \BG_m$.

Let us denote by $_0\!\!\on{Vin}^{\on{princ}}_G$ the intersection \begin{equation*}_0\!\!\on{Vin}^{\on{princ}}_G:=~_0\!\!\on{Vin}_G \cap \on{Vin}^{\on{princ}}_G.\end{equation*}

\subsection{Drinfeld-Lafforgue-Vinberg degeneration of $\on{Bun}_G(X)$~\cite[\S2.2]{s2}}
\label{VinBun}
Let $X$ be a smooth projective curve over $\BC$. Let us now recall the definition of
Drinfeld-Lafforgue-Vinberg multi-parameter degeneration $\on{VinBun}_G(X)$ of the moduli stack
$\on{Bun}_G(X)$. Since the curve $X$ is fixed throughout the paper, we will use the simplified
notations $\on{VinBun}_G, \on{Bun}_G$ instead of $\on{VinBun}_G(X), \on{Bun}_G(X)$.

Recall that for a stack $\CY$, one defines a stack
$ \on{Maps}(X,\CY) $
parametrizing morphisms from the curve $X$ to $\CY$ as
$\on{Maps}(X,\CY)(S):=\CY(X\times S). $

Thus for example we have $\on{Bun}_G=\on{Maps}(X,\on{pt}\!/G)$. Similarly, given an open substack $_0\CY \subset \CY$, a stack
$\on{Maps}_{\on{gen}}(X,\CY \supset~_0\CY)$ associates to a scheme $S$ the following data:
morphisms $f\colon S\times X \ra \CY$ such that for every geometric point $\bar{s} \ra S$ there exists an open dense subset $U \subset \bar{s} \times X$ on which the restricted morphism $f|_{U}\colon U \ra \CY$ factors through the open substack $_0\CY \subset \CY$.

\begin{defn}
  \label{vinbun}
  \cite[\S2.2.2]{s2}. Consider an algebraic stack
  $\on{Vin}_G/(G\times G)$ and an open substack
$_0\!\!\on{Vin}_G/(G\times G) \subset \on{Vin}_G/(G\times G)$.
We define \begin{equation*}
\on{VinBun}_G:= \on{Maps}_{\on{gen}}(X,\on{Vin}_G/(G\times G) \supset~_0\!\!\on{Vin}_G/(G\times G)).
\end{equation*}
We denote by
$\wp_{1,2}\colon \on{VinBun}_G \ra \on{Bun}_G=\on{Maps}(X,\pt/G)$ the two natural projections
arising from $\on{Vin}_G/(G\times G)\to\pt/(G\times G)$.
\end{defn}

Note that the action of $T\curvearrowright \on{Vin}_G$ of \S\ref{Morphism v for Vin} commutes with the $G\times G$-action on $\on{Vin}_G$. It follows that the $T$-action on $\on{Vin}_G$ induces the $T$-action
$
T \curvearrowright \on{VinBun}_G.
$
\sssec{The degeneration morphism $\on{VinBun}_G \ra T^+_{\on{ad}}$}
\label{degeneration mor for VinBun}
The morphism $\Upsilon\colon \on{Vin}_G \ra T^+_{\on{ad}}$
is $G\times G$-equivariant, so we obtain the morphism
\begin{equation*} \Upsilon_{/(G\times G)}\colon\on{Vin}_G/(G\times G) \ra T^+_{\on{ad}}/(G\times G)=T^+_{\on{ad}} \times (\on{pt}/(G\times G)).
\end{equation*}
Let $\on{pr}_1\colon T^+_{\on{ad}} \times (\on{pt}/(G\times G)) \ra T^{+}_{\on{ad}}$ denote the projection morphism to the first factor. The curve $X$ is proper while the variety $T^+_{\on{ad}}$ is affine, so it follows that the morphism

\begin{equation*} T^{+}_{\on{ad}}=\on{Maps}(\on{pt},T^+_{\on{ad}})~\iso~\on{Maps}(X,T^+_{\on{ad}})
\end{equation*}
induced by the morphism $X \ra \on{pt}$ is an isomorphism.

Define
\begin{equation*}
\Upsilon\colon \on{VinBun}_G \ra T^+_{\on{ad}},~\Upsilon(f):= \on{pr}_1 \circ (\Upsilon_{/(G\times G)}) \circ f.
\end{equation*}
The morphism $\Upsilon$ is $T$-equivariant. From \S\ref{Morphism v for Vin} it follows that the preimage $\Upsilon^{-1}(T_{\on{ad}})$ is isomorphic to
$
\on{Bun}_G \times T_{\on{ad}}.
$

\sssec{Defect free locus of $\on{VinBun}_G$}
Set $_{0}\!\!\on{VinBun}_{G}:=
\on{Maps}(X,~_{0}\!\!\on{Vin}_G/(G\times G))$.
The stack $_{0}\!\!\on{VinBun}_{G}$
is a smooth open substack of $\on{VinBun}_G$.
Let us denote by $_{0}\!\Upsilon$ the restriction
of the morphism $\Upsilon$ to $_{0}\!\!\on{VinBun}_{G}$.
Denote
$(_{0}\!\on{VinBun}_{G})_{0}:=~_{0}\!\Upsilon^{-1}(0)$.

\begin{prop} \label{fiber for vinbun} \cite[\S2.4]{s2}.
The stack $(_{0}\!\on{VinBun}_{G})_{0}$ is isomorphic
to the fibre product $\on{Bun}_{B_{-}}\underset{\on{Bun}_T}\times \on{Bun}_{B_{}}$.
\end{prop}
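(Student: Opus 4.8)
The plan is to reduce the statement to a computation of the fibre of the semigroup quotient $_{0}\!\on{Vin}_G/(G\times G)$ over $0\in T^+_{\on{ad}}$, and then to recognize the resulting mapping stack as the asserted fibre product. The whole argument is stack-theoretic, the only inputs being Lemma~\ref{fiber for Vin}, Remark~\ref{fiber for def free Vin}, and the formal fact that $\on{Maps}(X,-)$ preserves fibre products.

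First I would express the zero fibre as a mapping stack. By definition $_{0}\!\on{VinBun}_G=\on{Maps}(X,{}_0\!\on{Vin}_G/(G\times G))$, and $_{0}\!\Upsilon$ is its composite with the projection to $\on{Maps}(X,T^+_{\on{ad}})\simeq T^+_{\on{ad}}$, the isomorphism from~\S\ref{degeneration mor for VinBun} being valid since $X$ is proper and connected while $T^+_{\on{ad}}$ is affine. As $\on{Maps}(X,-)$ preserves fibre products and $\on{Maps}(X,\{0\})=\on{pt}$ maps to $\on{Maps}(X,T^+_{\on{ad}})=T^+_{\on{ad}}$ as the point $0$, and since the fibre of $_{0}\!\on{Vin}_G/(G\times G)\to T^+_{\on{ad}}$ over $0$ is $({}_0\!\on{Vin}_G\cap(\on{Vin}_G)_0)/(G\times G)$, I obtain
\begin{equation*}
(_{0}\!\on{VinBun}_G)_0\simeq\on{Maps}\bigl(X,\,({}_0\!\on{Vin}_G\cap(\on{Vin}_G)_0)/(G\times G)\bigr).
\end{equation*}

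Next I would identify this zero-fibre stack. By Remark~\ref{fiber for def free Vin} one has ${}_0\!\on{Vin}_G\cap(\on{Vin}_G)_0\simeq(G/N\times G/N_-)/T$, where (Lemma~\ref{fiber for Vin}) $G\times G$ acts by left multiplication on the two factors and $T$ acts by simultaneous right multiplication; these two actions commute, so I may form the quotient stack in either order. Quotienting first by $G\times G$ on the left, the two transitive actions, with stabilizers $N$ and $N_-$, give
\begin{equation*}
\bigl[(G/N\times G/N_-)/(G\times G)\bigr]\simeq(\on{pt}/N)\times(\on{pt}/N_-),
\end{equation*}
and the residual right $T$-action is diagonal, acting on each factor through the conjugation homomorphisms $T\to\on{Aut}(N)$ and $T\to\on{Aut}(N_-)$.

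Finally I would compute the remaining diagonal $T$-quotient. Since $B=N\rtimes T$ and $B_-=N_-\rtimes T$, for the separate conjugation actions one has $(\on{pt}/N)/T\simeq\on{pt}/B$ and $(\on{pt}/N_-)/T\simeq\on{pt}/B_-$; taking the \emph{diagonal} $T$ instead forces the two induced $T$-torsors to agree, so that
\begin{equation*}
\bigl[((\on{pt}/N)\times(\on{pt}/N_-))/T\bigr]\simeq(\on{pt}/B)\times_{\on{pt}/T}(\on{pt}/B_-).
\end{equation*}
Unwinding, a point is a $T$-torsor together with its extensions to a $B$-torsor and a $B_-$-torsor, which is precisely the data of the fibre product. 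Applying $\on{Maps}(X,-)$ and again using that it commutes with fibre products yields
\begin{equation*}
(_{0}\!\on{VinBun}_G)_0\simeq\on{Bun}_B\times_{\on{Bun}_T}\on{Bun}_{B_-}=\on{Bun}_{B_-}\times_{\on{Bun}_T}\on{Bun}_B.
\end{equation*}
The main obstacle is the bookkeeping of the three commuting actions ($G\times G$ on the left, the diagonal $T$ on the right, the latter becoming conjugation after the first quotient) and the careful verification of the last stacky identity, namely that passing to the diagonal $T$-quotient imposes a single identification of induced $T$-torsors rather than two independent ones. Everything else (the isomorphism $\on{Maps}(X,T^+_{\on{ad}})\simeq T^+_{\on{ad}}$ and the commutation of $\on{Maps}(X,-)$ with fibre products) is formal.
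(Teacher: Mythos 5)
Your proposal is correct and follows essentially the same route as the paper: the paper's proof consists of citing Remark~\ref{fiber for def free Vin} (i.e.\ the identification $_{0}\!\on{Vin}_G\cap(\on{Vin}_G)_0\simeq(G/N\times G/N_-)/T$ from Lemma~\ref{fiber for Vin}), and your argument simply makes explicit the formal steps left implicit there — that $\on{Maps}(X,-)$ commutes with the fibre over $0\in T^+_{\on{ad}}$, and that $\bigl[(G/N\times G/N_-)/T\bigr]/(G\times G)\simeq(\on{pt}/B)\times_{\on{pt}/T}(\on{pt}/B_-)$. The bookkeeping of the commuting actions and the final stacky identification are carried out correctly.
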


\begin{proof}
Follows from Remark~\ref{fiber for def free Vin}.
\end{proof}

\sssec{The relative compactification $\ol{\on{Bun}}_G$}
Define $\ol{\on{Bun}}_G:=\on{VinBun}_G/T$.

\begin{rem}{\em{
The stack $\ol{\on{Bun}}_G$ contains the stack $\on{Bun}_G\times (\on{pt}\!/Z_G)$ as a smooth open substack.}}
\end{rem}

\begin{prop}[D.~Gaitsgory]
  \label{proper over Bun}
The natural morphism $\kappa\colon\ol{\on{Bun}}_G \ra \on{Bun}_G\times \on{Bun}_G$ is proper.
\end{prop}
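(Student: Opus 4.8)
The plan is to establish properness of $\kappa$ by the valuative criterion, after converting the stacky quotient $\ol{\on{Bun}}_G=\on{VinBun}_G/T$ into a concrete extension problem for Vinberg structures. First note that $\kappa$ is well defined because the projections $\wp_{1,2}$, coming from $\on{Vin}_G/(G\times G)\to\pt/(G\times G)$, are $T$-invariant. Since $X$ is proper, for a fixed pair $(\CF_1,\CF_2)$ of $G$-bundles the spaces $H^0(X,\CHom(V^{\la^{\lvee}}_{\CF_1},V^{\la^{\lvee}}_{\CF_2}))$ are finite-dimensional; by the Tannakian description of \S\ref{Tannakian def of Vin}, an $S$-point of $\on{VinBun}_G$ over $(\CF_1,\CF_2)$ is a compatible system of morphisms $g_{\la^{\lvee}}\colon V^{\la^{\lvee}}_{\CF_1}\to V^{\la^{\lvee}}_{\CF_2}$ together with constants $\tau_{\mu^{\lvee}}$ (constant along $X$ as $X$ is proper and $T^+_{\on{ad}}$ affine), and the multiplicativity condition (b) expresses every $g_{\la^{\lvee}}$ through the fundamental ones. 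Hence $\kappa$ is of finite type and quasi-separated, and it suffices to verify separatedness and the existence part of the valuative criterion over a complete DVR $R$ with fraction field $K$ and uniformizer $\varpi$.

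For separatedness, observe that $T$ is a split torus, so a $T$-torsor over the field $K$ is trivial and a $K$-point of $\ol{\on{Bun}}_G$ lifts to $\on{VinBun}_G$; two $R$-extensions as points of $\ol{\on{Bun}}_G$ then differ by a $T$-valued function, so uniqueness reduces to showing that two extensions of the system $(g_{\la^{\lvee}})$ agreeing over $K$ coincide over $R$ up to the $T$-action. This is immediate: each $V^{\la^{\lvee}}_{\CF_i}$ is torsion free over the reduced scheme $X_R:=X\times\Spec R$, so a morphism between these bundles is determined by its restriction to the dense open $X_K$.

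The existence part is the crux. Starting from a generically nondegenerate system over $X_K$, extend each $g_{\la^{\lvee}}$ to a rational morphism over $X_R$ whose poles are supported on the special fiber $X_s$ and bounded by powers of $\varpi$. The $T$-action—available precisely because we work in the quotient $\on{VinBun}_G/T$—rescales the whole system, multiplying $g_{\la^{\lvee}}$ by $\la^{\lvee}(t)$ and $\tau_{\mu^{\lvee}}$ by $\mu^{\lvee}(t)$; taking $t$ to be a suitable power of $\varpi$ simultaneously clears the common pole and prevents the entire collection from vanishing on $X_s$. This is the same mechanism by which $(\BA^{n+1}\setminus 0)/\BG_m=\BP^n$ is proper while $\BA^{n+1}\setminus 0$ is not. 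The content to be checked—and the \emph{main obstacle}—is that after this normalization the restriction to $X_s$ is again a genuine $\on{Vin}_G/(G\times G)$-structure that is generically nondegenerate, i.e.\ that the scaled limit still satisfies conditions (a)--(c) of \S\ref{Tannakian def of Vin} and lands generically in ${}_0\on{Vin}_G$. Here one uses the multiplicativity (b) to control the zero loci of the $g_{\la^{\lvee}}$ in terms of the $\tau_{\mu^{\lvee}}$, together with the explicit descriptions of the zero fibre of $\Upsilon$ in Lemma~\ref{fiber for Vin} and of $\on{Vin}_G/\!\!/(N\times N_-)$ in Lemma~\ref{Factor by unipotent}, to exclude a totally degenerate limit. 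Once generic nondegeneracy on $X_s$ is secured, the extended datum is the required $R$-point of $\ol{\on{Bun}}_G$, completing the valuative criterion and the proof.
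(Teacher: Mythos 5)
Your strategy is genuinely different from the paper's (you check the valuative criterion for $\on{VinBun}_G/T$ directly, via $T$-rescaling, instead of the paper's reduction to quasi-maps), but it has a real gap exactly at the step you call the main obstacle, and the tools you cite there do not close it. Write $X_R:=X\times\on{Spec}R$, $X_s$ for the special fibre, and let $n_{\la^{\lvee}}$ be the order of pole along $X_s$ of the rational extension of $\varphi_{\la^{\lvee}}$ over $X_R$ (this is well defined: the polar locus is a divisor supported on the irreducible divisor $X_s$, and by reflexivity of the Hom bundles $\varpi^{n_{\la^{\lvee}}}\varphi_{\la^{\lvee}}$ is regular on all of $X_R$ with nonzero restriction to $X_s$). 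Rescaling by $t\in T(K)$ multiplies $\varphi_{\la^{\lvee}}$ by $\la^{\lvee}(t)$, hence shifts $n_{\la^{\lvee}}$ by the \emph{linear} function $-\langle\la^{\lvee},v(t)\rangle$, where $v(t)\in\Lambda$ is the coweight-valued valuation of $t$. Since the Tannakian definition of $\on{VinBun}_G$ demands that \emph{every} $\varphi_{\la^{\lvee}}$ be nonzero on each geometric fibre (not merely that the collection be not identically zero), after rescaling every pole order must equal exactly $0$; so a normalizing $t$ exists if and only if $n_{\la^{\lvee}}=\langle\la^{\lvee},\mu_0\rangle$ for some $\mu_0\in\Lambda$, i.e.\ $\la^{\lvee}\mapsto n_{\la^{\lvee}}$ is additive. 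The Cartan-component case of the multiplicativity relation only gives subadditivity $n_{\la^{\lvee}_1+\la^{\lvee}_2}\le n_{\la^{\lvee}_1}+n_{\la^{\lvee}_2}$, with equality precisely when the Cartan component of the tensor product of the two leading terms $(\varpi^{n_{\la^{\lvee}_i}}\varphi_{\la^{\lvee}_i})|_{X_s}$ does not vanish generically on $X_s$. Proving this non-vanishing \emph{is} the content of the proposition; it is not a matter of "excluding a totally degenerate limit". Lemma~\ref{fiber for Vin} and Lemma~\ref{Factor by unipotent} describe $(\on{Vin}_G)_0$ and $\on{Vin}_G/\!\!/(N\times N_-)$ as schemes and give no control over which limits a generically nondegenerate family can acquire, so the crucial step in your sketch is asserted, not proven.

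There are two ways to fill the gap. (i) Representation theory: at the generic point of $X_s$ the space $\Hom(V^{\la^{\lvee}},V^{\la^{\lvee}})$ is an irreducible $G\times G$-module $V^{\la^{\lvee}}\boxtimes (V^{\la^{\lvee}})^*$, and the map $(A,B)\mapsto\on{pr}\circ(A\otimes B)\circ\iota$ is the Cartan projection for $G\times G$; since $\BC[H/N_H]=\bigoplus_{\nu}V^{\nu}_H$ is an integral domain for any reductive $H$ (multiplication being the Cartan projection), Cartan projections never annihilate pure tensors of nonzero vectors, and applying this with $H=G\times G$ gives exactly the required equality, hence additivity. With this in hand your argument does go through: the element $t=\mu_0(\varpi)$ normalizes all $\varphi_{\la^{\lvee}}$ simultaneously, the relation $\on{pr}\circ(\varphi_{\la^{\lvee}_1}\otimes\varphi_{\la^{\lvee}_2})\circ\iota=\tau_{\la^{\lvee}_1+\la^{\lvee}_2-\nu^{\lvee}}\otimes\varphi_{\nu^{\lvee}}$ bounds the valuations of the rescaled $\tau$'s so they land in $R$, and the remaining conditions are closed. (ii) Alternatively, the geometric input the paper uses: properness of ${}_0\!\!\on{Vin}_G/T$, the wonderful compactification (\S\ref{nondegenerate in Tannak}); the paper feeds it through the finiteness Lemma~\ref{from Vin to End} and the general Lemma~\ref{lemma about finite mor} (whose proof contains precisely the valuative-criterion-plus-Hartogs argument you are implicitly relying on, with affineness of $\on{Vin}_G$ giving the extension across codimension two), and concludes by reducing to properness of quasi-maps into $\prod_{j}\BP\on{End}(V^{\la^{\lvee}_j,\la^{\lvee}_j})$. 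Either ingredient must be named and used; the bare $\BP^n$ analogy cannot produce the limit, because what makes it work there is that the valuations of the coordinates of a point are automatically "linear", whereas here that linearity is the theorem. One last minor point: your separatedness argument proves uniqueness of an extension inside $\on{VinBun}_G$, but what is needed is that an identification $t_K\in T(K)$ of two $R$-extensions over $K$ lies in $T(R)$; this follows from the same pole-order bookkeeping, since $v(\la^{\lvee}(t_K))=0$ for all dominant $\la^{\lvee}$ forces $t_K\in T(R)$.
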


\begin{proof} Will be given in Appendix~\ref{proof proper over Bun}.
\end{proof}

\sssec{Tannakian approach to the definition of $\on{VinBun}_G$}
 Let us explain how to deduce the Tannakian description of $\on{VinBun}_G$ from the Tannakian description of $\on{Vin}_G$ of \S\ref{Tannakian def of Vin}.
An $S$-point of $\on{VinBun}_G$ is a morphism $f$ from  $S\times X$ to $\on{Vin}_G/(G\times G)$. Such a morphism is the same as a pair of $G$-torsors $\mathcal{F}_{1},\mathcal{F}_{2}$ on $S\times X$ and a $G\times G$-equivariant morphism from $\mathcal{F}_1\times \mathcal{F}_2$ to $\on{Vin}_G$.
It follows from \S\ref{Tannakian def of Vin} that this morphism is the same as the family of $G\times G$-equivariant morphisms $g_{\lambda^{\!\scriptscriptstyle\vee}}$ ($\lambda^{\!\scriptscriptstyle\vee} \in \Lambda^{\vee+}$) from $ \mathcal{F}_1\times \mathcal{F}_2$ to $\on{End}(V^{\lambda^{\!\scriptscriptstyle\vee}})$ and a $G\times G$-equivariant regular functions $\tau_{\mu^{\!\scriptscriptstyle\vee}},\
\mu^{\!\scriptscriptstyle\vee} \in \La^{\vee,\pos}$, on $\mathcal{F}_1\times \mathcal{F}_2$, satisfying certain conditions.
Note that a $G\times G$-equivariant morphism from $\mathcal{F}_1\times \mathcal{F}_2$ to $\on{End}(V^{\lambda^{\!\scriptscriptstyle\vee}})$ is the same as the morphism from $\mathcal{V}^{\lambda^{\!\scriptscriptstyle\vee}}_{\mathcal{F}_1}$ to $\mathcal{V}^{\lambda^{\!\scriptscriptstyle\vee}}_{\mathcal{F}_2}$ and  a $G\times G$-equivariant regular function on $\mathcal{F}_1\times \mathcal{F}_2$ is the same as the regular function on $S\times X$.

It follows that
the stack $\on{VinBun}_G\colon {\bf{Sch}} \ra {\bf{Gpd}}$
can be described in the following way. It assigns to a scheme $S$ the following data:

1) two right $G$-torsors $\CF_1, \CF_2$ on $X$,

2) for every $\la^{\lvee} \in {\La^{\vee+}}$ a morphism $\varphi_{\la^{\!\scriptscriptstyle\vee}}\colon  \CV^{\la^{\!\scriptscriptstyle\vee}}_{\CF_1} \ra \CV^{\la^{\!\scriptscriptstyle\vee}}_{\CF_2}$,

3) for every $\mu^{\!\scriptscriptstyle\vee} \in \Lambda^{\vee,\pos}$ a morphism $\tau_{\mu^{\!\scriptscriptstyle\vee}}\colon  \CO_{S \times X} \ra \CO_{S \times X}$,\\
such that the following conditions hold.

a) For any $\mu^{\!\scriptscriptstyle\vee}_{1}, \mu^{\!\scriptscriptstyle\vee}_{2} \in \Lambda^{\vee,\pos}$ we have $\tau_{\mu^{\!\scriptscriptstyle\vee}_{1}}\otimes \tau_{\mu^{\!\scriptscriptstyle\vee}_{2}}=\tau_{\mu^{\!\scriptscriptstyle\vee}_{1}+\mu^{\!\scriptscriptstyle\vee}_{2}}.$

b) For every geometric point $\ol{s}\ra S$ and any dominant character $\la^{\lvee} \in \La^\vee$
the morphism $\varphi_{\la^{\lvee}}|_{\ol{s}\times X}\colon
{\CV^{\la^{\!\scriptscriptstyle\vee}}_{\CF_1}}|_{\ol{s}\times X} \ra
{\CV^{\la^{\!\scriptscriptstyle\vee}}_{\CF_2}}|_{\ol{s}\times X}$ is nonzero.

c) For any $\la^{\!\scriptscriptstyle\vee}_{1}, \la^{\!\scriptscriptstyle\vee}_{2},
\nu^{\!\scriptscriptstyle\vee} \in\Lambda^{\vee+}$ such that $V^{\nu^{\!\scriptscriptstyle\vee}}$
enters $V^{\la^{\!\scriptscriptstyle\vee}_{1}} \otimes V^{\la^{\!\scriptscriptstyle\vee}_{2}}$ with nonzero
multiplicity, we denote by $\iota\colon  W^{\nu^{\!\scriptscriptstyle\vee}} \hookrightarrow
V^{\la^{\!\scriptscriptstyle\vee}_{1}} \otimes V^{\la^{\!\scriptscriptstyle\vee}_{2}}$ the embedding of
the corresponding isotypical component and by $\on{pr}\colon  V^{\la^{\!\scriptscriptstyle\vee}_{1}}
\otimes V^{\la^{\!\scriptscriptstyle\vee}_{2}} \twoheadrightarrow W^{\nu^{\!\scriptscriptstyle\vee}}$ the corresponding
projection. We denote by $\iota^{\CF_1}, \on{pr}^{\CF_2}$ the corresponding morphisms between
the induced vector bundles. Then we have \begin{equation*}\on{pr}^{\CF_2} \circ
  (\varphi_{\la^{\!\scriptscriptstyle\vee}_{1}} \otimes \varphi_{\la^{\!\scriptscriptstyle\vee}_{2}})
  \circ \iota^{\CF_1}=
  (\tau_{\la^{\!\scriptscriptstyle\vee}_{1}+\la^{\!\scriptscriptstyle\vee}_{2}-\nu^{\!\scriptscriptstyle\vee}})
  \otimes \varphi_{\nu^{\!\scriptscriptstyle\vee}}.\end{equation*}

d) The morphism $\varphi_0$ coincides with the identity morphism, and
the morphism $\tau_0$ coincides with the identity morphism.

\medskip

Condition b) is a consequence of the fact that the morphism $f$ generically lands into ${}_{0}\!\!\on{Vin}_G/(G\times G)$. Conditions a),c),d) follow from the conditions a),b),c) of \S\ref{Tannakian def of Vin}, respectively.

In Tannakian terms, the morphism $\Upsilon\colon \on{VinBun}_G \ra T^+_{\on{ad}}$ sends an $S$-point
$(\CF_1,\CF_2,\varphi_{\la^{\lvee}},\tau_{\mu^{\lvee}}) \in \on{VinBun}_G(S)$
to the point $(\tau_{\mu^{\lvee}}) \in T^{+}_{\on{ad}}(S)$.

\sssec{Description of $\on{VinBun}_G$ for $G=SL_2$}
Following~\cite[Section~2.1.1]{s1} we can describe the stack $\on{VinBun}_{SL_2}$ in the following way. It assigns to a scheme $S$ a pair of vector bundles $\CF_1, \CF_2$ of rank $2$ on $X\times S$ together with trivializations of their determinant line bundles $\on{det}\CF_1, \on{det}\CF_2$ and a morphism of coherent sheaves $\varphi\colon \CF_1 \ra \CF_2$ satisfying the following condition: For each geometric point $\ol{s} \ra S$ the morphism $\varphi|_{X\times\ol{s}}$ is nonzero. In these terms the morphism $\Upsilon\colon\on{VinBun}_{SL_2} \ra T^+_{\on{ad}}=\BA^1$ sends an $S$-point to the determinant $\on{det}\varphi \in \Gamma(\CO_{X\times S})=\BA^1(S)$.

\subsection{Local model for $\on{Bun}_B$} We now recall the construction of a certain
           {\em local model} for $\on{Bun}_B$ from \cite{bfgm}. This local model is called
           {\em open zastava}. We need some notations first.

           \sssec{The open Bruhat locus of $G$} Let us define {\em{the open Bruhat locus}}
           $G^{\on{Bruhat}} \subset G$
as the $B\times N_-$-orbit of $1 \in G$.

\begin{defn} The open zastava is defined as
\begin{equation*} \oZ:=\on{Maps}_{\on{gen}}(X, G/(B\times N_{-}) \supset G^{\on{Bruhat}}/(B\times N_{-})=\on{pt}).\end{equation*}
\end{defn}

\sssec{Connected components of the open zastava}
\label{Tannak for divisor}
For any positive integer $n \in \BN$ we denote the $n$-th symmetric power of the curve $X$ by
$X^{(n)}$.
Let $\theta\in\Lambda^\pos$. A point $D\in X^\theta$ is a collection of effective divisors
$D_{\la^{\lvee}} \in X^{(\langle \la^{\lvee}, \theta\rangle)}$ for $\la \in \La^{\vee+}$ such that for every
$\la^{\lvee}_1, \la^{\lvee}_2 \in \La^{\vee+}$ we have
$D_{\la^{\lvee}_1}+D_{\la^{\lvee}_2}=D_{\la^{\lvee}_1+\la^{\lvee}_2}$.
Since the derived subgroup of $G$ is assumed to be simply connected, for
$\theta=\sum\limits_{i\in I} n_i\al_i\in \Lambda^\pos$ we have
$X^\theta=\prod\limits_{i\in I}X^{(n_i)}.$

Let us describe the connected components of the scheme $\oZ$. Note that the natural morphism
$G/(N\times N_{-}) \ra G/\!\!/(N\times N_{-})\simeq\ol{T}$
induces the morphism $\pi\colon \oZ \ra \on{Maps}_{\on{gen}}(X,\ol{T}/T \supset \on{pt})$.
The scheme $\on{Maps}_{\on{gen}}(X,\ol{T}/T \supset T/T=\on{pt})$ is isomorphic to
$\coprod\limits_{\theta \in \La^\pos}X^{\theta}.$
For $\theta \in \Lambda^\pos$, set $\oZ^\theta:=\pi^{-1}(X^{\theta}).$

\begin{prop} \cite[Proposition~2.25]{bfg}. The decomposition
  $\oZ=\coprod\limits_{\theta \in \Lambda^\pos}\oZ^{\theta}$
coincides with the decomposition of $\oZ$ into connected components.
\end{prop}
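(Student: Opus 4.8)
The plan is to deduce the statement from the observation that $\oZ=\coprod_\theta\oZ^\theta$ is already a decomposition into open and closed subschemes, and then to prove that each piece $\oZ^\theta$ is connected by showing that it is in fact irreducible. Indeed, the morphism $\pi\colon\oZ\to\coprod_{\theta\in\Lambda^\pos}X^\theta$ is continuous, and since $X$ is an irreducible curve each $X^\theta=\prod_{i\in I}X^{(n_i)}$ is irreducible; hence the $X^\theta$ are precisely the connected components of the target, and every $\oZ^\theta=\pi^{-1}(X^\theta)$ is open and closed in $\oZ$. Thus the connected components of $\oZ$ refine the given decomposition, with equality if and only if each $\oZ^\theta$ is connected. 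From now on I fix $\theta$ and write $\pi_\theta\colon\oZ^\theta\to X^\theta$ for the restriction of $\pi$, which I identify with the factorizable family of~\S\ref{setup}, whose fiber over $\theta\cdot x$ is $\Sch_\theta$.

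First I would treat the generic locus. Let $\overset{\circ}{X}{}^\theta\subset X^\theta$ be the open dense subset of configurations whose underlying uncolored divisor on $X$ is reduced, i.e.\ consisting of $\langle\rho^{\!\scriptscriptstyle\vee},\theta\rangle$ distinct points each decorated by a single simple coroot. Over $\overset{\circ}{X}{}^\theta$ the factorization property of $\pi_\theta$ expresses $\pi_\theta^{-1}(\overset{\circ}{X}{}^\theta)$, after the finest disjoint splitting $\theta=\sum_k\alpha_{i_k}$ into $\langle\rho^{\!\scriptscriptstyle\vee},\theta\rangle$ simple coroots, as a fibration with fibers $\prod_k\Sch_{\alpha_{i_k}}$. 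For a simple coroot one has $\Sch_{\alpha_i}\cong\BA^1$, so $\oZ^{\alpha_i}$ is an $\BA^1$-bundle over $X^{\alpha_i}=X$ and hence irreducible; iterating, $\pi_\theta^{-1}(\overset{\circ}{X}{}^\theta)$ is an $\BA^{\langle\rho^{\!\scriptscriptstyle\vee},\theta\rangle}$-bundle over the irreducible base $\overset{\circ}{X}{}^\theta$, and is therefore irreducible of dimension $2\langle\rho^{\!\scriptscriptstyle\vee},\theta\rangle$.

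It then remains to show that $\pi_\theta^{-1}(\overset{\circ}{X}{}^\theta)$ is dense in $\oZ^\theta$, for then $\oZ^\theta$ is the closure of an irreducible set, hence irreducible and a fortiori connected. Here I would use that every fiber of $\pi_\theta$ is equidimensional of dimension $\langle\rho^{\!\scriptscriptstyle\vee},\theta\rangle$: by factorization a fiber over an arbitrary divisor $\sum_a\theta_a\cdot x_a$ with distinct $x_a$ is $\prod_a\Sch_{\theta_a}$, of dimension $\sum_a\langle\rho^{\!\scriptscriptstyle\vee},\theta_a\rangle=\langle\rho^{\!\scriptscriptstyle\vee},\theta\rangle$ by the equidimensionality of the $\Sch_{\theta_a}$ recorded in the footnote to the Definition. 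Since $\dim(X^\theta\smallsetminus\overset{\circ}{X}{}^\theta)\leq\langle\rho^{\!\scriptscriptstyle\vee},\theta\rangle-1$, the preimage $\pi_\theta^{-1}(X^\theta\smallsetminus\overset{\circ}{X}{}^\theta)$ has dimension at most $2\langle\rho^{\!\scriptscriptstyle\vee},\theta\rangle-1$. As the open zastava $\oZ^\theta$ is smooth~\cite{bfgm}, and in particular pure of dimension $2\langle\rho^{\!\scriptscriptstyle\vee},\theta\rangle$, no irreducible component can be contained in this strictly lower-dimensional locus; hence every component meets $\pi_\theta^{-1}(\overset{\circ}{X}{}^\theta)$, which being irreducible forces $\oZ^\theta$ itself to be irreducible.

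The main obstacle is precisely this last density/purity step: ruling out spurious irreducible components of $\oZ^\theta$ concentrated over the collision locus $X^\theta\smallsetminus\overset{\circ}{X}{}^\theta$. Everything hinges on the equidimensionality of the intersections $\Sch_\theta$ (equivalently, on knowing $\pi_\theta$ is flat, or that $\oZ^\theta$ is pure-dimensional), which is the subtle input flagged in the footnote and established through the factorizable family in~\cite{bfgm}. Alternatively, granting that each fiber $\Sch_\theta$ is connected, one could argue directly that $\pi_\theta$ is an open surjection onto the connected base $X^\theta$ with connected fibers, so that its total space is connected; but proving connectedness of $\Sch_\theta$ is no easier than the purity statement and again rests on the fine geometry of these intersections.
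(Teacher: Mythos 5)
The paper itself contains no proof of this proposition --- it is quoted with a citation to \cite[Proposition~2.25]{bfg} --- so your proposal can only be judged on its own terms, and its outline (reduce to irreducibility of each $\oZ^\theta$; get irreducibility over the multiplicity-free locus from factorization; kill components over the collision locus by a dimension count) is indeed the standard route. The problem is that the step carrying all the weight is a non sequitur: ``$\oZ^\theta$ is smooth, and in particular pure of dimension $2\langle\rho^{\!\scriptscriptstyle\vee},\theta\rangle$'' is false reasoning, since a smooth scheme may perfectly well have connected components of different dimensions. Worse, in your setup purity is not a technicality but is \emph{equivalent} to the statement being proved. Indeed, smoothness makes the irreducible components of $\oZ^\theta$ coincide with its connected components, hence open and closed; the fibers of $\pi_\theta$ restricted to such a component $Z$ are then open and closed in the equidimensional fibers of $\pi_\theta$, so of pure dimension $\langle\rho^{\!\scriptscriptstyle\vee},\theta\rangle$, and the dimension formula for dominant morphisms gives $\dim Z=\dim\overline{\pi_\theta(Z)}+\langle\rho^{\!\scriptscriptstyle\vee},\theta\rangle$. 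Thus $\dim Z=2\langle\rho^{\!\scriptscriptstyle\vee},\theta\rangle$ if and only if $Z$ dominates $X^\theta$, i.e.\ if and only if $Z$ meets $\pi_\theta^{-1}(\overset{\circ}{X}{}^\theta)$ --- which is exactly the absence of the ``spurious components'' you set out to exclude. Note also that the footnote's input (equidimensionality of the fibers $\Sch_\theta$, from the upper bound of \cite[Proposition~6.4]{bfgm} plus the factorizable family) cannot close this gap: it constrains fibers, not components of the total space, and is perfectly compatible with a component of dimension $\langle\rho^{\!\scriptscriptstyle\vee},\theta\rangle$ sitting inside a single fiber over the deepest diagonal.

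What actually fills the hole is a pointwise statement: at every point of $\oZ^\theta$ the deformation problem is unobstructed and the tangent space has dimension exactly $2\langle\rho^{\!\scriptscriptstyle\vee},\theta\rangle$; smoothness \emph{of constant dimension} then yields purity, and your dimension count finishes the argument. Concretely, for $X=\BP^1$ one identifies $\oZ^\theta$ with based maps of degree $\theta$ to $G/B$, where $f^*T_{G/B}$ is globally generated, so $H^1$ of its twist by $\CO(-x)$ vanishes and Riemann--Roch gives the constant value $\langle2\rho^{\!\scriptscriptstyle\vee},\theta\rangle$; for a general curve one reduces to this case using that zastava is local over $X^\theta$ (off the divisor the $B$- and $N_-$-structures are transverse, i.e.\ give a canonical trivialization, so Beauville--Laszlo localizes the moduli problem to formal disks). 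Equivalently, one may establish flatness of $\pi_\theta$, so that the image of each open-and-closed component is open, hence dense. One of these inputs must be supplied, and neither appears in your text; as written, the proof is circular at its crux --- a circularity you half-acknowledge but then misattribute to the fiber statement of the footnote, which does not suffice. A minor factual slip besides: $\Sch_{\alpha_i}\cong\BC^\times$, not $\BA^1$ (for $SL_2$ the diagonal zastava fiber is $\{bc=-1\}$); this is harmless since only its irreducibility is used.
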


\sssec{Factorization of zastava~\cite[\S2.3]{bfgm}}
Let us fix $\theta_1,\theta_2,\theta \in \Lambda^\pos$ such that
$\theta=\theta_1+\theta_2$.
We have an addition morphism
$\on{add}\colon X^{\theta_1}\times X^{\theta_2} \ra X^{\theta}$.
An open subset
$(X^{\theta_1}\times X^{\theta_2})_{\on{disj}} \subset
X^{\theta_1}\times X^{\theta_2}$
is formed by the pairs $(D_{\theta_1},D_{\theta_2})$
of disjoint divisors.
The {\em{factorization}} is a canonical isomorphism
\begin{equation*}
  \mathfrak{f}_{\theta_1,\theta_2}\colon(X^{\theta_1}\times X^{\theta_2})_{\on{disj}}
  \underset{X^{\theta}}\times\oZ^{\theta}
\iso (X^{\theta_1}\times X^{\theta_2})_{\on{disj}}
\underset{X^{\theta_1}\times X^{\theta_2}}\times(\oZ^{\theta_1}
\times\oZ^{\theta_2}).
\end{equation*}
For a point $x \in X$ and a cocharacter $\theta \in \Lambda^\pos$, let us denote by $\ofZ^\theta$ the fiber $\pi^{-1}(\theta \cdot x)$.
For two points $x,y \in X$, let us denote by
$\theta_1\cdot x +\theta_2\cdot y \in X^{\theta}$ the point
$\on{add}(\theta_1\cdot x, \theta_2 \cdot y)$.
It follows from factorization that for $x\neq y$ the fiber
$\pi^{-1}(\theta_1\cdot x +\theta_2\cdot y)$
can be canonically identified with
$\ofZ^{\theta_1}_x\times\ofZ^{\theta_2}_y$.

\sssec{Tannakian approach to the definition of $Z^{\theta}$} \label{zastava tannak}
For $\la^{\lvee} \in \La^{\vee}$, let $\BC^{\la^{\lvee}}$
be the one-dimensional representation of $T$ via character
$\la^{\lvee}\colon T \ra \BC^{\times}$. Fix a positive cocharacter $\theta \in \Lambda^\pos$.
We recall the Tannakian definition of the functor
$Z^{\theta}\colon {\bf{Sch}} \ra {\bf{Set}},~S \mapsto Z^{\theta}(S)$.
It associates to a scheme $S$

1) a $G$-bundle $\CF$ on $S\times X$,

2) a $T$-bundle $\CT$ on $S \times X$ of degree $-\theta$,

3) for every $\la^{\lvee} \in \La^{\vee+}$, a morphism of coherent sheaves
$\BC^{\la^{\lvee}}_{\CT}\xrightarrow{\eta_{\la^{\!\scriptscriptstyle\vee}}}
\CV^{\la^{\!\scriptscriptstyle\vee}}_{\CF}$
and a surjective morphism of vector bundles
$\CV^{\la^{\!\scriptscriptstyle\vee}}_{\CF} \xrightarrow{\zeta_{\la^{\!\scriptscriptstyle\vee}}} \CO_{S\times X}$
satisfying the following conditions:

a) for every $\la^{\!\scriptscriptstyle\vee} \in \Lambda^{\vee+}$ the composition
$(\zeta_{\la^{\!\scriptscriptstyle\vee}} \circ \eta_{\la^{\!\scriptscriptstyle\vee}})$
is an isomorphism generically,

b) the Pl\"ucker relations hold.

\begin{rem} The open subscheme
      $\oZ^{\theta} \subset Z^{\theta}$ consists of
      $(\CF,\CT,\eta_{\la^{\lvee}},\zeta_{\la^{\lvee}}) \in Z^{\theta}$
    such that $\eta_{\la^{\lvee}}$ are injective morphisms of vector bundles.
\end{rem}

\sssec{Matrix description of open zastava for $G=SL_2$}\label{ex_SL_2_zast}
Let $G=SL_2$, $T=\{\on{diag}(t,t^{-1})\,|\, t \in \BC^\times\} \subset SL_2$ and identify $\BZ\iso\La$ via $n \mapsto (t \mapsto \on{diag}(t^{n},t^{-n}))$. We assume that $X=\BA^1$ (note that zastava spaces are defined even if $X$ is not projective). Fix $n \in \BZ_{\geqslant 0}=\Lambda^\pos$.
The variety $\oZ^n$ can be idenified with the space of matrices ${\bf{M}}=
\begin{pmatrix} A & B \\
C & D
\end{pmatrix} \in \on{Mat}_{2\times 2}[z]
$
such that $A$ is a monic polynomial of degree $n$, while the
degrees of $B$ and $C$ are strictly less than $n$, and $\on{det}{\bf{M}} =1$
(see e.g.~\cite[\S2(xii)]{bfn}).
The morphism $\pi\colon \oZ^n \ra \BA^{(n)}$ in these terms sends matrix ${\bf{M}}$ to the set of
roots of $A$ with multiplicities.

\sssec{}
Let us fix two
cocharacters $\theta_1, \theta_2 \in \La, \theta_1\geq \theta_2$ and a point $x \in X$.
\begin{defn}
$\ol{\mathfrak{Z}}{}^{\theta_1,\theta_2}$ is the moduli space of
the following data: it associates to a scheme $S$

1) a $G$-bundle $\CF$ on $S\times X$,

2) for every $\la^{\lvee} \in \La^{\vee+}$, a
morphism of {\em{sheaves}}
$\CO_{S\times X}(-\langle\la^{\lvee},\theta_1\rangle \cdot (S\times x))
\xrightarrow{\eta_{\la^{\!\scriptscriptstyle\vee}}} \CV^{\la^{\!\scriptscriptstyle\vee}}_{\CF}
$
and a morphism of {\em{sheaves}}
$
  \CV^{\la^{\!\scriptscriptstyle\vee}}_{\CF} \xrightarrow{\zeta_{\la^{\!\scriptscriptstyle\vee}}}
  \CO_{S\times X}(-\langle\la^{\lvee},\theta_2\rangle \cdot (S\times x))$
satisfying the following conditions:

a) for every $\la^{\!\scriptscriptstyle\vee} \in \Lambda^{\vee+}$ the composition $(\zeta_{\la^{\!\scriptscriptstyle\vee}} \circ \eta_{\la^{\!\scriptscriptstyle\vee}})$
coincides with the canonical embedding
$\CO_{S\times X}(-\langle\la^{\lvee},\theta_1\rangle \cdot (S\times x))\hookrightarrow
\CO_{S\times X}(-\langle\la^{\lvee},\theta_2\rangle \cdot (S\times x))$,

b) the Pl\"ucker relations hold.
\end{defn}

\begin{rem} \label{zastava as intersection} {\em Note that the scheme
    $\ol{\mathfrak{Z}}{}^{\theta_1,\theta_2}$
naturally identifies with $\ol{T}_{\theta_2}\cap \ol{S}_{\theta_1}.$
}
\end{rem}

\subsection{Local model for $\on{VinBun}_G$} We now recall the construction of certain {\em{local model}} for $\on{VinBun}_G$ of \cite[\S 6.1.6]{s2}. First we need some notations.

\sssec{The open Bruhat locus of the Vinberg semigroup} Let us define {\em{the open Bruhat locus}} $\on{Vin}^{\on{Bruhat}}_G \subset \on{Vin}_G$
as $B\times N_{-}$-orbit of the image of the section
$
\mathfrak{s}\colon T^+_{\on{ad}} \hookrightarrow \on{Vin}_G.
$

\begin{defn} \cite[\S 6.1.6]{s2} The local model is defined as
\begin{equation*} Y:=\on{Maps}_{\on{gen}}(X, \on{Vin}_G/(B\times N_{-}) \supset \on{Vin}^{\on{Bruhat}}_G/(B\times N_{-})).\end{equation*}
\end{defn}

\sssec{Defect free locus of $Y$}
Set \begin{equation*}
_{0}Y:=\on{Maps}_{\on{gen}}(X,~_{0}\!\!\on{Vin}_{G}/(B\times N_{-}) \supset
\on{Vin}^{\on{Bruhat}}_G/(B\times N_{-})).
\end{equation*}
The scheme $_{0}Y$ is a smooth open subscheme of $Y$.

\sssec{Local models $Y^{\theta}$}
\label{local Y}
Using Lemma~\ref{Factor by unipotent}, we get the natural morphism \begin{equation*}\on{Vin}_G/(N\times N_{-}) \ra \on{Vin}_G/\!\!/(N\times N_{-})\simeq\ol{T}\times T^+_{\on{ad}}\end{equation*} that gives us the morphism
$\on{Vin}_G/(B\times N_{-}) \ra (\ol{T}/T)\times T^+_{\on{ad}}$,
which, in turn,
induces the morphism \begin{equation*}\pi\colon Y \ra \on{Maps}_{\on{gen}}(X,(\ol{T}/T)\times T^{+}_{ad} \supset \on{pt}\times T^+_{\on{ad}})\simeq \on{Maps}_{\on{gen}}(X,\ol{T}/T \supset \on{pt}).\end{equation*}
The ind-scheme $\on{Maps}_{\on{gen}}(X,\ol{T}/T \supset T/T=\on{pt})$ is isomorphic to
$\coprod\limits_{\theta \in \Lambda^\pos}X^{\theta}.$
For $\theta \in \Lambda^\pos$ set $Y^\theta:=\pi^{-1}(X^{\theta})$.

For a positive cocharacter $\theta \in \Lambda^\pos$ set
$
_{0}Y^{\theta}:=~_{0}Y \cap Y^\theta:
$
an open dense smooth subscheme of $Y^{\theta}$.

\sssec{Tannakian approach to the definition of $Y^{\theta}$}
The scheme $Y^{\theta}$ can be described in the following way.
The corresponding functor of points assigns to a scheme $S$
the following data:

1) an $S$-point
$(\CF_1,\CF_2,\varphi_{\la^{\lvee}},\tau_{\mu^{\lvee}})
\in \on{VinBun}_G(S)$ of $\on{VinBun}_G$,

2) a $T$-bundle $\CT$ on $S\times X$ of degree $-\theta$,

3) for every $\la^{\lvee} \in \La^{\vee+}$, morphisms of vector bundles
\begin{equation*}
\eta_{\la^{\lvee}}\colon \BC^{\la^{\lvee}}_{\CT}
\hookrightarrow \CV^{\la^{\lvee}}_{\CF_1},~
\zeta_{\la^{\lvee}}\colon
\CV^{\la^{\lvee}}_{\CF_2} \twoheadrightarrow \CO_{S\times X},
\end{equation*}
satisfying the following conditions:

a) for every $\la^{\lvee} \in \La^{\vee+}$,
the composition
\begin{equation*}
\zeta_{\la^{\lvee}}\circ \eta_{\la^{\lvee}}\colon
\BC^{\la^{\lvee}}_{\CT} \ra \CO_{S\times X}
\end{equation*}
is an isomorphism generically.

b) The Pl\"ucker relations hold.

We have the morphism $\Upsilon^\theta\colon Y{}^\theta \ra T^+_{\on{ad}}$
that forgets the data of $\CF_1, \CF_2, \varphi_{\la^{\lvee}}, \eta_{\la^{\lvee}}, \zeta_{\la^{\lvee}}$.

\sssec{Matrix description of $Y^\theta$ for $G=SL_2$}
Let $G=SL_2, X=\BA^1$. We keep the notations of Section~\ref{ex_SL_2_zast}. Fix $n \in \BZ_{\geqslant 0}=\Lambda^\pos$.   
It can be deduced from~\cite[Section~5.3]{s1} that the variety $Y^n$ is isomorphic to the space of matrices ${\bf{M}}=
\begin{pmatrix} A & B \\
C & D
\end{pmatrix} \in
\on{Mat}_{2\times 2}[z]$
such that $A$ is a monic polynomial of degree $n$, while the
degrees of $B$ and $C$ are strictly less than $n$, and $\on{det}{\bf{M}}\in\BC\subset\BC[z]$.
The morphism $\Upsilon^n$ in these terms sends ${\bf{M}}$ to $\on{det}{\bf{M}}$.
The morphism $\pi$ sends ${\bf{M}}$ to the set of roots of $A$ with multiplicities.

\sssec{Compactified local model $\ol{Y}{}^\theta$} \label{compact Sch loc model}
For a positive cocharacter $\theta \in \Lambda^\pos$,
we define a certain compactification $\ol{Y}{}^\theta$ of
the local model $Y^{\theta}$. It associates to a scheme S

1) an $S$-point
$(\CF_1,\CF_2,\varphi_{\la^{\lvee}},\tau_{\mu^{\lvee}})
\in \on{VinBun}_G(S)$ of $\on{VinBun}_G$,

2) a $T$-bundle $\CT$ on $S\times X$ of degree $-\theta$,

3) for every $\la^{\lvee} \in \La^{\vee+}$, morphisms of coherent sheaves
\begin{equation*}
\eta_{\la^{\lvee}}\colon \BC^{\la^{\lvee}}_{\CT}
\hookrightarrow \CV^{\la^{\lvee}}_{\CF_1},~
\zeta_{\la^{\lvee}}\colon
\CV^{\la^{\lvee}}_{\CF_2} \ra \CO_{S\times X},
\end{equation*}
satisfying the following conditions:

a) for every $\la^{\lvee} \in \La^{\vee+}$,
the composition
\begin{equation*}
\zeta_{\la^{\lvee}}\circ \eta_{\la^{\lvee}}\colon
\BC^{\la^{\lvee}}_{\CT} \ra \CO_{S\times X}
\end{equation*}
is an isomorphism generically.

b) The Pl\"ucker relations hold.

\medskip

We have the morphism $\Upsilon^\theta\colon \ol{Y}{}^\theta \ra T^+_{\on{ad}}$
that forgets the data of $\CF_1, \CF_2, \varphi_{\la^{\lvee}}, \eta_{\la^{\lvee}}, \zeta_{\la^{\lvee}}$.
Let us denote by $\ol{Y}{}^{\theta,\on{princ}}$
the restriction of the multi-parameter family
$\ol{Y}{}^\theta \ra T^+_{\on{ad}}$
to the ``principal'' line
\begin{equation*}\BA^1 \hookrightarrow T_{\on{ad}}^+,\ a
  \mapsto (a, \dots, a)~\on{in~coordinates}~\al^{\lvee}_i,\ i \in I.\end{equation*}
Let us denote by $_{0}\!\Upsilon^{\theta,\on{princ}}$ the restriction of the morphism $\Upsilon^\theta$ to $_{0}Y^\theta \subset \ol{Y}{}^\theta$.

\bigskip

Recall the definition of $X^\theta$ of \S\ref{Tannak for divisor}.
Let $\ol{\pi}_{\theta}\colon \ol{Y}{}^{\theta} \ra X^{\theta}$ be the morphism given by
\begin{equation*}
(\CF_1,\CF_2,\varphi_{\la^{\lvee}},\tau_{\mu^{\lvee}},
\eta_{\la^{\lvee}},\zeta_{\la^{\lvee}}) \mapsto
(\on{Div}(\zeta_{\la^{\lvee}}\circ \eta_{\la^{\lvee}})).
\end{equation*}
Here by $\on{Div}(\zeta_{\la^{\lvee}}\circ \eta_{\la^{\lvee}})$
we mean the divisor of zeros of
$\zeta_{\la^{\lvee}}\circ \eta_{\la^{\lvee}}$
considered as a global section of $(\BC^{\la^{\lvee}}_{\CT})^{*}$.
Note that the fiber of $\ol{\pi}_\theta$ over $1\in T_{\on{ad}}\subset T_{\on{ad}}^+$ is nothing but the
compactified zastava space $\ol{Z}{}^\theta$ of~\cite[\S7.2]{gai}.

For $x \in X$, set
$\ol{\mathfrak{Y}}{}^\theta:=\ol{\pi}_\theta^{-1}(\theta\cdot x)$,
$\ol{\mathfrak{Y}}{}^{\theta,\on{princ}}:= \ol{\mathfrak{Y}}{}^\theta
\cap \ol{Y}{}^{\theta,\on{princ}}$.

\begin{rem}[D.~Gaitsgory]
  {\em We have
    \begin{equation*}
      \bigsqcup_{\theta\in\Lambda^\pos}\ol{Y}{}^\theta=
\on{Maps}_{\on{gen}}\left(X, G\backslash\left(\ol{G/N_-}\times\on{Vin}_G\times\ol{N\backslash G}/T\right)/G
\supset G\backslash\left((G/N_-)\times\on{Vin}_G^{\on{Bruhat}}\times(B\backslash G)\right)/G\right).
  \end{equation*}}
\end{rem}

\sssec{Factorization of compactified local models}
\label{fibers for Schieder lm}
The compactified local models $\ol{Y}{}^{\theta}$ factorize in families over $T^+_{\on{ad}}$
in the sense of the following Lemma:

\begin{lem}
Let $\theta_1, \theta_2 \in \Lambda^\pos$ and let $\theta := \theta_1+\theta_2$.
Then the addition morphism
$(X^{\theta_1}\times X^{\theta_2})_{\on{disj}} \xrightarrow{\on{add}}
X^{\theta}$
induces a cartesian square
\begin{equation*}
\xymatrix{
(X^{\theta_1} \times X^{\theta_2})_{\on{disj}} \underset{X^{\theta_1}\times X^{\theta_2}}\times
(\ol{Y}{}^{\theta_1}\underset{T^+_{\on{ad}}}\times \ol{Y}{}^{\theta_2}) \ar[r] \ar[d] & \ol{Y}{}^{\theta} \ar[d]^{\ol{\pi}_\theta}\\
(X^{\theta_1}\times X^{\theta_2})_{\on{disj}} \ar[r]^{\on{add}} & X^{\theta}.
}
\end{equation*}
\end{lem}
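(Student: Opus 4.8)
The plan is to establish the claim on $S$-points by exhibiting mutually inverse morphisms between the two upper corners of the square over $(X^{\theta_1}\times X^{\theta_2})_{\on{disj}}$, and then to observe that both are compatible with the projection to $(X^{\theta_1}\times X^{\theta_2})_{\on{disj}}$. Unwinding the Tannakian description of \secref{compact Sch loc model}, an $S$-point of the upper right corner is a datum $(\CF_1,\CF_2,\varphi_{\la^{\lvee}},\tau_{\mu^{\lvee}},\CT,\eta_{\la^{\lvee}},\zeta_{\la^{\lvee}})\in\ol{Y}{}^{\theta}(S)$ together with a factorization $\ol{\pi}_\theta(y)=D_1+D_2$ of its incidence divisor into a disjoint pair $(D_1,D_2)\in(X^{\theta_1}\times X^{\theta_2})_{\on{disj}}(S)$; an $S$-point of the upper left corner is such a disjoint pair together with points $y_i\in\ol{Y}{}^{\theta_i}(S)$ lying over $D_i$ and having a common image in $T^+_{\on{ad}}(S)$ under $\Upsilon^{\theta_i}$. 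I must identify these two sets functorially in $S$.

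The one feature absent from the factorization of open zastava recalled above (the canonical isomorphism $\mathfrak{f}_{\theta_1,\theta_2}$) is the fibre product over $T^+_{\on{ad}}$, and this is accounted for by the constancy of the $\tau$-data. Indeed, the generic condition defining $\ol{Y}{}^{\theta}$ forces the underlying map to land, away from $\supp\ol{\pi}_\theta$, in the open Bruhat locus $\on{Vin}^{\on{Bruhat}}_G/(B\times N_{-})$, which by \secref{Tannakian def of Vin} is cut out by $\mathfrak{s}$ and is isomorphic to $T^+_{\on{ad}}$; since $X$ is proper and $T^+_{\on{ad}}$ is affine, the isomorphism $\on{Maps}(X,T^+_{\on{ad}})\iso T^+_{\on{ad}}$ of \secref{degeneration mor for VinBun} shows that each $\tau_{\mu^{\lvee}}$ is a single global scalar. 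Thus the $T^+_{\on{ad}}$-value is a global invariant of any point of $\ol{Y}{}^{\theta}$, and any decomposition of the data must assign the \emph{same} value to both factors; this is exactly the meaning of the fibre product over $T^+_{\on{ad}}$.

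To build the comparison morphism I would mimic the proof of zastava factorization. Given $y$ over $D_1+D_2$, the section $\zeta_{\la^{\lvee}}\circ\eta_{\la^{\lvee}}$ of $(\BC^{\la^{\lvee}}_{\CT})^{*}$ vanishes precisely along $D_{1,\la^{\lvee}}+D_{2,\la^{\lvee}}$, whose summands have disjoint supports. Setting $U_1:=X\setminus\supp D_2$ and $U_2:=X\setminus\supp D_1$, I obtain an open cover of $X$ whose overlap $U_1\cap U_2=X\setminus\supp\ol{\pi}_\theta(y)$ is precisely the locus on which every $\zeta_{\la^{\lvee}}\circ\eta_{\la^{\lvee}}$ is invertible, so that the Bruhat datum is rigidified there, carrying the constant $T^+_{\on{ad}}$-value. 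Restricting all of the data to $U_1$ and regluing with the canonically trivialized datum on $U_2$ produces, by the Beilinson--Drinfeld factorization of $G$-bundles over disjoint supports, a point $y_1\in\ol{Y}{}^{\theta_1}(S)$ with incidence divisor $D_1$; symmetrically one produces $y_2\in\ol{Y}{}^{\theta_2}(S)$. The degrees match because the $T$-bundle $\CT$ of degree $-\theta$ restricts to a degree $-\theta_1$ bundle near $\supp D_1$ and a degree $-\theta_2$ bundle near $\supp D_2$. Conversely, a matching pair $(y_1,y_2)$ reglues over the cover $\{U_1,U_2\}$ along the overlap, where both compositions are isomorphisms carrying the shared $\tau$-value, into a single point of $\ol{Y}{}^{\theta}(S)$ lying over $D_1+D_2$.

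The step I expect to be the main obstacle is verifying that these two constructions are mutually inverse as morphisms of functors, and in particular that the defining conditions of \secref{compact Sch loc model}---the generic-isomorphism condition (a), the Pl\"ucker relations (b), and the $\on{VinBun}_G$-conditions on $\varphi_{\la^{\lvee}}$ and $\tau_{\mu^{\lvee}}$---are preserved under restriction and regluing. Since all of these conditions are local on $X$, their verification reduces to the overlap $U_1\cap U_2$, where every $\zeta_{\la^{\lvee}}\circ\eta_{\la^{\lvee}}$ is invertible and the data is canonically trivialized, so the conditions become tautological there; uniqueness of the glued object then follows from descent along the open cover $\{U_1,U_2\}$. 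Granting this, the two constructions are inverse to one another and manifestly commute with the projections to $(X^{\theta_1}\times X^{\theta_2})_{\on{disj}}$, which is precisely the assertion that the square is cartesian.
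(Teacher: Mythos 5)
Your proof is correct, and it is essentially the argument the paper relies on: the paper's own proof is the single line that it is analogous to \cite[Proposition~5.1.2]{s1}, and that argument is precisely your restriction-and-regluing over the cover $U_1=X\setminus\supp D_2$, $U_2=X\setminus\supp D_1$, with the fibre product over $T^+_{\on{ad}}$ accounted for by the constancy of the $\tau$-data (properness of $X$ against affineness of $T^+_{\on{ad}}$).

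One justification should, however, be corrected, although the step it supports is sound. The quotient $\on{Vin}^{\on{Bruhat}}_G/(B\times N_-)$ is \emph{not} isomorphic to $T^+_{\on{ad}}$: over the boundary of $T^+_{\on{ad}}$ the $B\times N_-$-action along the section $\mathfrak{s}$ has positive-dimensional stabilizers. For instance, the stabilizer of $\mathfrak{s}(0)$ is all of $N\times N_-$, since on each $V^{\la^{\lvee}}$ the element $\mathfrak{s}(0)$ acts as the projector onto the highest weight line, and such projectors satisfy $n\cdot\mathfrak{s}(0)\cdot n_-^{-1}=\mathfrak{s}(0)$ for every $(n,n_-)\in N\times N_-$; so this quotient is a genuinely stacky object, and a map into it would \emph{not} by itself glue canonically. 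What actually rigidifies the datum on the overlap $U_1\cap U_2$ is the $\eta_{\la^{\lvee}},\zeta_{\la^{\lvee}}$ data themselves: on the locus where every composition $\zeta_{\la^{\lvee}}\circ\varphi_{\la^{\lvee}}\circ\eta_{\la^{\lvee}}$ is invertible, these trivialize $\CT$, then trivialize $\CF_1$ and $\CF_2$ via the transversal $N$- and $N_-$-structures as in the proof of Proposition~\ref{tannak via ordinary def of Gr}, and under these trivializations $\varphi_{\la^{\lvee}}$ is forced to equal $\mathfrak{s}(\tau)$ (where $\tau=(\tau_{\mu^{\lvee}})$ is the constant value) by the submonoid argument in the proof of Lemma~\ref{embedding into product}; the same argument shows that the resulting isomorphism with this standard datum is unique, because the full datum including $\eta_{\la^{\lvee}},\zeta_{\la^{\lvee}}$ has no nontrivial automorphisms. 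With this justification substituted, your two constructions are canonical and mutually inverse, and the proof goes through.
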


\begin{proof}
The proof is analogous to the one of~\cite[Proposition~5.1.2]{s1}.
\end{proof}

The above Lemma implies that the fibers of the morphism
$\ol{Y}{}^{\theta}\ra T^+_{\on{ad}}$ are factorizable. That is for each
$t \in T^+_{\on{ad}}$ we have an isomorphism
\begin{equation*}
(X^{\theta_1}\times X^{\theta_2})_{\on{disj}} \underset{X^{\theta}}\times (\ol{Y}{}^{\theta}|_{t})
\iso (X^{\theta_1}\times X^{\theta_2})_{\on{disj}}
\underset{X^{\theta_1}\times X^{\theta_2}}\times(\ol{Y}{}^{\theta_1}|_{t}
\times \ol{Y}{}^{\theta_2}|_{t}).
\end{equation*}

\sssec{} \label{shifted compact Sch loc mod} Let us fix two cocharacters
$\theta_1, \theta_2 \in \La,\ \theta_1 \geq \theta_2$,
and a point $x \in X$.
\begin{defn}
  The space $\ol{\mathfrak{Y}}{}^{\theta_1,\theta_2}$ associates to a scheme $S$

1) an $S$-point $(\CF_1,\CF_2,\varphi_{\la^{\lvee}},\tau_{\mu^{\lvee}}) \in \on{VinBun}_G(S)$
of $\on{VinBun}_G$,

2) for every $\la^{\lvee} \in \La^{\vee+}$, a
morphism of vector bundles
$ \CO_{S\times X}(-\langle\la^{\lvee},\theta_1\rangle \cdot (S\times x))
\xrightarrow{\eta_{\la^{\!\scriptscriptstyle\vee}}}
\CV^{\la^{\!\scriptscriptstyle\vee}}_{\CF_1}
$
and a morphism of vector bundles
$
  \CV^{\la^{\!\scriptscriptstyle\vee}}_{\CF_2} \xrightarrow{\zeta_{\la^{\!\scriptscriptstyle\vee}}}
  \CO_{S\times X}(-\langle\la^{\lvee},\theta_2\rangle \cdot (S\times x))$
satisfying the following conditions.

a) For every $\la^{\!\scriptscriptstyle\vee} \in \Lambda^{\vee+}$ the composition $(\zeta_{\la^{\!\scriptscriptstyle\vee}} \circ \varphi_{\la^{\lvee}} \circ \eta_{\la^{\!\scriptscriptstyle\vee}})$
coincides with the canonical morphism
$\CO_{S\times X}(-\langle\la^{\lvee},\theta_1\rangle \cdot (S\times x))\hookrightarrow
\CO_{S\times X}(-\langle\la^{\lvee},\theta_2\rangle \cdot (S\times x))$.

b) The Pl\"ucker relations hold.
\end{defn}
Let
$\Upsilon^{\theta_1}_{\theta_2}\colon
\ol{\mathfrak{Y}}{}^{\theta_1,\theta_2} \ra T^+_{\on{ad}}$
be the morphism given by
\begin{equation*}
(\CF_1,\CF_2,\varphi_{\la^{\lvee}},\tau_{\mu^{\lvee}},
\eta_{\la^{\lvee}},\zeta_{\la^{\lvee}}) \mapsto
(\tau_{\mu^{\lvee}}).
\end{equation*}
We denote by $\ol{\mathfrak{Y}}{}^{\theta_1,\theta_2,\on{princ}}$
the restriction of the multi-parameter family
$\Upsilon^{\theta_1}_{\theta_2}\colon
\ol{\mathfrak{Y}}{}^{\theta_1,\theta_2} \ra T^+_{\on{ad}}$
to the ``principal'' line
\begin{equation*}\BA^1 \hookrightarrow T_{\on{ad}}^+,\ a \mapsto (a, \dots, a)~\on{in~coordinates}~\al^{\lvee}_i,\ i \in I.\end{equation*}
We will prove in Lemma~\ref{identification of local models} that the family
$\ol{\mathfrak{Y}}{}^{\theta_1,\theta_2} \ra T^+_{\on{ad}}$
is isomorphic to
$\ol{\mathfrak{Y}}{}^{\theta_1-\theta_2} \ra T^+_{\on{ad}}.$

\subsection{Schieder bialgebra}
\label{schieder bialgebra}
Let us recall the construction
of certain bialgebra from \cite{s3}.
We set \begin{equation*}\CA:=\bigoplus\limits_{\theta \in \Lambda^\pos}H_{c}^{\langle2\rho^{\lvee},\theta\rangle}(\ofZ^\theta).\end{equation*}

\begin{rem}{\em It follows from~\cite[\S6]{bfgm} that for $\theta \in \Lambda^\pos$ the vector space
$H_{c}^{\langle2\rho^{\lvee},\theta\rangle}(\ofZ^\theta)$
can be identified with the $\theta$-weight component
$U(\mathfrak{n}^{\vee})_{\theta}$. It follows that the algebra $\CA$ is
isomorphic to $U(\mathfrak{n}^{\vee})$ as a graded vector space.}
\end{rem}

\sssec{The two-parameter degeneration~\cite[\S6.2.2]{s3}} Take $\theta \in \Lambda^\pos$.
Let us consider the morphism
\begin{equation*}\pi_{\theta}\times {_{0}\!\Upsilon^{\theta,\on{princ}}}\colon _0Y^{\theta,\on{princ}} \ra
X^{\theta}\times \BA^1.\end{equation*}
Fix $x \in X$ and
positive cocharacters
$\theta_1,\theta_2 \in \Lambda^\pos$
such that $\theta_1+\theta_2=\theta$.
We define the family
$\varPi\colon Q \ra X \times \BA^1$
as the pullback of the family
$_{0}Y^{\theta,\on{princ}} \ra
X^{\theta}\times \BA^1$ along
\begin{equation*}X\times \BA^1 \hookrightarrow X^{\theta}\times \BA^1
,~(y,a) \mapsto
(\theta_1 \cdot x +\theta_2\cdot y,~a).\end{equation*}

Let $\varPi_{\on{comult}}$ denote the one-parameter family
over $X$ obtained by restricting
the family $\varPi$ above along the inclusion
$
X\times \{1\} \hookrightarrow X \times \BA^1.
$
Similarly, let $\varPi_{\on{mult}}$ denote the one-parameter
family over $\BA^1$ obtained by restricting
the family $\varPi$ above along the inclusion
$
\{x\}\times \BA^1 \hookrightarrow X \times \BA^1.
$

\begin{prop} \cite[Corollary 6.2.5]{s3}.

a) The one-parameter family $\varPi_{\on{comult}}$
is trivial over $X\setminus \{x\}$.
The special fiber is
\begin{equation*}
Q|_{(\theta\cdot x,1)}=\ofZ^{\theta}.
\end{equation*}
A general fiber is
\begin{equation*}
Q|_{(\theta_1\cdot x+\theta_2\cdot y,1)} = \ofZ^{\theta_1}\times \ofZ^{\theta_2}.
\end{equation*}

b) The one-parameter family $\varPi_{\on{mult}}$
is trivial over $\BA^1\setminus \{0\}$.
The special fiber is
\begin{equation*}
Q|_{(\theta\cdot x,0)}=\bigsqcup\limits_{\theta_1+\theta_2=\theta}\ofZ^{\theta_1}\times \ofZ^{\theta_2}.
\end{equation*}
A general fiber is $Q|_{(\theta\cdot x,1)}=\ofZ^{\theta}$.

\end{prop}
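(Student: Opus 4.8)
The plan is to treat the two coordinate directions of the family $\varPi\colon Q\to X\times\BA^1$ separately: the horizontal ($X$-)direction recorded in $\varPi_{\on{comult}}$ is governed by the factorization of zastava, while the vertical ($\BA^1$-)direction recorded in $\varPi_{\on{mult}}$ is governed by the Vinberg (Drinfeld--Gaitsgory) degeneration of the local model. The common starting point is that at $a=1$, i.e. over the unit $1=(1,\dots,1)\in T_{\on{ad}}\subset T^+_{\on{ad}}$, the fiber of $_0Y^{\theta,\on{princ}}\to X^\theta\times\BA^1$ is the open zastava $\oZ^\theta$ together with its projection $\pi_\theta\colon\oZ^\theta\to X^\theta$; this is the defect-free version of the fact recalled in \secref{compact Sch loc model} that the fiber of $\ol\pi_\theta$ over $1\in T_{\on{ad}}$ is the compactified zastava $\ol{Z}{}^\theta$.

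\noindent\textbf{Part a).} By the preceding remark, $\varPi_{\on{comult}}$ is the pullback of $\pi_\theta\colon\oZ^\theta\to X^\theta$ along $y\mapsto\theta_1\cdot x+\theta_2\cdot y$. Its fiber over $y=x$ is $\pi_\theta^{-1}(\theta\cdot x)=\ofZ^\theta$ by definition. For $y\ne x$ the divisors $\theta_1\cdot x$ and $\theta_2\cdot y$ are disjoint, so the factorization isomorphism $\mathfrak{f}_{\theta_1,\theta_2}$ identifies $\pi_\theta^{-1}(\theta_1\cdot x+\theta_2\cdot y)$ with $\ofZ^{\theta_1}_x\times\ofZ^{\theta_2}_y\cong\ofZ^{\theta_1}\times\ofZ^{\theta_2}$. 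To obtain triviality over $X\setminus\{x\}$ I would restrict $\mathfrak{f}_{\theta_1,\theta_2}$ to the locus $\{\theta_1\cdot x\}\times\{\theta_2\cdot y:y\ne x\}$ of $(X^{\theta_1}\times X^{\theta_2})_{\on{disj}}$: this identifies $\varPi_{\on{comult}}|_{X\setminus\{x\}}$ with the product of the fixed fiber $\ofZ^{\theta_1}_x$ and the family $y\mapsto\ofZ^{\theta_2}_y$ obtained by restricting $\oZ^{\theta_2}$ to the small diagonal $X\hookrightarrow X^{\theta_2}$. Since $\ofZ^{\theta_2}_y$ depends only on the formal neighborhood of $y$, this last family is the constant family $(X\setminus\{x\})\times\ofZ^{\theta_2}$, which yields the asserted triviality.

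\noindent\textbf{Part b).} Here $y=x$ is fixed, so $\varPi_{\on{mult}}$ is the restriction of $\Upsilon^{\theta,\on{princ}}$ to $\pi_\theta^{-1}(\theta\cdot x)$; that is, it is the defect-free part of the family $\ol{\mathfrak{Y}}{}^{\theta,\on{princ}}\to\BA^1$. For $a\ne0$ we have $\Upsilon=a\in T_{\on{ad}}$, so the fiber is again $\oZ^\theta$ and its fiber over $\theta\cdot x$ is $\ofZ^\theta$; triviality over $\BA^1\setminus\{0\}$ follows from the $\BC^\times$-equivariance of $\Upsilon^{\on{princ}}$ under the diagonal action $c\mapsto2\rho(c)$, which scales $a$ transitively on $\BG_m$ while fixing $\theta\cdot x\in X^\theta$. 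The real content is the special fiber at $a=0$. There all $\tau_{\mu^\vee}$ with $\mu^\vee\ne0$ vanish, so by condition (c) of the Tannakian description of $\on{VinBun}_G$ the morphisms $\varphi_{\la^\vee}$ degenerate and factor through the zero fiber $(\on{Vin}_G)_0\cong(\ol{G/N}\times\ol{G/N_-})/\!\!/T$ of \lemref{fiber for Vin}; on the defect-free locus this is the open part of \remref{fiber for def free Vin}. Consequently (as in \propref{fiber for vinbun}) the $B$-structure carried by $\eta_{\la^\vee}$ on $\CF_1$ and the $B_-$-structure carried by $\zeta_{\la^\vee}$ on $\CF_2$ decouple. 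I would then read off degrees: the total defect degree $\theta$ splits as $\theta_1+\theta_2$, where $\theta_1$ is the degree of the $N$-defect at $x$ and $\theta_2$ the complementary degree realized by the $N_-$-side. Using \remref{zastava as intersection}, which identifies shifted zastava with intersections of the form $\ol{T}_{\theta_2}\cap\ol{S}_{\theta_1}$, each such splitting contributes exactly $\ofZ^{\theta_1}\times\ofZ^{\theta_2}$; since the splitting $\theta=\theta_1+\theta_2$ is discrete and locally constant on the defect-free fiber, the pieces assemble into $\bigsqcup_{\theta_1+\theta_2=\theta}\ofZ^{\theta_1}\times\ofZ^{\theta_2}$.

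The main obstacle is the identification of the special fiber in part b). Showing that the vanishing $\tau_{\mu^\vee}=0$ forces a genuine \emph{decoupling} of the two Borel structures, that the defect-free condition promotes the resulting attractor--repellent stratification to an honest disjoint union of smooth pieces, and that the degree bookkeeping matches each stratum with $\ofZ^{\theta_1}\times\ofZ^{\theta_2}$ rather than with a compactified or lower-dimensional locus, is the delicate step; both part a) and the $a\ne0$ analysis in part b) are then formal consequences of factorization and $\BC^\times$-equivariance.
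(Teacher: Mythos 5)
The paper does not actually prove this Proposition: it is quoted verbatim from Schieder~\cite[Corollary~6.2.5]{s3}, so your attempt can only be measured against the ingredients the paper recalls and against its own proofs of the parallel statements for $\on{Vin}\ol{S}_\nu$. By that measure your proof is correct and is essentially the intended one: part a) is exactly the factorization property of the local models (the Lemma of \secref{fibers for Schieder lm}) restricted to configurations $\theta_1\cdot x+\theta_2\cdot y$, and part b) combines equivariance under the $2\rho$-action (the same device as in \lemref{restriction for VinGrBD} and \propref{restrictions of Upsilon}) with the structure of the defect-free zero fiber of $\on{Vin}_G$ (\remref{fiber for def free Vin}, \propref{fiber for vinbun}).

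Two points deserve tightening. First, in part a) the constancy of the family $y\mapsto\ofZ^{\theta_2}_y$ over $X\setminus\{x\}$ is not literally Zariski-triviality for an arbitrary projective curve: one trivializes only after choosing local (analytic or \'etale) coordinates, which is all that the cospecialization construction requires. Second, and more substantively, in part b) the decoupling at $a=0$ is carried not by the pair $(\eta_{\la^{\lvee}},\zeta_{\la^{\lvee}})$ --- those data are coupled only through $\varphi_{\la^{\lvee}}$ --- but by $\varphi_{\la^{\lvee}}$ itself: on the defect-free special fiber each $\varphi_{\la^{\lvee}}$ factors uniquely as $\CV^{\la^{\lvee}}_{\CF_1}\twoheadrightarrow\CL^{\la^{\lvee}}\hookrightarrow\CV^{\la^{\lvee}}_{\CF_2}$ through a line bundle, exactly as in \lemref{special fiber of vartheta}, with $\CL^{\la^{\lvee}}\simeq\CO_{S\times X}(-\langle\la^{\lvee},\theta_2\rangle\cdot(S\times x))$. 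The two zastava points are then $(\CF_1,\eta_{\la^{\lvee}},\zeta'_{\la^{\lvee}})$ and $(\CF_2,\eta'_{\la^{\lvee}},\zeta_{\la^{\lvee}})$, which \remref{zastava as intersection} and \lemref{identification of local models} identify with points of $\ofZ^{\theta_1}$ and $\ofZ^{\theta_2}$; effectivity of the two divisors of zeros forces $\theta_1,\theta_2\in\Lambda^\pos$, and local constancy of $\deg\CL^{\la^{\lvee}}$ in flat families gives the disjointness of the strata (and, conversely, composing $\eta'_{\la^{\lvee}}\circ\zeta'_{\la^{\lvee}}$ reconstructs $\varphi_{\la^{\lvee}}$, so the decoupling map is an isomorphism and not merely an inclusion). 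With these substitutions your outline coincides with Schieder's argument.
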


\sssec{Comultiplication~\cite[\S 2.11.1]{ffkm},~\cite[\S 6.3.2]{s3}}
\label{comultiplication}
Given positive coweights
$\theta,\theta_1,\theta_2\in \Lambda^\pos$ with
$\theta_1+\theta_2=\theta$ we define a morphism of vector spaces
\begin{equation*}
\Delta_{\theta_1,\theta_2}\colon\CA_{\theta} \ra \CA_{\theta_1}\otimes \CA_{\theta_2}
\end{equation*}
as the cospecialization morphism
\begin{equation*}
H_{c}^{\langle2\rho^{\lvee},\theta\rangle}(\ofZ^\theta)=
H_{c}^{\langle2\rho^{\lvee},\theta\rangle}(Q|_{(\theta\cdot x,1)}) \ra
H_{c}^{\langle2\rho^{\lvee},\theta\rangle}(Q|_{(\theta_1\cdot x+\theta_2\cdot y,1)})=
H_{c}^{\langle2\rho^{\lvee},\theta_1\rangle}(\ofZ^\theta_1)\otimes
H_{c}^{\langle2\rho^{\lvee},\theta_2\rangle}(\ofZ^\theta_2)
\end{equation*}
corresponding to the one-parameter
degeneration $\varPi_{\on{comult}}$.
Summing over all such triples $(\theta,\theta_1,\theta_2)$
we obtain a comultiplication morphism
\begin{equation*}
\Delta\colon\CA \ra \CA\otimes \CA.
\end{equation*}

\sssec{Multiplication~\cite[\S 6.3.3]{s3}}
\label{multiplication}
Given a positive coweight
$\theta\in \Lambda^\pos$ we define a morphism of vector spaces
\begin{equation*}
{\bf{m}}_{\theta}:=\bigoplus\limits_{\theta_1+\theta_2=\theta}{\bf{m}}_{\theta_1, \theta_2}\colon\bigoplus\limits_{\theta_1+\theta_2=\theta}
\CA_{\theta_1}\otimes \CA_{\theta_2} \ra \CA_{\theta}
\end{equation*}
as the cospecialization morphism
\begin{equation*}
\bigoplus\limits_{\theta_1+\theta_2=\theta}
H_{c}^{\langle2\rho^{\lvee},\theta_1\rangle}(\ofZ^\theta_1)\otimes
H_{c}^{\langle2\rho^{\lvee},\theta_2\rangle}(\ofZ^\theta_2)=
H_{c}^{\langle2\rho^{\lvee},\theta\rangle}(Q|_{(\theta x,0)}) \ra
H_{c}^{\langle2\rho^{\lvee},\theta\rangle}(Q|_{(\theta x,1)})=
H_{c}^{\langle2\rho^{\lvee},\theta\rangle}(\ofZ^\theta)
\end{equation*}
corresponding to the one-parameter
degeneration $\varPi_{\on{mult}}$.
Summing over all $\theta \in \Lambda^\pos$ we obtain a multiplication morphism
\begin{equation*}
{\bf{m}}\colon\CA\otimes \CA \ra \CA.
\end{equation*}

\section{Drinfeld-Gaitsgory-Vinberg interpolation Grassmannian}
\label{three}

\subsection{A degeneration of the Beilinson-Drinfeld Grassmannian}

\sssec{The example $G=\on{SL}_2$} Let us start from the case $G=\on{SL}_2$. Let us fix a point $x \in X$. \begin{defn}
  \label{BD degeneration for SL_2} For $G=\on{SL}_2$, an $S$-point of $\on{VinGr}_{G,x}$
  consists of the following data:

1) two vector bundles $\CV_1, \CV_2$ of rank two on $S \times X$, together with trivializations of their determinant line bundles $\on{det}\CV_1, \on{det}\CV_2$ and a morphism of coherent sheaves $\varphi\colon \CV_1 \ra \CV_2$,

2) the rational morphisms $\eta\colon \CO \ra \CV_1,\ \zeta\colon \CV_2 \ra \CO$ regular
on $S\times (X\setminus \{x\})$ such that
the composition \begin{equation*}(\zeta \circ \varphi \circ \eta)|_{S\times (X\setminus \{x\})}
\colon \CO|_{S\times (X\setminus \{x\})}
\ra \CO|_{S\times (X\setminus \{x\})}
\end{equation*}
is the identity morphism.
\end{defn}
We have a morphism
$\Upsilon\colon \on{VinGr}_{G,x} \ra \BA^1$ which sends an $S$-point above to the determinant \begin{equation*}\on{det}\varphi \in \on{\Gamma}(\CO_{X\times S})=\on{\Gamma}(\CO_{S})=\BA^1(S).\end{equation*}

It follows from Proposition~\ref{repres of BD} below that the functor $\on{VinGr}_{G,x}$ is represented by an ind-scheme (ind-projective over $\BA^1$) denoted by the same symbol.

Let $(\on{VinGr}_{G,x})_a$ denote the fiber of the morphism $\Upsilon$ over $a \in \BA^1$.

\begin{prop} \label{triv for SL_2}
The fiber of the morphism $\Upsilon$ over the point $1 \in \BA^1$ is  isomorphic to the affine Grassmannian $\on{Gr}_{G}$.
\end{prop}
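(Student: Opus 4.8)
The plan is to show that imposing $\det\varphi=1$ forces $\varphi$ to be an isomorphism, and then to use $\varphi$ to collapse the pair $(\CV_1,\CV_2)$ into a single $\on{SL}_2$-bundle carrying exactly the rational Pl\"ucker data that defines $\on{Gr}_G$ in its Tannakian incarnation of \S\ref{Tannakian def of BD}.

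First I would analyze the fiber. Since $\det\CV_1$ and $\det\CV_2$ are trivialized, $\det\varphi$ is a global section of $\CO_{S\times X}$, and as $X$ is proper it is pulled back from $S$, namely it equals the value of $\Upsilon$. On $\Upsilon^{-1}(1)$ this function is the constant $1$, so $\det\varphi$ is a nowhere-vanishing section of $\CO_{S\times X}$; hence the morphism $\varphi\colon\CV_1\to\CV_2$ of rank-two bundles is fibrewise invertible and therefore an isomorphism of vector bundles over all of $S\times X$ (not merely away from $x$). Next I would use $\varphi$ to rigidify: setting $\CV:=\CV_1$ with its determinant trivialization (an $\on{SL}_2$-bundle), I put $\eta_{\omega^\lvee}:=\eta\colon\CO\to\CV$ and $\zeta_{\omega^\lvee}:=\zeta\circ\varphi\colon\CV\to\CO$; these are rational, regular on $S\times(X\setminus\{x\})$, and the defining relation $\zeta\circ\varphi\circ\eta=\Id$ becomes $(\zeta_{\omega^\lvee}\circ\eta_{\omega^\lvee})|_{S\times(X\setminus\{x\})}=\Id$. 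For $G=\on{SL}_2$ every dominant weight is $n\omega^\lvee$ and $V^{n\omega^\lvee}=\Sym^nV^{\omega^\lvee}$, so the pair $(\eta_{\omega^\lvee},\zeta_{\omega^\lvee})$ determines, via the Pl\"ucker maps, a full system $(\eta_{n\omega^\lvee},\zeta_{n\omega^\lvee}):=(\Sym^n\eta_{\omega^\lvee},\Sym^n\zeta_{\omega^\lvee})$ satisfying conditions a)--d) of \S\ref{Tannakian def of BD}: conditions b)--d) are automatic and a) is exactly the relation above. This is precisely an $S$-point of $\Gr'_{G,x}$.

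Conversely, from an $S$-point of $\Gr'_{G,x}$ I would produce a point of the fiber by taking $\CV_1=\CV_2=\CV$, $\varphi=\Id$ (so $\det\varphi=1$), $\eta=\eta_{\omega^\lvee}$, $\zeta=\zeta_{\omega^\lvee}$. The two assignments are mutually inverse: the composite on $\on{VinGr}_{G,x}$-data sends $(\CV_1,\CV_2,\varphi,\eta,\zeta)$ to $(\CV_1,\CV_1,\Id,\eta,\zeta\circ\varphi)$, and the pair $(\Id_{\CV_1},\varphi^{-1})$ is the required (and, by rigidity, unique) isomorphism of these data, using that $\det\varphi^{-1}=1$ so that the determinant trivializations are respected. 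Finally I would invoke \propref{tannak via ordinary def of Gr}, which identifies $\Gr'_{G,x}$ with $\Gr_{G,x}=\on{Gr}_G$, to conclude $\Upsilon^{-1}(1)\simeq\on{Gr}_G$.

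The step I expect to require the most care is the bookkeeping in this equivalence: first checking that $\det\varphi$ nowhere-vanishing genuinely yields a bundle isomorphism (and that regularity of $\zeta$ away from $x$ is preserved under composition with the everywhere-regular $\varphi$), and second verifying that the functors are set-valued, so that the identification via $\varphi$ is legitimate rather than merely an isomorphism of groupoid objects --- this is the rigidity coming from the determinant trivializations together with the transversal rational structures, exactly as in the proof of \propref{tannak via ordinary def of Gr}. The reduction from the single fundamental-weight datum on the $\on{VinGr}$ side to the all-weights Pl\"ucker datum on the $\Gr'$ side is clean for $\on{SL}_2$ precisely because every irreducible representation is a symmetric power of the standard one.
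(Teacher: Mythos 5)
Your proof is correct and follows essentially the same route as the paper: both use the nonvanishing of $\det\varphi$ on $\Upsilon^{-1}(1)$ to identify $\CV_1\simeq\CV_2$ via $\varphi$, both recognize the pair $(\eta,\zeta\circ\varphi)$ with identity composite away from $x$ as exactly the data of a trivialization of the resulting single $\on{SL}_2$-bundle there, and both invert by taking $\CV_1=\CV_2=\CV$, $\varphi=\Id$ with the standard coordinate morphisms. The only difference is bookkeeping: the paper produces the trivialization directly from the transversal $N$ and $N_-$ structures, whereas you route through the Tannakian model $\Gr'_{G,x}$ (via symmetric powers of the fundamental datum) and cite Proposition~\ref{tannak via ordinary def of Gr}, whose proof is that very mechanism.
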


\begin{proof}
Let us construct a morphism of functors
$\Theta\colon (\on{VinGr}_{G,x})_{1} \ra \Gr_{G}.$
Note that the vector bundles $\CV_1,\ \CV_2$ are identified via $\varphi$.
From condition~\ref{BD degeneration for SL_2}.2) it follows that the rational morphisms $\eta, \zeta$ define transversal $N$ and $N_{-}$ structures in the vector bundle \begin{equation*}\CV:=\CV_1|_{S\times (X\setminus \{x\})} \simeq \CV_2|_{S\times (X\setminus \{x\})}.\end{equation*} Thus we get a trivialization $\sigma$ of the vector bundle $\CV|_{S\times (X\setminus \{x\})}$. Set \begin{equation*} \Theta(\CV_1,\CV_2,\varphi,\zeta,\eta):= (\CV,\sigma) \in \Gr_{G}(S).\end{equation*}
Let us construct the inverse morphism
$
\Theta^{-1}\colon \Gr_{G} \ra (\on{VinGr}_{G,x})_{1}.
$
Consider a point $(\CV,\sigma) \in \Gr_{G}(S)$.
Set
$
\CV_1:=\CV=:\CV_2,~\varphi:= \on{Id}\colon \CV_1=\CV \ra \CV=\CV_2.
$
Let us define $\eta\colon \CO \ra \CV=\CV_1$ as the composition of
$
\eta_0\colon \CO \hookrightarrow \CO \oplus \CO,~s \mapsto (s,0)
$ and $\sigma$.
Let us define $\zeta\colon \CV_2=\CV \ra \CO$ as the composition of $\sigma^{-1}$ and
$\zeta_0\colon \CO \oplus \CO \twoheadrightarrow \CO ,~(s_1,s_2) \mapsto s_1. $
Finally, we set
$\Theta^{-1}(\CV,\sigma):= (\CV_1,\CV_2,\varphi,\zeta,\eta).
$
\end{proof}

\sssec{The degeneration of $\on{Gr}_G$ for arbitrary $G$ via Tannakian approach} \label{degen of the affine Gr}
We now define a degeneration $\on{VinGr}_{G,x}$ of the affine Grassmannian $\Gr_{G}$ for arbitrary reductive group $G$. An $S$-point of $\on{VinGr}_{G,x}$ consists of the following data:

1) an $S$-point $(\CF_1, \CF_2, \varphi_{\la^{\lvee}}, \tau_{\mu^{\lvee}}) \in \on{VinBun}_G(S)$,

2) for every $\la^{\!\scriptscriptstyle\vee} \in \Lambda^{\vee+}$, the rational morphisms \begin{equation*}
\eta_{\la^{\!\scriptscriptstyle\vee}}\colon  \CO_{S\times X} \ra \CV^{\la^{\!\scriptscriptstyle\vee}}_{\CF_1},
~\zeta_{\la^{\!\scriptscriptstyle\vee}}\colon  \CV^{\la^{\lvee}}_{\CF_2} \ra \CO_{S\times X}\end{equation*}
regular on $U:=S\times (X \setminus \{x\})$,\\
satisfying the following conditions.

a) For every $\la^{\!\scriptscriptstyle\vee} \in \Lambda^{\vee+}$, the composition \begin{equation*}(\zeta_{\la^{\!\scriptscriptstyle\vee}} \circ \varphi_{\la^{\!\scriptscriptstyle\vee}} \circ \eta_{\la^{\!\scriptscriptstyle\vee}})|_{U}\end{equation*}
is the identity morphism.

b) For every $\lambda^{\!\scriptscriptstyle\vee}, \mu^{\!\scriptscriptstyle\vee} \in \Lambda^{\vee+}$ let $\on{pr}_{\la^{\!\scriptscriptstyle\vee},\mu^{\!\scriptscriptstyle\vee}}\colon  V^{\la^{\!\scriptscriptstyle\vee}} \otimes V^{\mu^{\!\scriptscriptstyle\vee}} \twoheadrightarrow V^{\la^{\!\scriptscriptstyle\vee}+\mu^{\!\scriptscriptstyle\vee}}$ be the projection morphism of \S\ref{Tannakian def of BD}. We have the corresponding morphisms \begin{equation*}\on{pr}^{\CF_1}_{\la^{\!\scriptscriptstyle\vee},\mu^{\!\scriptscriptstyle\vee}}\colon  \CV^{\la^{\!\scriptscriptstyle\vee}}_{\CF_1} \otimes \CV^{\mu^{\!\scriptscriptstyle\vee}}_{\CF_1} \ra \CV^{\la^{\!\scriptscriptstyle\vee}+\mu^{\!\scriptscriptstyle\vee}}_{\CF_1},~\on{pr}^{\CF_2}_{\la^{\!\scriptscriptstyle\vee},\mu^{\!\scriptscriptstyle\vee}}\colon  \CV^{\la^{\!\scriptscriptstyle\vee}}_{\CF_2} \otimes \CV^{\mu^{\!\scriptscriptstyle\vee}}_{\CF_2} \ra \CV^{\la^{\!\scriptscriptstyle\vee}+\mu^{\!\scriptscriptstyle\vee}}_{\CF_2}.\end{equation*}
Then the following diagrams are commutative:

\begin{equation*}\begin{CD}
\CO_{U}\otimes \CO_{U}@>\on{Id}\otimes \on{Id}>> \CO_{U}\\
@VV\eta_{\la^{\!\scriptscriptstyle\vee}} \otimes \eta_{\mu^{\!\scriptscriptstyle\vee}}V @VV\eta_{\la^{\!\scriptscriptstyle\vee}+\mu^{\!\scriptscriptstyle\vee}}V\\
(\CV^{\la^{\!\scriptscriptstyle\vee}}_{\CF_1} \otimes \CV^{\mu^{\!\scriptscriptstyle\vee}}_{\CF_1})|_U @>\on{pr}^{\CF_1}_{\la^{\!\scriptscriptstyle\vee},\mu^{\!\scriptscriptstyle\vee}}>> (\CV^{\la^{\!\scriptscriptstyle\vee}+\mu^{\!\scriptscriptstyle\vee}}_{\CF_1})|_U,
\end{CD}\end{equation*}

\bigskip

\begin{equation*}\begin{CD}
(\CV^{\la^{\!\scriptscriptstyle\vee}}_{\CF_2} \otimes \CV^{\mu^{\!\scriptscriptstyle\vee}}_{\CF_2})|_U @>\on{pr}^{\CF_2}_{\la^{\!\scriptscriptstyle\vee},\mu^{\!\scriptscriptstyle\vee}}>> (\CV^{\la^{\!\scriptscriptstyle\vee}+\mu^{\!\scriptscriptstyle\vee}}_{\CF_2})|_U\\
@VV\zeta_{\la^{\!\scriptscriptstyle\vee}} \otimes \zeta_{\mu^{\!\scriptscriptstyle\vee}}V @VV\zeta_{\la^{\!\scriptscriptstyle\vee}+\mu^{\!\scriptscriptstyle\vee}}V\\
\CO_{U}\otimes \CO_{U} @>\on{Id}\otimes \on{Id}>> \CO_{U}.\end{CD} \end{equation*}

c) Given a morphism $\on{pr}\colon  V^{\la^{\!\scriptscriptstyle\vee}} \otimes
V^{\mu^{\!\scriptscriptstyle\vee}} \ra V^{\nu^{\!\scriptscriptstyle\vee}}$ for
$\la^{\!\scriptscriptstyle\vee},\mu^{\!\scriptscriptstyle\vee},\nu^{\!\scriptscriptstyle\vee} \in \La^{+},\
\nu^{\!\scriptscriptstyle\vee} < \la^{\!\scriptscriptstyle\vee}+\mu^{\!\scriptscriptstyle\vee}$, we have
\begin{equation*}\on{pr}^{\CF_1} \circ (\eta_{\la^{\!\scriptscriptstyle\vee}} \otimes
  \eta_{\mu^{\!\scriptscriptstyle\vee}})=0,~(\zeta_{\la^{\!\scriptscriptstyle\vee}} \otimes
  \zeta_{\mu^{\!\scriptscriptstyle\vee}}) \circ \on{pr}^{\CF_2}=0.
\end{equation*}

d) For $\la^{\!\scriptscriptstyle\vee}=0$, we have $\zeta_{\la^{\!\scriptscriptstyle\vee}} = \on{Id}$ and $\eta_{\la^{\!\scriptscriptstyle\vee}} = \on{Id}$.

We have a projection $\on{VinGr}_{G,x} \ra \on{VinBun}_G$
that forgets the data $\eta_{\la^{\lvee}},\zeta_{\la^{\lvee}}$.

\sssec{The degeneration ${\on{VinGr}}_{G,x}$ via mapping stacks}
\label{vingr via mapp stack}
Recall the family
$\Upsilon\colon \on{Vin}_G \ra T^+_{\on{ad}}$ of
\S\ref{Morphism v for Vin}
considered as the scheme over $T^+_{\on{ad}}$.
Fix a point $x \in X$.
Denote by $\on{VinGr}'_{G,x}$ the fibre product
\begin{equation*}
\on{Maps}_{T^+_{\on{ad}}}(X\times T^+_{\on{ad}},\on{Vin}_{G}/(G\times G))
\underset{\on{Maps}_{T^{+}_{\on{ad}}}((X\setminus \{x\})\times T^{+}_{\on{ad}},\on{Vin}_G/(G\times G))}\times T^{+}_{\on{ad}},
\end{equation*}
where the morphism
$T^{+}_{\on{ad}} \ra
\on{Maps}_{T^{+}_{\on{ad}}}((X\setminus \{x\})\times T^{+}_{\on{ad}},\on{Vin}_G/(G\times G))
$
is the composition of the isomorphism
$T^+_{\on{ad}}\iso \on{Maps}_{T^+_{\on{ad}}}((X\setminus \{x\})\times T^{+}_{\on{ad}},T^+_{\on{ad}})$
with the morphism \begin{equation*}\on{Maps}_{T^+_{\on{ad}}}((X\setminus \{x\})\times T^{+}_{\on{ad}},T^+_{\on{ad}}) \ra
\on{Maps}_{T^+_{\on{ad}}}((X\setminus \{x\})\times T^{+}_{\on{ad}},\on{Vin}_G/(G\times G))\end{equation*}
induced by the morphism
$T^+_{\on{ad}} \xrightarrow{\mathfrak{s}} \on{Vin}_G \ra \on{Vin}_G/(G\times G)$.

Let us denote by $\Upsilon'\colon\on{VinGr}'_{G,x} \ra T^+_{\on{ad}}$
the projection to the second factor.

\begin{prop} \label{isom between descr of vingr}
The families $\Upsilon\colon \on{VinGr}_{G,x} \ra T^{+}_{\on{ad}}$ and $\Upsilon'\colon\on{VinGr}'_{G,x} \ra T^+_{\on{ad}}$ are isomorphic.
\end{prop}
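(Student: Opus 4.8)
The plan is to prove this exactly as \propref{tannak via ordinary def of Gr} was proved, by exhibiting mutually inverse morphisms of functors $\Xi\colon\on{VinGr}_{G,x}\to\on{VinGr}'_{G,x}$ and $\Xi^{-1}$ over $T^+_{\on{ad}}$; the only genuine content is a Pl\"ucker--Tannakian dictionary. First I would unwind the fibre product defining $\on{VinGr}'_{G,x}$. Since $X$ is proper and $T^+_{\on{ad}}$ is affine (so that $T^+_{\on{ad}}\iso\on{Maps}(X,T^+_{\on{ad}})$ as in \S\ref{degeneration mor for VinBun}), an $S$-point of $\on{Maps}_{T^+_{\on{ad}}}(X\times T^+_{\on{ad}},\on{Vin}_G/(G\times G))$ lying over $t\in T^+_{\on{ad}}(S)$ is the same as a morphism $f\colon S\times X\to\on{Vin}_G/(G\times G)$ whose composite with $\Upsilon_{/(G\times G)}$ is the pullback of $t$; by the Tannakian description of $\on{Vin}_G$ (\S\ref{Tannakian def of Vin}) transcribed to mapping stacks, this is precisely a point $(\CF_1,\CF_2,\varphi_{\la^{\lvee}},\tau_{\mu^{\lvee}})$ of the $\on{VinBun}_G$-data with $(\tau_{\mu^{\lvee}})=t$. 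The remaining factor of the fibre product forces the restriction of $f$ to $U:=S\times(X\setminus\{x\})$ to coincide with the morphism induced by the section $\mathfrak{s}(t)$ of \S\ref{section}; as $\mathfrak{s}$ lands in the nondegenerate locus ${}_0\!\!\on{Vin}_G$ and $X\setminus\{x\}$ is dense, this automatically supplies the genericity condition built into $\on{VinBun}_G$.

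Constructing $\Xi$ then amounts to reading off this factorization on $U$ from the Tannakian data $(\CF_1,\CF_2,\varphi_{\la^{\lvee}},\tau_{\mu^{\lvee}},\eta_{\la^{\lvee}},\zeta_{\la^{\lvee}})$ of \S\ref{degen of the affine Gr}. Condition a) there, $(\zeta_{\la^{\lvee}}\circ\varphi_{\la^{\lvee}}\circ\eta_{\la^{\lvee}})|_U=\on{Id}$, forces each $\eta_{\la^{\lvee}}$ to be fibrewise injective and each $\zeta_{\la^{\lvee}}$ fibrewise surjective on $U$; together with the Pl\"ucker-type compatibilities b),c),d) these are exactly the data of an $N$-structure on $\CF_1|_U$ and an $N_-$-structure on $\CF_2|_U$, hence, as in the proof of \propref{tannak via ordinary def of Gr}, of trivializations $\sigma_1,\sigma_2$ of $\CF_1|_U,\CF_2|_U$. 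I would then verify that with respect to $\sigma_1,\sigma_2$ the collection $\varphi_{\la^{\lvee}}|_U$ becomes the endomorphisms $g_{\la^{\lvee}}$ of $\mathfrak{s}(t)$ described in \S\ref{Upsilon and s via Tannak} (acting on the weight space $(V^{\la^{\lvee}})_{\la^{\lvee}-\mu^{\lvee}}$ by $\tau_{\mu^{\lvee}}$): condition a) pins the highest-weight matrix coefficient of $\varphi_{\la^{\lvee}}|_U$ to $\tau_0=1$, while b),c) propagate this to the remaining weight spaces via the multiplicativity $\tau_{\mu_1^{\lvee}}\tau_{\mu_2^{\lvee}}=\tau_{\mu_1^{\lvee}+\mu_2^{\lvee}}$. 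This yields the desired $S$-point of $\on{VinGr}'_{G,x}$, manifestly lying over $t=(\tau_{\mu^{\lvee}})$.

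The inverse $\Xi^{-1}$ reverses this. Starting from an $S$-point of $\on{VinGr}'_{G,x}$ — that is, $\on{VinBun}_G$-data together with the factorization of $f|_U$ through $\mathfrak{s}(t)$ — the factorization is by definition a pair of generic trivializations $\sigma_1,\sigma_2$ identifying $\varphi|_U$ with $\mathfrak{s}(t)$; transporting the standard $N$- and $N_-$-structures of the trivial bundle through $\sigma_1$ and $\sigma_2$ produces rational morphisms $\eta_{\la^{\lvee}},\zeta_{\la^{\lvee}}$ regular on $U$, and the Tannakian shape of $\mathfrak{s}(t)$ in \S\ref{Upsilon and s via Tannak} guarantees conditions a)--d). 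Just as in \propref{tannak via ordinary def of Gr} one checks that $\eta_{\la^{\lvee}},\zeta_{\la^{\lvee}}$ extend to rational morphisms with a pole of bounded order along $S\times x$, and that $\Xi$ and $\Xi^{-1}$ are mutually inverse and commute with the projections $\Upsilon,\Upsilon'$ to $T^+_{\on{ad}}$.

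The step I expect to be the main obstacle is the bookkeeping over the boundary of $T^+_{\on{ad}}$. When $t$ lies in a boundary stratum the section value $\mathfrak{s}(t)$ is a degenerate element of $\on{Vin}_G$, so the $g_{\la^{\lvee}}$ are no longer invertible and $\sigma_1,\sigma_2$ are genuinely two distinct trivializations of $\CF_1|_U$ and $\CF_2|_U$ rather than an identification of a single bundle, as happens for the ordinary affine Grassmannian. The delicate point is to confirm that condition a), which asserts only $\zeta\circ\varphi\circ\eta=\on{Id}$ and not that $\varphi|_U$ is an isomorphism, is the exact counterpart of ``$\varphi|_U=\mathfrak{s}(t)$ in every weight'', including along the degenerate locus; here the description of $\mathfrak{s}$ in \S\ref{Upsilon and s via Tannak} and the vanishing conditions c) for $\nu^{\lvee}<\la^{\lvee}+\mu^{\lvee}$ carry the essential weight. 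Everything else is a routine, $G\times G$-equivariant transcription of the argument of \propref{tannak via ordinary def of Gr}, carried out in families over $T^+_{\on{ad}}$.
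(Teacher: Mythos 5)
Your overall route is the paper's own: the paper deduces this proposition from its $X^n$-version, \propref{ident of two def}, whose proof is exactly the Tannakian dictionary you describe, with the section $\mathfrak{s}$ of \S\ref{section} playing the role you assign it. But there is a genuine gap at the central step, the construction of $\Xi$. An $N$-structure on $\CF_1|_U$ by itself does not yield a trivialization: a reduction of $\CF_1|_U$ to $N$ is trivializable over the affine $U$, but not canonically, so your ``hence trivializations $\sigma_1,\sigma_2$'' does not produce a morphism of functors (and with an uncontrolled choice of $\sigma_1,\sigma_2$ the subsequent claim that $\varphi_{\la^{\lvee}}|_U$ becomes $g_{\la^{\lvee}}$ is simply false). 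What is needed is a \emph{transversal pair} of structures on each bundle separately, obtained by composing with $\varphi$: on $\CF_1|_U$ take the $N$-structure $\eta_{\la^{\lvee}}$ together with the $N_-$-structure $\zeta_{\la^{\lvee}}\circ\varphi_{\la^{\lvee}}$, and on $\CF_2|_U$ take the $N$-structure $\varphi_{\la^{\lvee}}\circ\eta_{\la^{\lvee}}$ together with the $N_-$-structure $\zeta_{\la^{\lvee}}$. Condition a) says precisely that each pair is transversal, and each transversal pair canonically trivializes, as in \propref{tannak via ordinary def of Gr}; this is how the paper proceeds in \lemref{embedding into product} and in the proof of \propref{ident of two def}.

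The second gap is your ``propagation'' step. Conditions b), c) of \S\ref{degen of the affine Gr} constrain $\eta_{\la^{\lvee}}$ and $\zeta_{\la^{\lvee}}$, not $\varphi_{\la^{\lvee}}$, so multiplicativity of the $\tau_{\mu^{\lvee}}$ cannot by itself force the trivialized $\varphi_{\la^{\lvee}}|_U$ to act on each weight space $(V^{\la^{\lvee}})_{\la^{\lvee}-\mu^{\lvee}}$ by $\tau_{\mu^{\lvee}}$: a priori it could differ from the endomorphisms of $\mathfrak{s}(t)$ by a unipotent factor. What rules this out (and this is exactly the content of the paper's \lemref{embedding into product}) is a monoid computation: in the trivializations above, $(\varphi_{\la^{\lvee}}|_U,\tau_{\mu^{\lvee}}|_U)$ is a $U$-point of $\on{Vin}_G$ fixing all highest-weight vectors and, dually, all lowest-weight covectors; the submonoid of $\on{Vin}_G$ with the first property is $N\cdot\mathfrak{s}(T^+_{\on{ad}})$, the one with the second is $N_-\cdot\mathfrak{s}(T^+_{\on{ad}})$, and their intersection is exactly $\mathfrak{s}(T^+_{\on{ad}})$. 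This same computation settles the boundary-stratum worry in your last paragraph. With these two repairs your construction of $\Xi$ and $\Xi^{-1}$ goes through and agrees with the paper's proof, which constructs the map in the opposite direction (from $\on{VinGr}'_{G,X^n}$ to $\on{VinGr}_{G,X^n}$) and obtains the present statement as the $n=1$ case.
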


\begin{proof}
Follows from Proposition~\ref{ident of two def} below.
\end{proof}

\sssec{Definition of $\on{VinGr}_{G,X^n}$ via Tannakian approach}
\label{VinGr via Tannak}
We now define a degeneration $\on{VinGr}_{G,X^n}$ of the Beilinson-Drinfeld Grassmannian $\Gr_{G,X^n}$ for arbitrary reductive group $G$. An $S$-point of $\on{VinGr}_{G,X^n}$ consists of the following data:

1) a collection of $S$-points $\ul{x}=(x_1,\dots,x_n) \in X^{n}(S)$ of the curve $X$,

2) an $S$-point $(\CF_1, \CF_2, \varphi_{\la^{\lvee}}, \tau_{\mu^{\lvee}}) \in \on{VinBun}_G(S)$,

3) for every $\la^{\!\scriptscriptstyle\vee} \in \Lambda^{\vee+}$, the rational morphisms \begin{equation*}\eta_{\la^{\!\scriptscriptstyle\vee}}\colon  \CO_{S\times X} \ra \CV^{\la^{\!\scriptscriptstyle\vee}}_{\CF_1},~
\zeta_{\la^{\!\scriptscriptstyle\vee}}\colon  \CV^{\la^{\lvee}}_{\CF_2} \ra \CO_{S\times X}\end{equation*}
regular on $(S\times X)\setminus\{\varGamma_{x_1}\cup\dots\cup\varGamma_{x_n}\}$,\\
satisfying the same conditions as in the definition of $\on{VinGr}_{G,x}$ of~\S\ref{degen of the affine Gr} with $U$ replaced by $(S\times X)\setminus\{\varGamma_{x_1}\cup\dots\cup\varGamma_{x_n}\}$.

We have a projection $\on{VinGr}_{G,X^n} \ra \on{VinBun}_G$
that forgets the data of $\ul{x},\eta_{\la^{\lvee}},\zeta_{\la^{\lvee}}$.
We have a projection
$
\pi^{\on{Vin}}_n\colon \on{VinGr}_{G,X^n} \ra X^n
$
that forgets the data of $\CF_1, \CF_2, \varphi_{\la^{\lvee}}, \tau_{\la^{\lvee}}, \zeta_{\la^{\lvee}}, \eta_{\la^{\lvee}}$.

\sssec{The degeneration morphism $\Upsilon\colon \on{VinGr}_{G,X^n} \ra T^{+}_{\on{ad}}$} \label{Morphism v for VinGr} The morphism
$\Upsilon\colon \on{VinGr}_{G,X^n} \ra T^{+}_{\on{ad}}$
is defined as the composition of the morphisms
\begin{equation*}\on{VinGr}_{G,X^n} \ra \on{VinBun}_{G} \ra T^{+}_{\on{ad}}.\end{equation*}

\sssec{Second definition of $\on{VinGr}_{G,X^n}$} (cf. Remark~\ref{second def of GrBD}).
\label{second def VinGrBD}
Let us fix $n \in \BN$.
We denote by $\on{VinBun}_G(\mathfrak{U}_n)$ the following stack over $T^+_{\on{ad}} \times X^n$:
it associates to a scheme $S$

1) a collection of $S$-points $\ul{x}=(x_1,\dots,x_n) \in X^{n}(S)$ of the curve $X$,

2) an element of $Maps_{T^+_{\on{ad}}}(T^+_{\on{ad}} \times ((S\times X)\setminus \{\varGamma_{x_{1}} \cup \dots \cup \varGamma_{x_{n}}\}),
\on{Vin}_G/(G\times G))$.

We have a restriction morphism
$X^{n}\times \on{Maps}_{T^+_{\on{ad}}}(T^{+}_{\on{ad}}
\times X, \on{Vin}_{G}/(G \times G)) \ra \on{VinBun}_G(\mathfrak{U}_n)$.
We also have a morphism $T^{+}_{\on{ad}} \times X^{n} \ra \on{VinBun}_G(\mathfrak{U}_n)$
induced by the morphism
$T^+_{\on{ad}} \xrightarrow{\mathfrak{s}} \on{Vin}_G \ra \on{Vin}_{G}/(G \times G)$.
We define $\on{VinGr}'_{G,X^{n}}$ as the following family over the scheme $T^+_{\on{ad}}$:
\begin{equation*}
(X^n \times \on{Maps}_{T^{+}_{\on{ad}}}(T^{+}_{\on{ad}}\times X,\on{Vin}_G/(G\times G))\underset{\on{VinBun}_G(\mathfrak{U}_n)}\times (T^{+}_{\on{ad}}\times X^n).
\end{equation*}

The morphism $\on{VinGr}'_{G,X^n} \ra T^+_{\on{ad}}$ is denoted by $\Upsilon'$.

\begin{prop} \label{ident of two def}
The families $\Upsilon\colon \on{VinGr}_{G,X^n} \ra T^{+}_{\on{ad}}$ and $\Upsilon'\colon\on{VinGr}'_{G,X^n} \ra T^+_{\on{ad}}$ are isomorphic.
\end{prop}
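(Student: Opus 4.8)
The plan is to follow the same two-step strategy as in Proposition~\ref{tannak via ordinary def of Gr}, now with $\on{pt}/G$ replaced by the $T^+_{\on{ad}}$-family $\on{Vin}_G/(G\times G)$ and with the role of the trivialization on $X\setminus\{x\}$ played by the comparison with the section $\mathfrak{s}$. First I would unwind the fibre product defining $\on{VinGr}'_{G,X^n}$ in~\S\ref{second def VinGrBD}. Since $\on{VinBun}_G(\mathfrak{U}_n)$ lives over $T^+_{\on{ad}}\times X^n$, an $S$-point of the fibre product consists of a collection $\ul{x}=(x_1,\dots,x_n)\in X^n(S)$, an $S$-point $t\in T^+_{\on{ad}}(S)$, a relative map $f\colon S\times X\to\on{Vin}_G/(G\times G)$ lying over $t$, and an identification of the restriction $f|_U$, where $U:=(S\times X)\setminus\{\varGamma_{x_1}\cup\dots\cup\varGamma_{x_n}\}$, with the composite $U\to T^+_{\on{ad}}\xrightarrow{\mathfrak{s}}\on{Vin}_G/(G\times G)$. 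Using the Tannakian description of $\on{VinBun}_G$ recorded earlier, the datum $f$ is precisely $(\CF_1,\CF_2,\varphi_{\la^{\lvee}},\tau_{\mu^{\lvee}})$ subject to conditions a)--d) there, with $t=(\tau_{\mu^{\lvee}})$; in particular $\Upsilon'$ reads off exactly $(\tau_{\mu^{\lvee}})$, so any comparison built out of this dictionary is automatically a morphism of families over $T^+_{\on{ad}}$, as the statement requires.

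The heart of the matter is to translate the comparison $f|_U\simeq\mathfrak{s}$ into the rational morphisms $\eta_{\la^{\lvee}},\zeta_{\la^{\lvee}}$ of~\S\ref{VinGr via Tannak}. By the Tannakian formula for $\mathfrak{s}$ in~\S\ref{Upsilon and s via Tannak}, the endomorphism $\mathfrak{s}(t)$ of $V^{\la^{\lvee}}$ preserves the weight grading, fixes the highest weight line $\BC\cdot v_{\la^{\lvee}}$ (scaling by $\tau_0=1$) and scales each weight space $(V^{\la^{\lvee}})_{\la^{\lvee}-\mu^{\lvee}}$ by $\tau_{\mu^{\lvee}}$. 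Consequently a trivialization of the $(G\times G)$-torsor $\CF_1\times\CF_2$ over $U$ carrying $\varphi_{\la^{\lvee}}|_U$ to $\mathfrak{s}(t)$ is the same as an $N$-structure on $\CF_1|_U$ given by the nowhere vanishing highest weight sections $\eta_{\la^{\lvee}}\colon\CO_U\to\CV^{\la^{\lvee}}_{\CF_1}|_U$ together with an $N_-$-structure on $\CF_2|_U$ given by the surjections $\zeta_{\la^{\lvee}}\colon\CV^{\la^{\lvee}}_{\CF_2}|_U\to\CO_U$ onto the highest weight coinvariants, normalized by $\zeta_{\la^{\lvee}}\circ\varphi_{\la^{\lvee}}\circ\eta_{\la^{\lvee}}=1$ on $U$. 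The conditions b), c), d) of~\S\ref{VinGr via Tannak} are exactly the Pl\"ucker relations making $\eta_{\la^{\lvee}}$ (resp.\ $\zeta_{\la^{\lvee}}$) into a rational $N$-structure on $\CF_1$ (resp.\ $N_-$-structure on $\CF_2$) in the sense of~\S\ref{Tannakian def of BD}, i.e.\ condition~b) of~\S\ref{Tannakian def of Vin} transported through $\mathfrak{s}$. Extending these morphisms, regular on $U$, to honest rational morphisms on $S\times X$ (as in the passage from $\eta'_{\la^{\lvee}}$ to $\eta_{\la^{\lvee}}$ in the proof of Proposition~\ref{tannak via ordinary def of Gr}) then produces precisely an $S$-point of $\on{VinGr}_{G,X^n}$.

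With this dictionary in place I would define $\Xi\colon\on{VinGr}_{G,X^n}\to\on{VinGr}'_{G,X^n}$ by sending $(\ul{x},(\CF_1,\CF_2,\varphi_{\la^{\lvee}},\tau_{\mu^{\lvee}}),\eta_{\la^{\lvee}},\zeta_{\la^{\lvee}})$ to the map $f$ attached to $(\CF_1,\CF_2,\varphi_{\la^{\lvee}},\tau_{\mu^{\lvee}})$ together with the comparison $f|_U\simeq\mathfrak{s}$ built from $(\eta_{\la^{\lvee}},\zeta_{\la^{\lvee}})$, and the inverse $\Xi^{-1}$ by reading $(\eta_{\la^{\lvee}},\zeta_{\la^{\lvee}})$ off a given comparison. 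I would also observe that the generic nondegeneracy built into $\on{VinBun}_G=\on{Maps}_{\on{gen}}(\dots)$ is automatic here: since $\mathfrak{s}$ lands in $_0\!\!\on{Vin}_G$ (\S\ref{nondeg locus},~\S\ref{nondegenerate in Tannak}), the comparison forces $f|_U$, and hence $f$ generically, to factor through $_0\!\!\on{Vin}_G/(G\times G)$. That $\Xi$ and $\Xi^{-1}$ are mutually inverse is then formal, and compatibility with $\Upsilon,\Upsilon'$ holds because both remember $(\tau_{\mu^{\lvee}})$.

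The step I expect to be the main obstacle is the careful bookkeeping of the middle paragraph: establishing, condition by condition, that giving the comparison $f|_U\simeq\mathfrak{s}$ is equivalent to giving $(\eta_{\la^{\lvee}},\zeta_{\la^{\lvee}})$ satisfying a)--d), so that $\Xi$ and $\Xi^{-1}$ are well defined. The delicate point is that the model $\mathfrak{s}(t)$ changes character with $t$: over the open locus $T_{\on{ad}}\subset T^+_{\on{ad}}$ the element $\mathfrak{s}(t)$ is invertible and the comparison is a genuine trivialization, which should reproduce the isomorphism of Proposition~\ref{triv for SL_2}, whereas as $t$ degenerates toward the boundary of $T^+_{\on{ad}}$ the endomorphisms $\mathfrak{s}(t)$ (and $\varphi_{\la^{\lvee}}$) drop rank; one must check that the highest weight vector/covector data $(\eta_{\la^{\lvee}},\zeta_{\la^{\lvee}})$ still rigidifies the $G\times G$-trivialization to exactly the comparison datum in this degenerate regime as well.
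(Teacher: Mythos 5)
Your proposal is correct and follows essentially the same route as the paper's proof: unwind the fibre product defining $\on{VinGr}'_{G,X^n}$, use the Tannakian descriptions of $\on{Vin}_G$, $T^+_{\on{ad}}$ and the section $\mathfrak{s}$, and convert between trivializations matching $\mathfrak{s}$ and the highest-weight data $(\eta_{\la^{\lvee}},\zeta_{\la^{\lvee}})$ via $N$- and $N_-$-structures (the paper writes out the direction $\on{VinGr}'_{G,X^n}\to\on{VinGr}_{G,X^n}$, i.e.\ your $\Xi^{-1}$, and treats the inverse as symmetric). The rigidity you flag as the delicate point --- that the highest weight vector/covector data pin down the trivializations and force $\varphi_{\la^{\lvee}}|_U$ to equal $\mathfrak{s}(t)$ even over the boundary of $T^+_{\on{ad}}$ --- is exactly the submonoid computation ($N\cdot\mathfrak{s}(T^+_{\on{ad}})$ and $N_-\cdot\mathfrak{s}(T^+_{\on{ad}})$ intersecting in $\mathfrak{s}(T^+_{\on{ad}})$) that the paper records in the proof of Lemma~\ref{embedding into product}.
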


\begin{proof}
Let us construct a morphism
$\mho\colon\on{VinGr}'_{G,X^n} \ra \on{VinGr}_{G,X^n}$. Let $f\colon S \ra T^+_{\on{ad}}$ be a scheme over $T^+_{\on{ad}}$. An $S$-point of the family
$\on{VinGr}'_{G,X^n}$ over $T^+_{\on{ad}}$ is the following data:

1) a collection of $S$-points $\ul{x} = (x_1,\dots,x_n) \in X^{n}(S)$
of the curve $X$,

2) two $G$-bundles $\CF_1, \CF_2$ on $S\times X$,

3) a $G\times G$-equivariant morphism
$\digamma\colon \CF_1\times \CF_2 \ra \on{Vin}_G$
of schemes over $T^+_{\on{ad}}$,

4) trivializations $\sigma_1, \sigma_2$ of $\CF_1, \CF_2$ on $U:=(S\times X) \setminus \{\varGamma_{x_1}\cup\dots\cup\varGamma_{x_n}\}$
such that the morphism $\digamma$ is identified with the morphism
$\digamma^{\on{triv}}\colon G\times U \times G \ra \on{Vin}_G$ given by
$(g_1,p,x,g_2) \mapsto g_1 \cdot \mathfrak{s}(f(p)) \cdot {g_2}^{-1},$
where $\mathfrak{s}$ is the section of
the morphism $\Upsilon\colon \on{Vin}_G \ra T^+_{\on{ad}}$
defined in \S\ref{section},~\ref{Upsilon and s via Tannak}.

From the Tannakian description of $\on{Vin}_G$ in~\S\ref{Tannakian def of Vin} and the
description of $T^+_{\on{ad}}$ in
\S\ref{Upsilon and s via Tannak} it follows
that the $G\times G$-equivariant morphism $\digamma \colon \CF_1 \times \CF_2 \ra \on{Vin}_G$ of schemes over $T^+_{\on{ad}}$
is the same as the collection of $G\times G$-equivariant morphisms $\digamma_{\la^{\lvee}} \colon \CF_1 \times \CF_2 \ra \on{End}(V^{\la^{\lvee}})$ for each $\la^{\vee} \in \La^{\vee+}$
and (fixed) $G\times G$-equivariant morphisms $\widetilde{\tau}_{\mu^{\lvee}}\colon \CF_1\times \CF_2 \ra \BC$
satisfying the same conditions as in the definition of $\on{Vin}_G$
of \S\ref{Tannakian def of Vin}.
Each $G\times G$-equivariant morphism $\digamma_{\la^{\lvee}} \colon \CF_1 \times \CF_2 \ra \on{End}(V^{\la^{\lvee}})$ induces the morphism
$\varphi_{\la^{\lvee}}\colon
\CV^{\la^{\lvee}}_{\CF_1} \ra \CV^{\la^{\lvee}}_{\CF_2}$. Note also that the morphisms $\widetilde{\tau}_{\mu^{\lvee}}$ induce the morphisms
$\tau_{\mu^{\lvee}}\colon\CO_{S\times X}=\BC_{\CF_1} \ra \BC_{\CF_2}=\CO_{S\times X}$.
The trivializations $\sigma_1, \sigma_2$ induce the trivializations of the
vector bundles $\CV^{\la^{\lvee}}_{\CF_1}, \CV^{\la^{\lvee}}_{\CF_2}$
such that the morphisms $\varphi_{\la^{\lvee}}|_{U}
\colon {\CV^{\la^{\lvee}}_{\CF_1}}|_{U} \ra {\CV^{\la^{\lvee}}_{\CF_2}}|_{U}$
get identified with
$
g_{\la^{\lvee}}\colon V^{\la^{\lvee}}\otimes \CO_{U}
\ra V^{\la^{\lvee}}\otimes \CO_{U},
$
where $g_{\la^{\lvee}}$ is the tensor product of the morphisms 
\begin{equation*}
\on{Id}\otimes (\tau_{\mu^{\vee}}|_{U})\colon (V^{\la^{\vee}})_{\la^{\vee}-\mu^{\vee}} \otimes \CO_U \ra (V^{\la^{\vee}})_{\la^{\vee}-\mu^{\vee}} \otimes \CO_U
\end{equation*} (cf.\ \S\ref{Upsilon and s via Tannak}).
The standard $N,N_{-}$-structures in the trivial $G$-bundle on $S\times X$  define via
$\sigma_1,\sigma_2$ the $N$ and $N_{-}$-structures
in ${\CF_1}|_{U},~{\CF_2}|_{U}$ respectively. They give us the collection of rational morphisms
$\CO_{S\times X} \xrightarrow{\eta_{\la^{\!\scriptscriptstyle\vee}}} {\CV^{\la^{\!\scriptscriptstyle\vee}}_{\CF_1}},~{\CV^{\la^{\!\scriptscriptstyle\vee}}_{\CF_2}}\xrightarrow{\zeta_{\la^{\!\scriptscriptstyle\vee}}}
\CO_{S\times X}$ (cf. proof of Proposition~\ref{tannak via ordinary def of Gr}).
Set
\begin{equation*}
\mho(\ul{x},\CF_1,\CF_2,\digamma,\sigma_1,\sigma_2):=(\ul{x},\CF_1,\CF_2,\varphi_{\la^{\lvee}},\tau_{\mu^{\lvee}},\zeta_{\la^{\!\scriptscriptstyle\vee}},\eta_{\la^{\!\scriptscriptstyle\vee}}).
\end{equation*}
The inverse morphism $\mho^{-1}$ can be constructed using the same arguments.
\end{proof}

Recall the action $T \curvearrowright T^+_{\on{ad}}$ of \S\ref{Morphism v for Vin}.
Note that the torus $T$ acts on the space $\on{VinGr}'_{G,X^n}=\on{VinGr}_{G,X^n}$
via the actions $T \curvearrowright T^+_{\on{ad}}, \on{Vin}_G$ of \S\ref{Morphism v for Vin}.
It is easy to see that the morphism $\Upsilon$ is $T$-equivariant.

\begin{lem} \label{restriction for VinGrBD} The family
$\Upsilon^{-1}(T_{\on{ad}})$
is isomorphic to $\on{Gr}_{G,X^n} \times T_{\on{ad}}$.
\end{lem}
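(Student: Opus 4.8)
The plan is to exploit the $T$-equivariance of $\Upsilon$ recorded just above the statement, thereby reducing the assertion to an identification of the single fibre of $\Upsilon$ over $1\in T_{\on{ad}}$ together with a splitting of the $Z_G$-ambiguity.

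First I would analyze the fibre $\Upsilon^{-1}(1)$, i.e.\ the locus where $\tau_{\mu^{\lvee}}=1$ for all $\mu^{\lvee}\in\La^{\vee,\pos}$. On this locus the conditions b),c),d) of~\S\ref{Tannakian def of Vin} degenerate to the tensor compatibilities of a $G$-valued point, so by Tannakian reconstruction the collection $\{\varphi_{\la^{\lvee}}\}$ is forced to arise from a single isomorphism of $G$-bundles $\CF_1\iso\CF_2$ (exactly reversing the passage from $\digamma$ to $\varphi_{\la^{\lvee}}$ in the proof of~\propref{ident of two def}). Writing $\CF:=\CF_2$ and transporting $\eta_{\la^{\lvee}}$ through this isomorphism, condition a) of~\S\ref{degen of the affine Gr}, namely $(\zeta_{\la^{\lvee}}\circ\varphi_{\la^{\lvee}}\circ\eta_{\la^{\lvee}})|_U=\on{Id}$, together with the Pl\"ucker compatibilities b),c),d), is precisely the datum of transversal rational $N$- and $N_-$-structures on $\CF$ regular off $\varGamma_{x_1}\cup\dots\cup\varGamma_{x_n}$. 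This is the moduli problem $\Gr'_{G,X^n}$ of~\S\ref{Tannakian def of BD}, which by~\propref{tannak via ordinary def of Gr} is isomorphic to $\Gr_{G,X^n}$. The construction is manifestly functorial in $S$, so it yields an isomorphism $\Upsilon^{-1}(1)\iso\Gr_{G,X^n}$; this step is the exact generalization of~\propref{triv for SL_2} from $G=\on{SL}_2$ to arbitrary reductive $G$.

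Next I would globalize via the action map. Since $\Upsilon$ is $T$-equivariant, $T$ acts on $T^+_{\on{ad}}$ through $T_{\on{ad}}=T/Z_G$, and $T_{\on{ad}}\subset T^+_{\on{ad}}$ is the open $T_{\on{ad}}$-orbit on which $T_{\on{ad}}$ acts simply transitively. Hence the action map $a\colon T\times\Upsilon^{-1}(1)\ra\Upsilon^{-1}(T_{\on{ad}})$, $(t,y)\mapsto t\cdot y$, is surjective, and its fibres are exactly the orbits of the free $Z_G$-action $z\cdot(t,y)=(tz^{-1},z\cdot y)$, so that $\Upsilon^{-1}(T_{\on{ad}})\cong(T\times\Upsilon^{-1}(1))/Z_G$. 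The point that makes this quotient split as a product is that $Z_G$ acts trivially on $\Upsilon^{-1}(1)=\Gr_{G,X^n}$: the induced action is by left multiplication by the constant central element, which is trivial on $G_\CK/G_\CO$ because $Z_G\subset G_\CO$ is central. Consequently $(T\times\Gr_{G,X^n})/Z_G\iso(T/Z_G)\times\Gr_{G,X^n}=T_{\on{ad}}\times\Gr_{G,X^n}$, which is the desired conclusion.

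Both steps are essentially formal. The one point deserving care is the Tannakian reconstruction in the first step, namely that over $1\in T_{\on{ad}}$ the family $\{\varphi_{\la^{\lvee}}\}$ genuinely descends to a $G$-bundle isomorphism rather than a mere compatible family of maps of associated bundles; this is handled exactly as in the proofs of~\propref{tannak via ordinary def of Gr} and~\propref{ident of two def}. Granting this, the verification that $a$ is a $Z_G$-torsor onto $\Upsilon^{-1}(T_{\on{ad}})$ and that $Z_G$ acts trivially on the affine Grassmannian fibre is then routine.
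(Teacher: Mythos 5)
Your proof is correct and takes essentially the same route as the paper's: reduce by $T$-equivariance to identifying the single fibre $\Upsilon^{-1}(1)$ with $\Gr_{G,X^n}$, and carry out that identification via the Tannakian dictionary exactly as in \propref{triv for SL_2} (the paper's proof consists of precisely these two remarks). The only difference is one of detail: you additionally justify why equivariance yields a genuine product decomposition over $T_{\on{ad}}$ — namely that the action map $T\times\Upsilon^{-1}(1)\ra\Upsilon^{-1}(T_{\on{ad}})$ is a $Z_G$-quotient and $Z_G$ acts trivially on the Grassmannian fibre since a central constant element lies in $G_\CO$ — a point the paper's one-line argument leaves implicit.
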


\begin{proof} The morphism $\Upsilon$ is $T$-equivariant
so it is enough to identify the fiber $\Upsilon^{-1}(1)$
with $\on{Gr}_{G,X^n}$. The argument is analogous to the proof of Proposition~\ref{triv for SL_2}.
\end{proof}

Let $\Delta_{X} \hookrightarrow X^2$ be
the closed embedding of the diagonal. Let
$U \hookrightarrow X^2$ be
the open embedding of the complement to the diagonal.
\begin{prop} \label{factor for VinGr}
a) The restriction of the family
$\pi^{\on{Vin}}_2\colon\on{VinGr}_{G,X^2} \ra X^2$
to the closed subvariety $\Delta_{X} \hookrightarrow X^2$
is isomorphic to $\on{VinGr}_{G,X}$.

b) The restriction of the family
$\pi^{\on{Vin}}_2\colon\on{VinGr}_{G,X^2} \ra X^2$
to the open subvariety $U \hookrightarrow X^2$
is isomorphic to $(\on{VinGr}_{G,X}\underset{T^{+}_{\on{ad}}}\times \on{VinGr}_{G,X})|_U$.
\end{prop}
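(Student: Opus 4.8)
The plan is to work throughout with the mapping-stack description of $\on{VinGr}_{G,X^n}$ from \S\ref{second def VinGrBD}, which by Proposition~\ref{ident of two def} is isomorphic to the Tannakian one. Under this description an $S$-point of $\on{VinGr}_{G,X^n}$ lying over $t\in T^+_{\on{ad}}$ is a collection $\underline{x}=(x_1,\dots,x_n)$ together with a $G\times G$-equivariant map $\digamma\colon\CF_1\times\CF_2\to\on{Vin}_G$ over $t$, equipped with trivializations away from the graphs that identify $\digamma$ with the standard section $\mathfrak{s}(t)$ on $(S\times X)\setminus\{\varGamma_{x_1}\cup\dots\cup\varGamma_{x_n}\}$. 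In other words, a point of $\on{VinGr}_{G,X^n}$ is a modification, supported on $\varGamma_{x_1}\cup\dots\cup\varGamma_{x_n}$, of the ``standard'' datum $\mathfrak{s}(t)$ attached to a single $t\in T^+_{\on{ad}}$. Both assertions then become instances of the fact that such modifications are controlled only by the formal neighbourhoods of the graphs, exactly as in the classical situation recalled in \S\ref{tensor structure} (\cite[\S5.3.12]{bd},~\cite[Proposition~3.1.13]{zh}).

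For part a), restricting $\pi^{\on{Vin}}_2$ to the diagonal forces $x_1=x_2=x$, so $\varGamma_{x_1}\cup\varGamma_{x_2}=\varGamma_x$ and the locus on which the rational morphisms $\eta_{\la^{\lvee}},\zeta_{\la^{\lvee}}$ are required to be regular becomes $S\times(X\setminus\{x\})$. Comparing the Tannakian data of \S\ref{VinGr via Tannak} for $n=2$ restricted to the diagonal with that of \S\ref{degen of the affine Gr}, one checks that the two agree verbatim; this yields the desired isomorphism with $\on{VinGr}_{G,X}$, compatibly with the projections to $T^+_{\on{ad}}$.

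For part b), over $U$ the graphs $\varGamma_{x_1}$ and $\varGamma_{x_2}$ are disjoint. I would apply a Beauville--Laszlo gluing argument: a $G\times G$-equivariant modification of $\mathfrak{s}(t)$ supported on $\varGamma_{x_1}\sqcup\varGamma_{x_2}$ is the same datum as a pair of modifications, one supported on $\varGamma_{x_1}$ and one on $\varGamma_{x_2}$, since each is determined by its restriction to the formal neighbourhood of the corresponding graph and the two neighbourhoods are disjoint. The first modification, together with the background point $t$, is an $S$-point of $\on{VinGr}_{G,X}$ at $x_1$, and the second an $S$-point at $x_2$. The crucial point is that both are modifications of the \emph{same} standard section $\mathfrak{s}(t)$ for one and the same $t\in T^+_{\on{ad}}$; this is exactly what records the two $\on{VinGr}_{G,X}$-points as lying over a common point of $T^+_{\on{ad}}$, and hence produces the fibre product $\on{VinGr}_{G,X}\underset{T^+_{\on{ad}}}\times\on{VinGr}_{G,X}$ rather than an ordinary product. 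The reverse map, gluing a pair of modifications sharing the base point $t$ into a single one, is routine once this common $t$ is fixed.

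The main obstacle is precisely the bookkeeping of the $T^+_{\on{ad}}$-parameter. In the classical Beilinson--Drinfeld case the analogous factorization produces $(\on{Gr}_{G,X}\times\on{Gr}_{G,X})|_U$ with an honest product, because there is no Vinberg parameter; here the single datum $(\CF_1,\CF_2,\varphi_{\la^{\lvee}},\tau_{\mu^{\lvee}})$ carries one global collection of functions $\tau_{\mu^{\lvee}}$, hence one value of $\Upsilon$, and the gluing must be carried out relative to $T^+_{\on{ad}}$. Concretely, one must verify that the functions $\tau_{\mu^{\lvee}}$, which live on all of $S\times X$ and are insensitive to the points $x_i$, agree across the two factors, so that the reconstructed $\on{VinBun}_G$-datum is well defined; this is exactly what upgrades the product to the fibre product over $T^+_{\on{ad}}$.
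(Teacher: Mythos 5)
Your proposal is correct and takes essentially the same approach as the paper: part a) by direct comparison of the defining data on the diagonal, and part b) by the gluing argument of \cite[Proposition~3.1.13]{zh} applied to modifications of the standard section $\mathfrak{s}(t)$ at the two disjoint graphs, with the single shared value $t\in T^+_{\on{ad}}$ (equivalently, the single collection $\tau_{\mu^{\lvee}}$) being exactly what produces the fibre product over $T^+_{\on{ad}}$. The only cosmetic difference is that the paper performs the gluing over the Zariski cover $\{(S\times X)\setminus\varGamma_{x_1},\,(S\times X)\setminus\varGamma_{x_2}\}$ rather than phrasing it via formal neighbourhoods of the graphs.
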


\begin{proof}
Part a) is clear, let us prove b). The proof is the same as the one of~\cite[Proposition~3.1.13]{zh}.
Take $\ul{x}=(x_1,x_2) \in U(S)\subset X^{2}(S)$.
We define a morphism $(\on{VinGr}_{G,X^2})|_{U}(S) \ra \on{VinGr}_{G,X}$
by sending $(\ul{x},\CF_1,\CF_2,\digamma,\sigma_1,\sigma_2)$
to $(x_1,(\CF_1)_{x_1},(\CF_2)_{x_1},\digamma_{x_1},(\sigma_1)_{x_1},(\sigma_2)_{x_1})$
where $(\CF_k)_{x_1}$ ($k \in \{1,2\}$) is obtained by gluing $(\CF_k)|_{((S\times X) \setminus \{\varGamma_{x_2}\})}$ and $(\CF^{\on{triv}})|_{((S\times X) \setminus \{\varGamma_{x_1}\})}$ via $\sigma_k$
and therefore is equipped with a trivialization
$(\sigma_k)_{x_1}$. The morphism $\digamma_{x_1}$ is obtained by gluing $\digamma|_{((S\times X) \setminus \{\varGamma_{x_2}\})}$ and
$\digamma^{\on{triv}}|_{((S\times X) \setminus \{\varGamma_{x_1}\})}$. Similarly, we have another morphism $(\on{VinGr}_{G,X^2})|_{U}(S) \ra \on{VinGr}_{G,X}$. Together, they define a morphism
$(\on{VinGr}_{G,X^2})|_{U} \ra (\on{VinGr}_{G,X} \underset{T^+_{\on{ad}}}\times \on{VinGr}_{G,X})|_U.$

Conversely if we have
$(x_1,(\CF_1)_{x_1},(\CF_2)_{x_1},\digamma_{x_1},(\sigma_1)_{x_1},(\sigma_2)_{x_1}) \in \on{VinGr}_{G,X}(S)$
and
$(x_2,(\CF_1)_{x_2},(\CF_2)_{x_2},\digamma_{x_2},(\sigma_1)_{x_2},(\sigma_2)_{x_2}) \in \on{VinGr}_{G,X}(S)$ such that $(x_1,x_2) \in U(S)$, we can construct $\CF_k$ ($k \in \{1,2\}$) by gluing
${(\CF_k)_{x_1}}|_{((S\times X) \setminus \{\varGamma_{x_2}\})}$
and ${(\CF_k)_{x_2}}|_{((S\times X) \setminus \{\varGamma_{x_1}\})}$
by $(\sigma_k)^{-1}_{x_2}(\sigma_k)_{x_1}$, which by definition is equipped with a trivialization $\sigma_k$ on $(S\times X) \setminus \{\varGamma_{x_1} \cup \varGamma_{x_2}\}.$ The morphism $\digamma$ is obtained by gluing $\digamma_{x_1}$ and $\digamma_{x_2}$.
\end{proof}

\sssec{Embedding of $\on{VinGr}_{G,X^n}$ into the product $(\Gr_{G,X^n}\underset{X^{n}}\times \Gr_{G,X^n})\times T^+_{\on{ad}}$} \label{embedding} Let us construct a closed embedding \begin{equation*}\vartheta\colon \on{VinGr}_{G,X^n} \hookrightarrow \Gr_{G,X^n}\underset{X^n}\times \Gr_{G,X^n}\times T^+_{\on{ad}}. \end{equation*}
It sends an $S$-point
\begin{equation*}(\CF_1,\CF_2,\varphi_{\la^{\lvee}},\tau_{\mu^{\lvee}},\zeta_{\la^{\lvee}},\eta_{\la^{\lvee}})
  \in \on{VinGr}_{G,X^n}(S)\colon \CO_{S\times X} \xrightarrow{\eta_{\la^{\lvee}}} \CV^{\la^{\lvee}}_{\CF_1}
  \xrightarrow{\varphi_{\la^{\lvee}}} \CV^{\la^{\lvee}}_{\CF_2} \xrightarrow{\zeta_{\la^{\lvee}}} \CO_{S\times X}
\end{equation*}
to the point \begin{equation*}(\CF_1,\eta_{\la^{\lvee}},\zeta_{\la^{\lvee}}\circ \varphi_{\la^{\lvee}})\times(\CF_2,\varphi_{\la^{\lvee}}\circ \eta_{\la^{\lvee}},\zeta_{\la^{\lvee}})\times\tau_{\mu^{\lvee}}.\end{equation*}

\begin{lem}\label{embedding into product} The morphism $\vartheta$ is a closed embedding (cf. \cite[Lemma~5.2.7]{s1}).
\end{lem}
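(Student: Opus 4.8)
The plan is to identify $\on{VinGr}_{G,X^n}$, via $\vartheta$, with an explicit \emph{closed} subfunctor of $\Gr_{G,X^n}\underset{X^n}\times\Gr_{G,X^n}\times T^+_{\on{ad}}$; I will first check $\vartheta$ is a monomorphism and then exhibit its image as a closed subfunctor with an explicit inverse. I take care not to presuppose that $\on{VinGr}_{G,X^n}$ is representable (this Lemma is exactly what feeds into representability), so properness arguments are replaced by a direct reconstruction. To begin, $\vartheta$ is well defined: for a VinGr-point the compositions $\zeta_{\la^{\lvee}}\circ\varphi_{\la^{\lvee}}\circ\eta_{\la^{\lvee}}$ restrict to the identity on $U:=(S\times X)\setminus\{\varGamma_{x_1}\cup\dots\cup\varGamma_{x_n}\}$ by condition a) of \S\ref{degen of the affine Gr}, so the pairs $(\eta_{\la^{\lvee}},\,\zeta_{\la^{\lvee}}\circ\varphi_{\la^{\lvee}})$ and $(\varphi_{\la^{\lvee}}\circ\eta_{\la^{\lvee}},\,\zeta_{\la^{\lvee}})$ define transversal rational $N$- and $N_-$-structures on $\CF_1$ and $\CF_2$; by Proposition~\ref{tannak via ordinary def of Gr} these are genuine $S$-points of $\Gr_{G,X^n}$ over the same $\ul x$, while $(\tau_{\mu^{\lvee}})$ is an $S$-point of $T^+_{\on{ad}}$.

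For the monomorphism property, note that from the image we directly recover $\CF_1,\CF_2,(\tau_{\mu^{\lvee}})$ together with $\eta_{\la^{\lvee}}$ (first factor) and $\zeta_{\la^{\lvee}}$ (second factor); only $\varphi_{\la^{\lvee}}$ must be reconstructed. Over $U$ the morphisms $\eta_{\la^{\lvee}}$ and $\zeta_{\la^{\lvee}}\circ\varphi_{\la^{\lvee}}$ trivialize $\CF_1|_U$, while $\varphi_{\la^{\lvee}}\circ\eta_{\la^{\lvee}}$ and $\zeta_{\la^{\lvee}}$ trivialize $\CF_2|_U$; both trivializations are recovered from the image. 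Under them $\varphi_{\la^{\lvee}}|_U$ is forced to equal the diagonal endomorphism $g_{\la^{\lvee}}$ acting on $(V^{\la^{\lvee}})_{\la^{\lvee}-\mu^{\lvee}}$ by $\tau_{\mu^{\lvee}}$, exactly as computed in the proof of Proposition~\ref{ident of two def} together with \S\ref{Upsilon and s via Tannak}. Since $\CV^{\la^{\lvee}}_{\CF_1}$ is a vector bundle and $U$ is fibrewise dense in $S\times X$, the morphism $\varphi_{\la^{\lvee}}\colon\CV^{\la^{\lvee}}_{\CF_1}\ra\CV^{\la^{\lvee}}_{\CF_2}$ is determined by $\varphi_{\la^{\lvee}}|_U$. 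Thus the entire VinGr-datum is determined functorially by its $\vartheta$-image, and $\vartheta$ is a monomorphism.

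It remains to see that the image is closed and that $\vartheta$ is an isomorphism onto it. By the previous paragraph a triple $\big((\CF_1,\eta^1_{\la^{\lvee}},\zeta^1_{\la^{\lvee}}),(\CF_2,\eta^2_{\la^{\lvee}},\zeta^2_{\la^{\lvee}}),(\tau_{\mu^{\lvee}})\big)$ lies in the image precisely when the canonical rational morphism $\varphi_{\la^{\lvee}}|_U$ built from the two trivializations and $(\tau_{\mu^{\lvee}})$ extends to a morphism of vector bundles $\varphi_{\la^{\lvee}}\colon\CV^{\la^{\lvee}}_{\CF_1}\ra\CV^{\la^{\lvee}}_{\CF_2}$ over all of $S\times X$ satisfying $\varphi_{\la^{\lvee}}\circ\eta^1_{\la^{\lvee}}=\eta^2_{\la^{\lvee}}$, $\zeta^2_{\la^{\lvee}}\circ\varphi_{\la^{\lvee}}=\zeta^1_{\la^{\lvee}}$, and the Pl\"ucker relations b), c), d) of \S\ref{degen of the affine Gr}; the composition identities already hold on $U$, hence hold everywhere once the extension exists, by torsion-freeness of the target. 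Working inside a bounded piece of the product (fixing dominance bounds on the polar parts), regularity of the prescribed extension across the graphs $\varGamma_{x_i}$ and the Pl\"ucker identities are cut out by the vanishing of finitely many matrix coefficients, hence are closed conditions; the assignment of the reconstructed $(\CF_1,\CF_2,\varphi_{\la^{\lvee}},\tau_{\mu^{\lvee}},\zeta_{\la^{\lvee}},\eta_{\la^{\lvee}})$ is the inverse of $\vartheta$ on this closed subfunctor. Therefore $\vartheta$ is a closed embedding.

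The step I expect to be the main obstacle is the last one: verifying that ``the prescribed rational morphism $\varphi_{\la^{\lvee}}|_U$ extends regularly across the $\varGamma_{x_i}$, compatibly for all $\la^{\lvee}$ and Pl\"ucker-compatibly'' is genuinely a closed condition. This requires the reduction to bounded pieces and the expression of regularity and of the Pl\"ucker compatibilities as vanishing of matrix entries; this is precisely the content of the cited \cite[Lemma~5.2.7]{s1}, whose argument transfers here.
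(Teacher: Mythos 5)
Your argument is correct, but its second half takes a genuinely different route from the paper's. The monomorphism step is essentially the paper's: both recover the trivializations of $\CF_1|_U$ and $\CF_2|_U$ from the image data and observe that any filler $\varphi_{\la^{\lvee}}$ must then restrict over $U$ to the diagonal endomorphism $\mathfrak{s}(\tau_{\mu^{\lvee}})|_U$ --- though note that the reason this is \emph{forced}, rather than merely verified for points already known to lie in the image (which is all the computation in the proof of Proposition~\ref{ident of two def} gives), is the submonoid fact the paper makes explicit: elements of $\on{Vin}_G$ fixing all highest weight vectors form $N\cdot\mathfrak{s}(T^+_{\on{ad}})$, those fixing all lowest covectors form $N_-\cdot\mathfrak{s}(T^+_{\on{ad}})$, and these intersect in $\mathfrak{s}(T^+_{\on{ad}})$. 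Where you genuinely diverge is closedness: you exhibit the image directly as the locus, inside each bounded closed piece of $\Gr_{G,X^n}\underset{X^n}\times\Gr_{G,X^n}\times T^+_{\on{ad}}$, where the canonical rational morphism $\varphi_{\la^{\lvee}}|_U$ built from the two trivializations and $\tau$ extends regularly across the graphs; since poles are bounded on a bounded piece, the obstruction to extension is a section of a vector bundle on the base (the pushforward of $\CHom$ into the flat, finitely supported quotient $\CV^{\la^{\lvee}}_{\CF_2}(D)/\CV^{\la^{\lvee}}_{\CF_2}$), so its vanishing is a closed, base-change-compatible condition, and your reconstruction supplies the inverse; all remaining compatibilities hold on $U$ and propagate by torsion-freeness. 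This is Schieder's original $SL_2$ strategy \cite[Lemma~5.2.7]{s1} and it does generalize; it is self-contained and makes the representability statement (Proposition~\ref{repres of BD}) immediate. The paper instead proves that $\vartheta$ is \emph{proper}: it factors $\vartheta=\widetilde{\kappa}\circ(\varrho\circ\varsigma)$, shows $\varrho\circ\varsigma$ is a closed embedding by a $T$-torsor argument (a closed subspace of the torsor ${\mathcal X}\ra\BP{\mathcal X}$ meeting each orbit in a single point has closed image), and obtains properness of $\widetilde{\kappa}$ by base change from Proposition~\ref{proper over Bun}, i.e.\ from properness of $\ol{\on{Bun}}_G\ra\on{Bun}_G\times\on{Bun}_G$ proved in the Appendix. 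The trade-off: the paper's route avoids exactly the bounded-piece and flatness bookkeeping that you yourself flag as the main obstacle, at the cost of invoking a substantive global input (needed elsewhere in the paper anyway), whereas your route avoids that input entirely but must carry out the deferred verification in full.
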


\begin{proof} Let us show that $\vartheta$ induces an injective map on the level of $S$-points.
Let us identify $T^+_{\on{ad}}$ with ${\on{Maps}}(X,T^+_{\on{ad}})$. We already used this identification in the definition of the morphism $\vartheta$ because the
morphisms $\tau_{\mu^{\lvee}}\colon \CO_{S\times X} \ra \CO_{S\times X}$
define an $S$-point of ${\on{Maps}}(X,T^+_{\on{ad}})\simeq T^+_{\on{ad}}$.
Take an $S$-point \begin{equation*} (P,\tau_{\mu^{\lvee}}) \in (\Gr_{G,X^n}\underset{X^{n}}\times \Gr_{G,X^n})\times {\on{Maps}}(X,T^+_{\on{ad}}).\end{equation*}
The point $P \in (\Gr_{G,X^n}\underset{X^{n}}\times \Gr_{G,X^n})$ is represented by a collection of the following outer diamonds:
\begin{equation} \label{rhomb}\xymatrix@+10pt{
 & \CV^{\la^{\lvee}}_{\CF_{1}}
 \ar[dr]^{\zeta_{\la^{\lvee}}} \ar@{..>}^{\varphi_{\la^{\lvee}}}[dd] & \\
 \CO_{X\times S} \ar[ur]^{\eta_{\la^{\lvee}}} \ar[dr]_{\eta'_{\la^{\lvee}}} & & \CO_{X \times S}. \\
 & \CV^{\la^{\lvee}}_{\CF_{2}} \ar[ur]_{\zeta'_{\la^{\lvee}}} & \\
}\end{equation}
We must show that there is at most one collection of dotted arrows $\varphi_{\la^{\lvee}}$ making both triangles commutative.
Note that the collection of morphisms $(\eta_{\la^{\lvee}}, \zeta_{\la^{\lvee}}, \eta'_{\la^{\lvee}}, \zeta'_{\la^{\lvee}})$ defines the trivializations of
$G$-bundles
\begin{equation*}{\CF_{1}}|_{((S\times X)\setminus\{\varGamma_{x_1}\cup\dots\cup\varGamma_{x_n}\})},~
\CF_{2}|_{((S\times X)\setminus\{\varGamma_{x_1}\cup\dots\cup\varGamma_{x_n}\})}.\end{equation*}
After these trivializations and restrictions to
\begin{equation*} U:=(S\times X)\setminus\{\varGamma_{x_1}\cup\dots\cup\varGamma_{x_n}\}\end{equation*}
the outer diamond~(\ref{rhomb}) takes the following form:
\begin{equation*}\xymatrix@+10pt{
 & V^{\la^{\lvee}}\otimes\CO_{U} \ar[dr]^{\zeta_{\la^{\lvee}}} \ar@{..>}^{{\varphi_{\la^{\lvee}}}_{|U}}[dd] & \\
 \CO_U \ar[ur]^{\iota_{\la^{\lvee}}} \ar[dr]_{\iota_{\la^{\lvee}}} & & \CO_U, \\
 & V^{\la^{\lvee}}\otimes\CO_{U} \ar[ur]_{\zeta_{\la^{\lvee}}} & \\
}\end{equation*}
where the morphisms $\iota_{\la^{\lvee}}$ correspond to the highest vector embeddings $\BC \hookrightarrow V^{\la^{\lvee}}$
and the morphisms $(\zeta_{\la^{\lvee}})^{*}$ correspond to the lowest vector
embeddings $\BC \hookrightarrow (V^{\la^{\lvee}})^*$.
It follows that
$({\varphi_{\la^{\lvee}}}|_U,~\tau_{\mu^{\lvee}})$
actually corresponds to the unique point in $\on{Vin}_G(U)$
that can be described as $\mathfrak{s}({\tau_{\mu^{\lvee}}}|_U)$ (see~\S\ref{section} for the definition
of $\mathfrak{s}$). Indeed, the submonoid of $\on{Vin}_G$ consisting of $g \in \on{Vin}_G$ such
that $g_\la|_{(V^{\la^\vee})_{\la^\vee}}=\on{Id}$ for any
$\la \in \La^{\vee+}$ is $N\cdot\mathfrak{s}(T^+_{\on{ad}})$ and the submonoid of 
$\on{Vin}_G$ consisting of $g \in \on{Vin}_G$ such that 
$g^*_\la|_{(V^{\la^\vee})^{*}_{-\la^\vee}}=\on{Id}$ is $N_-\cdot\mathfrak{s}(T^+_{\on{ad}})$,
so their intersection is $\mathfrak{s}(T^+_{\on{ad}})$. The claim follows.

Let us now show that the morphism $\vartheta$ is proper. Consider the following fibre product:
\begin{equation*}{\mathcal X}:=((\Gr_{G,X^n}\underset{X^n}\times\Gr_{G,X^n})\times T^+_{\on{ad}})
\underset{\on{Bun}_G\times \on{Bun}_G}\times\on{VinBun}_{G}.\end{equation*}
We have a projection
\begin{equation*}
  {\mathcal X} \ra
(\Gr_{G,X^n}\underset{X^n}\times\Gr_{G,X^n})\times T^+_{\on{ad}}.\end{equation*}
Recall the projection $\on{VinGr}_{G,X^n} \ra \on{VinBun}_G$
of~\S\ref{VinGr via Tannak}. Together with the morphism $\vartheta$,
it induces a morphism
$\varsigma\colon \on{VinGr}_{G,X^n} \ra {\mathcal X}.$
It is easy to see that the morphism $\varsigma$ is a closed embedding.

Recall the action
$T \curvearrowright \on{VinBun}_G$
of \S\ref{VinBun}.
It follows from Proposition~\ref{proper over Bun} that the morphism
\begin{equation*}\kappa\colon\on{VinBun}_G/T \ra \on{Bun}_G\times \on{Bun}_G
\end{equation*}
is proper.
Then it follows that the morphism
\begin{equation}\label{properfactor} \widetilde{\kappa}\colon
((\Gr_{G,X^n}\underset{X^n}\times\Gr_{G,X^n})
\underset{\on{Bun}_G\times \on{Bun}_G}\times
\on{VinBun}_{G}/T) \times T^+_{\on{ad}}
\ra \Gr_{G,X^n}\underset{X^n}\times\Gr_{G,X^n}
\times T^+_{\on{ad}}\end{equation}
obtained by the base change of the morphism $\kappa$ is proper.
Let us denote the LHS of~(\ref{properfactor}) by $\BP {\mathcal X}$.
We consider the natural morphism
$\varrho\colon {\mathcal X} \ra \BP {\mathcal X}$.
Consider the following commutative diagram:
\begin{equation}\label{decomp of vartheta}\xymatrix{
 \on{VinGr}_{G,X^n} \ar[rrr]^{\varsigma} \ar[d]^{\vartheta}
 &&& {\mathcal X} \ar[d]^{\varrho} \\
 (\Gr_{G,X^n}\underset{X^n}\times\Gr_{G,X^n})
\times T^+_{\on{ad}} &&& \ar[lll]_{\widetilde{\kappa}} \BP {\mathcal X}
 }
\end{equation}
We claim that the composition
$\varrho \circ \varsigma$ is a closed embedding.
To prove this let us consider the image of the morphism $\varsigma$.
It is a closed subspace of the $T$-torsor
$\mathcal{X} \ra \BP {\mathcal X}$ that intersects each
$T$-orbit in a unique point. It follows that the image of
$\varrho \circ \varsigma$ is closed. The claim follows.
Now from the commutativity of the diagram~(\ref{decomp of vartheta})
we see that the morphism $\vartheta$
is the composition of two proper morphisms,
$\vartheta=\widetilde{\kappa}\circ (\varrho \circ \varsigma)$.
So $\vartheta$ is proper.
\end{proof}

\begin{lem} \label{morphism between products}  The morphism
    \begin{equation*}\Gr_{G,X^n}\underset{X^n}\times\Gr_{G,X^n} \ra \Gr_{G,X^n}\underset{X^n}\times \Gr_{G,X^n}
    \end{equation*} that sends
    \begin{equation*}(\ul{x}, \CO_{X\times S} \xrightarrow{\eta_{\la^{\!\scriptscriptstyle\vee}}}
      \CV^{\la^{\!\scriptscriptstyle\vee}}_{\CF} \xrightarrow{\zeta_{\la^{\!\scriptscriptstyle\vee}}} \CO_{X\times S},\
      \CO_{X\times S} \xrightarrow{\eta'_{\la^{\!\scriptscriptstyle\vee}}} \CV^{\la^{\!\scriptscriptstyle\vee}}_{\CF}
      \xrightarrow{\zeta'_{\la^{\!\scriptscriptstyle\vee}}} \CO_{X\times S}) \in
    \Gr_{G,X^n}\underset{X^n}\times \Gr_{G,X^n}\end{equation*}
to the point
\begin{equation*}(\ul{x}, \CO_{X\times S} \xrightarrow{\eta_{\la^{\!\scriptscriptstyle\vee}}}
  \CV^{\la^{\!\scriptscriptstyle\vee}}_{\CF}
  \xrightarrow{\zeta'_{\la^{\!\scriptscriptstyle\vee}}\circ \eta'_{\la^{\!\scriptscriptstyle\vee}}\circ\zeta_{\la^{\!\scriptscriptstyle\vee}}} \CO_{X\times S},~\CO_{X\times S}
  \xrightarrow{\eta'_{\la^{\!\scriptscriptstyle\vee}}\circ\zeta_{\la^{\!\scriptscriptstyle\vee}}\circ \eta_{\la^{\!\scriptscriptstyle\vee}}} \CV^{\la^{\!\scriptscriptstyle\vee}}_{\CF}
  \xrightarrow{\zeta'_{\la^{\!\scriptscriptstyle\vee}}} \CO_{X\times S}) \in \Gr_{G,X^n}\underset{X^n}\times \Gr_{G,X^n}\end{equation*}
is the identity morphism.
\end{lem}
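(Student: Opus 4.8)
The plan is to verify the assertion on $S$-points, which suffices because a morphism of functors (here an endomorphism of $\Gr_{G,X^n}\underset{X^n}\times\Gr_{G,X^n}$ over $X^n$) is the identity as soon as it induces the identity map on $\on{Hom}(S,-)$ for every scheme $S$. Fixing $S$ and using the Tannakian description of $\Gr_{G,X^n}$ from Proposition~\ref{tannak via ordinary def of Gr}, an $S$-point of the source is the datum of $\ul{x}$ together with two points of $\Gr_{G,X^n}$ over $\ul x$, i.e.\ two $G$-bundles $\CF_1,\CF_2$ (the statement abbreviates both by $\CF$, but the argument never needs $\CF_1=\CF_2$) and two families of rational morphisms $\CO_{S\times X}\xrightarrow{\eta_{\la^{\lvee}}}\CV^{\la^{\lvee}}_{\CF_1}\xrightarrow{\zeta_{\la^{\lvee}}}\CO_{S\times X}$ and $\CO_{S\times X}\xrightarrow{\eta'_{\la^{\lvee}}}\CV^{\la^{\lvee}}_{\CF_2}\xrightarrow{\zeta'_{\la^{\lvee}}}\CO_{S\times X}$, each satisfying the conditions of~\S\ref{Tannakian def of BD}. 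The collection $\ul x$ and the two bundles are left untouched by the morphism, and the image retains $\eta_{\la^{\lvee}}$ in its first factor and $\zeta'_{\la^{\lvee}}$ in its second factor; so everything reduces to identifying the two modified compositions with $\zeta_{\la^{\lvee}}$ and $\eta'_{\la^{\lvee}}$.

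The only input needed is condition~a) of~\S\ref{Tannakian def of BD}: for every $\la^{\lvee}\in\La^{\vee+}$ the composition $\zeta_{\la^{\lvee}}\circ\eta_{\la^{\lvee}}$ is the identity on $U:=(S\times X)\setminus\{\varGamma_{x_1}\cup\dots\cup\varGamma_{x_n}\}$. Since $\eta_{\la^{\lvee}}$ and $\zeta_{\la^{\lvee}}$ are rational morphisms regular on the dense open $U$, agreement on $U$ forces equality of rational morphisms, so $\zeta_{\la^{\lvee}}\circ\eta_{\la^{\lvee}}=\on{Id}_{\CO_{S\times X}}$; the same holds for the primed data. I would then substitute: in the first factor
\begin{equation*}
\zeta'_{\la^{\lvee}}\circ\eta'_{\la^{\lvee}}\circ\zeta_{\la^{\lvee}}=(\zeta'_{\la^{\lvee}}\circ\eta'_{\la^{\lvee}})\circ\zeta_{\la^{\lvee}}=\zeta_{\la^{\lvee}},
\end{equation*}
and in the second factor
\begin{equation*}
\eta'_{\la^{\lvee}}\circ\zeta_{\la^{\lvee}}\circ\eta_{\la^{\lvee}}=\eta'_{\la^{\lvee}}\circ(\zeta_{\la^{\lvee}}\circ\eta_{\la^{\lvee}})=\eta'_{\la^{\lvee}}.
\end{equation*}
Hence the image $S$-point equals the original one, and the morphism is the identity.

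I expect no genuine obstacle: the lemma is a formal consequence of the $N$- and $N_{-}$-transversality packaged in condition~a). The only points requiring a little care are bookkeeping, namely checking that each composite has the correct source and target so that it lands in the factor it is assigned to — in particular that $\zeta'_{\la^{\lvee}}\circ\eta'_{\la^{\lvee}}$ is the self-composition of the \emph{second} factor's data (hence equals $\on{Id}$) rather than a crossing between the two bundles — and the elementary passage from an identity valid on the dense open $U$ to an equality of rational morphisms.
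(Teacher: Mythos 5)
Your proof is correct and is exactly the verification the paper leaves implicit: the paper's entire proof of this lemma is the single word ``Obvious,'' and unwinding condition~a) of the Tannakian definition as you do is the only thing to check. One small refinement: the passage from agreement on $U$ to equality of rational morphisms is best justified not by density of $U$ alone (which need not suffice over a non-reduced base $S$), but by the fact that the complement of $U$ is a relative effective Cartier divisor, so that restriction to $U$ is injective on morphisms between the relevant locally free sheaves (equivalently, the twists $\CO(-m\cdot D)\hookrightarrow\CO(m'\cdot D)$ are injections and a rational morphism is determined by its restriction to $U$).
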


\begin{proof}
Obvious.
\end{proof}

\begin{prop}
\label{repres of BD}
The functor $\on{VinGr}_{G,X^n}$ is represented by an ind-scheme ind-projective over $T^+_{\on{ad}}$.
\end{prop}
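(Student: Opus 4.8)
The plan is to deduce the Proposition formally from \lemref{embedding into product}, exhibiting $\on{VinGr}_{G,X^n}$ as a closed ind-subscheme of an ambient ind-scheme that is visibly ind-projective over $T^+_{\on{ad}}$, and then to invoke the general principle that a closed ind-subscheme of an ind-projective ind-scheme is again ind-projective. Concretely, \lemref{embedding into product} provides the morphism
\[
\vartheta\colon \on{VinGr}_{G,X^n}\hookrightarrow\left(\Gr_{G,X^n}\underset{X^n}\times\Gr_{G,X^n}\right)\times T^+_{\on{ad}}
\]
and asserts that it is a closed embedding. Reading this functorially, $\vartheta$ is a monomorphism representable by closed embeddings, so it identifies $\on{VinGr}_{G,X^n}$ with a closed subfunctor of the target; consequently $\on{VinGr}_{G,X^n}$ is represented by a closed ind-subscheme. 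Moreover, by the very definition of $\vartheta$ in \S\ref{embedding}, its projection to the third factor $T^+_{\on{ad}}$ records the datum $\tau_{\mu^{\lvee}}$ and hence coincides with $\Upsilon$, so this identification is compatible with the maps to $T^+_{\on{ad}}$.

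It then remains to check that the ambient space $\mathcal Y:=\left(\Gr_{G,X^n}\underset{X^n}\times\Gr_{G,X^n}\right)\times T^+_{\on{ad}}$ is ind-projective over $T^+_{\on{ad}}$. First I recall that the Beilinson-Drinfeld Grassmannian $\Gr_{G,X^n}$ is ind-projective over $X^n$ (see \cite{bd,zh}); fibre products of ind-projective-over-$X^n$ ind-schemes are again ind-projective over $X^n$, so $\Gr_{G,X^n}\underset{X^n}\times\Gr_{G,X^n}$ is ind-projective over $X^n$. Since $X$ is a smooth projective curve, $X^n$ is projective over $\BC$, and composing with $X^n\to\on{Spec}\BC$ shows this fibre product is ind-projective over $\BC$. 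Taking the product with $T^+_{\on{ad}}$ and projecting to the second factor, we conclude that $\mathcal Y$ is ind-projective over $T^+_{\on{ad}}$. This is the one point where I genuinely use that $X$ is proper: a priori $\mathcal Y$ is only ind-projective over $X^n\times T^+_{\on{ad}}$, and properness of $X^n$ is what lets me absorb the $X^n$-factor.

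Finally I combine the two observations. Writing $\mathcal Y=\varinjlim_\alpha\mathcal Y_\alpha$ with each $\mathcal Y_\alpha$ projective over $T^+_{\on{ad}}$, the closed ind-subscheme $\on{VinGr}_{G,X^n}$ meets each $\mathcal Y_\alpha$ in a closed subscheme $Z_\alpha$, which is again projective over $T^+_{\on{ad}}$, and $\on{VinGr}_{G,X^n}=\varinjlim_\alpha Z_\alpha$. Hence $\on{VinGr}_{G,X^n}$ is represented by an ind-scheme ind-projective over $T^+_{\on{ad}}$. I expect no real obstacle at the level of this Proposition: all the substantive work — that $\vartheta$ is injective on $S$-points and that it is proper, the latter via the auxiliary space $\mathcal X$ and \propref{proper over Bun} — is already carried out in the proof of \lemref{embedding into product}, so what remains here is only the formal packaging described above.
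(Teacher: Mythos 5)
Your proposal is correct and follows essentially the same route as the paper: both deduce the statement from Lemma~\ref{embedding into product} by realizing $\on{VinGr}_{G,X^n}$ as a closed subfunctor of $\Gr_{G,X^n}\underset{X^n}\times\Gr_{G,X^n}\times T^+_{\on{ad}}$, compatibly with the projections to $T^+_{\on{ad}}$, and then invoking the ind-projectivity of $\Gr_{G,X^n}$ together with the fact that a closed ind-subscheme of an ind-projective ind-scheme is again ind-projective. The only difference is that you spell out explicitly (via properness of $X^n$) why the ambient space is ind-projective over $T^+_{\on{ad}}$, a step the paper leaves implicit.
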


\begin{proof}
In Lemma~\ref{embedding into product} it is proved that $\on{VinGr}_{G,X^n}$
is a closed subfunctor of
\begin{equation*}
\Gr_{G,X^n}\underset{X^n}\times \Gr_{G,X^n} \times T^+_{\on{ad}}
\end{equation*}
and the following diagram is commutative:
\begin{equation*}
\xymatrix{
\on{VinGr}_{G,X^n} \ar[dr]^{\Upsilon} \ar[rr]^{\vartheta}
&& \ar[dl]_{\on{pr}_3}
\Gr_{G,X^n}\underset{X^n}\times \Gr_{G,X^n}\times T^+_{\on{ad}}\\
& T^+_{\on{ad}} &
}
\end{equation*}
where the morphism
$\on{pr}_3\colon \Gr_{G,X^n}\underset{X^n}\times \Gr_{G,X^n}\times T^+_{\on{ad}} \ra T^+_{\on{ad}}$
is the projection to the third factor.
It is known \cite[\S5.3.10]{bd},~\cite[~Theorem 3.1.3]{zh} that the functor $\Gr_{G,X^n}$ is
represented by an ind-projective scheme. It follows that the functor $\on{VinGr}_{G,X^n}$ is
represented by a closed ind-subscheme of
$\Gr_{G,X^n}\underset{X^n}\times \Gr_{G,X^n} \times T^+_{\on{ad}}$
ind-projective over $T^+_{\on{ad}}$.
\end{proof}

\sssec{Defect free locus of $\on{VinGr}_{G,x}$}
Set
\begin{equation*}
_{0}\!\!\on{VinGr}_{G,x}:= \on{Maps}_{T^+_{\on{ad}}}(X\times T^+_{\on{ad}},~_{0}\!\!\on{Vin}_{G}/(G\times G))
\underset{\on{Maps}_{T^{+}_{\on{ad}}}((X\setminus \{x\})\times T^{+}_{\on{ad}},~_{0}\!\!\on{Vin}_G/(G\times G))}\times T^{+}_{\on{ad}}.
\end{equation*}
We denote by $_{0}\!\Upsilon\colon _{0}\!\!\on{VinGr}_{G,x} \ra T^+_{\on{ad}}$
the projection to the second factor.

\sssec{The principal degeneration $\on{VinGr}^{\on{princ}}_{G,X^n}$}
\label{princ deg VinGr} We denote by $\on{VinGr}^{\on{princ}}_{G,X^n}$
(resp.\ $_{0}\!\!\on{VinGr}^{\on{princ}}_{G,x}$) the restriction of the degeneration
$\Upsilon\colon \on{VinGr}_{G,X^n}\ra T_{\on{ad}}^{+}$
(resp.\ $_{0}\!\Upsilon$) to the ``principal'' line
\begin{equation*}\BA^1 \hookrightarrow T_{\on{ad}}^+,\ a \mapsto
(a, \dots, a)~\on{in~coordinates}~\al^{\lvee}_i,\ i \in I.\end{equation*}
We denote the corresponding morphism $\on{VinGr}^{\on{princ}}_{G,X^n} \ra \BA^1$ by
$\Upsilon^{\on{princ}}$ (resp.\ $_{0}\!\!\on{VinGr}^{\on{princ}}_{G,x} \ra \BA^1$ by
$_{0}\!\Upsilon^{\on{princ}}$).

Let us denote by
$\vartheta^{\on{princ}}\colon\on{VinGr}^{\on{princ}}_{G,X^n} \ra (\Gr_{G,X^n}\underset{X^{n}}\times \Gr_{G,X^n})\times \BA^1$
the restriction of the morphism $\vartheta$ of \S\ref{embedding}
to $\on{VinGr}^{\on{princ}}_{G,X^n}$.

\sssec{The special fiber of the degeneration $\on{VinGr}_{G,x}$} \label{special fiber} Let us describe the fiber over $0 \in T^+_{\on{ad}}$ of the morphism $\Upsilon$. Set
$(\on{VinGr}_{G,x})_0:=\Upsilon^{-1}(0).
$ Note that the morphism \begin{equation*}\vartheta\colon \on{VinGr}_{G,x} \hookrightarrow \Gr_G \times \Gr_G \times T^+_{\on{ad}}\end{equation*} of \S\ref{embedding} induces the closed embedding
\begin{equation*}\vartheta_0\colon (\on{VinGr}_{G,x})_0 \hookrightarrow \Gr_G \times \Gr_G.\end{equation*}

\begin{lem}\label{special fiber of vartheta}
The morphism $\vartheta_{0}$ induces an isomorphism between $(\on{VinGr}_{G,x})_0$ and
\begin{equation*}
\bigcup_{\mu \in \La}\ol{T}_{\mu} \times \ol{S}_{\mu}
\subset \Gr_G\times \Gr_G
\end{equation*}
considered as the reduced ind-schemes.
\end{lem}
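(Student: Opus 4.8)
The plan is to prove the statement Tannakianly: I will describe the $S$-points of the zero fiber $(\on{VinGr}_{G,x})_0=\Upsilon^{-1}(0)$, match them bijectively with the $S$-points of $\bigcup_{\mu\in\La}\ol{T}_\mu\times\ol{S}_\mu\subset\Gr_G\times\Gr_G$, and then upgrade this to an isomorphism of reduced ind-schemes. The last step is cheap: by Lemma~\ref{embedding into product} the morphism $\vartheta$ is a closed embedding, hence so is $\vartheta_0$, and a closed embedding that induces a bijection on field-valued points is an isomorphism onto its image with the reduced structure. So the whole content is the moduli-theoretic matching, and I will carry it out by working with reduced test schemes (equivalently geometric points) throughout, which is also why the statement can only assert an isomorphism of reduced ind-schemes.

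First I would extract the structural consequence of lying over $0\in T^+_{\on{ad}}$. Over $0$ all functions $\tau_{\mu^{\lvee}}$ with $\mu^{\lvee}>0$ vanish, so condition c) of the Tannakian description of $\on{VinBun}_G$ in \S\ref{VinBun} forces $\on{pr}^{\CF_2}\circ(\varphi_{\la_1^{\lvee}}\otimes\varphi_{\la_2^{\lvee}})\circ\iota^{\CF_1}=0$ whenever $\nu^{\lvee}<\la_1^{\lvee}+\la_2^{\lvee}$. By Lemma~\ref{fiber for Vin}, which identifies $(\on{Vin}_G)_0$ with $(\ol{G/N}\times\ol{G/N_-})/\!\!/T$, this is exactly the condition that each $\varphi_{\la^{\lvee}}$ become a rank-one morphism. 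Concretely I would produce a $T$-bundle $\CT$ on $S\times X$ together with a factorization $\CV^{\la^{\lvee}}_{\CF_1}\xrightarrow{b_{\la^{\lvee}}}\BC^{\la^{\lvee}}_{\CT}\xrightarrow{a_{\la^{\lvee}}}\CV^{\la^{\lvee}}_{\CF_2}$ of $\varphi_{\la^{\lvee}}$, where $b_{\la^{\lvee}}$ is the highest-weight quotient attached to the $N$-structure carried by $\eta_{\la^{\lvee}}$ and $a_{\la^{\lvee}}$ is the lowest-weight inclusion attached to the $N_-$-structure carried by $\zeta_{\la^{\lvee}}$. On $U=S\times(X\setminus\{x\})$ the Vinberg element lands in the nondegenerate locus $(G/N\times G/N_-)/T$ of $(\on{Vin}_G)_0$ (Remark~\ref{fiber for def free Vin}), so there the line bundle is trivialized compatibly with condition a) ($\zeta\circ\varphi\circ\eta=\on{Id}$ on $U$), and all the degeneration is concentrated at the single point $x$.

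The degeneration type at $x$ is recorded by a coweight $\mu\in\La$, namely the degree of $\CT$ supported at $x$, equal to the order of vanishing of $a_{\la^{\lvee}}$ (and the order of the pole of $b_{\la^{\lvee}}$) there. A local computation at $x$ then shows that $\zeta_{\la^{\lvee}}\circ\varphi_{\la^{\lvee}}=(\zeta_{\la^{\lvee}}\circ a_{\la^{\lvee}})\circ b_{\la^{\lvee}}$ extends to an honest morphism $\CV^{\la^{\lvee}}_{\CF_1}\to\CO_{S\times X}(-\langle\la^{\lvee},\mu\rangle\cdot(S\times x))$, so that the first factor $(\CF_1,\eta_{\la^{\lvee}},\zeta_{\la^{\lvee}}\circ\varphi_{\la^{\lvee}})$ of $\vartheta$ is an $S$-point of $\ol{T}_\mu$; symmetrically $\varphi_{\la^{\lvee}}\circ\eta_{\la^{\lvee}}=a_{\la^{\lvee}}\circ(b_{\la^{\lvee}}\circ\eta_{\la^{\lvee}})$ extends to an honest morphism $\CO_{S\times X}(-\langle\la^{\lvee},\mu\rangle\cdot(S\times x))\to\CV^{\la^{\lvee}}_{\CF_2}$, so the second factor $(\CF_2,\varphi_{\la^{\lvee}}\circ\eta_{\la^{\lvee}},\zeta_{\la^{\lvee}})$ is an $S$-point of $\ol{S}_\mu$. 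Crucially, the \emph{same} bundle $\CT$ governs both compositions, which forces the \emph{same} $\mu$ on the two sides; this is precisely why the answer is the diagonal union $\bigcup_\mu\ol{T}_\mu\times\ol{S}_\mu$ and not $\bigsqcup_{\mu_1,\mu_2}\ol{T}_{\mu_1}\times\ol{S}_{\mu_2}$. For the inverse I would start from $\mu\in\La$ and a pair $(\CF_1,\eta^{(1)}_{\la^{\lvee}},\zeta^{(1)}_{\la^{\lvee}})\in\ol{T}_\mu$, $(\CF_2,\eta^{(2)}_{\la^{\lvee}},\zeta^{(2)}_{\la^{\lvee}})\in\ol{S}_\mu$, set $\eta_{\la^{\lvee}}:=\eta^{(1)}_{\la^{\lvee}}$ and $\zeta_{\la^{\lvee}}:=\zeta^{(2)}_{\la^{\lvee}}$, take $\CT$ of the degree recorded by $\mu$ at $x$, let $b_{\la^{\lvee}}$ be the highest-weight quotient of $\CF_1$ cut out by $\eta^{(1)}$ and $a_{\la^{\lvee}}$ the lowest-weight inclusion into $\CF_2$ cut out by $\eta^{(2)}$ (which lands in $\BC^{\la^{\lvee}}_{\CT}$ by the honesty built into $\ol{S}_\mu$), and put $\varphi_{\la^{\lvee}}:=a_{\la^{\lvee}}\circ b_{\la^{\lvee}}$, $\tau_{\mu^{\lvee}}:=0$ for $\mu^{\lvee}>0$; then verify conditions a)--d) of \S\ref{degen of the affine Gr} and that the two constructions are mutually inverse.

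The main obstacle is Steps one and two: producing the $T$-bundle $\CT$ and the factorizations $\varphi_{\la^{\lvee}}=a_{\la^{\lvee}}\circ b_{\la^{\lvee}}$ through $\BC^{\la^{\lvee}}_{\CT}$ uniformly in $\la^{\lvee}$ and compatibly with the Pl\"ucker relations, and then the local analysis at $x$ showing that the mixed compositions $\zeta_{\la^{\lvee}}\circ\varphi_{\la^{\lvee}}$ and $\varphi_{\la^{\lvee}}\circ\eta_{\la^{\lvee}}$ genuinely extend to honest morphisms into the twisted sheaves $\CO_{S\times X}(-\langle\la^{\lvee},\mu\rangle\cdot(S\times x))$ for one and the same $\mu$. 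I expect the bookkeeping of orders of vanishing at $x$ (balancing the pole of $\zeta$ against the zero of $a$, and vice versa) to be the delicate point, while everything away from $x$ is controlled by the nondegenerate-locus description of Remark~\ref{fiber for def free Vin}. Finally, since $\Upsilon^{-1}(0)$ may be non-reduced, I will not attempt a scheme-theoretic fiber computation but instead conclude via the closed-embedding property of $\vartheta_0$ applied to reduced points, which yields exactly the asserted isomorphism of reduced ind-schemes.
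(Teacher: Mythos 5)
Your proposal is correct and follows essentially the same route as the paper's proof: the paper constructs your backward map as a morphism $\varsigma\colon\bigcup_\mu\ol{T}_\mu\times\ol{S}_\mu\to(\on{VinGr}_{G,x})_0$ (setting $\varphi_{\la^{\lvee}}:=\eta_{\la^{\lvee}}\circ\zeta'_{\la^{\lvee}}$), identifies $\vartheta_0\circ\varsigma$ with the standard closed embedding via Lemma~\ref{morphism between products}, and then proves surjectivity on $\BC$-points by exactly your forward argument — the unique factorization of $\varphi_{\la^{\lvee}}$ through line bundles $\CL^{\la^{\lvee}}$ whose multiplicativity $\CL^{\la_1^{\lvee}}\otimes\CL^{\la_2^{\lvee}}=\CL^{\la_1^{\lvee}+\la_2^{\lvee}}$, $\CL^{0}=\CO$, together with the trivialization on $U$ coming from $\zeta\circ\varphi\circ\eta=\on{Id}$, forces $\CL^{\la^{\lvee}}\simeq\CO(-\langle\la^{\lvee},\mu\rangle\cdot x)$ for a single coweight $\mu$. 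The only organizational difference is that you propose carrying out the factorization through a $T$-bundle $\CT$ for arbitrary reduced test schemes $S$, whereas the paper runs it only for $\BC$-points (which suffices once the closed-embedding property is in hand, and avoids the in-families bookkeeping you flag as delicate).
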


\begin{proof}
We define a morphism \begin{equation*} \varsigma\colon\bigcup_{\mu \in \La}\ol{T}_{\mu} \times \ol{S}_{\mu} \hookrightarrow (\on{VinGr}_{G,x})_0.\end{equation*}
It sends a pair of points
\begin{equation*}
 \CO_{S\times X} \xrightarrow{\eta'_{\la^{\lvee}}}
 \CV^{\la^{\lvee}}_{\CF'} \xrightarrow{\zeta'_{\la^{\lvee}}} \CO_{S\times X}(\langle-\la^{\lvee},\mu\rangle\cdot (S\times x)) \in \ol{T}_{\mu}(S),\end{equation*}
\begin{equation*} \CO_{S\times X}(\langle-\la^{\lvee},\mu\rangle \cdot (S\times x)) \xrightarrow{\eta_{\la^{\lvee}}} \CV^{\la^{\lvee}}_{\CF} \xrightarrow{\zeta_{\la^{\lvee}}} \CO_{S\times X} \in \ol{S}_{\mu}(S)\end{equation*}
to the point
\begin{equation*}\CO_{S\times X} \xrightarrow{\eta'_{\la^{\lvee}}} \CV^{\la^{\lvee}}_{\CF'  } \xrightarrow{\eta_{\la^{\lvee}}\circ\zeta'_{\la^{\lvee}}}
\CV^{\la^{\lvee}}_{\CF} \xrightarrow{\zeta_{\la^{\lvee}}} \CO_{S\times X} \in (\on{VinGr_{G,x})_{0}}\end{equation*}
It follows from Lemma~\ref{morphism between products} that the composition
$\vartheta_0\circ\varsigma$ coincides with the natural closed embedding \begin{equation*}\bigcup_{\mu \in \La}\ol{T}_{\mu} \times \ol{S}_{\mu} \subset \Gr_G\times \Gr_G.\end{equation*}
It follows that the morphism $\varsigma$ is a closed embedding. It suffices to show now that the morphism $\varsigma$ is surjective on the level of $\BC$-points. Take a $\BC$-point \begin{equation*}\on{P}:=(\CF_1,\CF_2,\varphi_{\la^{\lvee}},\zeta_{\la^{\lvee}},\eta_{\la^{\vee}}) \in (\on{VinGr}_{G,x})_0.\end{equation*}
The morphisms $\varphi_{\la^{\lvee}}$ admit a unique factorization as
\begin{equation*}
\CV^{\la^{\lvee}}_{\CF_1} \xrightarrow{\zeta'_{\la^{\lvee}}} \CL^{\la^{\lvee}}  \xrightarrow{\eta'_{\la^{\lvee}}} \CV^{\la^{\lvee}}_{\CF_2}
\end{equation*}
where $\CL^{\la^{\lvee}}$ is a line bundle on the curve $X$ and the first morphism is a surjection of vector
bundles. Note that for any $\la^{\lvee}_1, \la^{\lvee}_2 \in \La^{\vee+}$ we have the identification
$\CL^{\la_1^{\lvee}} \otimes \CL^{\la_2^{\lvee}} = \CL^{\la_1^{\lvee}+\la_2^{\lvee}}$ and $\CL^{0}=\CO_{S\times X}$.
It follows that there exists $\mu \in \La$ such that
\begin{equation*}\CL^{\la^{\lvee}} \simeq \CO_{S\times X}(\langle-\la^{\lvee},\mu\rangle \cdot (S\times x)).\end{equation*}
So the $\BC$-point $\on{P}$ is the image of the $\BC$-point \begin{equation*}(\CF_1,\eta_{\la^{\lvee}}, \zeta'_{\la^{\lvee}}) \times (\CF_2,\eta'_{\la^{\lvee}},\zeta_{\la^{\lvee}}) \in \ol{T}_{\mu}\times \ol{S}_{\mu}
\end{equation*}
under the morphism $\varsigma$.
\end{proof}

\begin{rem}\label{sp fiber def free for vingr}{\em It follows from
      Proposition~\ref{fiber for vinbun} that the morphism $\vartheta_0$
      restricts to the isomorphism between the ind-schemes
      $(\on{VinGr}_{G,x})_0~\cap~_{0}\!\!\on{VinGr}_{G,x}$ and
$\bigsqcup_{\mu \in \La}T_{\mu} \times S_{\mu}$.}
\end{rem}

\sssec{Ind-scheme $\on{Vin}\ol{S}_{\nu}$} \label{VinS}
Fix a point $x \in X$ and a cocharacter $\nu \in \La$.
We consider a functor
\begin{equation*}
\on{Vin}\ol{S}_{\nu}\colon {\bf{Sch}} \ra {\bf{Sets}},
\end{equation*}
associating to a scheme $S$

1) an $S$-point $(\CF_1, \CF_2, \varphi_{\la^{\lvee}}, \tau_{\mu^{\lvee}}) \in \on{VinBun}_G(S)$ of $\on{VinBun}_G$,

2)  for every $\la^{\!\scriptscriptstyle\vee} \in \Lambda^{\vee+}$, morphisms of sheaves
\begin{equation*}
\eta_{\la^{\!\scriptscriptstyle\vee}}\colon
\CO_{S\times X}(\langle-\la^{\lvee},\nu\rangle\cdot (S\times x)) \ra
\CV^{\la^{\!\scriptscriptstyle\vee}}_{\CF_1},~
\zeta_{\la^{\!\scriptscriptstyle\vee}}\colon  \CV^{\la^{\lvee}}_{\CF_2} \ra \CO_{S\times X},\end{equation*}
satisfying the same conditions as in~\S\ref{degen of the affine Gr}.

Let $\widetilde{r}_{\nu,+}\colon \on{Vin}\ol{S}_{\nu}
\hookrightarrow \on{VinGr}_{G,x}$ be the natural embedding.
\begin{lem} The morphism $\widetilde{r}_{\nu,+}$ is a closed embedding.
\end{lem}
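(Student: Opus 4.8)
The plan is to realize $\on{Vin}\ol{S}_\nu$ as the preimage, under the closed embedding $\vartheta$ of \S\ref{embedding}, of a manifestly closed sub-ind-scheme of $\Gr_G\times\Gr_G\times T^+_{\on{ad}}$; closedness of $\widetilde{r}_{\nu,+}$ then follows formally. Recall that $\vartheta$ (a closed embedding by Lemma~\ref{embedding into product}) sends a point $(\CF_1,\CF_2,\varphi_{\la^{\lvee}},\tau_{\mu^{\lvee}},\eta_{\la^{\lvee}},\zeta_{\la^{\lvee}})$ to the triple $(\CF_1,\eta_{\la^{\lvee}},\zeta_{\la^{\lvee}}\circ\varphi_{\la^{\lvee}})\times(\CF_2,\varphi_{\la^{\lvee}}\circ\eta_{\la^{\lvee}},\zeta_{\la^{\lvee}})\times\tau_{\mu^{\lvee}}$. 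I would first observe that the two semiinfinite closures $\ol{S}_\nu,\ol{T}_0\subset\Gr_G$, in their Tannakian descriptions, are cut out of $\Gr_G$ by pole conditions: a point lies in $\ol{S}_\nu$ exactly when its $\eta$-datum extends to a regular morphism $\CO_{S\times X}(-\langle\la^{\lvee},\nu\rangle\cdot(S\times x))\to\CV^{\la^{\lvee}}_{\CF}$, and in $\ol{T}_0$ exactly when its $\zeta$-datum is regular as a morphism $\CV^{\la^{\lvee}}_{\CF}\to\CO_{S\times X}$. Both are closed conditions, so $\ol{S}_\nu$ and $\ol{T}_0$ are closed in $\Gr_G$.

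Next I would verify the key identification of subfunctors of $\Gr_G\times\Gr_G\times T^+_{\on{ad}}$, namely $\vartheta(\on{Vin}\ol{S}_\nu)=\vartheta(\on{VinGr}_{G,x})\cap(\ol{S}_\nu\times\ol{T}_0\times T^+_{\on{ad}})$. Comparing the data of \S\ref{VinS} with that of \S\ref{degen of the affine Gr}, the functor $\on{Vin}\ol{S}_\nu$ is obtained from $\on{VinGr}_{G,x}$ by imposing exactly two extra conditions: that $\eta_{\la^{\lvee}}$ be regular on all of $S\times X$ as a morphism from $\CO_{S\times X}(-\langle\la^{\lvee},\nu\rangle\cdot(S\times x))$, and that $\zeta_{\la^{\lvee}}$ be regular as a morphism into $\CO_{S\times X}$; the remaining conditions (multiplicativity of $\tau$, the identity $\zeta\circ\varphi\circ\eta=\on{Id}$, and the Pl\"ucker-type relations) are shared by both functors. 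Under $\vartheta$, the first condition is precisely the requirement that the first factor $(\CF_1,\eta_{\la^{\lvee}},\zeta_{\la^{\lvee}}\circ\varphi_{\la^{\lvee}})$ lie in $\ol{S}_\nu$, since its $\eta$-datum is $\eta_{\la^{\lvee}}$ itself; and the second condition is precisely the requirement that the second factor $(\CF_2,\varphi_{\la^{\lvee}}\circ\eta_{\la^{\lvee}},\zeta_{\la^{\lvee}})$ lie in $\ol{T}_0$, since its $\zeta$-datum is $\zeta_{\la^{\lvee}}$ itself. This establishes the identification.

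Finally, since $\ol{S}_\nu\times\ol{T}_0\times T^+_{\on{ad}}$ is closed in $\Gr_G\times\Gr_G\times T^+_{\on{ad}}$ and $\vartheta$ is a closed embedding, the identification exhibits $\on{Vin}\ol{S}_\nu=\vartheta^{-1}(\ol{S}_\nu\times\ol{T}_0\times T^+_{\on{ad}})$ as a closed sub-ind-scheme of $\on{VinGr}_{G,x}$, so that $\widetilde{r}_{\nu,+}$ is a closed embedding. I expect the only delicate point to be the bookkeeping of pole orders under the two projections of $\vartheta$: one must check that the regularity of $\eta$ and of $\zeta$ translates, respectively, into the defining pole bound of $\ol{S}_\nu$ read off the first copy of $\Gr_G$ and into the regularity defining $\ol{T}_0$ read off the second copy, and that this matching holds scheme-theoretically rather than merely on reduced points. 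Everything else is formal.
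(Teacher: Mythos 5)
Your proposal is correct, but it proves the lemma by a different mechanism than the paper does. The paper's own proof composes $\widetilde{r}_{\nu,+}$ with the closed embedding $\vartheta$ of \S\ref{embedding} and then \emph{re-runs} the two-step argument of Lemma~\ref{embedding into product} for the composite $\vartheta\circ\widetilde{r}_{\nu,+}$: injectivity on $S$-points (the rigidity argument pinning the middle arrows via the section $\mathfrak{s}$) and properness (via the auxiliary space $\mathcal X$ and Proposition~\ref{proper over Bun}). You instead exhibit a Cartesian square
\begin{equation*}
\begin{CD}
\on{Vin}\ol{S}_\nu @>>> \ol{S}_\nu\times\ol{T}_0\times T^+_{\on{ad}}\\
@V\widetilde{r}_{\nu,+}VV @VVV\\
\on{VinGr}_{G,x} @>\vartheta>> \Gr_G\times\Gr_G\times T^+_{\on{ad}},
\end{CD}
\end{equation*}
so that $\widetilde{r}_{\nu,+}$ is a base change of the closed embedding on the right. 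Your bookkeeping of the two extra conditions is accurate: comparing \S\ref{VinS} with \S\ref{degen of the affine Gr}, the only difference is that $\eta_{\la^{\lvee}}$ must extend regularly from $\CO_{S\times X}(-\langle\la^{\lvee},\nu\rangle\cdot(S\times x))$ and $\zeta_{\la^{\lvee}}$ must be regular into $\CO_{S\times X}$; under $\vartheta$ the $\eta$-datum of the first factor and the $\zeta$-datum of the second factor are literally $\eta_{\la^{\lvee}}$ and $\zeta_{\la^{\lvee}}$, so these conditions are exactly membership in $\ol{S}_\nu$ and $\ol{T}_0$ respectively; and such extensions across the divisor $S\times\{x\}$ into a vector bundle are unique (no nonzero subsheaf of a locally free sheaf is supported on a Cartier divisor), which makes the square Cartesian scheme-theoretically, settling the ``delicate point'' you flag. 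Note that your argument in fact never uses that $\vartheta$ is a closed embedding or proper — only that it is a morphism of functors and that $\ol{S}_\nu$ and $\ol{T}_0$ are closed sub-ind-schemes of $\Gr_G$ (which the paper uses throughout, and which follows from their Tannakian pole-bound descriptions). What your route buys is economy: no repetition of the properness argument and no second appeal to Proposition~\ref{proper over Bun}. What the paper's route buys is that it needs no separate input about closedness of the semiinfinite closures, since it quotes a proof already carried out in full.
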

\begin{proof}
Recall the closed embedding
$\vartheta\colon\on{VinGr}_{G,x} \hookrightarrow
\Gr_G\times\Gr_G\times T^+_{\on{ad}}$
of \S\ref{embedding}.
It is enough to show that the composition
$\vartheta \circ \widetilde{r}_{\nu}$ is a closed embedding.
The proof is the same as the proof of Lemma~\ref{embedding into product}.
\end{proof}
The morphism
$
\Upsilon_{\nu}\colon
\on{Vin}\ol{S}_\nu \ra T^+_{\on{ad}}
$
is defined as the composition
of the morphisms
\begin{equation*}
\on{Vin}\ol{S}_{\nu} \ra \on{VinBun}_G \ra T^+_{\on{ad}}.
\end{equation*}

\sssec{The principal degeneration $\on{Vin}\ol{S}_{\nu}^{\on{princ}}$} \label{VinS princ}
Let us denote by $\on{Vin}\ol{S}_{\nu}^{\on{princ}}$
the restriction of the degeneration
$\Upsilon_{\nu}\colon \on{Vin}\ol{S}_{\nu} \ra T^+_{\on{ad}}$
to the ``principal'' line
\begin{equation*}\BA^1 \hookrightarrow T_{\on{ad}}^+,\ a \mapsto (a, \dots, a)~\on{in~coordinates}~\al^{\lvee}_i,\ i \in I.\end{equation*}
Let us denote the corresponding morphism $\on{Vin}\ol{S}_{\nu}^{\on{princ}} \ra \BA^1$ by
$\Upsilon^{\on{princ}}_{\nu}$.

It follows from Lemma~\ref{restriction for VinGrBD}
that the fiber of the morphism
$\Upsilon^{\on{princ}}_{\nu}$ over the point
$1\in\BA^1$ is isomorphic
to $\ol{S}_{\nu}$.
It follows from~Lemma~\ref{special fiber of vartheta} that
the morphism $\widetilde{r}_{\nu,+}$ induces an isomorphism
from $(\on{Vin}\ol{S}_{\nu})_0$ to
\begin{equation*}
\bigcup\limits_{\mu \in \La,\mu \leq \nu}
(\ol{T}_{\mu} \cap \ol{S}_{\nu})\times \ol{S}_{\mu}.
\end{equation*}

\sssec{} \label{embedding of local models}
Let us fix
$\theta_1, \theta_2 \in \La$, $\theta_1 \geq \theta_2$, set
$\theta:=\theta_1-\theta_2$.
Recall the families
$\ol{\mathfrak{Y}}^{\theta},$
$\ol{\mathfrak{Y}}^{\theta_1,\theta_2}$
over $T^+_{\on{ad}}$ of~\S\ref{compact Sch loc model},
~\ref{shifted compact Sch loc mod}.
Let us denote by ${\imath}_{\theta_1,\theta_2}$, ${\imath}_{\theta}$
the closed embeddings of the families
$\ol{\mathfrak{Y}}^{\theta},$
$\ol{\mathfrak{Y}}^{\theta_1,\theta_2}$
into $\Gr_G\times \Gr_G \times T^+_{\on{ad}}$
that are constructed in the same way as the embedding $\vartheta$
of \S\ref{embedding}.

\begin{lem} \label{identification of local models} (cf.~\cite[\S6.3]{bfgm})
The family $\ol{\mathfrak{Y}}^{\theta_1,\theta_2} \ra T^{+}_{\on{ad}}$
is isomorphic to the family
$\ol{\mathfrak{Y}}^{\theta} \ra T^{+}_{\on{ad}}$.
\end{lem}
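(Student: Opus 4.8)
The plan is to produce the isomorphism by a simultaneous ``loop rotation'' at the point $x$, i.e.\ by applying the Hecke modification attached to the cocharacter $-\theta_2$ to \emph{both} $G$-bundles $\CF_1$ and $\CF_2$ at once. This is the $\on{Vin}$-enhancement of the classical shift isomorphism $\ol{T}_{\theta_2}\cap\ol{S}_{\theta_1}\iso\ol{T}_0\cap\ol{S}_{\theta}$ (Remark~\ref{zastava as intersection} together with~\cite[\S6.3]{bfgm}), which is nothing but the translation automorphism $t_{-\theta_2}$ of $\Gr_G$ given by the loop $z^{-\theta_2}\in T(\CK)$ at $x$; since $T(\CK)$ normalizes $N(\CK)$ and $N_-(\CK)$, this automorphism carries $\ol{S}_{\theta_1}$ to $\ol{S}_{\theta_1-\theta_2}=\ol{S}_\theta$ and $\ol{T}_{\theta_2}$ to $\ol{T}_0$. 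Recall that an $S$-point of $\ol{\mathfrak{Y}}{}^{\theta_1,\theta_2}$ consists of $(\CF_1,\CF_2,\varphi_{\la^{\lvee}},\tau_{\mu^{\lvee}})\in\on{VinBun}_G(S)$ together with $\eta_{\la^{\lvee}}\colon\CO_{S\times X}(-\langle\la^{\lvee},\theta_1\rangle\cdot(S\times x))\to\CV^{\la^{\lvee}}_{\CF_1}$ and $\zeta_{\la^{\lvee}}\colon\CV^{\la^{\lvee}}_{\CF_2}\to\CO_{S\times X}(-\langle\la^{\lvee},\theta_2\rangle\cdot(S\times x))$ with $\zeta_{\la^{\lvee}}\circ\varphi_{\la^{\lvee}}\circ\eta_{\la^{\lvee}}$ the canonical inclusion, whereas an $S$-point of $\ol{\mathfrak{Y}}{}^{\theta}=\ol{\pi}_\theta^{-1}(\theta\cdot x)$ (\S\ref{compact Sch loc model}) has instead $\eta_{\la^{\lvee}}\colon\CO_{S\times X}(-\langle\la^{\lvee},\theta\rangle\cdot(S\times x))\to\CV^{\la^{\lvee}}_{\CF_1}$ and $\zeta_{\la^{\lvee}}\colon\CV^{\la^{\lvee}}_{\CF_2}\to\CO_{S\times X}$. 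Thus only the normalization of the twists at $x$ differs.

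First I would use the fact that the $N$- and $N_-$-structures encoded by $\eta_{\la^{\lvee}},\zeta_{\la^{\lvee}}$ trivialize $\CF_1$ and $\CF_2$ on the punctured formal disk at $x$, and form, using these trivializations, the Hecke modifications $\CF_1',\CF_2'$ of $\CF_1,\CF_2$ by $z^{-\theta_2}$ at $x$; on each $\Gr_G$-factor this is precisely the translation $t_{-\theta_2}$ above. Transporting $\eta_{\la^{\lvee}},\zeta_{\la^{\lvee}},\varphi_{\la^{\lvee}}$ through the resulting rational isomorphisms $\CV^{\la^{\lvee}}_{\CF_i}\dashrightarrow\CV^{\la^{\lvee}}_{\CF_i'}$ (which are isomorphisms away from $x$) and keeping $\tau_{\mu^{\lvee}}$ unchanged, the normalization of $\eta_{\la^{\lvee}}$ passes from pole order $\langle\la^{\lvee},\theta_1\rangle$ to $\langle\la^{\lvee},\theta\rangle$ and the target of $\zeta_{\la^{\lvee}}$ from $\CO_{S\times X}(-\langle\la^{\lvee},\theta_2\rangle\cdot(S\times x))$ to $\CO_{S\times X}$, exactly as in the shift isomorphism of~\cite[\S6.3]{bfgm}. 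Hence the output $(\CF_1',\CF_2',\varphi'_{\la^{\lvee}},\tau_{\mu^{\lvee}},\eta'_{\la^{\lvee}},\zeta'_{\la^{\lvee}})$ is an $S$-point of $\ol{\mathfrak{Y}}{}^{\theta}$, and we obtain a morphism $\Xi\colon\ol{\mathfrak{Y}}{}^{\theta_1,\theta_2}\to\ol{\mathfrak{Y}}{}^{\theta}$. Its inverse is the modification by $z^{\theta_2}$, so $\Xi$ is an isomorphism; since both structure morphisms to $T^+_{\on{ad}}$ record only $\tau_{\mu^{\lvee}}$, which $\Xi$ leaves untouched, $\Xi$ is an isomorphism of families over $T^+_{\on{ad}}$.

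The main point to check, and the only step that is not pure degree bookkeeping, is that the passage to $\CF_1',\CF_2'$ preserves membership in $\on{VinBun}_G$ with the \emph{same} function $\tau_{\mu^{\lvee}}$, i.e.\ that the Pl\"ucker relations and conditions a)--d) of the Tannakian description of $\on{VinBun}_G$ survive the modification. The key observation is that, written in a trivialization on the punctured disk at $x$, the modification operators act by the cocharacter $z^{-\theta_2}$, i.e.\ by a $\CK$-linear operator preserving the weight grading of each $V^{\la^{\lvee}}$; since the projections $\on{pr}\colon V^{\la^{\lvee}_1}\otimes V^{\la^{\lvee}_2}\twoheadrightarrow W^{\nu^{\lvee}}$ and the inclusions $\iota$ are $G$-equivariant, in particular $T$-equivariant, they commute with these operators over $\CK$. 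Consequently, conjugating $\varphi_{\la^{\lvee}_1}\otimes\varphi_{\la^{\lvee}_2}$ by the modification and pre/post-composing with $\on{pr},\iota$ transforms the Vinberg relation $\on{pr}^{\CF_2}\circ(\varphi_{\la^{\lvee}_1}\otimes\varphi_{\la^{\lvee}_2})\circ\iota^{\CF_1}=\tau_{\la^{\lvee}_1+\la^{\lvee}_2-\nu^{\lvee}}\otimes\varphi_{\nu^{\lvee}}$ into the corresponding relation for $\varphi'$ with the same scalar $\tau_{\la^{\lvee}_1+\la^{\lvee}_2-\nu^{\lvee}}$; the invariance of $\tau$ reflects the $G\times G$-invariance of $\Upsilon\colon\on{Vin}_G\to T^+_{\on{ad}}$ used in~\S\ref{degeneration mor for VinBun}.

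Equivalently, and perhaps more transparently, one may run the whole argument after the closed embeddings ${\imath}_{\theta_1,\theta_2},{\imath}_{\theta}$ of~\S\ref{embedding of local models}: applying the automorphism $t_{-\theta_2}\times t_{-\theta_2}\times\on{id}$ of $\Gr_G\times\Gr_G\times T^+_{\on{ad}}$, one checks that it carries the image of ${\imath}_{\theta_1,\theta_2}$ onto the image of ${\imath}_{\theta}$. Here the two $\Gr_G$-factors are moved by the clean translation picture above, and the verification that the automorphism preserves the Vinberg locus cut out inside $\Gr_G\times\Gr_G\times T^+_{\on{ad}}$ is the same $T$-equivariance statement. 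I expect this last compatibility of the cocharacter modification with the tensor data $\varphi_{\la^{\lvee}}$ and the unchanged $\tau_{\mu^{\lvee}}$ to be the only genuinely delicate point; everything else is the degree computation of~\cite[\S6.3]{bfgm}.
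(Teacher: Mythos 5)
Your proposal is correct and takes essentially the same approach as the paper: the paper's proof simply introduces the automorphism $\aleph_{\theta_2}\colon\Gr_G\times\Gr_G\times T^{+}_{\on{ad}}\iso\Gr_G\times\Gr_G\times T^{+}_{\on{ad}}$, $([g_1],[g_2],t)\mapsto([z^{\theta_2}g_1],[z^{\theta_2}g_2],t)$, and observes that it identifies ${\imath}_{\theta}(\ol{\mathfrak{Y}}{}^{\theta})$ with ${\imath}_{\theta_1,\theta_2}(\ol{\mathfrak{Y}}{}^{\theta_1,\theta_2})$ --- which is precisely the diagonal translation of your final paragraph, run in the inverse direction ($z^{\theta_2}$ instead of $z^{-\theta_2}$). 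Your modular Hecke-modification formulation and the $T$-equivariance/commutation check of the Vinberg relations are just a fleshed-out version of what the paper leaves as ``it is easy to see.''
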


\begin{proof}
Recall the identification $\Gr_G=G_{\CK}/G_{\CO}$ of \S\ref{setup}.
Recall that $\theta_2 \in \La$ defines an element
$z^{\theta_2} \in T(\CK)$. We get the isomorphism
\begin{equation*}
\aleph_{\theta_2}\colon \Gr_G \times \Gr_G \times T^{+}_{\on{ad}} \iso
\Gr_G \times \Gr_G \times T^{+}_{\on{ad}}
\end{equation*}
given by $([g_1],[g_2],t) \mapsto ([z^{\theta_2}g_1],[z^{\theta_2}g_2],t)$.
It is easy to see that the isomorphism $\aleph_{\theta_2}$ identifies the
subscheme
${\imath}_{\theta}(\ol{\mathfrak{Y}}^{\theta})\simeq \ol{\mathfrak{Y}}^{\theta}$ of $\Gr_G\times \Gr_G \times T^+_{\on{ad}}$ with the subscheme
${\imath}_{\theta_1,\theta_2}(\ol{\mathfrak{Y}}^{\theta_1,\theta_2})\simeq \ol{\mathfrak{Y}}^{\theta_1,\theta_2}$ of $\Gr_G\times \Gr_G \times T^+_{\on{ad}}$.
\end{proof}

\subsection{Drinfeld-Gaitsgory interpolation}

\sssec{The space of~$\BC^{\times}$-equivariant morphisms} Let $Z_1$ and $Z_2$ be schemes equipped with an action of $\BC^\times$. Then we define the space ${\on{Maps}}^{\BC^{\times}}(Z_1,Z_2)$ as follows: for any scheme $S$,
\begin{equation*}{\on{Maps}}^{\BC^{\times}}(Z_1,Z_2)(S):=\on{Mor}(S\times Z_1,Z_2)^{\BC^{\times}}\end{equation*}

\sssec{Attractors and repellents} (c.f.~\cite[Sections~1.4,~1.5]{dg2}). Let $Z$ be a scheme equipped with an action of $\BC^{\times}$. Then we set $Z^{\on{attr}}:={\on{Maps}}^{\BC^{\times}}(\BA^1,Z)$,
where $\BC^{\times}$ acts on $\BA^1$ by dilations. It follows from~\cite[Corollary~1.5.3(ii)]{dg2} that the functor $Z^{\on{attr}}$ is represented by a scheme.

Let $\BA^1_{-}$ be an affine line equipped with the following action of $\BC^{\times}$: \begin{equation*}\BC^{\times}\times \BA^1_{-} \ra \BA^1_{-},~ (c,a) \mapsto c^{-1}a. \end{equation*}
We set $Z^{\on{rep}}:={\on{Maps}}^{\BC^{\times}}(\BA^1_{-},Z)$.
The scheme $Z^{\on{attr}}$ is called the {\em{attractor}} of $Z$, and
$Z^{\on{rep}}$ is called the {\em{repellent}} of $Z$.

Recall that $Z^{\BC^{\times}}:={\on{Maps}}^{\BC^{\times}}(\on{pt},Z).$ The $\BC^{\times}$-equivariant morphisms $0\colon \on{pt} \ra \BA^{1},~0_{-}\colon \on{pt} \ra \BA^{1}_{-}$ induce the morphisms
$q^{+}\colon Z^{\on{attr}} \ra Z^{\BC^{\times}},~q^{-}\colon Z^{\on{rep}} \ra Z^{\BC^{\times}}$.

\sssec{Definition of the interpolation} Let $Z$ be a scheme equipped with a $\BC^{\times}$-action. In \cite[\S2]{dg2}, certain interpolation $\widetilde{Z}$ over $\BA^1$ was defined. Let us recall the construction of $\widetilde{Z}$.

Set $\BX:=\BA^2$ and consider the morphism $\BX \ra \BA^1,\ (\tau_1,\tau_2) \mapsto \tau_1 \tau_2.$ For any scheme $S$ over $\BA^1$ set $\BX_S:= \BX \underset{\BA^1}\times S$.
Let us consider the following $\BC^{\times}$-action on $\BX\colon
c \cdot (\tau_1, \tau_2):=(c \cdot \tau_1,c^{-1} \cdot \tau_2)$.
This action preserves the morphism $\BX \ra \BA^1$, so for any scheme $S$ one obtains a $\BC^{\times}$-action on $\BX_S$.

Define $\widetilde{Z}$ to be the following space over $\BA^1$:

\begin{equation*}\on{Mor}_{\BA^1}(S,\widetilde{Z}):=\on{Mor}(\BX_S,Z)^{\BC^{\times}}.\end{equation*}

\sssec{Properties of the interpolation} \label{main prop} Let us recall the main properties of $\widetilde{Z}$ from \cite{dg2}. The projection $\BX \ra \BA^1$ admits two sections: \begin{equation*}s_1(a):=(a,1),~s_2(a):=(1,a).\end{equation*}
The sections $s_{1}, s_2$ define morphisms $	\gamma_1\colon\widetilde{Z}\ra Z,~
\gamma_2\colon\widetilde{Z} \ra Z.
$
Let $
\gamma\colon \widetilde{Z} \ra
Z\times Z \times \BA^1$
denote the morphism whose third component is the tautological projection $\widetilde{Z}\ra \BA^1$, and the first and the second components are $\gamma_1$ and $\gamma_2$ respectively.
It follows from~\cite[Proposition~2.2.6]{dg2}
that the morphism $\gamma$ induces an isomorphism
between $\widetilde{Z}|_{\BG_m}$ and the graph of
the action morphism
$\BC^{\times} \times Z \ra Z$.

\sssec{The special fiber of the interpolation $\widetilde{Z}$}\label{zero fiber of DG} For $a \in \BA^1$ let us denote by $\widetilde{Z}_a$ the fiber of $\widetilde{Z}$ over $a$. It follows from \cite[Proposition~2.2.9]{dg2} that the following diagram is cartesian:
\begin{equation*}\begin{CD} \widetilde{Z}_{0} @>\gamma_1>> Z^{\on{rep}}\\
@VV\gamma_2V @VVq^-V\\
Z^{\on{attr}} @>q^{+}>> Z^{\BC^{\times}}
\end{CD}\end{equation*}
i.e. the fiber $\widetilde{Z}_0$ is canonically isomorphic to $Z^{\on{rep}}\underset{Z^{\BC^{\times}}}\times Z^{\on{attr}}$.

\sssec{Drinfeld-Gaitsgory interpolation of $\Gr_G$} Recall that $\Gr_G$ is the union of the projective schemes $\ol\Gr{}_G^\la,\ \la \in \La^+$. We consider the $\BC^\times$-action on $\Gr_G$ arising
from the coweight $2\rho\colon\BC^\times\to T\curvearrowright\Gr_G$.
It follows from~\cite{dg2} that the closed embeddings of the Schubert varieties induce
the closed embeddings of the corresponding Drinfeld-Gaitsgory interpolations
$\widetilde{\ol\Gr}{}_G^\mu \subset \widetilde{\ol\Gr}{}_G^\la,\ \mu \leq \la \in \La^+$,
and we define $\widetilde{\Gr}_G:=\lim\limits_{\lambda}
\widetilde{\ol\Gr}{}_G^\la$.

\sssec{The special fiber of the interpolation $\widetilde{\Gr}_G$} \label{specfiber} From \S\ref{zero fiber of DG} it follows that \begin{equation*}(\widetilde{\Gr}_G)_0 \simeq \Gr^{\on{rep}}_{G}\times_{(\Gr_G)^{\BC^{\times}}}\Gr^{\on{attr}}_G \simeq \bigsqcup\limits_{\mu \in \La} T_{\mu} \times S_{\mu}.
\end{equation*}

\sssec{The open embedding $\jmath\colon \widetilde{\Gr}_G \hookrightarrow \on{VinGr}^{\on{princ}}_{G,x}$} Let us construct an open embedding of the interpolation $\widetilde{\Gr}_G \ra \BA^1$ into the degeneration $\on{VinGr}^{\on{princ}}_{G,x} \ra \BA^1$ (considered as the schemes over $\BA^1$) such that the following diagram is commutative:

\begin{equation*}
\xymatrix{ \widetilde{\Gr}_G \ar[d]^{\gamma} \ar[r]^{\jmath} & \ar[d]^{\vartheta} \on{VinGr}^{\on{princ}}_{G,x}   \\
\Gr_G\times \Gr_G \times \BA^1  \ar[r]^{\on{Id}} & \Gr_G\times \Gr_G \times \BA^1.
}
\end{equation*}

Recall the open ind-subscheme
$_{0}\!\!\on{VinGr}^{\on{princ}}_{G,x} \subset \on{VinGr}^{\on{princ}}_{G,x}$
of \S\ref{princ deg VinGr}.

\begin{prop}[D.~Gaitsgory]
  \label{DG completion}
There exists an isomorphism
$\upeta\colon _{0}\!\!\on{VinGr}^{\on{princ}}_{G,x} \iso \widetilde{\on{Gr}}_G$ of the families over $\BA^1$
such that the following diagram is commutative:
\begin{equation}
  \label{don't copy diagrams!}
\xymatrix{ _{0}\!\!\on{VinGr}^{\on{princ}}_{G,x} \ar[d]^{\vartheta|_{_{0}\!\!\on{VinGr}^{\on{princ}}_{G,x}}} \ar[r]^{\upeta} & \ar[d]^{\gamma} \widetilde{\on{Gr}}_G   \\
\Gr_G\times \Gr_G \times \BA^1  \ar[r]^{\on{Id}} & \Gr_G\times \Gr_G \times \BA^1.
}
\end{equation}
\end{prop}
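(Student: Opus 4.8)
The plan is to identify the two families over $\BA^1$ by comparing their functors of points, using the mapping--space presentation of ${}_0\!\!\on{VinGr}^{\on{princ}}_{G,x}$ from \S\ref{vingr via mapp stack} (restricted to the principal line, cf.\ \propref{isom between descr of vingr}) together with the defining universal property of the Drinfeld--Gaitsgory interpolation, and then to match the two embeddings into $\Gr_G\times\Gr_G\times\BA^1$. Recall that by definition $\widetilde{\Gr}_G=\lim\limits_\lambda\widetilde{\ol\Gr}{}^\la_G$, where an $S$-point of $\widetilde{\ol\Gr}{}^\la_G$ over $\BA^1$ is a $\BC^\times$-equivariant morphism $\BX_S\to\ol\Gr{}^\la_G$, with $\BC^\times$ acting on $\Gr_G$ through $2\rho$. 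So the heart of the matter is to produce, functorially from the Vinberg data $(\CF_1,\CF_2,\varphi_{\la^{\lvee}},\tau_{\mu^{\lvee}},\eta_{\la^{\lvee}},\zeta_{\la^{\lvee}})$ over the principal line $\BA^1\hookrightarrow T^+_{\on{ad}}$, a $\BC^\times$-equivariant family of $\Gr_G$-points parametrized by $\BX=\BA^2$, and conversely; here the two coordinates $\tau_1,\tau_2$ of $\BX$ (with $\tau_1\tau_2=a$) will distribute the principal degeneration between its attracting and repelling directions, exactly as the left $N$- and right $N_-$-legs of the section $\mathfrak{s}$ do in the local--model picture used in \lemref{embedding into product}.

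First I would treat the generic locus. Over $\BG_m\subset\BA^1$ both families are canonically trivialized: on the Vinberg side ${}_0\!\!\on{VinGr}^{\on{princ}}_{G,x}|_{\BG_m}\iso\Gr_G\times\BG_m$ by \lemref{restriction for VinGrBD} (cf.\ \propref{triv for SL_2}), and $\widetilde{\Gr}_G|_{\BG_m}$ is identified with the graph of the $2\rho$-action by \S\ref{main prop}. Under these identifications both $\vartheta$ of \S\ref{embedding} and $\gamma$ send the fibre over $a\in\BG_m$ isomorphically onto the graph of the $2\rho$-action inside $\Gr_G\times\Gr_G\times\BG_m$, the two $\Gr_G$-factors being read off from the legs $(\CF_1,\eta_{\la^{\lvee}},\zeta_{\la^{\lvee}}\circ\varphi_{\la^{\lvee}})$ and $(\CF_2,\varphi_{\la^{\lvee}}\circ\eta_{\la^{\lvee}},\zeta_{\la^{\lvee}})$ of $\vartheta$, versus $\gamma_1,\gamma_2$ of \S\ref{main prop}. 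So $\upeta$ is the evident isomorphism over $\BG_m$; the only subtlety already visible here is to match the attracting leg with the repelling one correctly, since this choice is exactly what fixes the bottom arrow of the diagram.

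It remains to extend $\upeta$ across the special fibre $a=0$ and to verify it is an isomorphism of ind-schemes there. For this I would compare the two descriptions of the zero fibre directly: by \S\ref{zero fiber of DG} and \S\ref{specfiber} one has $(\widetilde{\Gr}_G)_0\iso\Gr^{\on{attr}}_G\times_{(\Gr_G)^{\BC^\times}}\Gr^{\on{rep}}_G\iso\bigsqcup_\mu S_\mu\times T_\mu$, while \lemref{special fiber of vartheta} together with \remref{sp fiber def free for vingr} identifies $({}_0\!\!\on{VinGr}^{\on{princ}}_{G,x})_0$ with $\bigsqcup_\mu T_\mu\times S_\mu$; the attractor--repellent maps $q^+,q^-$ of \S\ref{main prop} match these two stratifications fixed point by fixed point. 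Granted a morphism of the two families that is an isomorphism over the dense $\BG_m$ and a bijection on $\BC$-points of the special fibre, I would upgrade it to an isomorphism using that both families are ind-proper over $\BA^1$ (ind-projective, by restricting \propref{repres of BD} to the principal line), that both are closed ind-subschemes of the separated ind-scheme $\Gr_G\times\Gr_G\times\BA^1$ via $\vartheta$ (\lemref{embedding into product}) and $\gamma$, and that the zero fibres are reduced and equidimensional with matching structure: a morphism of ind-proper families over $\BA^1$ that is an isomorphism over $\BG_m$ and induces an isomorphism on the reduced special fibres is an isomorphism.

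The main obstacle is the construction of the comparison morphism over all of $\BA^1$ rather than merely generically: one must turn the single degeneration parameter $a$ of the Vinberg family into the two-variable $\BC^\times$-equivariant datum over $\BX$ demanded by the interpolation. This is precisely where the geometry of the nondegenerate locus ${}_0\!\!\on{Vin}^{\on{princ}}_G$ enters --- the fact that $\mathfrak{s}(a)$ factors its left and right legs through $N$ and $N_-$ supplies the two coordinates $\tau_1,\tau_2$ --- and I expect the cleanest route is to carry out the construction on the level of the mapping-stack presentation of \S\ref{vingr via mapp stack}, deducing the equivariant $\BX$-family from the $T^+_{\on{ad}}$-family restricted along the principal $\BC^\times\hookrightarrow T$, $c\mapsto2\rho(c)$.
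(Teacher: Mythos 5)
There is a genuine gap, and it sits exactly where you locate ``the main obstacle'': you never construct the comparison morphism $\upeta$ over all of $\BA^1$, and that construction \emph{is} the content of the proposition. Your generic-locus discussion and the matching of zero fibres ($\bigsqcup_\mu S_\mu\times T_\mu$ versus $\bigsqcup_\mu T_\mu\times S_\mu$) only reproduce facts already recorded in \S\ref{specfiber} and \remref{sp fiber def free for vingr}; a morphism defined over $\BG_m$ together with a bijection between the sets of $\BC$-points of the special fibres does not produce a morphism of families over $\BA^1$. The paper's proof supplies the missing mechanism: by \cite[\S2.4,~D.6]{dg1}, the stack ${}_{0}\!\on{Vin}^{\on{princ}}_G/(G\times G)$ over $\BA^1$ is isomorphic to $\BA^1/\widetilde{G}$, where $\widetilde{G}\to\BA^1$ is the Drinfeld--Gaitsgory interpolation of $G$ itself for the conjugation action $c\mapsto\bigl(g\mapsto 2\rho(c)\,g\,2\rho(c)^{-1}\bigr)$, which is a \emph{group scheme} over $\BA^1$. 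Hence ${}_{0}\!\on{VinGr}^{\on{princ}}_{G,x}$ is the affine Grassmannian $\Gr_{\widetilde{G}}$ of this group scheme, and the $\BC^\times$-equivariant evaluation morphism $\BX\underset{\BA^1}\times\widetilde{G}\to\BA^1\times G$ yields a map $\upalpha\colon\BA^1/\widetilde{G}\to\on{Maps}_{\BA^1}(\BX,\BA^1/G)^{\BC^\times}$, from which $\upeta$ falls out by adjunction, with the diagram commuting by construction. Your observation that the section $\mathfrak{s}$ factors its two legs through $N$ and $N_-$ gestures in this direction but is not a substitute for this identification.

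Your proposed finishing argument is also invalid on two counts. First, neither family is ind-proper over $\BA^1$: \propref{repres of BD} concerns the full $\on{VinGr}^{\on{princ}}_{G,x}$, whereas the proposition is about its \emph{open} defect-free locus; and the zero fibre of $\widetilde{\Gr}_G$ is a union of products of \emph{locally closed} semi-infinite orbits, so $\widetilde{\Gr}_G\to\BA^1$ cannot be ind-proper either. Second, even granting properness, the principle ``isomorphism over $\BG_m$ plus isomorphism on reduced special fibres implies isomorphism'' is false: the section $V(y)\subset\BP^1\times\BA^1$ included into the closed subscheme $V(y^2,\,ty)$ (the same section with an embedded point over $0$) satisfies all your hypotheses and is not an isomorphism. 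The paper instead argues that $\gamma\circ\upeta$ coincides with $\vartheta|_{{}_{0}\!\on{VinGr}^{\on{princ}}_{G,x}}$, hence is a locally closed embedding by \lemref{embedding into product}, and $\gamma$ is a locally closed embedding by \cite[\S2.5.11]{dg2}; therefore $\upeta$ is a locally closed embedding, bijective on $\BC$-points, so an isomorphism of \emph{reduced} ind-schemes; the upgrade to an isomorphism of ind-schemes then uses the formal smoothness of $\Gr_{\widetilde{G}}$ over $\BA^1$ together with the fibrewise identifications --- a property available precisely because of the group-scheme presentation that your approach lacks.
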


\begin{proof}
Will be given in Appendix~\ref{proof DG completion}.
\end{proof}

\section{The action of Schieder bialgebra on the fiber functor}
\label{four}

\subsection{Construction of the action} \label{construction action}
Fix $\CP \in \on{Perv}_{G_{\CO}}(\Gr_G)$. Set $V:=H^{\bullet}(\Gr_G,\CP)$.
Recall the closed embedding
\begin{equation*}
\vartheta^{\on{princ}}\colon\on{VinGr}^{\on{princ}}_{G,x} \hookrightarrow
\Gr_G \times \Gr_G \times \BA^1\end{equation*}
of \S\ref{princ deg VinGr}.
Let us fix a cocharacter $\nu \in \La$.
Recall the closed embedding
$\on{Vin}\ol{S}^{\on{princ}}_{\nu}
\hookrightarrow \on{VinGr}^{\on{princ}}_{G,x}$
of \S\ref{VinS}.
Let $\vartheta^{\on{princ}}_\nu\colon\on{Vin}\ol{S}^{\on{princ}}_\nu \hookrightarrow
\Gr_G \times \Gr_G \times \BA^1$
be the composition
\begin{equation*}
\on{Vin}\ol{S}^{\on{princ}}_{\nu}
\hookrightarrow
\on{VinGr}^{\on{princ}}_{G,x} \hookrightarrow
\Gr_G \times \Gr_G \times \BA^1.
\end{equation*}
For $\mu \in \La$ set
$\CP_\mu:=\CP|_{\ol{S}_\mu},~
\widetilde{\CP}_{\nu}:=
(\ul\BC\boxtimes\CP\boxtimes\ul\BC)|_{\on{Vin}\ol{S}_{\nu}^{\on{princ}}}$ (the $*$-restrictions
to the corresponding closed subvarieties).
We will see in Remark~\ref{support} that the support of the complex
$\widetilde{\CP}_\nu$ is finite-dimensional.

Recall the one-parameter deformation
$\Upsilon^{\on{princ}}_{\nu}\colon
\on{Vin}\ol{S}^{\on{princ}}_{\nu} \ra \BA^1$
of \S\ref{VinS princ}.

\begin{prop} \label{restrictions of Upsilon}
The one-parametric family $\Upsilon^{\on{princ}}_{\nu}$ is trivial over
$\BG_m$. The special fiber ${\Upsilon^{\on{princ}}_{\nu}}^{-1}(0)$ is
\begin{equation*}
\bigcup\limits_{\mu \leq \nu,\mu \in \La} (\ol{S}_{\nu}\cap \ol{T}_{\mu})\times \ol{S}_{\mu}.
\end{equation*}
A general fiber is $\ol{S}_{\nu}$.
The restriction
$\widetilde{\CP}_{\nu}
|_{{\Upsilon^{\on{princ}}_{\nu}}^{-1}(\BG_m)}$
is isomorphic to
$\CP|_{\ol{S}_{\nu}}\boxtimes \ul\BC{}_{\BG_m}$.
The restriction of $\widetilde{\CP}_{\nu}$
to $(\ol{S}_{\nu}\cap \ol{T}_{\mu}) \times \ol{S}_{\mu}$
is isomorphic to $\ul\BC{}_{(\ol{S}_{\nu}\cap \ol{T}_{\mu})}\boxtimes (\CP|_{\ol{S}_{\mu}})$.
\end{prop}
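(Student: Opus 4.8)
The plan is to treat the first three assertions (concerning the base $\BA^1$ and the fibers) as essentially assembled from the identifications already recorded in \S\ref{VinS princ}, and to concentrate the actual work on the two sheaf-theoretic claims, of which the generic one is the substantial point. First, for triviality and the general fiber: by the $\on{Vin}\ol{S}_\nu$-version of Lemma~\ref{restriction for VinGrBD} (same proof, reducing to the fiber over $1\in T_{\on{ad}}$), the restriction $\Upsilon_\nu^{-1}(T_{\on{ad}})$ is isomorphic to $\ol{S}_\nu\times T_{\on{ad}}$. The principal line meets $T_{\on{ad}}\subset T^+_{\on{ad}}$ exactly in $\BG_m$, and the cocharacter $2\rho$ acts on this line by simultaneous scaling of all coordinates, hence transitively on $\BG_m$; since $\Upsilon^{\on{princ}}_\nu$ is $\BC^\times$-equivariant, I would conclude that $(\Upsilon^{\on{princ}}_\nu)^{-1}(\BG_m)\to\BG_m$ is the trivial family with fiber $\ol{S}_\nu$. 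The special fiber was already identified in \S\ref{VinS princ} by means of Lemma~\ref{special fiber of vartheta}: the closed embedding $\widetilde{r}_{\nu,+}$ carries $(\on{Vin}\ol{S}_\nu)_0$ isomorphically onto $\bigcup_{\mu\leq\nu}(\ol{S}_\nu\cap\ol{T}_\mu)\times\ol{S}_\mu$, which is the asserted zero fiber.

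For the special-fiber sheaf, I would use Lemma~\ref{special fiber of vartheta} in its precise form: under $\vartheta_0$ the first $\Gr_G$-factor of $\Gr_G\times\Gr_G$ receives the repellent-type datum and the second receives the attractor-type datum, so that on the piece indexed by $\mu$ the two projections land in $\ol{S}_\nu\cap\ol{T}_\mu$ and in $\ol{S}_\mu$ respectively. Since $\vartheta^{\on{princ}}_\nu$ over $0$ is the composite of $\vartheta_0$ with the inclusion of the zero fiber, the external product $\ul\BC\boxtimes\CP\boxtimes\ul\BC$ restricts factor by factor: the first $\ul\BC$ gives $\ul\BC_{\ol{S}_\nu\cap\ol{T}_\mu}$, the middle $\CP$ gives $\CP|_{\ol{S}_\mu}$, and the $\BA^1$-factor contributes the stalk of $\ul\BC$ at $0$. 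This yields $\widetilde{\CP}_\nu|_{(\ol{S}_\nu\cap\ol{T}_\mu)\times\ol{S}_\mu}\cong\ul\BC_{\ol{S}_\nu\cap\ol{T}_\mu}\boxtimes\CP|_{\ol{S}_\mu}$, which is the final assertion.

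The generic-fiber sheaf is the main point. Over $\BG_m$ one has $\tau\ne0$, so the maps lie in the nondegenerate locus and $(\Upsilon^{\on{princ}}_\nu)^{-1}(\BG_m)\subset {}_0\!\on{VinGr}^{\on{princ}}_{G,x}$. By Proposition~\ref{DG completion} together with the graph description of \S\ref{main prop}, the embedding $\vartheta^{\on{princ}}_\nu$ over $\BG_m$, read through the trivialization $\ol{S}_\nu\times\BG_m$ above, is the graph of the $2\rho$-action: a point $(s,a)$ is sent to $(s,\,2\rho(c)\cdot s,\,a)$, with $c$ determined by $a$, the second component again lying in $\ol{S}_\nu$ because $\ol{S}_\nu$ is $2\rho(c)$-stable. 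Restricting $\ul\BC\boxtimes\CP\boxtimes\ul\BC$, the first and third factors contribute only constant sheaves, while the middle factor contributes the pullback of $\CP$ along $(s,a)\mapsto 2\rho(c)\cdot s$. As $2\rho$ factors through $T\subset G_\CO$ and $\CP$ is $G_\CO$-equivariant, this pullback is canonically isomorphic to the pullback along $(s,a)\mapsto s$, and I would thus obtain $\widetilde{\CP}_\nu|_{(\Upsilon^{\on{princ}}_\nu)^{-1}(\BG_m)}\cong\CP|_{\ol{S}_\nu}\boxtimes\ul\BC_{\BG_m}$.

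I expect the delicate step to be precisely this generic-fiber identification: one must promote the pointwise ``graph of the action'' picture to an honest isomorphism of complexes, i.e.\ check that the $\BC^\times$-equivariant structure of $\CP$ (through $2\rho$) trivializes the pulled-back family of restrictions compatibly over all of $\BG_m$, and that everything stays within bounded constructible derived categories, using the finite-dimensionality of $\supp\widetilde{\CP}_\nu$ from Remark~\ref{support}. The remaining assertions are essentially bookkeeping on top of the fiber identifications of \S\ref{VinS princ}.
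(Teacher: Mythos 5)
Your proposal is correct and takes essentially the paper's route: the paper's proof is precisely the citation of \S\ref{VinS princ}, i.e.\ the fiber identifications coming from Lemma~\ref{restriction for VinGrBD} ($T$-equivariant triviality over $\BG_m$, with $2\rho$ scaling the principal line) and Lemma~\ref{special fiber of vartheta} (the zero fiber), after which the sheaf statements follow by restricting $\ul\BC\boxtimes\CP\boxtimes\ul\BC$ factor by factor, exactly as you do. Your only addition is to make explicit, via Proposition~\ref{DG completion}, the graph-of-the-$2\rho$-action description of the embedding over $\BG_m$, and the $G_\CO$-equivariance of $\CP$, the trivialization of $\widetilde{\CP}_\nu$ over $\BG_m$ that the paper leaves implicit --- a valid (if slightly heavier than strictly necessary, since the $T$-equivariance of the whole family already suffices) way of spelling out the same point.
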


\begin{proof}
Follows from~\S\ref{VinS princ}.
\end{proof}

\begin{rem} \label{support}{\em{Let us show that the support of the complex $\widetilde{\CP}_\nu$ is finite-dimensional. It is enough to show that the supports of
$(\widetilde{\CP}_\nu)|
_{{\Upsilon^{\on{princ}}_\nu}^{-1}(\BG_m)}$
and
$(\widetilde{\CP}_\nu)|
_{{\Upsilon^{\on{princ}}_\nu}^{-1}(0)}$
are finite-dimensional.
It follows from Proposition~\ref{restrictions of Upsilon}
using the fact that there are finitely many $\mu \leq \nu$
such that $\CP_\mu$ is nonzero.
}
}
\end{rem}

\sssec{The action}
\label{The action}
Given a positive coweight $\nu \in \Lambda^\pos$
we define a morphism of vector spaces
\begin{equation*}
\bigoplus\limits_{\mu \leq \nu}(\CA_{\nu-\mu} \otimes V_{\mu})= H^{\langle2\rho^{\lvee},\nu\rangle}_{c}({\Upsilon^{\on{princ}}_{\nu}}^{-1}(0),(\widetilde{\CP}_{\nu})_0)\ra
H^{\langle2\rho^{\lvee},\nu\rangle}_{c}({\Upsilon^{\on{princ}}_{\nu}}^{-1}(1),(\widetilde{\CP}_{\nu})_1)=V_{\nu},
\end{equation*}
as the cospecialization morphism (with coefficients in the sheaf
$\widetilde{\CP}_{\nu}$) corresponding to the one-parameter
degeneration $\Upsilon^{\on{princ}}_{\nu}$. Here $(\widetilde{\CP}_{\nu})_0$
(resp.\ $(\widetilde{\CP}_{\nu})_1$) stands for the restriction of $\widetilde{\CP}_{\nu}$ to
the fiber of $\Upsilon^{\on{princ}}_{\nu}$ over $0\in\BA^1$ (resp.\ over $1\in\BA^1$).
Summing over all $\nu \in \Lambda^\pos$ we obtain the desired morphism
\begin{equation*}
\on{act}_V\colon \CA \otimes V \ra V.
\end{equation*}

\subsection{Associativity}
\sssec{The two-parameter deformation of Grassmannian}
\label{twoparam of Gr}
Recall two projections
\begin{equation*}
\xymatrix{
& \on{VinBun}_G \ar[dl]_{\wp_1} \ar[dr]^{\wp_2} &\\
\on{Bun}_G && \on{Bun}_G
}
\end{equation*}
of~Definition~\ref{vinbun}.
Set $\CW:=\on{VinBun}_G \underset{{\on{Bun}_G}}\times \on{VinBun}_G$.
For $n \in \BN$ let $\on{Vin^2Gr}_{G,X^n}$ be the moduli
space of the following data:
it associates to a scheme $S$

1) a collection of $S$-points $(x_1,\dots,x_n)\in X^n(S)$ of the curve $X$,

2) an $S$-point $(\CF_1,\CF_2,\CF_3,\varphi_{1,\la^{\lvee}},\varphi_{2,\la^{\lvee}},
\tau_{1,\mu^{\lvee}},\tau_{2,\mu^{\lvee}}) \in
\CW(S),$

3) for every $\la \in \La^{\vee+}$, the rational morphisms
\begin{equation*}
\eta_{\la^{\lvee}}\colon
\CO_{S\times X} \ra \CV^{\la^{\lvee}}_{\CF_1},~
\zeta_{\la^{\lvee}}\colon
\CV^{\la^{\lvee}}_{\CF_3} \ra \CO_{S\times X},
\end{equation*}
regular on $(S\times X)\setminus\{\varGamma_{x_1}\cup\dots\cup\varGamma_{x_n}\}$,
such that the data
\begin{equation*}
(\CF_1,\CF_3,\varphi_{2,\la^{\lvee}} \circ \varphi_{1,\la^{\lvee}},
\tau_{2,\mu^{\lvee}} \circ \tau_{1,\mu^{\lvee}},
\eta_{\la^{\lvee}},\zeta_{\la^{\lvee}})
\end{equation*}
define an $S$-point of $\on{VinGr}_{G,X^n}$.

Let us denote by $_0\!\CW$ the substack of $\CW$ consisting
of points
\begin{equation*}
(\CF_1,\CF_2,\CF_3,\varphi_{1,\la^{\lvee}},\varphi_{2,\la^{\lvee}},
\tau_{1,\mu^{\lvee}},\tau_{2,\mu^{\lvee}}) \in
\CW
\end{equation*}
such that the data
\begin{equation*}
(\CF_1,\CF_3,\varphi_{2,\la^{\lvee}} \circ \varphi_{1,\la^{\lvee}},
\tau_{2,\mu^{\lvee}} \circ \tau_{1,\mu^{\lvee}})
\end{equation*}
define a point of $\on{VinBun}_G$.

\begin{rem}{\em\label{Double VinGr via fiberprod}
The family $\on{Vin^2Gr}_{G,X^n}$ can be obtained as
the fibre product:
\begin{equation*} \xymatrix{
\on{Vin^2Gr}_{G,X^n}=
(\on{VinGr}_{G,X^n} \underset{\on{VinBun}_G}
\times {_0\!\CW})
\ar[r] \ar[d] &
{_0\!\CW}
\ar[d]^{\on{m}} \\
\on{VinGr}_{G,X^n} \ar[r]
& \on{VinBun}_G
}
\end{equation*}
where the morphism
$\on{m}\colon
{_0\!\CW} \ra
\on{VinBun}_G$
corresponds to the multiplication in $\on{Vin}_G$.}
\end{rem}

The degeneration morphism
$\Upsilon^2\colon \on{Vin^2Gr}_{G,X^n} \ra
T^+_{\on{ad}}\times T^+_{\on{ad}}$
is defined as the composition of the morphisms
\begin{equation*}
\on{Vin^2Gr}_{G,X^n} \ra
\CW \ra
T^+_{\on{ad}}\times T^+_{\on{ad}}.
\end{equation*}

Let us denote by $\on{Vin^2Gr}^{\on{princ}}_{G,X^n}$
the restriction of the degeneration $\Upsilon^2\colon\on{Vin^2Gr}_{G,X^n}
\ra T^+_{\on{ad}}\times T^+_{\on{ad}}$
to the product of~``principal'' lines
\begin{equation*}
\BA^1 \times \BA^1 \hookrightarrow T^+_{\on{ad}}\times T^+_{\on{ad}},~
(a_1,a_2) \mapsto ((a_1,\dots,a_1),(a_2,\dots,a_2)).
\end{equation*}
Let us denote the corresponding morphism
$\on{Vin^2Gr}^{\on{princ}}_{G,X^n} \ra \BA^2$ by
$\Upsilon^{2,\on{princ}}$.

\begin{lem}
The restrictions of the two-parameter family
$\Upsilon^{2,\on{princ}}\colon\on{Vin^2Gr}^{\on{princ}}_{G,X^n} \ra \BA^2$
to the lines
\begin{equation*}
\BA^{1}\times\{1\} \hookrightarrow \BA^1\times \BA^1,~
\{1\}\times\BA^{1} \hookrightarrow \BA^1\times \BA^1
\end{equation*}
are both isomorphic to the one-parameter family
$\Upsilon^{\on{princ}}\colon \on{VinGr}^{\on{princ}}_{G,X^n} \ra \BA^1$.
\end{lem}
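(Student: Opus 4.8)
The plan is to exploit the fibre-product description of $\on{Vin^2Gr}_{G,X^n}$ from Remark~\ref{Double VinGr via fiberprod}, namely $\on{Vin^2Gr}_{G,X^n}=\on{VinGr}_{G,X^n}\underset{\on{VinBun}_G}\times {}_0\!\CW$ with the second leg given by the multiplication morphism $\on{m}\colon {}_0\!\CW\to\on{VinBun}_G$. The two coordinates of the degeneration morphism $\Upsilon^2$ record the parameters $\tau_1,\tau_2$ attached to the two $\on{VinBun}_G$-factors of $\CW=\on{VinBun}_G\underset{\on{Bun}_G}\times\on{VinBun}_G$. Restricting to $\{1\}\times\BA^1$ (resp.\ $\BA^1\times\{1\}$) forces the first (resp.\ second) of these parameters to equal $1\in T_{\on{ad}}\subset T^+_{\on{ad}}$, and I claim that on this locus the corresponding $\on{VinBun}_G$-factor degenerates to $\on{Bun}_G$, which collapses the fibre product back to $\on{VinGr}_{G,X^n}$.

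Concretely, first I would invoke the identification $\Upsilon^{-1}(T_{\on{ad}})\simeq\on{Bun}_G\times T_{\on{ad}}$ of \S\ref{degeneration mor for VinBun}: over $1\in T_{\on{ad}}$ a point of $\on{VinBun}_G$ consists of a single $G$-bundle with each $\varphi_{\la^{\lvee}}$ an isomorphism and $\tau_{\mu^{\lvee}}=1$. Applying this to the first factor of $\CW$ over the locus $\{1\}\times T^+_{\on{ad}}$, the defining fibre product $\CW=\on{VinBun}_G\underset{\on{Bun}_G}\times\on{VinBun}_G$ restricts to $\on{Bun}_G\underset{\on{Bun}_G}\times\on{VinBun}_G\simeq\on{VinBun}_G$ (the second factor), and the substack condition cutting out ${}_0\!\CW$ becomes automatic, since $\varphi_{2,\la^{\lvee}} \circ \varphi_{1,\la^{\lvee}}=\varphi_{2,\la^{\lvee}}$ already defines a point of $\on{VinBun}_G$ once $\varphi_{1,\la^{\lvee}}$ is invertible. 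Under this identification $\on{m}$ becomes the identity of $\on{VinBun}_G$, so
\begin{equation*}
\on{Vin^2Gr}_{G,X^n}\big|_{\{1\}\times T^+_{\on{ad}}}=\on{VinGr}_{G,X^n}\underset{\on{VinBun}_G,\,\on{Id}}\times\on{VinBun}_G=\on{VinGr}_{G,X^n},
\end{equation*}
compatibly with the remaining degeneration parameter $\tau_2$ living in the second copy of $T^+_{\on{ad}}$. Restricting further to the principal line $\BA^1\hookrightarrow T^+_{\on{ad}}$ then yields exactly $\Upsilon^{\on{princ}}\colon\on{VinGr}^{\on{princ}}_{G,X^n}\to\BA^1$. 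The case of $\BA^1\times\{1\}$ is symmetric, trivializing the second factor and retaining $\tau_1$ instead.

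At the level of $S$-points the isomorphism is the expected one: when $\varphi_{1,\la^{\lvee}}\colon\CV^{\la^{\lvee}}_{\CF_1}\iso\CV^{\la^{\lvee}}_{\CF_2}$ is an isomorphism one identifies $\CF_1$ with $\CF_2$ and transports the rational morphism $\eta_{\la^{\lvee}}\colon\CO_{S\times X}\to\CV^{\la^{\lvee}}_{\CF_1}$ to $\varphi_{1,\la^{\lvee}}\circ\eta_{\la^{\lvee}}$, retaining $(\CF_2,\CF_3,\varphi_{2,\la^{\lvee}},\tau_{2,\mu^{\lvee}},\varphi_{1,\la^{\lvee}}\circ\eta_{\la^{\lvee}},\zeta_{\la^{\lvee}})$ as the resulting $S$-point of $\on{VinGr}_{G,X^n}$. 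The only real content beyond bookkeeping — and the step I expect to be the main obstacle — is checking that this transport preserves conditions~a)--d) of \S\ref{degen of the affine Gr} (the identity $\zeta\circ\varphi\circ\eta=\on{Id}$ on $U$, the Pl\"ucker/projection relations, and the vanishing conditions), which holds because $\varphi_{1,\la^{\lvee}}$ is a compatible family of isomorphisms of the associated bundles, and that the identification is functorial in $S$ and intertwines the projections to $\BA^1$. The structural collapse of the fibre product itself is immediate from Remark~\ref{Double VinGr via fiberprod}; the bulk of the write-up will be this compatibility verification together with confirming that the defect-free conditions match under the identification.
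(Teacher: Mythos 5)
Your proposal is correct and takes essentially the same route as the paper, whose entire proof is the one-line citation ``Follows from Remark~\ref{Double VinGr via fiberprod}'': you invoke exactly that fibre-product description and fill in why fixing one degeneration parameter at $1$ collapses the product back to $\on{VinGr}_{G,X^n}$. The details you supply (the trivialization of the corresponding $\on{VinBun}_G$-factor over $1\in T_{\on{ad}}\subset T^+_{\on{ad}}$, the automatic nature of the ${}_0\!\CW$-condition there, and the transport of $\eta_{\la^{\lvee}}$ along the isomorphisms $\varphi_{1,\la^{\lvee}}$) are accurate and are precisely what the paper leaves implicit.
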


\begin{proof}
Follows from Remark~\ref{Double VinGr via fiberprod}.
\end{proof}

Let us construct a closed embedding
\begin{equation*}\vartheta^2\colon
\on{Vin^2Gr}_{G,X^n} \hookrightarrow
\Gr_{G,X^n}\underset{X^n}\times\Gr_{G,X^n}
\underset{X^n}\times\Gr_{G,X^n}\times
T^+_{\on{ad}}\times T^+_{\on{ad}}.
\end{equation*}
It sends a point
\begin{equation*}
(\CF_1,\CF_2,\CF_3,\varphi_{1,\la^{\lvee}},\varphi_{2,\la^{\lvee}},
\tau_{1,\mu^{\lvee}},\tau_{2,\mu^{\lvee}},
\eta_{\la^{\lvee}},\zeta_{\la^{\lvee}}) \in \on{Vin^2Gr}_{G,X^n}:
\end{equation*}
\begin{equation*}
\CO_{S\times X} \xrightarrow{\eta_{\la^{\lvee}}}\CV^{\la^{\lvee}}_{\CF_1} \xrightarrow{\varphi_{1,\la^{\lvee}}} \CV^{\la^{\lvee}}_{\CF_2} \xrightarrow{\varphi_{2,\la^{\lvee}}} \CV^{\la^{\lvee}}_{\CF_3} \xrightarrow{\zeta_{\la^{\lvee}}} \CO_{S\times X}
\end{equation*}
to the point
\begin{equation*}
(\CF_1,\eta_{\la^{\lvee}},\zeta_{\la^{\lvee}}\circ
\varphi_{2,\la^{\lvee}}\circ\varphi_{1,\la^{\lvee}})
\times
(\CF_2,\varphi_{1,\la^{\lvee}} \circ \eta_{\la^{\lvee}},
\zeta_{\la^{\lvee}}\circ\varphi_{2,\la^{\lvee}})
\times
(\CF_3,\varphi_{2,\la^{\lvee}} \circ \varphi_{1,\la^{\lvee}}
\circ \eta_{\la^{\lvee}},\zeta_{\la^{\lvee}})\times
\tau_{1,\mu^{\lvee}}\times
\tau_{2,\mu^{\lvee}}.
\end{equation*}

\begin{lem}
The morphism $\vartheta^2$ is a closed embedding.
\end{lem}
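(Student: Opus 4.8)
The plan is to mirror the proof of Lemma~\ref{embedding into product} for $\vartheta$, carried out in two steps: first that $\vartheta^2$ is injective on the level of $S$-points, and then that it is proper; a proper monomorphism is a closed embedding.

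First I would establish injectivity. Given an $S$-point of the target recorded by the three $\Gr_{G,X^n}$-data attached to $\CF_1,\CF_2,\CF_3$ together with $\tau_{1,\mu^{\lvee}},\tau_{2,\mu^{\lvee}}$, I must recover $\varphi_{1,\la^{\lvee}}$ and $\varphi_{2,\la^{\lvee}}$ uniquely. Set $U:=(S\times X)\setminus\{\varGamma_{x_1}\cup\dots\cup\varGamma_{x_n}\}$. The pairs $(\eta_{\la^{\lvee}},\zeta_{\la^{\lvee}}\circ\varphi_{2,\la^{\lvee}}\circ\varphi_{1,\la^{\lvee}})$, $(\varphi_{1,\la^{\lvee}}\circ\eta_{\la^{\lvee}},\zeta_{\la^{\lvee}}\circ\varphi_{2,\la^{\lvee}})$ and $(\varphi_{2,\la^{\lvee}}\circ\varphi_{1,\la^{\lvee}}\circ\eta_{\la^{\lvee}},\zeta_{\la^{\lvee}})$ trivialize $\CF_1|_U,\CF_2|_U,\CF_3|_U$ respectively; the middle pair is a legitimate point of $\Gr_{G,X^n}$ since $(\zeta_{\la^{\lvee}}\circ\varphi_{2,\la^{\lvee}})\circ(\varphi_{1,\la^{\lvee}}\circ\eta_{\la^{\lvee}})=\on{Id}$ on $U$ by condition a) in the definition of $\on{VinGr}_{G,X^n}$ of \S\ref{degen of the affine Gr}. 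After these trivializations $\varphi_{1,\la^{\lvee}}|_U$ becomes an endomorphism of $V^{\la^{\lvee}}\otimes\CO_U$; since $\eta_{\la^{\lvee}}$ and $\varphi_{1,\la^{\lvee}}\circ\eta_{\la^{\lvee}}$ both become the highest-vector embedding, $\varphi_{1,\la^{\lvee}}$ fixes $(V^{\la^{\lvee}})_{\la^{\lvee}}$, and since $\zeta_{\la^{\lvee}}\circ\varphi_{2,\la^{\lvee}}\circ\varphi_{1,\la^{\lvee}}$ and $\zeta_{\la^{\lvee}}\circ\varphi_{2,\la^{\lvee}}$ both become the lowest-covector projection, $\varphi_{1,\la^{\lvee}}^*$ fixes $(V^{\la^{\lvee}})^*_{-\la^{\lvee}}$. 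By the characterization of $\mathfrak{s}(T^+_{\on{ad}})$ as $N\cdot\mathfrak{s}(T^+_{\on{ad}})\cap N_-\cdot\mathfrak{s}(T^+_{\on{ad}})$ used in the proof of Lemma~\ref{embedding into product}, this forces $\varphi_{1,\la^{\lvee}}|_U=\mathfrak{s}(\tau_{1,\mu^{\lvee}}|_U)$; the same argument for $\varphi_{2,\la^{\lvee}}$ gives $\varphi_{2,\la^{\lvee}}|_U=\mathfrak{s}(\tau_{2,\mu^{\lvee}}|_U)$. As $U$ is dense and the target bundles are locally free, $\varphi_{1,\la^{\lvee}},\varphi_{2,\la^{\lvee}}$ are thereby determined on all of $S\times X$.

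Next I would prove properness, repeating the torsor argument with $T$ replaced by $T\times T$. Set
\begin{equation*}
\mathcal{X}^2:=\left((\Gr_{G,X^n}\underset{X^n}\times\Gr_{G,X^n}\underset{X^n}\times\Gr_{G,X^n})\times T^+_{\on{ad}}\times T^+_{\on{ad}}\right)\underset{\on{Bun}_G\times\on{Bun}_G\times\on{Bun}_G}\times\CW,
\end{equation*}
where $\CW=\on{VinBun}_G\times_{\on{Bun}_G}\on{VinBun}_G$ maps to $\on{Bun}_G^{\times3}$ via $\wp_1$ of the first factor, the common middle bundle, and $\wp_2$ of the second factor. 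Then $\vartheta^2$ together with the projection $\on{Vin^2Gr}_{G,X^n}\to\CW$ of Remark~\ref{Double VinGr via fiberprod} induces a closed embedding $\varsigma^2\colon\on{Vin^2Gr}_{G,X^n}\hookrightarrow\mathcal{X}^2$. By Proposition~\ref{proper over Bun} the morphism $\kappa\times\kappa\colon(\on{VinBun}_G/T)^{\times2}\to(\on{Bun}_G\times\on{Bun}_G)^{\times2}$ is proper, and base change to the diagonal of the two inner copies of $\on{Bun}_G$ shows $\CW/(T\times T)\to\on{Bun}_G^{\times3}$ is proper; hence the base change $\widetilde{\kappa}^2\colon\BP\mathcal{X}^2\to(\Gr_{G,X^n}\times_{X^n}\Gr_{G,X^n}\times_{X^n}\Gr_{G,X^n})\times T^+_{\on{ad}}\times T^+_{\on{ad}}$ is proper.

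Finally, $\mathcal{X}^2\to\BP\mathcal{X}^2$ is a $(T\times T)$-torsor, and the image of $\varsigma^2$ meets each orbit in a single point, so $\varrho\circ\varsigma^2$ is a closed embedding; consequently $\vartheta^2=\widetilde{\kappa}^2\circ(\varrho\circ\varsigma^2)$ is a composition of proper morphisms, hence proper. Being proper and injective on $S$-points, $\vartheta^2$ is a closed embedding. I expect the only genuinely new point — the main obstacle — to be the bookkeeping in the injectivity step: unlike the single $\varphi$ of $\vartheta$, one must now disentangle $\varphi_1$ and $\varphi_2$ separately, and the crucial observation is that the $\CF_2$-component of the image records exactly the intermediate maps $\varphi_{1,\la^{\lvee}}\circ\eta_{\la^{\lvee}}$ and $\zeta_{\la^{\lvee}}\circ\varphi_{2,\la^{\lvee}}$, which by the monoid characterization of $\mathfrak{s}$ pin down each factor individually rather than only their composite.
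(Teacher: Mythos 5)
Your proposal is correct and follows essentially the same route as the paper, whose proof of this lemma is literally a reference back to the proof of Lemma~\ref{embedding into product}: you reproduce its two steps (uniqueness of the dotted arrows via the monoid characterization $N\cdot\mathfrak{s}(T^+_{\on{ad}})\cap N_-\cdot\mathfrak{s}(T^+_{\on{ad}})=\mathfrak{s}(T^+_{\on{ad}})$, then properness via the torsor argument resting on Proposition~\ref{proper over Bun}), with the correct bookkeeping replacing $T$ by $T\times T$ and $\on{VinBun}_G$ by $\CW$. The observation that the middle Grassmannian datum records $\varphi_{1,\la^{\lvee}}\circ\eta_{\la^{\lvee}}$ and $\zeta_{\la^{\lvee}}\circ\varphi_{2,\la^{\lvee}}$, so that each $\varphi_i$ is pinned down separately, is exactly the intended adaptation.
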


\begin{proof}
The proof is the same as the one of
Lemma~\ref{embedding into product}.
\end{proof}

Let us denote by
\begin{equation*}
(\vartheta^{2})^{\on{princ}}\colon \on{Vin^2Gr}^{\on{princ}}_{G,X^n}
\hookrightarrow \Gr_{G,X^n}\underset{X^n}\times\Gr_{G,X^n}
\underset{X^n}\times\Gr_{G,X^n}\times
\BA^1\times \BA^1
\end{equation*}
the restriction of the morphism $\vartheta^2$
to $\on{Vin^2Gr}^{\on{princ}}_{G,X^n} \subset
\on{Vin^2Gr}_{G,X^n}$.

\sssec{The two-parameter deformations of closures of semi-infinite orbits} \label{two-param for S}
We fix $x \in X$.
Fix a cocharacter $\nu \in \La$.
Let $\on{Vin}^2\ol{S}_{\nu}$ be the moduli space
of the following data: it associates to a
scheme $S$

1) an $S$-point $(\CF_1,\CF_2,\CF_3,\varphi_{1,\la^{\lvee}},\varphi_{2,\la^{\lvee}},
\tau_{1,\mu^{\lvee}},\tau_{2,\mu^{\lvee}}) \in
\CW.$

2) For every $\la \in \La^{+\vee}$, a morphism
of sheaves
\begin{equation*}
\eta_{\la^{\lvee}}\colon
\CO_{S\times X}(\langle-\la^{\lvee},\nu\rangle \cdot (S\times x))
\ra \CV^{\la^{\lvee}}_{\CF_1}
\end{equation*}
and a rational morphism
$
\zeta_{\la^{\lvee}}\colon
\CV^{\la^{\lvee}}_{\CF_3} \ra \CO_{S\times X}
$
regular on $S\times (X\setminus \{x\})$, such that the data
\begin{equation*}
(\CF_1,\CF_3,\varphi_{2,\la^{\lvee}} \circ \varphi_{1,\la^{\lvee}},
\tau_{2,\mu^{\lvee}} \circ \tau_{1,\mu^{\lvee}},
\eta_{\la^{\lvee}},\zeta_{\la^{\lvee}})
\end{equation*}
defines an $S$-point of $\on{Vin}\ol{S}_{\nu}$.

\begin{rem}{\em\label{Double VinS via fiberprod}
The family $\on{Vin}^2\ol{S}_\nu$ can be obtained as
the fibre product:
\begin{equation*} \xymatrix{
\on{Vin}^2\ol{S}_\nu=
\on{Vin}\ol{S}_\nu \underset{\on{VinBun}_G}
\times {_0\!\CW}
\ar[r] \ar[d] &
{_0\!\CW}
\ar[d]^{\on{m}} \\
\on{Vin}\ol{S}_\nu \ar[r]
& \on{VinBun}_G.
}
\end{equation*}}
\end{rem}

The degeneration morphism
$\Upsilon^2_{\nu}\colon \on{Vin}^2\ol{S}_\nu \ra
T^+_{\on{ad}}\times T^+_{\on{ad}}$
is defined as the composition of the morphisms
\begin{equation*}
\on{Vin}^2\ol{S}_\nu \ra
\CW \ra
T^+_{\on{ad}}\times T^+_{\on{ad}}.
\end{equation*}

Let us denote by $\on{Vin}^2\ol{S}{}^{\on{princ}}_\nu$
the restriction of the degeneration $\Upsilon^2_{\nu}\colon\on{Vin}^2\ol{S}_\nu
\ra T^+_{\on{ad}}\times T^+_{\on{ad}}$
to the product of~``principal'' lines
\begin{equation*}
\BA^1 \times \BA^1 \hookrightarrow T^+_{\on{ad}}\times T^+_{\on{ad}},~
(a_1,a_2) \mapsto
((a_1,\dots,a_1),(a_2,\dots,a_2)).
\end{equation*}
Let us denote the corresponding morphism
$\on{Vin}^2\ol{S}{}^{\on{princ}}_\nu \ra \BA^2$ by
$\Upsilon^{2,\on{princ}}_{\nu}$.

Let
$
r^2_{\nu,+}\colon \on{Vin}^2\ol{S}_\nu
\hookrightarrow \on{Vin^2Gr}_G
$
be the natural closed embedding.
Recall the morphism $\vartheta^2$
of \S\ref{twoparam of Gr}.
Let \begin{equation*}
\vartheta^2_\nu\colon
\on{Vin}^2\ol{S}_\nu \hookrightarrow
\Gr_{G}\times\Gr_{G}
\times\Gr_{G}\times
T^+_{\on{ad}}\times T^+_{\on{ad}}
\end{equation*}
be the composition $\vartheta^2_\nu:=
r^2_{\nu,+} \circ~
\vartheta^2$.
Let
\begin{equation*}
(\vartheta^2_\nu)^{\on{princ}}\colon\on{Vin}^2\ol{S}{}^{\on{princ}}_\nu
\hookrightarrow \Gr_G\times \Gr_G \times \Gr_G \times \BA^1 \times \BA^1
\end{equation*}
be the restriction of the morphism $\vartheta^2_\nu$ to
$\on{Vin}^2\ol{S}{}^{\on{princ}}_\nu
\subset \on{Vin}^2\ol{S}_\nu$.

\begin{prop}
  \label{associ}
  Let $\CP\in\on{Perv}_{G_\CO}(\Gr_G)$, and $V=H^{\bullet}(\Gr_G,\CP)$.
  Given $\nu\geq\mu_1\geq\mu_2$, the following diagram commutes:
  \begin{equation} \label{assoc of action}
\xymatrix{
\CA_{\nu-\mu_1} \otimes \CA_{\mu_1-\mu_2} \otimes V_{\mu_2} \ar[rrr]^{{\bf{m}}_{\nu-\mu_1,\mu_1-\mu_2} \otimes \on{Id}} \ar[d]^{\on{Id} \otimes \on{act}_V} &&& \CA_{\nu-\mu_2} \otimes V_{\mu_2} \ar[d]^{\on{act}_V} \\
\CA_{\nu-\mu_1} \otimes V_{\mu_1} \ar[rrr]^{\on{act}_V} &&& V_\nu.
}
  \end{equation}
\end{prop}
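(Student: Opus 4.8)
The plan is to realize both compositions in~(\ref{assoc of action}) as iterated cospecialization morphisms attached to a single two-parameter family, and then to invoke the order-independence of such iterated cospecializations. Concretely, I would work with the two-parameter family $\Upsilon^{2,\on{princ}}_\nu\colon \on{Vin}^2\ol{S}{}^{\on{princ}}_\nu\to\BA^2$ of~\S\ref{two-param for S} together with the complex $\widetilde\CP{}^2_\nu:=(\ul\BC\boxtimes\ul\BC\boxtimes\CP\boxtimes\ul\BC\boxtimes\ul\BC)|_{\on{Vin}^2\ol{S}{}^{\on{princ}}_\nu}$, the $*$-restriction along $(\vartheta^2_\nu)^{\on{princ}}$ in which $\CP$ sits on the third ($\CF_3$) factor (this is forced by compatibility with the single-parameter sheaf $\widetilde\CP_\nu$ of~\S\ref{The action} under the two axis-restrictions below). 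As in Remark~\ref{support} the support of $\widetilde\CP{}^2_\nu$ is finite-dimensional, so $R(\Upsilon^{2,\on{princ}}_\nu)_!\widetilde\CP{}^2_\nu$ is a constructible complex on $\BA^2$ whose stalk at $(1,1)$ is $V_\nu$ and whose stalk at $(0,0)$, by the fiberwise analysis of Proposition~\ref{restrictions of Upsilon}, is $\bigoplus_{\nu\geq\mu_1\geq\mu_2}\CA_{\nu-\mu_1}\otimes\CA_{\mu_1-\mu_2}\otimes V_{\mu_2}$.

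The first task is to identify the restrictions of this family to the coordinate axes. Using the fibre-product description of Remark~\ref{Double VinS via fiberprod}, the restriction to $\{a_1=1\}$ (resp. to $\{a_2=1\}$), together with its sheaf, is isomorphic as a family over $\BA^1$ in the remaining parameter to the one-parameter family $\Upsilon^{\on{princ}}_\nu$ with $\widetilde\CP_\nu$; hence the cospecialization along either of these lines from the special fiber to the general fiber is the action map $\on{act}_V\colon\bigoplus_\mu\CA_{\nu-\mu}\otimes V_\mu\to V_\nu$. Next, the restriction to $\{a_1=0\}$ splits, on the summand indexed by $\mu_1$, as a product $(\ol{S}_\nu\cap\ol{T}_{\mu_1})\times\on{Vin}\ol{S}{}^{\on{princ}}_{\mu_1}$ in which the frozen factor contributes $\CA_{\nu-\mu_1}$ and the moving factor carries $\CP$ on its $\CF_3$-output, so that the cospecialization $(0,0)\to(0,1)$ is $\on{Id}\otimes\on{act}_V$. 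Symmetrically, the restriction to $\{a_2=0\}$ splits, on the summand indexed by $\mu_2$, as a product of the frozen factor $\ol{S}_{\mu_2}$ (carrying $\CP$, contributing $V_{\mu_2}$) with a one-parameter $a_1$-degeneration of $\ol{S}_\nu\cap\ol{T}_{\mu_2}$; by Remark~\ref{zastava as intersection} and Lemma~\ref{identification of local models} this degeneration is precisely Schieder's multiplication family $\varPi_{\on{mult}}$ of~\S\ref{multiplication}, so the cospecialization $(0,0)\to(1,0)$ is $\bigoplus_{\mu_1}{\bf m}_{\nu-\mu_1,\mu_1-\mu_2}\otimes\on{Id}$.

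Granting these identifications, the iterated cospecialization along $(0,0)\to(0,1)\to(1,1)$ is exactly $\on{act}_V\circ(\on{Id}\otimes\on{act}_V)$, the left-then-bottom path of~(\ref{assoc of action}), while the iterated cospecialization along $(0,0)\to(1,0)\to(1,1)$ is $\on{act}_V\circ({\bf m}_{\nu-\mu_1,\mu_1-\mu_2}\otimes\on{Id})$, the top-then-right path. Since $\Upsilon^{2,\on{princ}}_\nu$ is trivial over $\BG_m\times\BG_m$ and over each punctured axis (triviality over $\BG_m$ in each principal parameter, cf. Proposition~\ref{restrictions of Upsilon}), both iterated maps coincide with the single cospecialization morphism of $R(\Upsilon^{2,\on{princ}}_\nu)_!\widetilde\CP{}^2_\nu$ from its stalk at $(0,0)$ to its stalk at $(1,1)$; equivalently, the two orders of taking cospecialization in the two independent directions agree by the standard commutativity of iterated specialization over $\BA^1\times\BA^1$. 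This establishes the commutativity of~(\ref{assoc of action}).

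The main obstacle I anticipate is the geometric identification of the axis-restrictions, and in particular the matching of the $\{a_2=0\}$-restriction with $\varPi_{\on{mult}}$: this requires tracking, through the fibre-product description of~\S\ref{twoparam of Gr}, how freezing the lower break-point $\mu_2$ converts the residual $a_1$-degeneration of $\ol{S}_\nu\cap\ol{T}_{\mu_2}$ into the multiplication degeneration of zastava spaces. The splitting of the $\{a_1=0\}$-restriction as a product is analogous but more transparent, since there the upper factor is simply frozen. Once the coordinate restrictions are pinned down, the remaining input, namely that iterated cospecialization in two independent parameters is order-independent, is formal.
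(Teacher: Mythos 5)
Your proposal is correct and follows essentially the same route as the paper's proof: the same two-parameter family $\on{Vin}^2\ol{S}{}^{\on{princ}}_\nu$ with the sheaf $\widetilde{\CP}^2_\nu$, the same identification of the axis restrictions (the $\{a_2=0\}$ axis giving ${\bf m}\otimes\on{Id}$ via Lemma~\ref{closures of fam}(a) and Lemma~\ref{identification of local models}, the $\{a_1=0\}$ axis giving $\on{Id}\otimes\on{act}_V$ via Lemma~\ref{closures of fam}(b), and the lines through $1$ giving $\on{act}_V$), and the same concluding observation that both iterated cospecializations equal the single cospecialization from the fiber over $(0,0)$ to the fiber over $(1,1)$.
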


\begin{proof}
Set
\begin{equation*}
\widetilde{\CP}^2_\nu:=
(\ul\BC{}\boxtimes\ul\BC{}\boxtimes\CP\boxtimes\ul\BC{}\boxtimes\ul\BC{})|_{\on{Vin}^2\ol{S}{}^{\on{princ}}_\nu}.
\end{equation*}
It follows from Corollary~\ref{fibers of two-param} below that the support of
the complex $\widetilde{\CP}^2_\nu$ is
finite-dimensional.
For a point $(a_1,a_2)\in \BA^1\times \BA^1$
let us denote by $(\widetilde{\CP}^2_\nu)_{(a_1,a_2)}$
the restriction of $\widetilde{\CP}^2_\nu$ to
the fiber $(\Upsilon^{2,\on{princ}}_\nu)^{-1}(a_1,a_2)$.
Let us consider the tautological action
$\BC^\times \times \BC^\times \curvearrowright \BA^1\times\BA^1$.
Let us consider the stratification
\begin{equation*}
\BA^1\times\BA^1=(\BG_m\times\BG_m)\sqcup
(\BG_m\times\{0\})\sqcup
(\{0\}\times\BG_m)\sqcup
(\{0\}\times\{0\})
\end{equation*}
by $\BC^\times\times\BC^\times$-orbits. Recall the closed embedding
\begin{equation*}
(\vartheta^2_\nu)^{\on{princ}}\colon\on{Vin}^2\ol{S}{}^{\on{princ}}_\nu
\hookrightarrow \Gr_G\times \Gr_G \times \Gr_G \times \BA^1 \times \BA^1.
\end{equation*}
\begin{lem} \label{two-param to strata}
a) The restriction of the morphism $(\vartheta^2_\nu)^{\on{princ}}$
of the families over $\BA^1\times \BA^1$
\begin{equation*}
\xymatrix{
\on{Vin}^2\ol{S}{}^{\on{princ}}_\nu \ar[dr] \ar[rr]^{(\vartheta^2_\nu)^{\on{princ}}}
&& \ar[dl]
\Gr_G\times \Gr_G \times \Gr_G \times \BA^1 \times \BA^1\\
& \BA^1\times\BA^1 &
}
\end{equation*}
to the stratum $\BG_m \times \BG_m \subset \BA^1\times \BA^1$
is isomorphic to
\begin{equation*}
\xymatrix{
\ol{S}_\nu \times \BG_m \times \BG_m \ar[dr]^{} \ar[rr]^{(\vartheta^2_\nu)^{\on{princ}}|_{(\BG_m \times \BG_m)}}
&& \ar[dl]
\Gr_G\times \Gr_G \times \Gr_G \times \BG_m \times \BG_m \\
& \BG_m \times \BG_m &
}
\end{equation*}
where the morphism $(\vartheta^2_\nu)^{\on{princ}}|_{(\BG_m \times \BG_m)}$ is given by
$
(g,c_1,c_2) \mapsto (g,2\rho(c_1)\cdot g,2\rho(c_1c_2)\cdot g,c_1,c_2).
$

b) The restriction of the morphism $(\vartheta^2_\nu)^{\on{princ}}$
of the families over $\BA^1\times \BA^1$
to the stratum $\BG_m \times \{0\} \subset \BA^1\times \BA^1$
is isomorphic to
\begin{equation*}
\xymatrix{
\bigcup\limits_{\mu \in \La, \nu \geq \mu}
(\ol{T}_\mu \cap \ol{S}_\nu) \times \ol{S}_\mu \times
\BG_m \ar[dr]^{}
\ar[rr]^{(\vartheta^2_\nu)^{\on{princ}}|_{(\BG_m \times \{0\})}}
&& \ar[dl]
\Gr_G\times \Gr_G \times \Gr_G \times \BG_m \times \{0\} \\
& \BG_m \times \{0\} &
}
\end{equation*}
where the map $(\vartheta^2_\nu)^{\on{princ}}|_{(\BG_m \times \{0\})}$ is given by
$
(g_1,g_2,c) \mapsto (g_1,2\rho(c)\cdot g_1,g_2,c,0).
$

c) The restriction of the morphism $(\vartheta^2_\nu)^{\on{princ}}$
of the families over $\BA^1\times \BA^1$
to the stratum $\{0\}\times\BG_m \subset \BA^1\times \BA^1$
is isomorphic to
\begin{equation*}
\xymatrix{
\bigcup\limits_{\mu \in \La, \nu \geq \mu}
(\ol{T}_\mu \cap \ol{S}_\nu) \times \ol{S}_\mu \times
\BG_m \ar[dr]^{}
\ar[rr]^{(\vartheta^2_\nu)^{\on{princ}}|_{(\{0\} \times \BG_m)}}
&& \ar[dl]
\Gr_G\times \Gr_G \times \Gr_G \times \{0\} \times \BG_m \\
& \{0\}\times\BG_m &
}
\end{equation*}
where the map $(\vartheta^2_\nu)^{\on{princ}}|_{(\{0\}\times\BG_m)}$ is given by
$
(g_1,g_2,c) \mapsto (g_1,g_2,2\rho(c)\cdot g_2,0,c).
$

d) The restriction of the morphism $(\vartheta^2_\nu)^{\on{princ}}$
of the families over $\BA^1\times \BA^1$
to the point $(\{0\} \times \{0\})$
is isomorphic to
\begin{equation*}
\xymatrix{
\bigcup\limits_{\mu_1,\mu_2 \in \La, \nu \geq \mu_1 \geq \mu_2}
(\ol{S}_\nu \cap \ol{T}_{\mu_1})\times
(\ol{S}_{\mu_1}\cap\ol{T}_{\mu_2})\times\ol{S}_{\mu_2}
\ar[dr] \ar[rr]
&& \ar[dl]
\Gr_G\times \Gr_G \times \Gr_G \\
& (\{0\}\times\{0\}) &
}.
\end{equation*}
\end{lem}

\begin{proof}
The proof is the same as the one of Lemma~\ref{special fiber of vartheta}.
\end{proof}

\begin{cor} \label{fibers of two-param}
Under the identifications of
Lemma~\ref{two-param to strata} we have
\begin{equation*}
(\widetilde{\CP}^2_{\nu})_{(1,1)}=\CP_\nu,
~(\widetilde{\CP}^2_{\nu})_{(1,0)}|
_{(\ol{T}_\mu \cap \ol{S}_\nu) \times \ol{S}_\mu}=\ul\BC{}\boxtimes\CP_\mu,
~(\widetilde{\CP}^2_{\nu})_{(0,1)}|
_{(\ol{T}_\mu \cap \ol{S}_\nu) \times \ol{S}_\mu}=\ul\BC{}\boxtimes\CP_\mu,
\end{equation*}
\begin{equation*}
~(\widetilde{\CP}^2_{\nu})_{(0,0)}|
_{(\ol{S}_\nu \cap \ol{T}_{\mu_1})\times
(\ol{S}_{\mu_1}\cap\ol{T}_{\mu_2})\times\ol{S}_{\mu_2}}=
\ul\BC{}\boxtimes\ul\BC{}\boxtimes\CP_{\mu_2}.
\end{equation*}
\end{cor}

Let us fix a cocharacter $\mu \in \La, \mu \leq \nu$.
Recall the family $\Upsilon^{2,\on{princ}}_\nu\colon
\on{Vin}\ol{S}{}^{\on{princ}}_{\nu} \ra \BA^1$ of~\S\ref{two-param for S} and
the families $\ol{\mathfrak{Y}}{}^{\nu-\mu,\on{princ}}, \ol{\mathfrak{Y}}{}^{\nu,\mu,\on{princ}}$
of~\S\ref{compact Sch loc model} and
\S\ref{shifted compact Sch loc mod}.

\begin{lem} \label{closures of fam}

a) The closure of the family
$(\ol{T_\mu}\cap \ol{S}_\nu)\times\ol{S}_\mu\times \BG_m\ra \BG_m \times \{0\}$
in the family
$(\Upsilon^{2,\on{princ}}_\nu)^{-1}(\BA^1\times\{0\}) \ra \BA^{1}$
is isomorphic to the family
$\ol{\mathfrak{Y}}{}^{\nu,\mu,\on{princ}}\times \ol{S}_\mu \ra \BA^1$
on the level of reduced schemes.

b) The closure of the family
$(\ol{T_\mu}\cap \ol{S}_\nu)\times\ol{S}_\mu\times \BG_m \ra \{0\}\times \BG_m$
in the family
$(\Upsilon^{2,\on{princ}}_\nu)^{-1}(\{0\}\times\BA^1) \ra \BA^{1}$
is isomorphic to the family
$(\ol{T_\mu}\cap \ol{S}_\nu)\times \on{Vin}\ol{S}{}^{\on{princ}}_\mu \ra \BA^1$
on the level of reduced schemes.
\end{lem}

\begin{proof}
To prove a) let us construct a closed embedding
$\varkappa \colon\ol{\mathfrak{Y}}{}^{\nu,\mu,\on{princ}} \times \ol{S}_{\mu}
\hookrightarrow {\on{Vin}^2\ol{S}{}^{\on{princ}}_{\nu}}|_{\BA^1 \times \{0\}}$ of families over $\BA^1$. It sends an $S$-point
\begin{equation*}
((\CF_1,\CF_2,\varphi_{\la^{\lvee}},\tau_{\mu^{\lvee}},\eta_{\la^{\lvee}},\zeta_{\la^{\lvee}}),(\CF_3,\eta'_{\la^{\lvee}},\zeta'_{\la^{\lvee}})) \in (\ol{\mathfrak{Y}}{}^{\nu,\mu,\on{princ}} \times \ol{S}_{\mu})(S):
\end{equation*}
\begin{equation*}
(\CO_{S\times X}(-\langle\la^{\lvee},\nu\rangle\cdot (S\times x)) \xrightarrow{\eta_{\la^{\lvee}}} \CV^{\la^{\lvee}}_{\CF_1} \xrightarrow{\varphi_{\la^{\lvee}}} \CV^{\la^{\lvee}}_{\CF_2} \xrightarrow{\zeta_{\la^{\lvee}}} \CO_{S\times X}(-\langle\la^{\lvee},\mu\rangle\cdot (S\times x)),
\end{equation*}
\begin{equation*}
\CO_{S\times X}(-\langle\la^{\lvee},\mu\rangle\cdot (S\times x)) \xrightarrow{\eta'_{\la^{\lvee}}} \CV^{\la^{\lvee}}_{\CF_3} \xrightarrow{\zeta'_{\la^{\lvee}}} \CO_{S\times X})
\end{equation*}
to the point
$(\CF_1,\CF_2,\CF_3,\varphi_{\la^{\lvee}},\tau_{\mu^{\lvee}},\eta'_{\la^{\lvee}}\circ \zeta_{\la^{\lvee}},\eta_{\la^{\lvee}},\zeta'_{\la^{\lvee}}) \in
{\on{Vin}^2\ol{S}{}^{\on{princ}}_{\nu}}|_{\BA^1 \times \{0\}}(S)$:
\begin{equation*}
\CO_{S\times X}(-\langle\la^{\lvee},\nu\rangle\cdot (S\times x)) \xrightarrow{\eta_{\la^{\lvee}}} \CV^{\la^{\lvee}}_{\CF_1} \xrightarrow{\varphi_{\la^{\lvee}}} \CV^{\la^{\lvee}}_{\CF_2} \xrightarrow{\eta'_{\la^{\lvee}} \circ \zeta_{\la^{\lvee}}} \CV^{\la^{\lvee}}_{\CF_3} \xrightarrow{\zeta'_{\la^{\lvee}}} \CO_{S\times X}.
\end{equation*}
It follows that the morphism $\varkappa$ induces the isomorphism from
$\ol{\mathfrak{Y}}{}^{\nu,\mu,\on{princ}} \times \ol{S}_{\mu}$ to the closure of
$(\ol{T_\mu}\cap \ol{S}_\nu)\times\ol{S}_\mu\times \BG_m$ in
the family $(\Upsilon^{2,\on{princ}}_\nu)^{-1}(\BA^1\times\{0\})$.

To prove b) we construct a closed embedding
$\varpi\colon (\ol{T}_{\mu} \cap \ol{S}_{\nu})\times \on{Vin}\ol{S}{}^{\on{princ}}_\mu
\hookrightarrow {\on{Vin}^2\ol{S}{}^{\on{princ}}_{\nu}}|_{\{0\} \times \BA^1}$
of families over $\BA^1$. It sends an $S$-point
\begin{equation*}
((\CF_1,\eta_{\la^{\lvee}},\zeta_{\la^{\lvee}}),(\CF_2,\CF_3,\varphi_{\la^{\lvee}},\tau_{\mu^{\lvee}},\eta'_{\la^{\lvee}},
\zeta'_{\la^{\lvee}}))
\in ((\ol{T}_{\mu} \cap \ol{S}_{\nu})\times \on{Vin}\ol{S}{}^{\on{princ}}_\mu)(S):
\end{equation*}
\begin{equation*}
(\CO_{S\times X}(-\langle\la^{\lvee},\nu\rangle\cdot (S\times x)) \xrightarrow{\eta_{\la^{\lvee}}} \CV^{\la^{\lvee}}_{\CF_1} \xrightarrow{\zeta_{\la^{\lvee}}} \CO_{S\times X}(-\langle\la^{\lvee},\mu\rangle\cdot (S\times x)),
\end{equation*}
\begin{equation*}
\CO_{S\times X}(-\langle\la^{\lvee},\mu\rangle\cdot (S\times x)) \xrightarrow{\eta'_{\la^{\lvee}}} \CV^{\la^{\lvee}}_{\CF_2} \xrightarrow{\varphi_{\la^{\lvee}}} \CV^{\la^{\lvee}}_{\CF_3} \xrightarrow{\zeta'_{\la^{\lvee}}} \CO_{S\times X})
\end{equation*}
to the point
$(\CF_1,\CF_2,\CF_3,\eta'_{\la^{\lvee}}\circ \zeta_{\la^{\lvee}},\varphi_{\la^{\lvee}},\eta_{\la^{\lvee}},\zeta'_{\la^{\lvee}}):$
\begin{equation*}
\CO_{S\times X}(-\langle\la^{\lvee},\nu\rangle\cdot (S\times x)) \xrightarrow{\eta_{\la^{\lvee}}} \CV^{\la^{\lvee}}_{\CF_1} \xrightarrow{\eta'_{\la^{\lvee}} \circ \zeta_{\la^{\lvee}}} \CV^{\la^{\lvee}}_{\CF_2} \xrightarrow{\varphi_{\la^{\lvee}}} \CV^{\la^{\lvee}}_{\CF_3} \xrightarrow{\zeta'_{\la^{\lvee}}} \CO_{S\times X}.
\end{equation*}
Here we use the identification of the scheme $\ol{T}_{\mu} \cap \ol{S}_{\nu}$ with
$\ol\fZ{}^{\nu,\mu}$. We see that the morphism $\varpi$ induces an isomorphism from
$(\ol{T}_{\mu} \cap \ol{S}_{\nu})\times \on{Vin}\ol{S}{}^{\on{princ}}_\mu$
to the closure of the family
$
(\ol{T_\mu}\cap \ol{S}_\nu)\times\ol{S}_\mu\times \BC^\times
\ra \BC^\times
$
in the family
$
(\Upsilon^{2,\on{princ}}_\nu)^{-1}(\{0\}\times\BA^1) \ra \BA^{1}.
$
\end{proof}

Let us consider the cospecialization morphism
\begin{equation*}
H_c^{\langle2\rho^{\lvee},\nu\rangle}((\on{Vin}^2\ol{S}_{\nu})_{(0,0)},(\widetilde{\CP}^2_\nu)_{(0,0)}) \ra
H_c^{\langle2\rho^{\lvee},\nu\rangle}((\on{Vin}^2\ol{S}_{\nu})_{(1,1)},(\widetilde{\CP}^2_\nu)_{(1,1)}).
\end{equation*}
From Corollary~\ref{fibers of two-param} it follows that
$H_c^{\langle2\rho^{\lvee},\nu\rangle}((\on{Vin}^2\ol{S}_{\nu})_{(0,0)},(\widetilde{\CP}^2_\nu)_{(0,0)})=\\
\bigoplus\limits_{\nu \geq \mu_1 \geq \mu_2}
H_c^{\langle2\rho^{\lvee},\nu-\mu_1\rangle}
(\ol{S}_\nu\cap \ol{T}_{\mu_1},\ul\BC{})\otimes
H_c^{\langle2\rho^{\lvee},\mu_1-\mu_2\rangle}
(\ol{S}_{\mu_1}\cap \ol{T}_{\mu_2},\ul\BC{})\otimes
H_c^{\langle2\rho^{\lvee},\mu_2\rangle}(\ol{S}_{\mu_2},\CP)$, and
\begin{equation*}
H_c^{\langle2\rho^{\lvee},\nu\rangle}((\on{Vin}^2\ol{S}_{\nu})_{(1,1)},(\widetilde{\CP}^2_\nu)_{(1,1)})=
H_c^{\langle2\rho^{\lvee},\nu\rangle}(\ol{S}_\nu,\CP).
\end{equation*}
Thus we get the morphisms

\begin{equation*}
H_c^{\langle2\rho^{\lvee},\nu-\mu_1\rangle}
(\ol{S}_\nu\cap \ol{T}_{\mu_1},\ul\BC{})\otimes
H_c^{\langle2\rho^{\lvee},\mu_1-\mu_2\rangle}
(\ol{S}_{\mu_1}\cap \ol{T}_{\mu_2},\ul\BC{})\otimes
H_c^{\langle2\rho^{\lvee},\mu_2\rangle}(\ol{S}_{\mu_2},\CP) \ra
H_c^{\langle2\rho^{\lvee},\nu\rangle}(\ol{S}_\nu,\CP).
\end{equation*}

Note that the following diagram is commutative:
\begin{equation} \label{associat}
\xymatrix{
H_c^{\langle2\rho^{\lvee},\nu\rangle}((\ol{S}_\nu\cap \ol{T}_{\mu_1})
\times (\ol{S}_{\mu_1}\cap \ol{T}_{\mu_2})
\times \ol{S}_{\mu_2},\ul\BC{}\boxtimes\ul\BC{}\boxtimes\CP) \ar[r] \ar[d] &
H_c^{\langle2\rho^{\lvee},\nu\rangle}((\ol{S}_\nu\cap \ol{T}_{\mu_2})
\times \ol{S}_{\mu_2},\ul\BC{}\boxtimes\CP) \ar[d]\\
H_c^{\langle2\rho^{\lvee},\nu\rangle}((\ol{S}_\nu\cap \ol{T}_{\mu_1})
\times \ol{S}_{\mu_1},\ul\BC{}\boxtimes\CP) \ar[r] &
H_c^{\langle2\rho^{\lvee},\nu\rangle}(\ol{S}_\nu,\CP).
}
\end{equation}
where the morphisms in the diagram are the cospecialization morphisms.
Indeed, both compositions are equal to the cospecialization morphism from the fiber
over $(0,0)$ to the fiber over $(1,1)$.

From Lemma~\ref{closures of fam} and Lemma~\ref{identification of local models} it follows
that the diagram (\ref{associat}) is equal to the diagram~(\ref{assoc of action}).
Hence~(\ref{assoc of action}) is commutative and Proposition~\ref{associ} is proved.
\end{proof}

\subsection{Compatibility of the coproduct with the tensor structure}
\label{tensor compat}
Let $\CP,\CQ\in\on{Perv}_{G_\CO}(\Gr_G)$. We set $V:=H^\bullet(\Gr_G,\CP),\
W:=H^\bullet(\Gr_G,\CQ)\in \on{Rep}(G^\vee)$.
We need to check that the diagram
\begin{equation} \label{diagram for compat with tensor}
\xymatrix
{\CA \otimes V \otimes W \ar[rrrr]^{\on{act}_{V\otimes W}}
\ar[d]^{\Delta\otimes\on{Id}} &&&& V \otimes W \ar[d]^{\on{Id}}\\
\CA \otimes \CA \otimes V \otimes W \ar[rrrr]^{(\on{act}_V\otimes\on{act}_W)
\circ(\on{Id}\otimes\uptau\otimes\on{Id})}
&&&& V \otimes W
}
\end{equation}
commutes, where the morphism
$\uptau\colon \CA \otimes V \ra V \otimes \CA$
sends $a\otimes b$ to $b \otimes a$.

Fix $\theta \in \La$. Let $\on{Vin}\ol{S}{}^{\on{princ}}_{\theta,X}$
be the following moduli space: it
associates to a scheme $S$

1) an $S$-point $x \in X(S)$ of the curve $X$,

2) an $S$-point
$(\CF_1, \CF_2,\varphi_{\la^{\lvee}},\tau_{\mu^{\lvee}})$
of $\on{VinBun}^{\on{princ}}_G$,

3) for every $\la^{\lvee} \in \La^{\vee+}$,  morphisms of sheaves
$\eta_{\la^{\lvee}}$,
\begin{equation*}
\eta_{\la^{\lvee}}\colon
\CO_{S\times X}(-\langle\la^{\lvee},\theta\rangle\cdot\varGamma_{x})
\ra \CV^{\la^{\lvee}}_{\CF_{1}}
\end{equation*}
and rational  morphisms
$
\zeta_{\la^{\lvee}}\colon\CV^{\la^{\lvee}}_{\CF_{2}} \ra
\CO_{S\times X}
$
regular on $(S\times X) \setminus
\{\varGamma_{x}\},$
satisfying the same conditions as in~\S\ref{degen of the affine Gr}.

Fix $\theta_1, \theta_2 \in \La$. Let $\on{Vin}\ol{S}{}^{\on{princ}}_{\theta_1,\theta_2}$
be the following moduli space: it
associates to a scheme $S$

1) a pair of $S$-points $(x_1,x_2) \in X^2(S)$ of the curve $X$,

2) an $S$-point
$(\CF_1, \CF_2,\varphi_{\la^{\lvee}},\tau_{\mu^{\lvee}})
\in \on{VinBun}^{\on{princ}}_G(S)$
of $\on{VinBun}^{\on{princ}}_G$,

3) for every $\la^{\lvee} \in \La^{\vee+}$,  morphisms of sheaves
$\eta_{\la^{\lvee}}$,
\begin{equation*}
\eta_{\la^{\lvee}}\colon
\CO_{S\times X}(-\langle\la^{\lvee},\theta_1\rangle\cdot\varGamma_{x_1}
-\langle\la^{\lvee},\theta_2\rangle\cdot\varGamma_{x_2})
\ra \CV^{\la^{\lvee}}_{\CF_{1}}
\end{equation*}
and rational  morphisms
$
\zeta_{\la^{\lvee}}\colon\CV^{\la^{\lvee}}_{\CF_{2}} \ra
\CO_{S\times X}
$
regular on $(S\times X) \setminus
\{\varGamma_{x_1} \cup \varGamma_{x_2}\},$
satisfying the same conditions as in~\S\ref{degen of the affine Gr}.

\bigskip
We have a projection
$\pi^{\on{Vin}}_{\theta_1,\theta_2}\colon
\on{Vin}\ol{S}{}_{\theta_1,\theta_2}^{\on{princ}} \ra X^2$
that forgets the data of
$\CF_1,\CF_2,\varphi_{\la^{\lvee}},\tau_{\mu^{\lvee}},\eta_{\la^{\lvee}},\zeta_{\la^{\lvee}}$.
Let
\begin{equation*}
\vartheta^{\on{princ}}_{\theta_1,\theta_2}\colon\on{Vin}\ol{S}{}^{\on{princ}}_{\theta_1,\theta_2}
\hookrightarrow
\Gr_{G,X^2}\underset{X^2}\times\Gr_{G,X^2}\times\BA^1
\end{equation*}
be the restriction of the closed embedding $\vartheta$ of \S\ref{embedding}
to
$\on{Vin}\ol{S}{}^{\on{princ}}_{\theta_1,\theta_2}
\subset \on{VinGr}_{G,X^2}$.
The morphism
$
\Upsilon^{\on{princ}}_{\theta_1,\theta_2}\colon
\on{Vin}\ol{S}{}^{\on{princ}}_{\theta_1,\theta_2}
\ra \BA^1
$
is defined as the composition of morphisms
$
\on{Vin}\ol{S}{}^{\on{princ}}_{\theta_1,\theta_2}\hookrightarrow
\on{VinGr}^{\on{princ}}_{G,X^2} \ra \BA^1.
$
\sssec{The three-parameter deformation}

Let us consider the morphism
\begin{equation*}
\Upsilon^{\on{princ}}_{\theta_1,\theta_2}\times
\pi_{\theta_1,\theta_2}\colon
\on{Vin}\ol{S}{}^{\on{princ}}_{\theta_1,\theta_2}\ra
X^2\times \BA^1.
\end{equation*}
Recall the closed embedding
\begin{equation*}
\vartheta^{\on{princ}}_{\theta_1,\theta_2}\colon\on{Vin}\ol{S}{}^{\on{princ}}_{\theta_1,\theta_2}
\hookrightarrow
\Gr_{G,X^2}\underset{X^2}\times\Gr_{G,X^2}\times\BA^1.
\end{equation*}
Recall the sheaf $\CP\underset{X}\star\CQ$ on $\Gr_{G,X^2}$ of
\S\ref{tensor structure}.
Set
\begin{equation*}
(\widetilde{\CP}\underset{X}\star\widetilde{\CQ})
_{\theta_1,\theta_2}:=
(\ul\BC{}\boxtimes(\CP\underset{X}\star\CQ)
\boxtimes\ul\BC{})|_{\on{Vin}\ol{S}{}^{\on{princ}}_{\theta_1,\theta_2}}.
\end{equation*}
It follows from Corollary~\ref{fibers for fusion} below that the complex
$(\widetilde{\CP}\underset{X}\star\widetilde{\CQ})
_{\theta_1,\theta_2}$
has finite-dimensional support.
Recall the embeddings
$
\Delta_{X} \hookrightarrow X^2
\hookleftarrow U
$
of \S\ref{tensor structure}.
\begin{lem} \label{restrictions}
a) The restriction of the morphism
$\vartheta^{\on{princ}}_{\theta_1,\theta_2}$
of the families over $X^2\times \BA^1$
\begin{equation*}
\xymatrix{
\on{Vin}\ol{S}{}^{\on{princ}}_{\theta_1,\theta_2} \ar[dr] \ar[rr]^{\vartheta^{\on{princ}}_{\theta_1,\theta_2}}
&& \ar[dl]
\Gr_{G,X^2}\underset{X^2}\times\Gr_{G,X^2}\times\BA^1\\
& X^2\times\BA^1 &
}
\end{equation*}
to the open subvariety $U\times\BA^1 \subset X^2\times \BA^1$
is isomorphic to
\begin{equation*}
\xymatrix{
(\on{Vin}\ol{S}{}^{\on{princ}}_{\theta_1,X}\underset{\BA^1}\times \on{Vin}\ol{S}{}^{\on{princ}}_{\theta_2,X})|_{(U\times\BA^1)} \ar[dr]^{} \ar[rr]^{\vartheta^{\on{princ}}_{\theta_1,\theta_2}|_{(U \times \BA^1)}}
&& \ar[dl]
((\Gr_{G,X}\times\Gr_{G,X})|_U\underset{U}\times(\Gr_{G,X}\times\Gr_{G,X})|_U) \times\BA^1 \\
& U \times \BA^1 &
}
\end{equation*}
where the morphism $\vartheta^{\on{princ}}_{\theta_1,\theta_2}|_{(U \times \BA^1)}$ is given by
\begin{equation*}
(\vartheta_{\theta_1}^{(1)},\vartheta_{\theta_2}^{(1)},
\vartheta_{\theta_1}^{(2)},\vartheta_{\theta_2}^{(2)},\vartheta_{\theta_1}^{(3)}=\vartheta_{\theta_2}^{(3)}),
\end{equation*} and
$\vartheta_{\theta_1}^{(i)},\vartheta_{\theta_2}^{(i)}$
are the corresponding components of the morphisms
\begin{equation*}
\vartheta^{\on{princ}}_{\theta_1}\colon
\on{Vin}\ol{S}{}^{\on{princ}}_{\theta_1,X}
\hookrightarrow \Gr_{G,X}\underset{X}\times\Gr_{G,X}\times\BA^1,~
\vartheta^{\on{princ}}_{\theta_2}\colon
\on{Vin}\ol{S}{}^{\on{princ}}_{\theta_2,X}
\hookrightarrow \Gr_{G,X}\underset{X}\times\Gr_{G,X}\times\BA^1.
\end{equation*}

b) The restriction of the morphism
$\vartheta^{\on{princ}}_{\theta_1,\theta_2}$
of the families over $X^2\times \BA^1$
to the closed subvariety $\Delta_X\times\BA^1 \subset X^2\times \BA^1$
is isomorphic to
\begin{equation*}
\xymatrix{
\on{Vin}\ol{S}{}^{\on{princ}}_{\theta_1+\theta_2,X} \ar[dr]^{} \ar[rr]^{\vartheta^{\on{princ}}_{\theta_1,\theta_2}|_{(\Delta_X \times \BA^1)}}
&& \ar[dl]
\Gr_{G,X}\underset{X}\times\Gr_{G,X} \times\BA^1 \\
& \Delta_X \times \BA^1 &
}
\end{equation*}
where the morphism $(\vartheta^{\on{princ}}_{\theta_1,\theta_2})|_{\Delta_X \times \BA^1}$
coincides with the morphism $\vartheta^{\on{princ}}_{\theta_1+\theta_2}$.

\end{lem}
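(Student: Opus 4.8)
The plan is to deduce both statements from the factorization property of $\on{VinGr}_{G,X^2}$ established in \propref{factor for VinGr}, together with the observation that $\on{Vin}\ol{S}{}^{\on{princ}}_{\theta_1,\theta_2}$ is cut out inside $\on{VinGr}^{\on{princ}}_{G,X^2}$ by the semiinfinite conditions imposed on $\eta_{\la^{\lvee}}$ and $\zeta_{\la^{\lvee}}$, and that these conditions are local at the marked points $x_1,x_2$. Since $\vartheta^{\on{princ}}_{\theta_1,\theta_2}$ is by definition the restriction of the closed embedding $\vartheta$ of \S\ref{embedding}, and the formulas defining $\vartheta$ only involve the compositions $\zeta_{\la^{\lvee}}\circ\varphi_{\la^{\lvee}}$ and $\varphi_{\la^{\lvee}}\circ\eta_{\la^{\lvee}}$, the compatibility with the embeddings will follow once the identification of the underlying moduli spaces is in place.

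First I would treat part b). Over the diagonal, \propref{factor for VinGr}(a) identifies $(\on{VinGr}_{G,X^2})|_{\Delta_X}$ with $\on{VinGr}_{G,X}$ by setting $x_1=x_2=x$. Under this identification the divisor condition $-\langle\la^{\lvee},\theta_1\rangle\cdot\varGamma_{x_1}-\langle\la^{\lvee},\theta_2\rangle\cdot\varGamma_{x_2}$ imposed on $\eta_{\la^{\lvee}}$ becomes $-\langle\la^{\lvee},\theta_1+\theta_2\rangle\cdot\varGamma_x$, which is exactly the condition defining $\on{Vin}\ol{S}{}^{\on{princ}}_{\theta_1+\theta_2,X}$, while the regularity of $\zeta_{\la^{\lvee}}$ on the complement of $\varGamma_{x_1}\cup\varGamma_{x_2}$ becomes regularity on the complement of $\varGamma_x$. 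Hence the subfunctor $\on{Vin}\ol{S}{}^{\on{princ}}_{\theta_1,\theta_2}$ restricted to $\Delta_X$ is identified with $\on{Vin}\ol{S}{}^{\on{princ}}_{\theta_1+\theta_2,X}$, and the restricted embedding coincides with $\vartheta^{\on{princ}}_{\theta_1+\theta_2}$.

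For part a) I would use \propref{factor for VinGr}(b), which exhibits over $U$ an isomorphism $(\on{VinGr}_{G,X^2})|_U\iso(\on{VinGr}_{G,X}\underset{T^+_{\on{ad}}}\times\on{VinGr}_{G,X})|_U$ by the gluing construction recalled in its proof. Since $x_1\ne x_2$, the behaviour of $\eta_{\la^{\lvee}}$ and $\zeta_{\la^{\lvee}}$ at $x_1$ is independent of that at $x_2$; therefore under the gluing isomorphism the single divisor condition on $\eta_{\la^{\lvee}}$ splits into the condition of order $\langle\la^{\lvee},\theta_1\rangle$ at $x_1$ and of order $\langle\la^{\lvee},\theta_2\rangle$ at $x_2$, while the regularity of $\zeta_{\la^{\lvee}}$ away from $\{x_1,x_2\}$ splits into regularity away from $x_1$ for the first factor and away from $x_2$ for the second. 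Thus the conditions carving out $\on{Vin}\ol{S}{}^{\on{princ}}_{\theta_1,\theta_2}$ become precisely those carving out $\on{Vin}\ol{S}{}^{\on{princ}}_{\theta_1,X}$ at $x_1$ and $\on{Vin}\ol{S}{}^{\on{princ}}_{\theta_2,X}$ at $x_2$. The fibre product is taken over $\BA^1$ rather than $T^+_{\on{ad}}$ because we have restricted to the principal line, and the principal parameter is the global datum $\tau_{\mu^{\lvee}}$ shared by the two factors; this is what forces the common third component $\vartheta^{(3)}_{\theta_1}=\vartheta^{(3)}_{\theta_2}$ in the stated formula. The remaining components of $\vartheta^{\on{princ}}_{\theta_1,\theta_2}|_{U\times\BA^1}$ are then read off by tracking the four $\Gr_{G,X}$-factors through the gluing isomorphism.

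The routine but slightly delicate point—the main thing to verify with care—is that the gluing isomorphism of \propref{factor for VinGr}(b) intertwines all the Tannakian conditions of \S\ref{degen of the affine Gr}, in particular the Pl\"ucker relations and the distinction between genuine morphisms of sheaves and rational morphisms, as well as the compatibility data $\varphi_{\la^{\lvee}},\tau_{\mu^{\lvee}}$ inherited from $\on{VinBun}_G$. This is handled exactly as in the factorization argument for the Beilinson--Drinfeld Grassmannian following \cite[Proposition~3.1.13]{zh}, which was already invoked in the proof of \propref{factor for VinGr}; the only new feature is the bookkeeping of the Vinberg data, which is local in the same sense and introduces no additional difficulty.
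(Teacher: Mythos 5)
Your proof is correct and follows exactly the paper's route: the paper's own proof of this lemma is the one-line appeal ``It follows from Proposition~\ref{factor for VinGr},'' and your argument is simply that appeal written out in detail (part~b) from its diagonal statement, part~a) from its statement over $U$, with the principal-line restriction accounting for the fibre product over $\BA^1$ and the shared third component). The careful point you flag --- that the gluing isomorphism of Proposition~\ref{factor for VinGr} intertwines the Tannakian divisor conditions --- is precisely what the paper leaves implicit, so your write-up is a faithful elaboration rather than a different proof.
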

\begin{proof}
It follows from Proposition~\ref{factor for VinGr}.
\end{proof}

\begin{cor} \label{fibers for fusion}
Under the identifications of Lemma~\ref{restrictions}, Proposition~\ref{restrictions of Upsilon} we have

a)
$
{((\widetilde{\CP}\underset{X}\star\widetilde{\CQ})
_{\theta_1,\theta_2})_{((x,x),0)}}|
_{(\ol{T}_\mu\cap\ol{S}_{\theta_1+\theta_2})\times\ol{S}_\mu}=
\ul\BC{}\boxtimes(\CP\star\CQ)_\mu
$
for $x \in X$.

b)
$
{((\widetilde{\CP}\underset{X}\star\widetilde{\CQ})
_{\theta_1,\theta_2})_{((x,y),0)}}|
_{((\ol{T}_{\mu_1}\cap\ol{S}_{\theta_1})\times(\ol{T}_{\mu_2}\cap\ol{S}_{\theta_2}))
\times
\ol{S}_{\mu_1}\times\ol{S}_{\mu_2}}=
\ul\BC{}\boxtimes\ul\BC{}\boxtimes\CP_{\mu_1}\boxtimes\CQ_{\mu_2}
$
for $(x,y) \in U$.

c)
$
{((\widetilde{\CP}\underset{X}\star\widetilde{\CQ})
_{\theta_1,\theta_2})_{((x,x),1)}}=
(\CP\star\CQ)_{\theta_1+\theta_2}
$
for $x \in X$.

d)
$
{((\widetilde{\CP}\underset{X}\star\widetilde{\CQ})
_{\theta_1,\theta_2})_{((x,y),1)}}=
\CP_{\theta_1}\boxtimes \CQ_{\theta_2}
$
for $(x,y) \in U$.
\end{cor}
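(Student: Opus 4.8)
The plan is to deduce the Corollary by feeding the two geometric inputs already available — the stratumwise description of the family in Lemma~\ref{restrictions} and the fiberwise description of the degeneration together with the sheaf $\widetilde{\CP}$ in Proposition~\ref{restrictions of Upsilon} — into the definition of $(\widetilde{\CP}\underset{X}\star\widetilde{\CQ})_{\theta_1,\theta_2}$. The four points $((x,x),0),\ ((x,y),0),\ ((x,x),1),\ ((x,y),1)$ lie over the four products of the strata $\Delta_X\sqcup U\subset X^2$ with $\{0\}\sqcup\BG_m\subset\BA^1$, and the diagonal/off-diagonal behaviour recorded in Lemma~\ref{restrictions} is independent of the degeneration parameter. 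So I would organize the argument by treating the $X^2$-direction (which of $\Delta_X$ or $U$) and the $\BA^1$-direction (which of $0$ or $1$) separately, and then combine them.

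First, for the nondegenerate value $a=1$ (parts c and d): by the triviality of the family over $\BG_m$ (the two-point analogue of Lemma~\ref{restriction for VinGrBD}, as in \S\ref{VinS princ}) the fiber of $\on{Vin}\ol{S}{}^{\on{princ}}_{\theta_1,\theta_2}$ over $X^2\times\{1\}$ is $\ol{S}_{\theta_1,\theta_2}$, and the middle factor of $\ul\BC\boxtimes(\CP\underset{X}\star\CQ)\boxtimes\ul\BC$ restricts to $\CP\underset{X}\star\CQ$. Over $\Delta_X$ the fusion sheaf $\CP\underset{X}\star\CQ$ restricts to the convolution $\CP\star\CQ$, while $\ol{S}_{\theta_1,\theta_2}$ collapses to $\ol{S}_{\theta_1+\theta_2}$ (Lemma~\ref{restrictions}(b)); this yields part c. Over $U$ the family factorizes as $\ol{S}_{\theta_1}\times\ol{S}_{\theta_2}$ (Lemma~\ref{restrictions}(a)) and $\CP\underset{X}\star\CQ$ restricts to the external product $\CP\boxtimes\CQ$ by \S\ref{tensor structure}, giving $\CP_{\theta_1}\boxtimes\CQ_{\theta_2}$ after restriction to the closed pieces; this is part d.

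Next, for the degenerate value $a=0$ (parts a and b), I would run the same dichotomy and then apply Proposition~\ref{restrictions of Upsilon} to the resulting sheaf. On the diagonal, Lemma~\ref{restrictions}(b) identifies the family with $\on{Vin}\ol{S}{}^{\on{princ}}_{\theta_1+\theta_2,X}$ carrying $\CP\star\CQ$ in the middle slot; applying Proposition~\ref{restrictions of Upsilon} with $\nu=\theta_1+\theta_2$ and $\CP\star\CQ$ in place of $\CP$ produces the special fiber $\bigcup_{\mu}(\ol{S}_\nu\cap\ol{T}_\mu)\times\ol{S}_\mu$ with restriction $\ul\BC\boxtimes(\CP\star\CQ)_\mu$, which is part a. Off the diagonal, Lemma~\ref{restrictions}(a) factorizes the family over the single shared principal line as $\on{Vin}\ol{S}{}^{\on{princ}}_{\theta_1,X}\underset{\BA^1}\times\on{Vin}\ol{S}{}^{\on{princ}}_{\theta_2,X}$ carrying $\CP$ and $\CQ$ respectively; Proposition~\ref{restrictions of Upsilon} applied to each factor at $a=0$ gives $(\ol{T}_{\mu_i}\cap\ol{S}_{\theta_i})\times\ol{S}_{\mu_i}$ with sheaf $\ul\BC\boxtimes\CP_{\mu_1}$, resp.\ $\ul\BC\boxtimes\CQ_{\mu_2}$, and taking the external product and reordering the factors yields part b.

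The only genuinely delicate point, which I would isolate and check, is the bookkeeping of the cohomological shifts and of the order of the box-product factors. One must verify that the normalization $\iota_x^*[-1]\circ\mathfrak{p}^0\simeq\on{Id}$ of \S\ref{tensor structure}, combined with the perversity conventions, makes the diagonal fiber of $\CP\underset{X}\star\CQ$ equal to $\CP\star\CQ$ on the nose (rather than a shift of it), so that it matches the unshifted restrictions recorded in Proposition~\ref{restrictions of Upsilon}; and one must track that the constant summands $\ul\BC$ produced on the repellent-type factors $\ol{T}_\mu\cap\ol{S}_\nu$ land in the correct tensor slots after the rearrangement of $(\ol{T}_{\mu_1}\cap\ol{S}_{\theta_1})\times\ol{S}_{\mu_1}\times(\ol{T}_{\mu_2}\cap\ol{S}_{\theta_2})\times\ol{S}_{\mu_2}$ into the grouping displayed in part b. Everything else is a direct substitution of the two cited results.
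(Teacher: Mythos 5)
Your proposal is correct and follows essentially the same route as the paper, which gives no separate proof at all: the corollary is stated there as an immediate consequence of Lemma~\ref{restrictions} and Proposition~\ref{restrictions of Upsilon} combined with the definition of the fusion product in \S\ref{tensor structure}, which is exactly the substitution you carry out stratum by stratum. Your extra care about the cohomological shift coming from $\iota_x^*[-1]\circ\mathfrak{p}^0\simeq\on{Id}$ and about the reordering of box-product factors in part b) only makes explicit bookkeeping that the paper suppresses.
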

\sssec{Proof of compatibility with the tensor structure}
Let us fix two distinct points $x,y \in X$.
Let us consider the cospecialization morphism
\begin{equation*}
\xymatrix{
H_c^{\langle2\rho^{\lvee},\theta_1+\theta_2\rangle}((\on{Vin}\ol{S}{}^{\on{princ}}_{\theta_1,\theta_2})_{((x,x),0)},
((\widetilde{\CP}\underset{X}\star\widetilde{\CQ})
_{\theta_1,\theta_2})_{((x,x),0)}) \ar[d]\\
H_c^{\langle2\rho^{\lvee},\theta_1+\theta_2\rangle}((\on{Vin}\ol{S}{}^{\on{princ}}_{\theta_1,\theta_2})_{((x,y),1)},
((\widetilde{\CP}\underset{X}\star\widetilde{\CQ})
_{\theta_1,\theta_2})_{((x,y),1)})
}
\end{equation*}
from the point $((x,x),0)$ to the point $((x,y),1)$.
It may be obtained in two ways:
by first cospecializing from $((x,x),0)$ to $((x,x),1)$ and then to $((x,y),1)$
or first cospecializing from $((x,x),0)$ to $((x,y),0)$ and then to
$((x,y),1)$.
For $\nu_1,\nu_2\in \La$, let $\on{p}_{\nu_1,\nu_2}\colon (V\otimes W)_{\nu_1+\nu_2} \twoheadrightarrow V_{\nu_1}\otimes W_{\nu_2}$ be the natural projection. Using Corollary~\ref{fibers for fusion} and Lemma~\ref{restrictions} we see that the following
diagram commutes:
\begin{equation*}
\xymatrix{
\CA_{\theta_1+\theta_2-\mu} \otimes (V \otimes W)_{\mu} \ar[rrrr]^{\on{act}_{V \otimes W}} \ar[d]^{\oplus\Delta_{\mu_1,\mu_2} \otimes \on{p}_{\theta_1-\mu_1,\theta_2-\mu_2}} &&&& (V \otimes W)_{\theta_1+\theta_2} \ar[d]^{\on{p}_{\theta_1,\theta_2}} \\
\bigoplus\limits_{\mu_1+\mu_2=\theta_1+\theta_2-\mu}\CA_{\mu_1} \otimes \CA_{\mu_2} \otimes V_{\theta_1-\mu_1} \otimes W_{\theta_2-\mu_2} \ar[rrrr]^{\hspace{2,5cm}(\on{act}_V \otimes \on{act}_W) \circ (\on{Id} \otimes \tau \otimes \on{Id})} &&&& V_{\theta_1} \otimes W_{\theta_2}
}.
\end{equation*}
Summing over all $\theta_1,\theta_2,\mu \in \La$ such that $\mu \leq \theta_1+\theta_2$ we get the diagram
~(\ref{diagram for compat with tensor}).

\begin{lem} \label{action nontriv}
Let us fix $\theta \in \Lambda^\pos$, $a \in \CA_{\theta}\setminus\{0\}$. There exists $\nu \in \La^{+}$ such that the operator $\on{act}_{V^{\nu}}(a)$ is nonzero.
\end{lem}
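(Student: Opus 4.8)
The plan is to exhibit, for a suitably dominant $\nu$, a single nonzero matrix coefficient of $\on{act}_{V^\nu}(a)$. Fix $0\neq a\in\CA_\theta$ and, for $\nu\in\La^+$, take $\CP=\IC_\nu$, the intersection cohomology sheaf of $\ol\Gr{}^\nu_G$, so that $V^\nu=H^\bullet(\Gr_G,\IC_\nu)$ is the irreducible $G^\vee$-module with weight spaces $V^\nu_\mu=\Phi_\mu(\IC_\nu)$. By the construction of~\S\ref{The action}, and since $a$ has degree $\theta$, the operator $\on{act}_{V^\nu}(a)$ restricted to $V^\nu_{\nu-\theta}$ lands in $V^\nu_\nu$; as $S_\nu\cap\ol\Gr{}^\nu_G=\{\nu\}$, the target $V^\nu_\nu$ is one-dimensional, so this restriction is a linear functional $\langle a,-\rangle_\nu$ on $V^\nu_{\nu-\theta}$. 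It therefore suffices to prove that $\langle a,-\rangle_\nu\neq0$ for some dominant $\nu$.

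The second step is to recognise this functional as one slot of a cohomological pairing. By Proposition~\ref{restrictions of Upsilon} it is the $\mu=\nu-\theta$ summand of the cospecialization $H_c^{\langle2\rho^{\lvee},\nu\rangle}({\Upsilon^{\on{princ}}_\nu}^{-1}(0),(\widetilde{\CP}_\nu)_0)\to V^\nu_\nu$, and on this summand the source is $H_c^{\langle2\rho^{\lvee},\theta\rangle}(\ol{S}_\nu\cap\ol{T}_{\nu-\theta})\otimes V^\nu_{\nu-\theta}$. By Remark~\ref{zastava as intersection} and Lemma~\ref{identification of local models}, translation by $z^{\nu-\theta}$ identifies $\ol{S}_\nu\cap\ol{T}_{\nu-\theta}\cong\ol\fZ{}^{\nu,\nu-\theta}\cong\ol\fZ{}^\theta$, so the first tensor factor is canonically $\CA_\theta$ with its basis of fundamental classes of top-dimensional irreducible components. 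We thus obtain a bilinear pairing $\CA_\theta\otimes V^\nu_{\nu-\theta}\to\BC$ sending $a$ to $\langle a,-\rangle_\nu$, and both factors carry fundamental-class (MV) bases. By the Remark in~\S\ref{schieder bialgebra} (i.e.\ \cite[\S6]{bfgm}) we have $\dim\CA_\theta=\dim U(\nvee)_\theta$, and for $\nu$ deep in the dominant cone the multiplicity $\dim V^\nu_{\nu-\theta}$ stabilizes to the same Kostant number; hence the two factors have equal dimension and it is enough to prove the pairing is non-degenerate.

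For non-degeneracy I would compute the pairing in the fundamental-class bases and show it is unitriangular for the closure order. Concretely, the cospecialization coefficient pairing a component $Z\subset\ol{S}_\nu\cap\ol{T}_{\nu-\theta}$ against an MV cycle $Z'\subset\ol{S}_{\nu-\theta}\cap\ol\Gr{}^\nu_G$ is the multiplicity with which $Z\times Z'$ flows into the generic fibre $\ol{S}_\nu$ of $\Upsilon^{\on{princ}}_\nu$; using Lemma~\ref{closures of fam}, which presents the relevant closure as $\ol\fY{}^{\nu,\nu-\theta,\on{princ}}\times\ol{S}_{\nu-\theta}$, this multiplicity equals the intersection number of $Z$ with the translate $2\rho(c)\cdot Z'$ inside $\ol\Gr{}^\nu_G$ for generic $c$, i.e.\ the Mirkovi\'c--Vilonen intersection pairing between the two opposite families of MV cycles. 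Ordering the components by the closure order, this matrix is unitriangular: the diagonal term $\langle Z,Z^\vee\rangle$ equals $1$ because a generic point of $Z$ meets a single translate of $Z^\vee$ transversally, while off-diagonal terms are governed by the order. Choosing a component $Z_0$ maximal among those occurring with nonzero coefficient in $a$ and pairing $a$ against the dual MV cycle $Z_0^\vee$ then isolates the coefficient of $Z_0$, giving $\langle a,Z_0^\vee\rangle_\nu\neq0$; hence $\on{act}_{V^\nu}(a)\neq0$.

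The main obstacle is precisely this multiplicity-one / triangularity statement for the cospecialization coefficients. Identifying them with the MV intersection numbers demands a local analysis of the degeneration $\on{Vin}\ol{S}{}^{\on{princ}}_\nu$ (equivalently of the Drinfeld--Gaitsgory--Vinberg local model $\ol\fY{}^{\nu,\nu-\theta,\on{princ}}$) along a generic point of each cycle $Z$, showing that there the family is the standard $\BC^\times$-interpolation, so that the specialization is transversal with multiplicity one, and that strictly larger or incomparable components contribute nothing. This transversality computation, together with the bookkeeping matching the two MV bases under translation by $z^{\nu-\theta}$, is the technical heart of the argument; the dimension count and the reductions above are formal given the results already established in~\S\S\ref{two}--\ref{four}.
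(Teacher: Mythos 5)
Your reduction of the statement to the nonvanishing of one weight block of the action is sound, but the block you chose makes the problem hard, and the entire proof then hangs on the unitriangularity/multiplicity-one claim in your third step, which you do not prove. That claim is not a routine verification: it presupposes (i) that the cospecialization matrix entries of the family $\Upsilon^{\on{princ}}_{\nu}$ are computed by intersection numbers of cycles in $\ol\Gr{}^\nu_G$, (ii) a bijection $Z\mapsto Z^\vee$ between the irreducible components of $\ol{S}_\nu\cap\ol{T}_{\nu-\theta}$ (repellent-side cycles) and the MV cycles in $\ol\Gr{}^\nu_G\cap S_{\nu-\theta}$ (attractor-side cycles), and (iii) a partial order compatible with this bijection for which off-diagonal entries vanish. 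None of these is established, and such duality statements between attractor-side and repellent-side cycle bases are exactly the kind of delicate MV-basis assertions that require genuine work (note also that the two families of complementary-dimensional cycles meet set-theoretically only at the $T$-fixed point $\nu-\theta$, so even defining the ``intersection number'' inside the singular variety $\ol\Gr{}^\nu_G$ needs care). A priori nothing formal rules out that your pairing $\CA_\theta\otimes V^\nu_{\nu-\theta}\to V^\nu_\nu$ is degenerate in the $\CA$-factor, so the gap is essential, not cosmetic.

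The paper avoids this entirely by choosing the block whose \emph{source}, rather than target, is one-dimensional: it acts on the lowest weight line instead of mapping into the highest weight line. By~\cite[Proposition~6.4]{bfgm} there exists $\la\in\La^+$ with $\ol{S}_{\theta-\la}\cap\ol{T}_{-\la}\subseteq\ol\Gr{}^{-w_0(\la)}_G\cap\ol{S}_{\theta-\la}$; since both sides are equidimensional of dimension $\langle\rho^{\lvee},\theta\rangle$, every irreducible component of the left-hand side is a component of the right-hand side, so fundamental classes identify $\CA_\theta$ with a subspace of the weight space $V^{-w_0(\la)}_{\theta-\la}$. Using the identifications $\ol{S}_{\theta-\la}\cap\ol{T}_{-\la}\simeq\ol{S}_\theta\cap\ol{T}_0$, Lemma~\ref{identification of local models}, and the closed embedding $\ol{\mathfrak{Y}}{}^{\theta-\la,-\la}\hookrightarrow\on{Vin}\ol{S}_{\theta-\la}$, the block $\CA_\theta\otimes V^{-w_0(\la)}_{-\la}\to V^{-w_0(\la)}_{\theta-\la}$ of $\on{act}_{V^{-w_0(\la)}}$ is identified with the multiplication ${\bf{m}}\colon\CA_\theta\otimes\CA_0\to\CA_\theta$, which sends $a\otimes1$ to $a\ne0$; no triangularity or intersection-theoretic input is needed beyond the containment above. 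Your approach is in effect the dual of this one (via $V^\nu\simeq(V^{-w_0(\nu)})^*$), but transporting the statement through that duality geometrically is precisely the hard, unproven part of your argument; the missing ingredient that collapses the difficulty is the containment from~\cite[Proposition~6.4]{bfgm}.
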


\begin{proof}
It follows from~\cite[Proposition~6.4]{bfgm} that there exists
$\la \in \La^{+}$ such that $\ol{S}_{\theta-\la}\cap \ol{T}_{-\la}$
is contained inside $\ol\Gr{}^{-w_0(\la)}_G \cap \ol{S}_{\theta-\la}$. 
It follows that
$V^{-w_{0}(\la)}_{-\la+\theta}=
H^{\langle2\rho^{\lvee},\theta\rangle}_{c}(\ol\Gr{}^{-w_0(\la)}_G \cap \ol{S}_{\theta-\la},\ul\BC{})$
embeds into
$\CA_{\theta}=H^{\langle2\rho^{\lvee},\theta\rangle}_{c}(\ol{S}_{\theta-\la}\cap \ol{T}_{-\la},\ul\BC{})$.
Indeed, the latter space has a basis formed by the fundamental classes of irreducible components
of $\ol{S}_{\theta-\la}\cap \ol{T}_{-\la}$, while the former space has a basis formed by the
fundamental classes of irreducible components of $\ol\Gr{}^{-w_0(\la)}_G \cap \ol{S}_{\theta-\la}$,
and each of those irreducible components is an irreducible component of
$\ol\Gr{}^{-w_0(\la)}_G \cap \ol{S}_{\theta-\la}$.
Thus the vertical arrows of the following commutative diagram are embeddings:
\begin{equation}\label{act vs mult}
\xymatrix{\CA_{\theta}\otimes V^{-w_0(\la)}_{-\la} \ar[rrr]^{\on{act}_{V^{-w_0(\la)}}} \ar[d] &&&
V^{-w_0(\la)}_{-\la+\theta}\ar[d] \\
\CA_{\theta} \otimes \CA_{0} \ar[rrr]^{\bf{m}} &&& \CA_{\theta}.
}
\end{equation}
Commutativity of the diagram (\ref{act vs mult}) follows from the definitions of $\on{act}, {\bf{m}}$ using the identifications
$\ol{S}_{\theta-\la}\cap \ol{T}_{-\la} \simeq \ol{S}_{\theta} \cap \ol{T}_{0}$,
$\ol{\mathfrak{Y}}^{\theta} \simeq \ol{\mathfrak{Y}}^{\theta-\la,-\la}$ (see Lemma~\ref{identification of local models}) and the natural closed embedding
$\ol{\mathfrak{Y}}^{\theta-\la,-\la} \hookrightarrow \on{Vin}\ol{S}_{\theta-\la}$. Note that ${\bf{m}}(a\otimes 1)=a$ is nonzero. It follows that
the operator $\on{act}_{V^{-w_0(\la)}}(a)$ is nonzero.
\end{proof}

\subsection{Schieder conjecture}
According to~\cite[Question~6.6.1]{s3}, the bialgebra $\CA$ is expected to be isomorphic
to the universal enveloping algebra $U(\nvee)$. We will construct such an isomorphism.

\begin{cor}
  \label{five}
  Schieder bialgebra $\CA$ is isomorphic to $U(\nvee)$.
\end{cor}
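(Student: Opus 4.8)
The plan is to upgrade the action $\on{act}$ of \S\ref{The action} into a homomorphism of bialgebras $\Psi\colon\CA\to U(\nvee)$ and then to show it is an isomorphism by combining injectivity with a dimension count.

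First I would record that $\on{act}$ assigns to each $a\in\CA$ a natural endomorphism $\Psi(a)=\{\on{act}_{V}(a)\}_\CP$ of the fiber functor $H^\bullet(\Gr_G,-)$; naturality in $\CP$ is built into the cospecialization construction of \S\ref{The action}. By Proposition~\ref{associ} (the commuting square~(\ref{assoc of action})) the assignment $a\mapsto\Psi(a)$ is a homomorphism of algebras, and by the compatibility with the tensor structure of~\S\ref{tensor compat} (the commuting square~(\ref{diagram for compat with tensor})) it intertwines the coproduct $\Delta$ on $\CA$ with the tensor product of endomorphisms. Thus $\Psi$ is a homomorphism of bialgebras from $\CA$ into the bialgebra $\mathbb E$ of $\otimes$-compatible natural endomorphisms of the fiber functor. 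Under the geometric Satake equivalence $(\Perv_{G_\CO}(\Gr_G),\star)\simeq\Rep(G^\vee)$ of~\cite{mv}, the space $\mathbb E$ is the distribution bialgebra of the Tannakian group $G^\vee$, its primitive elements form $\on{Lie}(G^\vee)=\gvee$, and the grading $\Phi=\bigoplus_\nu\Phi_\nu$ corresponds to the $T^\vee$-weight decomposition. Since $\CA_\theta$ raises weights by $\theta$, the map $\Psi$ is graded for the $\Lambda^\pos$-grading, i.e.\ $\Psi(\CA_\theta)\subseteq\mathbb E_\theta$.

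Next I would exploit that $\CA$ is connected graded with $\CA_0=\BC$. Hence every element of minimal positive degree is primitive for $\Delta$, so for each $i\in I$ the space $\CA_{\alpha_i}$, which is one-dimensional since $\dim\CA_{\alpha_i}=\dim U(\nvee)_{\alpha_i}=1$ by~\cite[\S6]{bfgm}, maps under the coalgebra homomorphism $\Psi$ to a primitive element of $\mathbb E$ of $T^\vee$-weight $\alpha_i$, that is into $\gvee_{\alpha_i}=\BC e_i\subset\nvee$. By Lemma~\ref{action nontriv} the homomorphism $\Psi$ is injective, so in particular $\Psi(\CA_{\alpha_i})=\BC e_i$ is nonzero. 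Let $\CA'\subseteq\CA$ denote the subalgebra generated by $\bigoplus_{i\in I}\CA_{\alpha_i}$; then $\Psi(\CA')$ is the subalgebra of $\gvee$-distributions generated by the $e_i$, which is exactly $U(\nvee)$.

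Finally I would close the argument by a dimension count. Since $\Psi$ is injective and graded, $\dim\CA'_\theta=\dim\Psi(\CA')_\theta=\dim U(\nvee)_\theta$ for every $\theta\in\Lambda^\pos$, while $\dim\CA_\theta=\dim U(\nvee)_\theta$ by the identification $\CA_\theta\simeq U(\nvee)_\theta$ of~\cite[\S6]{bfgm}. As $\CA'_\theta\subseteq\CA_\theta$, these equalities force $\CA'=\CA$, whence $\Psi(\CA)=U(\nvee)$ and $\Psi\colon\CA\to U(\nvee)$ is a bijective homomorphism of bialgebras, proving Schieder's conjecture. The main obstacle is the first paragraph: one must check carefully that the cospecialization action is natural in $\CP$ and monoidal, so that it genuinely lands in the distribution bialgebra $\mathbb E$ of the Tannakian category, and that the primitives of weight $\alpha_i$ are pinned down to $\gvee_{\alpha_i}$; once this soft Tannakian input is in place, the generation and dimension steps are purely formal.
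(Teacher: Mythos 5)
Your proposal is correct, and it closes the argument by a genuinely different route than the paper. Both proofs share the same backbone: the action of~\ref{The action}, its associativity (Proposition~\ref{associ}) and tensor compatibility (diagram~(\ref{diagram for compat with tensor})), the Tannakian fact that an endomorphism of the fiber functor acting as a derivation of the tensor structure is an element of $\gvee$ (the paper cites~\cite[Lemma~23.69]{mil} for exactly this), Lemma~\ref{action nontriv}, and the graded dimension identity $\dim\CA_\theta=\dim U(\nvee)_\theta$ from~\cite[\S6]{bfgm}. The divergence is in how the isomorphism is forced. The paper works with the Lie algebra $\fa=\CA^{\on{prim}}$ of primitive elements, imports the computation $\CA^{\on{prim}}=\bigoplus_{\alpha\in R^+}\CA^{\on{prim}}_\alpha$ from~\cite[Proposition~4.2]{ffkm} (legitimate because Schieder's coproduct coincides with the one of~\cite{ffkm}, although the products differ), uses Lemma~\ref{action nontriv} only in degrees $\alpha_i$ to get $\varepsilon(\fa_{\alpha_i})=\nvee_{\alpha_i}$ and hence surjectivity of $\varepsilon\colon\fa\to\nvee$, and obtains injectivity of $\epsilon\colon U(\fa)\to\CA$ from faithfulness of the $U(\nvee)$-action on $\Rep(G^\vee)$, finishing with two character comparisons. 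You avoid~\cite{ffkm} altogether: you use Lemma~\ref{action nontriv} in full strength (injectivity of $\Psi$ in \emph{every} degree $\theta$, not just $\theta=\alpha_i$), pin down $\Psi(\CA_{\alpha_i})=\BC e_i$ via the automatic primitivity of lowest-degree elements in a connected graded bialgebra, and then force $\CA'=\CA$ by the graded dimension count against $U(\nvee)=\Psi(\CA')$. What each buys: your route is more self-contained (all geometric input is internal to the paper) and exhibits the isomorphism directly as the action map $\Psi$; the paper's route isolates the Lie algebra of primitives of $\CA$, which is of independent interest, but at the price of the external FFKM computation.

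One caveat, which you flagged yourself: describing $\mathbb{E}$ as ``the distribution bialgebra'' is imprecise --- the algebra of all natural endomorphisms of the fiber functor is the full dual $\BC[G^\vee]^*$, whose convolution coproduct exists only after completion, so ``bialgebra homomorphism into $\mathbb{E}$'' needs care. However, your argument only uses two robust facts: an endomorphism satisfying the derivation property obtained by feeding a primitive element into diagram~(\ref{diagram for compat with tensor}) lies in $\gvee$, and the subalgebra of endomorphisms generated by $\gvee$ is $U(\gvee)\supset U(\nvee)$ in characteristic zero. These are precisely what the paper's reference to Milne supplies, so this is presentational, not a gap. (Both your argument and the paper's leave the naturality in $\CP$ of the cospecialization maps implicit; it is needed equally in both.)
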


\begin{proof}
Let $\CA^{\on{prim}}\subset\CA$ stand for the subspace of primitive elements equipped with the
natural structure of Lie algebra. We will denote this Lie algebra $\fa$.
We have a canonical morphism $\epsilon\colon U(\fa)\to\CA$. We have to check that $\epsilon$
is an isomorphism and to construct an isomorphism $\varepsilon\colon \fa\iso\nvee$.
According to~\S\ref{tensor compat}, $\fa$ acts on the fiber functor
$H^\bullet(\Gr_G,\bullet)\colon \on{Perv}_{G_\CO}(\Gr_G)\to\on{Vect}$, and hence we obtain
a homomorphism of Lie algebras $\fa\to\gvee$, see~\cite[Lemma~23.69]{mil}.
This homomorphism clearly lands into $\nvee\subset\gvee$
(see~\ref{The action}). This is the desired homomorphism $\varepsilon$.

The vector space $\CA^{\on{prim}}$ was computed in~\cite[Proposition~4.2]{ffkm}: it is a
direct sum of lines $\bigoplus_{\alpha\in R^+}\CA^{\on{prim}}_\alpha$. Note that while the definition
of {\em algebra} structure on $\CA$ in~\cite{ffkm} is {\em different} from Schieder's
one~\ref{multiplication}, the definition of {\em coalgebra} structure on $\CA$
in~\cite[2.11]{ffkm} is manifestly the same as Schieder's one~\ref{comultiplication}.
Thus the character of the $\Lambda^\pos$-graded vector space $\fa=\bigoplus_{\alpha\in R^+}\fa_\alpha$
coincides with the character of $\nvee$ as elements in $\BZ[\Lambda^\pos]$.

It follows from Lemma~\ref{action nontriv} that for $i\in I$, the action of $\fa_{\alpha_i}$ on the
fiber functor is not trivial, and
hence $\varepsilon(\fa_{\alpha_i})\ne0$, that is $\varepsilon(\fa_{\alpha_i})=\nvee_{\alpha_i}$.
Since $\bigoplus_{i\in I}\nvee_{\alpha_i}$ generates $\nvee$, we see that $\varepsilon$ is
surjective. Since the characters of $\fa$ and $\nvee$ coincide, $\varepsilon$ is an
isomorphism. Since the action of $U(\fa)\simeq U(\nvee)$ on the fiber functor is effective
and factors through the action of $\CA$, we conclude that $\epsilon\colon U(\fa)\to\CA$
is injective. One last comparison of characters shows that $\epsilon$ is an isomorphism,
and hence $U(\nvee)\iso U(\fa)\iso\CA$ (the first arrow is $\varepsilon^{-1}$, the second
one is $\epsilon$).
\end{proof}

\subsection{Comparison with the Ext-algebra of~\cite{ffkm}}
\label{ivan}
There is another construction of the algebra structure in $\CA$ going back to~\cite{ffkm}.
Let us denote it by $\CA\ni a,b\mapsto a\circ b$. In this Section we identify this product
with Schieder's product.

Let us denote the shriek extension of the constant sheaf on $T_\nu\cap\ol\Gr{}_G^\lambda$
by $\CR^\lambda_\nu$. As $\lambda$ grows, these sheaves form an inverse system, and we denote
its inverse limit by $\CR_\nu$. By Braden's Theorem,
$\Ext^{\langle2\rho^{\lvee},\nu-\mu\rangle}(\CR^\lambda_\nu,\CR^\lambda_\mu)=
H^{\langle2\rho^{\lvee},\nu-\mu\rangle}(\iota_{\nu,-}^*r_{\nu,-}^!r_{\mu,-,!}\underline\BC)=
H^{\langle2\rho^{\lvee},\nu-\mu\rangle}(\iota_{\nu,+}^!r_{\nu,+}^*r_{\mu,-,!}\underline\BC)=
H^{\langle2\rho^{\lvee},\nu-\mu\rangle}_c(T_\mu\cap S_\nu\cap\ol\Gr{}_G^\lambda)$. As $\lambda$ grows,
this compactly supported cohomology forms an inverse system that eventually stabilizes;
the stable value will be denoted by $\Ext^{\langle2\rho^{\lvee},\nu-\mu\rangle}(\CR_\nu,\CR_\mu)=
H^{\langle2\rho^{\lvee},\nu-\mu\rangle}_c(T_\mu\cap S_\nu)=\CA_{\nu-\mu}$.

The composition of the above Ext's defines the desired product
$(a\in\CA_{\theta_1},b\in\CA_{\theta_2})\mapsto a\circ b\in\CA_{\theta_1+\theta_2}$.
Furthermore, for $\CP\in\on{Perv}_{G_\CO}(\Gr_G)$ we have $\Phi_\nu(\CP)=
\Ext^{\langle2\rho^{\lvee},\nu\rangle}(\CR_\nu,\CP)$ (the RHS is again defined as the limit of a
direct system that eventually stabilizes to $\iota_{\nu,-}^*r_{\nu,-}^!\CP$).
Hence the composition of Ext's also defines an action
$(a\in\CA_{\nu-\mu},\phi\in\Phi_\mu(\CP))\to a\circ\phi\in\Phi_\nu(\CP)$.

\begin{lem}
  \label{mirko}
  \textup{(a)} The $\circ$-product on $\CA$ coincides with Schieder's product $\mathbf m$.

  \textup{(b)} The $\circ$-action of $\CA$ on $\Phi$ coincides with the action
  of~\ref{The action}.
\end{lem}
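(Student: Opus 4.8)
The plan is to observe that all four operations in question---the Schieder product $\mathbf m$ of~\ref{multiplication}, the $\circ$-product, the action of~\ref{The action}, and the $\circ$-action---are governed by a single geometric device: the cospecialization morphism attached to a $2\rho$-interpolation family. The comparison thus reduces to matching this cospecialization with the Yoneda composition of $\Ext$-groups, and the bridge between the two is Braden's hyperbolic localization theorem~\cite{br,dg2}. I will treat~(b) first, since it carries the essential geometric content, and then deduce~(a).

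For~(b), recall that the action of~\ref{The action} is the cospecialization for $\Upsilon^{\on{princ}}_{\nu}\colon\on{Vin}\ol{S}^{\on{princ}}_{\nu}\to\BA^1$: by Proposition~\ref{restrictions of Upsilon} its generic fiber is $\ol{S}_\nu$ with the sheaf $\CP|_{\ol{S}_\nu}$, while its special fiber is $\bigcup_{\mu\leq\nu}(\ol{S}_\nu\cap\ol{T}_\mu)\times\ol{S}_\mu$ carrying $\ul\BC\boxtimes(\CP|_{\ol{S}_\mu})$. On the other hand, the $\circ$-action is the Yoneda composition
\[
\Ext^{\langle2\rho^{\lvee},\nu-\mu\rangle}(\CR_\nu,\CR_\mu)\otimes\Ext^{\langle2\rho^{\lvee},\mu\rangle}(\CR_\mu,\CP)\to\Ext^{\langle2\rho^{\lvee},\nu\rangle}(\CR_\nu,\CP),
\]
read through the Braden identifications $\CA_{\nu-\mu}=H^{\langle2\rho^{\lvee},\nu-\mu\rangle}_c(T_\mu\cap S_\nu)$ and $\Phi_\nu(\CP)=\iota_{\nu,-}^*r_{\nu,-}^!\CP$ of~\ref{ivan}. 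The crucial point is that, by Proposition~\ref{DG completion}, the defect-free locus of this family is identified with the Drinfeld--Gaitsgory interpolation of $\ol{S}_\nu$ inside $\widetilde{\Gr}_G$, whose zero fiber realizes the attractor--repellent fiber product $\Gr^{\on{attr}}_G\times_{(\Gr_G)^{\BC^\times}}\Gr^{\on{rep}}_G$ of~\ref{specfiber}. For such an interpolation the cospecialization morphism on compactly supported cohomology is, by the very construction of~\cite{dg2} recalled in~\ref{zero fiber of DG}, the map induced by the Braden adjunction relating $r^*_{\nu,+}$ and $r^!_{\nu,-}$. Consequently the $\mu$-component of the cospecialization is exactly Yoneda composition with the corresponding class in $\Ext^{\langle2\rho^{\lvee},\nu-\mu\rangle}(\CR_\nu,\CR_\mu)=\CA_{\nu-\mu}$, which is~(b).

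For~(a) I would run the identical argument with the perverse sheaf $\CP$ replaced by the pro-object $\CR_\bullet$: under the identification of local models (Lemma~\ref{identification of local models}) and Proposition~\ref{DG completion}, Schieder's multiplication family of~\ref{multiplication} becomes the $\CR$-version of the action family, so its cospecialization is the Yoneda composition of the $\CR$'s, namely the $\circ$-product. Equivalently, one transports the equality of actions established in~(b) to an equality of products by means of the commutative square~(\ref{act vs mult}), which realizes $\mathbf m$ inside $\on{act}_{V^{-w_0(\la)}}$ through the identifications $\ol{S}_{\theta-\la}\cap\ol{T}_{-\la}\simeq\ol{S}_\theta\cap\ol{T}_0$ and $\ol{\mathfrak{Y}}{}^{\theta}\simeq\ol{\mathfrak{Y}}{}^{\theta-\la,-\la}$ together with the embedding $\ol{\mathfrak{Y}}{}^{\theta-\la,-\la}\hookrightarrow\on{Vin}\ol{S}_{\theta-\la}$; since by Lemma~\ref{action nontriv} the action detects every nonzero element of $\CA_\theta$, the two products must coincide.

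The main obstacle is the assertion of the second paragraph that the cospecialization morphism of the $2\rho$-interpolation coincides with the Braden comparison map underlying the $\Ext$-identifications of~\ref{ivan}. Making this precise amounts to tracking the six-functor formalism through the degeneration of $\Gr_G$ to $\Gr^{\on{attr}}_G\times_{(\Gr_G)^{\BC^\times}}\Gr^{\on{rep}}_G$ described in~\ref{zero fiber of DG}, and verifying that the specialization functor intertwines the attractor-restriction $q^+$ and the repellent-corestriction $q^-$ exactly as in~\cite{dg2}; here the input of Proposition~\ref{DG completion}, which embeds the interpolation into the Drinfeld--Gaitsgory--Vinberg Grassmannian, is indispensable.
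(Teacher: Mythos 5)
Your argument for part~(b) has a genuine gap, and you flag it yourself: everything hinges on the assertion that the cospecialization morphism of the $2\rho$-interpolation family ``is, by the very construction of~\cite{dg2}, the map induced by the Braden adjunction,'' hence agrees with Yoneda composition under the identifications of~\S\ref{ivan}. That assertion is not in~\cite{dg2} (which uses the interpolation to \emph{prove} Braden's theorem, not to identify cospecialization with composition of $\Ext$'s), and it is in fact precisely the content of Lemma~\ref{mirko}(b); deferring it to a future ``tracking of the six-functor formalism through the degeneration'' leaves the lemma unproved. The idea you are missing is the representability reduction that makes the paper's proof short: both the $\circ$-action and the action of~\ref{The action} are natural in $\CP$, and $\Phi_\mu$ is (pro-)represented by $\CR_\mu$, so by Yoneda it suffices to compare the two actions of $a\in\CA_{\nu-\mu}$ on the single universal element $\on{Id}\in\Phi_\mu(\CR_\mu)$. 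For the $\circ$-action this gives $a\circ\on{Id}=a$; for the cospecialization action one must check that the induced map
\begin{equation*}
\CA_{\nu-\mu}=\CA_{\nu-\mu}\otimes H^0_c(S_\mu,\CR_\mu)\longrightarrow
H^{\langle2\rho^{\lvee},\nu-\mu\rangle}_c(S_\nu\cap T_\mu)=\CA_{\nu-\mu}
\end{equation*}
is the identity, which is immediate because the Drinfeld--Gaitsgory interpolation of $\ol{S}_\nu\cap T_\mu$ is a \emph{trivial} family. No comparison of cospecialization with the Braden natural transformation in general is ever needed.

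Your treatment of part~(a) is salvageable once (b) is actually established: the route via the commutative square~(\ref{act vs mult}) is unnecessarily indirect, but the cleaner variant you also sketch works, namely combining (b) with Proposition~\ref{associ} and the injectivity of $\CA\to\End(\Phi)$ supplied by Lemma~\ref{action nontriv} to get $\mathbf m(a\otimes b)=a\circ b$. (The paper instead deduces (a) from (b) directly by evaluating on $\on{Id}$, using that composition with $\on{Id}$ recovers the element.) So the defect is localized entirely in (b): as written, your proof assumes a statement equivalent in difficulty to the lemma itself.
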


\begin{proof}
  (a) follows from (b). To prove (b), due to the fact that $\Phi_\mu$ is represented by
  $\CR_\mu$, it suffices to compare the two actions of $\CA_{\nu-\mu}$ on
  $\on{Id}\in\Phi_\mu(\CR_\mu)$. That is we have to check that the cospecialization
  morphism~\ref{The action} $\CA_{\nu-\mu}=\CA_{\nu-\mu}\otimes\BC=
  \CA_{\nu-\mu}\otimes H^0_c(S_\mu,\CR_\mu)\to H^{\langle2\rho^{\lvee},\nu-\mu\rangle}_c(S_\nu,\CR_\mu)
  =H^{\langle2\rho^{\lvee},\nu-\mu\rangle}_c(S_\nu\cap T_\mu)=
  H^{\langle2\rho^{\lvee},\nu-\mu\rangle}_c(\ol{S}_\nu\cap T_\mu)=\CA_{\nu-\mu}$ is the identity
  morphism. This follows from the fact that the Drinfeld-Gaitsgory interpolation of
  $\ol{S}_\nu\cap T_\mu$ is trivial.
\end{proof}

\subsection{Integral form}
\label{integra}
Note that the bialgebra $\CA$ comes equipped with a natural basis of
fundamental classes of irreducible components of $\ofZ^\theta$, and the
structure constants of multiplication in this basis belong to $\BZ$.
Hence $\CA$ acquires an integral form $\CA_\BZ$. The algebra $U(\nvee)$
also has the integral Chevalley-Kostant form $U(\nvee)_\BZ$, see
e.g.~\cite[Chapter~2]{st}.

\begin{prop}
  \label{integr}
  The isomorphism $U(\nvee)\iso\CA$ of~Corollary~\ref{five} induces
  an isomorphism $U(\nvee)\supset U(\nvee)_\BZ\iso\CA_\BZ\subset\CA$.
\end{prop}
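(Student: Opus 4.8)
The plan is to prove the equality $\CA_\BZ=U(\nvee)_\BZ$ of lattices inside $\CA\simeq U(\nvee)$ by establishing the two inclusions separately, after recording that both sides are sub-bialgebra $\BZ$-forms. On the $\CA$-side the multiplication has integral structure constants in the fundamental-class basis by construction; the comultiplication $\Delta=\bigoplus\Delta_{\theta_1,\theta_2}$ of \S\ref{comultiplication}, being a cospecialization on top compactly supported cohomology, sends the fundamental class of an irreducible component of $\ofZ^\theta$ to a $\BZ_{\geq0}$-combination of classes $[C_1]\otimes[C_2]$ of components of $\ofZ^{\theta_1}\times\ofZ^{\theta_2}$, so that $\Delta(\CA_\BZ)\subseteq\CA_\BZ\otimes\CA_\BZ$. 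Moreover, by Corollary~\ref{five} the map $\varepsilon$ is an isomorphism of bialgebras, so this coproduct is the standard coproduct on $U(\nvee)$, and Kostant's form $U(\nvee)_\BZ=\on{Dist}(N^\vee_\BZ)$ is likewise a sub-bialgebra $\BZ$-form.

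First I would treat the simple-root directions. For $\theta=n\alpha_i$ the space $\CA_{n\alpha_i}$ is one-dimensional, since by the character computation used in Corollary~\ref{five} (the characters of $\fa$ and of $\nvee$ agree) one has $\dim\CA_{n\alpha_i}=\dim U(\nvee)_{n\alpha_i}=1$; thus $\CA_{\BZ,n\alpha_i}=\BZ\cdot[\ofZ^{n\alpha_i}]$. Normalizing the Chevalley generator by $e_i:=\varepsilon([\ofZ^{\alpha_i}])$ — a legitimate choice up to the harmless sign ambiguity $e_i\mapsto-e_i$, which preserves the lattice $\bigoplus_n\BZ e_i^{(n)}$ — I would use the reduction to the rank-one group of the root $\alpha_i$, where the introduction's computation $e_ne_m=\binom{n+m}{n}e_{n+m}$ applies and gives $[\ofZ^{\alpha_i}]^{\,n}=n!\,[\ofZ^{n\alpha_i}]$, to identify $[\ofZ^{n\alpha_i}]$ with the divided power $e_i^{(n)}=e_i^n/n!$. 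Since Kostant's theorem presents $U(\nvee)_\BZ$ as the $\BZ$-algebra generated by the $e_i^{(n)}$, and these all lie in the $\BZ$-algebra $\CA_\BZ$, this already yields the inclusion $U(\nvee)_\BZ\subseteq\CA_\BZ$.

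The reverse inclusion $\CA_\BZ\subseteq U(\nvee)_\BZ$ is the crux, and I would attack it by induction on the height of $\theta$, the base case $\theta\in\BN\alpha_i$ being settled above. For a fundamental class $[C]\in\CA_{\BZ,\theta}$ the reduced coproduct $\bar\Delta[C]$ lands in $\bigoplus_{\theta_1+\theta_2=\theta,\ \theta_i\neq0}\CA_{\BZ,\theta_1}\otimes\CA_{\BZ,\theta_2}$, which by the inductive hypothesis lies in $U(\nvee)_\BZ\otimes U(\nvee)_\BZ$; it then remains to control the primitive part of $[C]$, which vanishes unless $\theta\in R^+$ and otherwise sits on the line $\BZ e_\theta\subseteq U(\nvee)_\BZ$. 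Concluding integrality of $[C]$ itself from this amounts exactly to the \emph{cosaturation} of Kostant's form, i.e.\ to the fact that $\on{Dist}(N^\vee_\BZ)$ is the full $\BZ$-dual lattice of the Chevalley form $\BZ[N^\vee]$; equivalently, dualizing the inclusion $U(\nvee)_\BZ\subseteq\CA_\BZ$ already gives $\CA_\BZ^*\subseteq\BZ[N^\vee]$, and one must verify the opposite containment $\BZ[N^\vee]\subseteq\CA_\BZ^*$ by checking that the coordinate generators of the Chevalley form of $\BC[N^\vee]$ pair integrally with all fundamental classes. I expect this cosaturation step — forcing integrality of a whole class from integrality of its reduced coproduct in the higher-height degrees, precisely where the fundamental-class and semicanonical bases genuinely differ — to be the main obstacle; everything else is either structural or reduces to the rank-one computation.
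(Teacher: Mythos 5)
There is a genuine gap, and it sits exactly where you predicted it would: the inclusion $\CA_\BZ\subseteq U(\nvee)_\BZ$ is never actually proved. Your first inclusion $U(\nvee)_\BZ\subseteq\CA_\BZ$ is reasonable (the rank-one identification $[\ofZ^{n\alpha_i}]\leftrightarrow e_i^{(n)}$ plus generation of the Kostant form by divided powers of Chevalley generators), but for the converse you only \emph{reformulate} the problem twice: once as ``cosaturation'' of the Kostant form inside its $\BQ$-span relative to reduced coproducts, and once, dually, as the statement that every fundamental class pairs integrally with the Chevalley $\BZ$-form of $\BC[N^\vee]$. Neither reformulation is verified, and the second one is essentially equivalent to the proposition itself. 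Moreover, your induction has a circularity at the primitive level: for $\theta\in R^+$ you assert that the primitive part of a class $[C]\in\CA_{\BZ,\theta}$ ``sits on the line $\BZ e_\theta$,'' but a priori it only sits on $\BC e_\theta=\nvee_\theta$; showing that the generator of the primitive line of $\CA_\BZ$ is an \emph{integral} multiple of $e_\theta$ (rather than, say, $\tfrac12 e_\theta$) is precisely the kind of integrality statement being proved, so it cannot be assumed. Even granting the (unproven, though plausible) claim $\Delta(\CA_\BZ)\subseteq\CA_\BZ\otimes\CA_\BZ$, the induction on height therefore does not close.

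The paper avoids this difficulty entirely by a different mechanism: it pins both lattices against a third one. Using the integral-coefficient perverse sheaves $\CI_!(\lambda,\BZ)\simeq\CI_{!*}(\lambda,\BZ)$ of~\cite{bari}, whose hyperbolic stalks are the integral Weyl modules $V^\lambda_\BZ$ with $\BZ$-basis the Mirkovi\'c--Vilonen fundamental classes, one shows (via the commutative diagram in the proof of Lemma~\ref{action nontriv}) that for $\lambda\gg0$ the action of $\CA$ on the lowest weight class gives an isomorphism $\CA_{\BZ,\theta}\iso V^\lambda_{\BZ,\theta+w_0\lambda}$; on the other side, Steinberg's theorem~\cite{st} gives $U(\nvee)_{\BZ,\theta}\iso V^\lambda_{\BZ,\theta+w_0\lambda}$ by acting on the lowest weight vector. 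Since both isomorphisms are intertwined by the isomorphism of Corollary~\ref{five}, the two lattices coincide. If you want to salvage your approach, the missing ingredient is exactly such an external integral ``test object'': some faithful integral module on which both $\CA_\BZ$ and $U(\nvee)_\BZ$ act with images you can compare, which is what the integral Satake/MV theory supplies.
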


\begin{proof}
  We consider the perverse sheaves with integral coefficients
  $\CI_!(\lambda,\BZ)\iso\CI_{!*}(\lambda,\BZ)$ of~\cite[Lemma~11.5]{bari}.
  The fiber functor applied to them gives the integral Weyl modules
  $\Phi(\CI_!(\lambda,\BZ))\simeq V^\lambda_\BZ\in\Rep(G^\vee_\BZ)$.
  These are free $\BZ$-modules with the bases given by the fundamental
  classes of the irreducible components of $\Gr_G^\lambda\cap S_\nu$,
  see~\cite[Proposition~11.1]{bari}. For a fixed $\theta\in\Lambda^\pos$, and a
  dominant coweight $\lambda\gg0$,
  the action of $\CA$ on the fundamental class of
  $\Gr^\lambda\cap S_{w_0(\lambda)}$ induces an isomorphism of the weight
  spaces $\CA_\theta\iso\Phi_{\theta+w_0\lambda}(\on{IC}(\ol\Gr{}^\lambda_G))
  \simeq V^\lambda_{\theta+w_0\lambda}$. Moreover, it follows
  from the proof of~Lemma~\ref{action nontriv} that this isomorphism respects
  the integral forms:
  $\CA_{\BZ,\theta}\iso V^\lambda_{\BZ,\theta+w_0\lambda}$.
  However, according to~\cite[Corollary~1 of~Theorem~2]{st}, the action of
  $U(\nvee)$ on the lowest weight vector of $V^\lambda_\BZ$ also induces
  an isomorphism of the weight spaces respecting the integral forms
  $U(\nvee)_{\BZ,\theta}\iso V^\lambda_{\BZ,\theta+w_0\lambda}$.

  The proposition follows.
\end{proof}

\section{Arbitrary reductive groups and symmetric Kac-Moody Lie algebras}
\label{5five}

\subsection{Configurations}
In general, we may have $\Lambda^\pos:=\bigoplus_{i\in I}\BN\alpha_i\subsetneqq
\Lambda_{\geq0}:=\{\alpha\in\Lambda : \langle\lambda^{\!\scriptscriptstyle\vee},\alpha\rangle\geq0\
\forall \lambda^{\!\scriptscriptstyle\vee}\in\Lambda^{\vee+}\}$. It happens e.g.\ for $G=\on{PGL}_2$.
Recall that for $\theta\in\Lambda_{\geq0}$, a point $D\in X^\theta$
is a collection of effective divisors
$D_{\la^{\lvee}} \in X^{(\langle \la^{\lvee}, \theta\rangle)}$ for $\la \in \La^{\vee+}$ such that for every
$\la^{\lvee}_1, \la^{\lvee}_2 \in \La^{\vee+}$ we have
$D_{\la^{\lvee}_1}+D_{\la^{\lvee}_2}=D_{\la^{\lvee}_1+\la^{\lvee}_2}$.
For $\theta=\sum\limits_{i\in I} n_i\al_i\in \Lambda^\pos$ we define
$X^{(\theta)}=\prod\limits_{i\in I}X^{(n_i)}.$ We have a closed (``diagonal'') embedding
$X^{(\theta)}\hookrightarrow X^\theta$ (it fails to be an isomorphism e.g.\ for $G=\on{PGL}_2$).

The complement $X^\theta\setminus X^{(\theta)}$ accounts for some undesirable components of
$\on{VinBun}_G$ if we use the naive definition for arbitrary $G$.
The reason lies in the ``bad'' components of $\ol{\on{Bun}}_B$. Schieder explains in~\cite[\S7]{s0}
how to get rid of the ``bad'' components using a central extension
\begin{equation}
  \label{extension}
  1\to\CZ\to\widehat{G}\to G\to1
\end{equation}
such that $\CZ$ is a (connected) central torus in $\widehat{G}$, and the derived subgroup
$[\widehat{G},\widehat{G}]\subset\widehat{G}$ is simply connected.
The same remedy works in our setup.

\subsection{Example for $G=\on{PGL}_2$}
We identify $\Lambda$ with $\BZ$, so that the simple root $\alpha$ corresponds to~2.
We have $\Lambda^\pos=2\BN,\ \Lambda_{\geq0}=\BN$. Let $X=\BP^1$ with homogeneous coordinates $x,y$.
We identify $G=\on{PGL}_2=\on{SO}_3$
and consider the fiber $\fB$ of $\ol{\on{Bun}}{}_B^1$ over the $G$-bundle
$\CV=\CO(-1)\oplus\CO\oplus\CO(1)$ (with the evident symmetric self-pairing $\CV\otimes\CV\to\CO$).
In other words, $\fB$ is the projectivization of
$\Hom^0(\CO(-1),\CV)$, where $\Hom^0$ is formed by the isotropic homomorphisms.
Thus $\Hom^0(\CO(-1),\CV)=\{(P_0,P_1,P_2) : P_0P_2-P_1^2=0\}$, where $P_i$ is a homogeneous
polynomial in $x,y$ of degree $i$. Clearly, $\Hom^0(\CO(-1),\CV)$ consists of 2 irreducible
3-dimensional components: the first one given by $P_0=P_1=0$, and the second one given by
(away from the first one) $P_0\ne0,\ P_2=P_1^2P_0^{-1}$. We denote the projectivization of the
first (resp.\ second) component by $\fB_{\on{bad}}$ (resp.\ $\fB_{\on{good}}$).

We also consider the ``shifted zastava'' space $Z^{1,-1}$: the open part of $\fB$ formed by
those $\CO(-1)\hookrightarrow\CV$ whose composition with the projection
$\CV\twoheadrightarrow\CO(1)$ is nonzero. In other words, it is given by the condition
$P_2\ne0$. The factorization projection $Z^{1,-1}\to X^\alpha$ is given by
$(P_0,P_1,P_2)\mapsto P_2$. We see that $Z_{\on{bad}}^{1,-1}$ projects isomorphically onto
$X^\alpha$, while $Z_{\on{good}}^{1,-1}$ projects onto $X^{(\alpha)}\hookrightarrow X^\alpha$.

The existence of ``bad'' components in the zastava spaces implies the existence of
``bad'' components in the local models $Y^\theta$ of~\S\ref{local Y}, and hence the existence
of ``bad'' components in $\on{VinBun_G}$.

\subsection{Modified constructions}
Let us redenote $\ol{\on{Bun}}_B$ by $\ol{\on{Bun}}{}_B^{\on{naive}}$.
In order to get rid of the ``bad'' components of $\ol{\on{Bun}}{}_B^{\on{naive}}$,
Schieder modified the definition as
\begin{equation*}
  \ol{\on{Bun}}_B:=\on{Maps}_{\on{gen}}(X, G\backslash\ol{\widehat{G}/N}/\widehat{T}\supset
  G\backslash(\widehat{G}/N)/\widehat T=\on{pt}\!/\!B),
\end{equation*}
where $\widehat{G}$ is chosen as
in~(\ref{extension}), and $1\to\CZ\to\widehat{T}\to T\to1$ is the corresponding extension
of Cartan tori. Schieder proved that $\ol{\on{Bun}}_B$ is canonically independent of
the choice of central extension~(\ref{extension}), i.e.\ $\ol{\on{Bun}}_B$ is well defined.
Note also that $G\backslash\ol{\widehat{G}/N}/\widehat{T}=
G\backslash(\ol{\widehat{G}/N}/\CZ)/T$.

Following Schieder, we modify~Definition~\ref{Vinberg via Rees} of $\on{Vin}_G^{\on{naive}}$
as $\on{Vin}_G:=\on{Vin}_{\widehat{G}}/\CZ$. Accordingly, we
modify~Definition~\ref{vinbun} of $\on{VinBun}_G^{\on{naive}}$ as
\begin{equation*}
  \on{VinBun}_G:=\on{Maps}_{\on{gen}}(X, \on{Vin}_G/(G\times G)\supset{}
  _0\!\!\on{Vin}_G/(G\times G))
\end{equation*}
(instead of $\on{Maps}_{\on{gen}}(X, \on{Vin}_G^{\on{naive}}/(G\times G)\supset{}
_0\!\!\on{Vin}_G^{\on{naive}}/(G\times G))$).

With this understanding, the definition of $\on{VinGr}_{G,X^n}$
of~\S\ref{second def VinGrBD}
stays intact, as well as the construction of the action of Schieder bialgebra
$\CA$ on the geometric Satake fiber functor.

\subsection{A Coulomb branch construction}
\label{Coulomb}
In case $G$ is almost simple simply laced, the relative compactification
$\ol{Z}{}^\theta\supset Z^\theta$~\cite[\S7.2]{gai} of zastava space for $X=\BA^1$
was constructed in~\cite[Remark~3.7]{bfn} in the course of study of Coulomb branches of
$3d\ \CN=4$ supersymmetric quiver gauge theories. The diagonal fibers of the factorization
morphism $\pi\colon \ol{Z}{}^\theta\to\BA^{(\theta)}$ are identified with
$\ol\fZ{}^{\theta,0}=\ol{S}_\theta\cap\ol{T}_0$, see~Remark~\ref{zastava as intersection}.
The construction of~\cite{bfn} proceeds in terms of $\theta$-dimensional representations
of the Dynkin graph $Q$ of $G$ equipped with an orientation.
Given an arbitrary quiver $Q$ with a set $Q_0$ of vertices and without loop edges, and a
dimension vector $\theta\in\BN^{Q_0}$, this construction produces an affine scheme
$\pi\colon Z^\theta_{\fg_Q}\to\BA^{(\theta)}$ and its relative projectivization
$\ol{\pi}\colon \ol{Z}{}^\theta_{\fg_Q}\to\BA^{(\theta)}$. Here we denote
by $\fg_Q$ the corresponding symmetric Kac-Moody algebra, and we expect $Z^\theta_{\fg_Q}$
(resp.\ $\ol{Z}{}^\theta_{\fg_Q}$)
to play the role of (compactified) zastava space for $\fg_Q$ (cf.~\cite[Remark~3.26.(1)]{bfn}).
In particular, we hope that
$\pi,\ol{\pi}$ enjoy the factorization property, and $\dim\ol\fZ{}^\theta_{\fg_Q}=
\dim\fZ^\theta_{\fg_Q}=|\theta|$, where
$\ol\fZ{}^\theta_{\fg_Q}$ (resp.\ $\fZ^\theta_{\fg_Q}$) stands for the diagonal fiber
$\ol{\pi}{}^{-1}(\theta\cdot0)$ (resp.\ $\pi^{-1}(\theta\cdot0)$).

Furthermore, the construction of~\cite[Remark~3.2]{bfn} equips $Z^\theta_{\fg_Q}$ and
$\ol{Z}{}^\theta_{\fg_Q}$ with an action of the Cartan torus $T_Q=\on{Spec}\BC[\BZ^{Q_0}]$.
We expect the set of
$T_Q$-fixed points in $\ol\fZ{}^\theta_{\fg_Q}$ to be discrete and parametrized by
$\{\gamma\in\BN^{Q_0} : \gamma\leq\theta\}$. Instead of $T_Q$-action we will consider a
$\BC^\times$-action arising from a regular cocharacter $\BC^\times\to T_Q$.
We expect the attractor (resp.\ repellent)
to a point $\gamma$ to be isomorphic to $\fZ^{\theta-\gamma}_{\fg_Q}$ (resp.\ $\fZ^\gamma_{\fg_Q}$).

Finally, we define $\CA^Q_\theta:=H^{2|\theta|}_c(\fZ^\theta_{\fg_Q})$, and
$\CA^Q:=\bigoplus_{\theta\in\BN^{Q_0}}\CA^Q_\theta$. It is equipped with a bialgebra structure:
a comultiplication arising from factorization as in~\S\ref{comultiplication}, and
a multiplication arising from the Drinfeld-Gaitsgory interpolation associated to the above
$\BC^\times$-action as in~\S\ref{multiplication}.
We conjecture that $\CA^Q$ is isomorphic to $U(\fg_Q^+)$.

\appendix

\section{Proofs of Propositions~\ref{proper over Bun} and~\ref{DG completion}}
\label{app}

\centerline{By Dennis Gaitsgory}

\bigskip

\subsection{Proof of Proposition~\ref{proper over Bun}}
\label{proof proper over Bun}
We will need two lemmas.

\begin{lem} \label{lemma about finite mor} Let $Z \ra Z'$ be a morphism of affine schemes,
equipped with actions of algebraic groups $H$ and $H'$, compatible under a surjective homomorphism
$H \ra H'$ with finite kernel that lies in the center of $H$.
Let $_{0}Z' \subset Z'$ be an open $H'$-invariant subscheme, and let
$_{0}Z \subset Z$
be its preimage. Assume that $Z$ is of finite type over a base field. Assume also  that the morphism $_{0}Z \ra ~_{0}Z'$ is finite.
Then the resulting morphism
\begin{equation*}
b\colon
\on{Maps}_{\on{gen}}(X, Z/H \supset~_{0}Z/H) \ra
\on{Maps}_{\on{gen}}(X, Z'/H' \supset~_{0}Z'/H')
\end{equation*}
is finite.
\end{lem}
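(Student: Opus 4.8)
The plan is to prove that $b$ is finite by exploiting the stability of finiteness under composition, together with the valuative criterion. Accordingly, I would factor $b$ as
\[
\on{Maps}_{\on{gen}}(X, Z/H \supset {}_0Z/H) \xrightarrow{\ b_1\ }
\on{Maps}_{\on{gen}}(X, Z'/H \supset {}_0Z'/H) \xrightarrow{\ b_2\ }
\on{Maps}_{\on{gen}}(X, Z'/H' \supset {}_0Z'/H'),
\]
where $b_1$ is induced by the $H$-equivariant morphism $Z \to Z'$ (keeping the group $H$ fixed, with $H$ acting on $Z'$ through $H \to H'$), and $b_2$ is induced by the change of groups $H \to H'$ (keeping the space $Z'$ fixed). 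It then suffices to prove that each of $b_1$ and $b_2$ is finite.

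For $b_2$: writing $K:=\ker(H\to H')$, the central finite subgroup $K$ acts trivially on $Z'$, so the $Z'$-valued section data is unaffected by the change of group, and the fibre of $b_2$ over a point $(P',s')$ is exactly the groupoid of lifts of the $H'$-torsor $P'$ to an $H$-torsor $P$. Since $X$ is a proper curve over $\BC$ and $K$ is finite and central, the standard theory of central extensions shows that the set of such lifts, when nonempty, is a torsor under the finite group $H^1(X,K)$ (the obstruction living in $H^2(X,K)$), while the ambiguity in automorphisms is governed by the finite group $H^0(X,K)$. I would therefore deduce finiteness of $b_2$ from the general fact that a central finite $K\subset H$ makes $\on{Bun}_H\to\on{Bun}_{H'}$ finite, base-changed along the (unchanged) section data.

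For $b_1$: now the group is fixed, and affineness of $b_1$ is inherited directly from the affineness of the morphism $Z\to Z'$ of affine schemes after passing to quotient stacks and mapping spaces. The crux is universal closedness, which I would check by the valuative criterion over a trait $\Spec R$ ($R$ a discrete valuation ring with fraction field $F$): given an $R$-point of the target, a generic $F$-lift into $Z/H$, one must produce a unique extension to an $R$-lift into $Z/H$ compatible with the given $R$-lift into $Z'/H$. Over the dense open locus of $\Spec R\times X$ on which the section lands in ${}_0Z'$ (by the genericity condition), the finite morphism ${}_0Z\to{}_0Z'$ supplies finitely many lifts, and the prescribed $F$-lift singles out a unique one; one then shows this lift propagates across the points where the section meets the complement $Z'\setminus{}_0Z'$.

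The \emph{main obstacle} is precisely this last extension step for $b_1$: showing that a lift defined through the finite locus ${}_0Z\to{}_0Z'$ extends, uniquely, across the boundary where the section degenerates into $Z'\setminus{}_0Z'$. This is exactly the point at which the two hypotheses must be combined — the affineness of $Z\to Z'$ controls the non-proper directions and, together with the properness of $X$ and the finiteness (hence properness) of ${}_0Z\to{}_0Z'$, forces existence and uniqueness of the limiting lift; separatedness then yields uniqueness globally, completing the valuative criterion and hence the finiteness of $b_1$ and of $b$.
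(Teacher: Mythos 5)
Your factorization of $b$ is exactly the paper's: first change the scheme ($Z \ra Z'$, group $H$ fixed), then change the group ($H \ra H'$, scheme $Z'$ fixed). Your treatment of the second morphism is also essentially the paper's: one base-changes from $\on{Bun}_H \ra \on{Bun}_{H'}$, whose properness (with fibers $\on{Bun}_{\on{ker}(H\ra H')}$, a finite stack since the kernel is finite) gives what is needed; your extra cohomological detail is fine but not where the difficulty lies. The problem is the first morphism: you set up the valuative criterion correctly, but the step you yourself flag as the ``main obstacle'' --- extending the lift across the points where the section leaves ${}_{0}Z'$ --- is precisely the mathematical content of the lemma, and your proposal does not supply the argument. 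Saying that ``the affineness of $Z\ra Z'$ controls the non-proper directions and \dots forces existence and uniqueness of the limiting lift'' restates the desired conclusion; it is not a mechanism. Note also that the locus to be crossed is not controlled by any properness: the section genuinely lands in $Z'\setminus{}_{0}Z'$ there, so no lifting statement along ${}_{0}Z\ra{}_{0}Z'$, however cleverly applied, can see those points.

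What the paper does at this step is concrete and worth internalizing. Working in torsor language (an $H$-bundle $\CF$ on $\CalD\times X$, $\CalD$ a trait, with an $H$-equivariant map $\CF\ra Z'$), one first applies the valuative criterion for the proper morphism ${}_{0}Z/H \ra {}_{0}Z'/H$ not over all of $\CalD\times X$ but over the one-dimensional trait $\CalD\times\upxi$, where $\upxi$ is the generic point of the curve $X$; the genericity hypothesis guarantees the section lands in ${}_{0}Z'/H$ there. This produces an $H$-equivariant lift $\CF|_U \ra Z$ on an open subset $U$ whose complement in $\CalD\times X$ consists of finitely many closed points of the special fiber, hence has codimension $2$. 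The punchline is then an algebraic Hartogs argument: since $\CF$ is normal and the target $Z$ is \emph{affine}, a morphism defined off a closed subset of codimension $\geq 2$ extends uniquely to all of $\CF$. This codimension-two extension into an affine target is the missing idea in your proposal; without it (or a substitute, e.g.\ a Zariski-main-theorem argument on the locus where the section stays in ${}_{0}Z'$ followed by the same Hartogs step at the remaining points) the valuative criterion is not verified and the proof of finiteness of $b_1$ is incomplete.
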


\noindent{\it Warning:}
Note the morphism $b$ is not necessarily schematic; rather its base change by a scheme yields a Deligne-Mumford stack.
However, the notion of finiteness makes sense in this context as well: finite means proper + finite fibers over geometric points.

\begin{proof} It is easy to see that $b$ is quasi-finite (i.e., every geometric point has a finite preimage). Hence, it suffices to show that it is proper.

The morphism $b$ is the composition of the morphisms
\begin{equation*}
\on{Maps}_{\on{gen}}(X, Z/H \supset~_{0}Z/H) \ra
\on{Maps}_{\on{gen}}(X, Z'/H \supset~_{0}Z'/H) \ra
\on{Maps}_{\on{gen}}(X, Z'/H' \supset~_{0}Z'/H').
\end{equation*}

The first of these morphisms is schematic, and the second is not. We will show that both these morphisms are proper.

First we check that $\on{Maps}_{\on{gen}}(X, Z/H \supset~_{0}Z/H) \ra
\on{Maps}_{\on{gen}}(X, Z'/H \supset~_{0}Z'/H)$ is proper. We will do so by checking the valuative criterion.

Denote by $\upxi$ the generic point of $X$. Let $\CalD$ be the spectrum
of a discrete valuation ring, and let $\overset{\circ}\CalD\subset\CalD$ be the spectrum
of its fraction field. Given a square
\begin{equation*}
\begin{CD}
\overset{\circ}{\mathcal{D}} \times X @>>> Z/H \\ @VVV @VVV \\
\mathcal{D} \times X  @>>>  Z'/H
\end{CD}
\end{equation*}
such that the composition
$\mathcal{D} \times \upxi \ra \mathcal{D} \times X \ra Z'/H$
factors through the open embedding $_{0}Z'/H\hookrightarrow Z'/H$, we have to lift
$\CalD\times X\to Z'/H$ to a morphism
$\mathcal{D} \times X \ra Z/H$.

A morphism $\overset{\circ}{\mathcal{D}} \times X \ra Z/H$
(resp.\ $\mathcal{D} \times X \ra Z'/H$)
is the same as an $H$-bundle $\overset{\circ}{\CF}$ on $\overset{\circ}{\mathcal{D}} \times X$
(resp.\ $\CF$ on $\CalD\times X$) and an $H$-equivariant morphism $\overset{\circ}{\CF} \ra Z$
(resp.\ $\CF\to Z'$). Thus we have a square
\begin{equation*}
\begin{CD}
\overset{\circ}{\CF} @>>> Z \\ @VVV @VVV \\
\CF @>>>  Z'
\end{CD}
\end{equation*}
such that the composition
\begin{equation} \label{11}
\CF|_{\mathcal{D}\times \upxi} \ra \CF \ra Z'
\end{equation}
factors through the open embedding $_{0}Z'\hookrightarrow Z'$ and we have to lift
$\CF\to Z'$ to an $H$-equivariant morphism $\CF \ra Z$.

We now use the assumption that $_{0}Z \ra ~_{0}Z'$ is proper. This implies that
the morphism $_{0}Z/H \ra ~_{0}Z'/H$ is proper.
Since $\mathcal{D}\times \upxi$ is the spectrum of a DVR with generic point $\overset{\circ}{\mathcal{D}}\times \upxi$, in the diagram
$$
\begin{CD}
\overset{\circ}{\mathcal{D}} \times \upxi @>>> Z/H \\
@VVV @VVV \\
\mathcal{D} \times \upxi  @>>>  Z'/H,
\end{CD}
$$
the map $\mathcal{D} \times \upxi \to Z'/H$ lifts to a map $\mathcal{D} \times \upxi \to Z/H$. Hence, the
morphism~(\ref{11}) lifts to an $H$-equivariant morphism
$\CF|_{\mathcal{D} \times \upxi} \ra~_{0}Z$.

Thus  we obtain
an $H$-equivariant morphism from an open subset $\CF|_{U} \subset \CF$
to $Z$, such that
$\on{codim}_\CF(\CF|_{(\mathcal{D}\times X) \setminus U})=2$.
Since $Z$ is affine and $\CF$ is normal, this morphism extends to the whole of
$\CF$.

It remains to show that the morphism
\begin{equation*}
  \on{Maps}_{\on{gen}}(X, Z'/H \supset~_{0}Z'/H) \ra
  \on{Maps}_{\on{gen}}(X, Z'/H' \supset~_{0}Z'/H')
\end{equation*}
is proper. It suffices to check that the morphism $\on{Maps}(X,Z'/H) \ra \on{Maps}(X,Z'/H')$
is proper. Indeed, the following diagram is Cartesian:
\begin{equation*}
\begin{CD}
\on{Maps}(X,Z'/H) @>>> \on{Maps}(X,Z'/H') \\ @VVV @VVV \\
\on{Bun}_H @>>>  \on{Bun}_{H'}
\end{CD}\end{equation*}
so the desired properness of $\on{Maps}(X,Z'/H) \ra \on{Maps}(X,Z'/H')$
follows from the properness of the morphism
$\on{Bun}_H \ra \on{Bun}_{H'}$. (Note, however, that the latter morphism is not schematic; its fibers are isomorphic
to $\on{Bun}_{\on{ker}(H\ra H')}$.)

\end{proof}

For $\Lambda^{\vee+}\ni\lambda^{\lvee}\leq\mu^{\lvee}\in\Lambda^{\vee}$,
we denote by $V^{\lambda^{\lvee},\mu^{\lvee}}$ the irreducible representation of $\on{Vin}_G$
such that $V^{\lambda^{\lvee},\mu^{\lvee}}|_G$ coincides with $V^{\lambda^{\lvee}}$, and
the center $Z_{G_{\on{enh}}}=T$ acts on $V^{\lambda^{\lvee},\mu^{\lvee}}$ via the character $\mu^{\lvee}$.
Let $\la^{\lvee}_j \in \La^{\vee+}, j\in J$ be a collection of dominant weights that
generate $\Lambda^\vee\otimes\BQ$.
We have a natural morphism
$\upomega \colon\on{Vin}_G \ra \prod\limits_{j\in J}\on{End}(V^{\la_j^{\lvee},\la_j^{\lvee}})$.
Note that the preimage
$\upomega^{-1}(\prod\limits_{j\in J}(\on{End}(V^{\la_j^{\lvee},\la_j^{\lvee}}) \setminus \{0\}))$
is exactly $_{0}\!\!\on{Vin}_G$. We denote by $_0\upomega$ the restriction of $\upomega$ to
$_{0}\!\!\on{Vin}_G$.

\begin{lem} \label{from Vin to End}
The morphism $_0\upomega\colon _{0}\!\!\on{Vin}_G \ra \prod\limits_{j\in J}(\on{End}(V^{\la_j^{\lvee},\la_j^{\lvee}}) \setminus \{0\})$ is finite.
\end{lem}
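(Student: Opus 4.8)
The plan is to show that $_0\upomega$ is finite by verifying that it is separated, quasi-finite and proper, and then invoking Zariski's main theorem. The whole argument is organized around the $T$-action of \S\ref{Morphism v for Vin}. Since the matrix coefficients of $V^{\la_j^{\lvee},\la_j^{\lvee}}$ lie in the graded piece of $\BC[\on{Vin}_G]$ of $\La^\vee$-degree $\la_j^{\lvee}$, the torus $T=\on{Spec}\BC[\La^\vee]$ acts on the $j$-th factor $\on{End}(V^{\la_j^{\lvee},\la_j^{\lvee}})$ by the single character $\la_j^{\lvee}$ (overall scaling), and $_0\upomega$ is $T$-equivariant for this action (for $G=\on{SL}_2$ this is exactly the scaling action on $\on{Mat}_{2\times2}\setminus\{0\}$). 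Passing to quotients, and using the identification $_0\!\!\on{Vin}_G/T=\ol{G_{\on{ad}}}$ (the De Concini--Procesi wonderful compactification, \S\ref{nondegenerate in Tannak}), the projectivization of $_0\upomega$ descends to a morphism $\ol\upomega\colon\ol{G_{\on{ad}}}\to P:=\prod_{j\in J}\BP(\on{End}(V^{\la_j^{\lvee}}))$, fitting into a commutative square with the $T$-torsor $q\colon {}_0\!\!\on{Vin}_G\to\ol{G_{\on{ad}}}$ and the $\prod_{j}\BG_m$-torsor $p\colon P^\times:=\prod_{j\in J}(\on{End}(V^{\la_j^{\lvee},\la_j^{\lvee}})\setminus\{0\})\to P$, the two torsors being compatible via the homomorphism $\chi\colon T\to\prod_{j\in J}\BG_m,\ t\mapsto(\la_j^{\lvee}(t))_{j}$.

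The first input is that $\ol\upomega$ is finite. As the $\la_j^{\lvee}$ generate $\La^\vee\otimes\BQ$, they separate points and tangent directions on the wonderful compactification, so by De Concini--Procesi the map $\ol\upomega\colon\ol{G_{\on{ad}}}\hookrightarrow P$ is a closed embedding; in particular it is finite. The second input concerns $\chi$: because the $\la_j^{\lvee}$ span $\La^\vee\otimes\BQ$, the kernel $\bigcap_j\ker\la_j^{\lvee}$ is finite and the image $T':=\chi(T)$ is a closed subtorus of $\prod_j\BG_m$. Quasi-finiteness of $_0\upomega$ is now immediate: a fiber of $_0\upomega$ over a point of $P^\times$ lies over a single point of $P$, whose $\ol\upomega$-preimage in $\ol{G_{\on{ad}}}$ is finite, and over each such point the requirement of matching the prescribed $\prod_j\BG_m$-coordinate pins down the $T$-coordinate up to the finite kernel of $\chi$.

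For properness I would check the valuative criterion directly. Given a DVR $R$ with fraction field $K$, a map $\on{Spec}R\to P^\times$ and a lift $\on{Spec}K\to{}_0\!\!\on{Vin}_G$ compatible under $_0\upomega$, I first compose with $q,p$ and use finiteness of $\ol\upomega$ to extend uniquely to $\on{Spec}R\to\ol{G_{\on{ad}}}$. It remains to lift this along the $T$-torsor $q$ so as to match the given $\on{Spec}R$-point of $P^\times$ and extend the generic lift. The pulled-back $\prod_j\BG_m$-torsor over $\on{Spec}R$ is trivial and carries the given section; its generic point factors through the closed subtorus $T'$ (coming from the $K$-point of $_0\!\!\on{Vin}_G$), hence the whole $\on{Spec}R$-section factors through $T'$ since $T'$ is closed; lifting along the finite cover $T\twoheadrightarrow T'$ extends the generic $T$-value over $R$, and filling in the trivial $T$-torsor produces the required $\on{Spec}R\to{}_0\!\!\on{Vin}_G$. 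Crucially, this extension lands in $_0\!\!\on{Vin}_G$ (not merely in $\on{Vin}_G$) precisely because the target is $P^\times$ rather than $\prod_j\on{End}(V^{\la_j^{\lvee}})$: the nonvanishing condition forbids the $\BG_m$-coordinate from degenerating at the special point, so no $T$-direction escapes to the boundary. I expect the main obstacle to be this last point together with the torsor comparison---making rigorous that the wonderful compactification absorbs the whole ``non-$T$'' part of the degeneration while the explicit nonzero condition controls the remaining torus factor; once the identification $_0\!\!\on{Vin}_G/T=\ol{G_{\on{ad}}}$ and the closedness of $\ol\upomega$ and of $T'\subset\prod_j\BG_m$ are in hand, the rest is formal.
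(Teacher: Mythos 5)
Your proof has the same skeleton as the paper's: both reduce finiteness to quasi-finiteness plus properness, and both extract properness from the two key facts that ${}_{0}\!\!\on{Vin}_G/T$ is the (proper) wonderful compactification of $G_{\on{ad}}$ and that the $\la_j^{\lvee}$ span $\La^\vee\otimes\BQ$, i.e.\ that $\ker\chi$ is finite. The difference is packaging. The paper quotients \emph{both} sides by $T$, observes that the resulting square is cartesian, and gets properness of ${}_0\upomega$ by base change from properness of ${}_0\upomega/T$ (proper source, separated target --- the separatedness of $\bigl(\prod_j(\on{End}(V^{\la_j^{\lvee},\la_j^{\lvee}})\setminus\{0\})\bigr)/T$ being precisely what your ``the section factors through the closed subtorsor for $T'=\chi(T)$'' step proves by hand); and it gets quasi-finiteness directly from the Tannakian description of $\on{Vin}_G$ in \S\ref{Tannakian def of Vin}, with no statement about $\ol\upomega$ at all. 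Your valuative-criterion argument for properness (extend in the wonderful compactification, factor the induced $\prod_j\BG_m$-torsor section through the closed $T'$-subtorsor, then lift along the finite isogeny $T\twoheadrightarrow T'$ by the valuative criterion for finite morphisms) is correct and is essentially an unwinding of the paper's base-change argument; this part I consider sound.

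The genuine weak point is the claim that $\ol\upomega\colon\ol{G_{\on{ad}}}\to\prod_j\BP\on{End}(V^{\la_j^{\lvee}})$ is a \emph{closed embedding} ``by De Concini--Procesi, since the $\la_j^{\lvee}$ separate points and tangent directions''. De Concini--Procesi's theorem gives this for a \emph{single regular} dominant weight, whereas here the individual $\la_j^{\lvee}$ may be non-regular (e.g.\ the fundamental weights of $\on{PGL}_3$). For such a jointly regular collection, one readily gets a proper, birational, and (using the spanning hypothesis) quasi-finite morphism from the wonderful compactification onto the closure of $G_{\on{ad}}$ in the product; whether it is a closed embedding is a \emph{normality} statement about that multi-projective closure, which is a delicate question (non-normal closures do occur for non-regular weights in a single $\BP\on{End}(V^{\la^{\lvee}})$, and the closed-embedding statement for the fundamental weights of $\on{PGL}_n$ is the classical, nontrivial theorem on complete collineations). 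So as written this step is not justified, and it is load-bearing: your quasi-finiteness of ${}_0\upomega$ is derived from finiteness of the fibers of $\ol\upomega$. The repair is easy, because you only ever use the weaker facts that $\ol\upomega$ is proper (automatic: proper source, separated target) and quasi-finite; quasi-finiteness does follow from the spanning hypothesis (for instance, on the closure of the maximal torus, $\ol\upomega$ is a toric morphism between toric varieties whose fans both equal the Weyl fan, since the normal fan of the Minkowski sum $\sum_j\on{conv}(W\la_j^{\lvee})$ is the Weyl fan exactly when the collection is jointly regular), but this needs its own argument rather than a citation. Alternatively, replace the step wholesale by the paper's: quasi-finiteness of ${}_0\upomega$ follows from the Tannakian description of $\on{Vin}_G$, since the $g_{\la_j^{\lvee}}$ for a spanning family determine the full datum $(g_{\la^{\lvee}},\tau_{\mu^{\lvee}})$ up to finitely many choices.
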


\begin{proof}
  It follows from the Tannakian description of $\on{Vin}_G$ of~\S\ref{Tannakian def of Vin}
  that the morphism $_{0}\upomega$ is quasi-finite. Hence, it is enough to show that it is proper.
We have the action $T \curvearrowright \on{Vin}_G$ as in~\S\ref{Morphism v for Vin}.
We also have the action of a torus $T':= \prod\limits_{j\in J}\BC^{\times}$ on
$\prod\limits_{j\in J}\on{End}(V^{\la_j^{\lvee},\la_j^{\lvee}})$ via dilations.
The morphism $T\twoheadrightarrow T'$ given by
$t \mapsto \prod\limits_{j\in J}\la^{\lvee}_j(t)$ defines
an action of $T$ on $\prod\limits_{j\in J}\on{End}(V^{\la_j^{\lvee},\la_j^{\lvee}})$.
The morphism $\upomega$ is $T$-equivariant.
The following diagram is cartesian:
\begin{equation*}
\begin{CD}
_{0}\!\!\on{Vin}_G @>_{0}\upomega>> \prod\limits_{j\in J}(\on{End}(V^{\la_j^{\lvee},\la_j^{\lvee}}) \setminus \{0\}) \\ @VVV @VVV \\
_{0}\!\!\on{Vin}_G/T @>_{0}\upomega/T>> (\prod\limits_{j\in J}(\on{End}(V^{\la_j^{\lvee},\la_j^{\lvee}}) \setminus \{0\}))/T.
\end{CD}
\end{equation*}
Note that $_{0}\!\!\on{Vin}_G/T$ is proper, while
$(\prod\limits_{j\in J}(\on{End}(V^{\la_j^{\lvee},\la_j^{\lvee}}) \setminus \{0\}))/T$ is separated
(as the stack quotient of a smooth scheme
$\prod\limits_{j\in J}\BP\on{End}(V^{\la_j^{\lvee},\la_j^{\lvee}})$ by a finite group
$\on{Ker}(T\to T')$). Hence the morphism $_{0}\upomega/T$ is proper,
and thus the morphism $_{0}\upomega$ is proper.
\end{proof}

Now we are in a position to finish the proof of~Proposition~\ref{proper over Bun}.
We apply Lemma~\ref{lemma about finite mor} to
$Z:=\on{Vin}_G$,
$Z':=\prod\limits_{j\in J}\on{End}(V^{\la^{\lvee}_j,\la^{\lvee}_j})$,
$H:=G\times G \times T$, $H':=G\times G \times T'$,
$_{0}Z':=\prod\limits_{j\in J}(\on{End}(V^{\la_j^{\lvee},\la_j^{\lvee}}) \setminus \{0\})$.
Note that the preimage $\upomega^{-1}(_{0}Z')=~_{0}Z$ coincides with $~_{0}\!\!\on{Vin}_G$.
It follows from Lemma~\ref{from Vin to End} that the morphism
\begin{equation*}_{0}\upomega\colon_{0}\!\!\on{Vin}_G=~_{0}Z\ra ~_{0}Z'=\prod\limits_{1\leq i\leq \on{r}}(\on{End}(V^{\la^{\lvee}_j,\la^{\lvee}_j})\setminus \{0\})
\end{equation*}
is finite.
So to show that the stack $\ol{\on{Bun}}_G$
is proper over $\on{Bun}_G \times \on{Bun}_G$
it is enough to prove the properness of the stack
\begin{equation*}
\on{Maps}_{\on{gen}}(X,\prod\limits_{j\in J}\on{End}(V^{\la^{\lvee}_j,\la^{\lvee}_j})/(G\times G \times T')
\supset \prod\limits_{j\in J}(\on{End}(V^{\la^{\lvee}_j,\la^{\lvee}_j})\setminus \{0\})/(G\times G \times T'))
\end{equation*}
over $\on{Bun}_G \times \on{Bun}_G$.
Since $\prod\limits_{j\in J}(\on{End}(V^{\la^{\lvee}_j,\la^{\lvee}_j})\setminus \{0\})/T'\simeq
\prod\limits_{j\in J}\BP\on{End}(V^{\la^{\lvee}_j,\la^{\lvee}_j})$,
It follows that the stack in question is isomorphic to the stack of quasi-maps from $X$ to
$\prod\limits_{j\in J}\BP\on{End}(V^{\la^{\lvee}_j,\la^{\lvee}_j})/(G\times G)$
which is known to be proper over $\on{Bun}_G \times \on{Bun}_G$.
Proposition~\ref{proper over Bun} is proved.

\subsection{Proof of Proposition~\ref{DG completion}}
\label{proof DG completion}
The family $_{0}\!\!\on{VinGr}^{\on{princ}}_{G,x}$ can be identified
with the following fibre product:
\begin{equation*}
\on{Maps}_{\BA^1}(X\times \BA^1,~_{0}\!\!\on{Vin}^{\on{princ}}_{G}/(G\times G))
\underset{\on{Maps}_{\BA^1}((X\setminus \{x\})\times \BA^1,~_{0}\!\!\on{Vin}^{\on{princ}}_G/(G\times G))}\times \BA^1.
\end{equation*}
Let us consider the following $\BC^{\times}$-action on the group $G$:
\begin{equation*}
c \mapsto (g \mapsto 2\rho(c)\cdot g \cdot 2\rho(c^{-1}),~c\in\BC^{\times},~g\in G.
\end{equation*}
Let us denote by $\widetilde{G} \ra \BA^1$ the corresponding Drinfeld-Gaitsgory interpolation. Note that $\widetilde{G}$ is a group scheme over $\BA^1$. It follows from~\cite[\S2.4,~D.6]{dg1}
that the stack $_{0}\!\!\on{Vin}^{\on{princ}}_G/(G\times G)$ over $\BA^1$ is isomorphic to the stack $\BA^1/\widetilde{G}$ over $\BA^1$.
Thus the family $_{0}\!\!\on{VinGr}^{\on{princ}}_{G,x}$ is identified with
\begin{equation*}
\on{Maps}_{\BA^1}(X\times \BA^1,\BA^1/\widetilde{G})
\underset{\on{Maps}_{\BA^1}((X\setminus \{x\})\times \BA^1,\BA^1/\widetilde{G})}\times \BA^1.
\end{equation*}
Let us denote this family by $\Gr_{\widetilde{G}}$.
Note that $\Gr_{\widetilde{G}}$ is the affine Grassmannian for the group-scheme
$\widetilde{G}$ over $\BA^1$.

Let us construct the sought-for morphism of ind-schemes over $\BA^1$:
\begin{equation} \label{12}
\upeta:\Gr_{\widetilde{G}}\to \widetilde\Gr_G.
\end{equation}

We first construct a morphism of (pre)stacks over $\BA^1$:
\begin{equation} \label{13}
\upalpha:\BA^1/\widetilde{G}\to \on{Maps}_{\BA^1}(\BX,\BA^1/G)^{\BC^{\times}}.
\end{equation}

We start with the evaluation morphism of group-schemes over $\BA^1$
$$\BX\underset{\BA^1}\times \widetilde{G} \ra \BA^1 \times G,$$
which gives rise to a map of stacks over $\BA^1$:
\begin{equation*}
\Game\colon \BX \underset{\BA^1}\times (\BA^1/\widetilde{G}) \ra \BA^1 \times (\on{pt}\!/G).
\end{equation*}
Note that the above morphism $\Game$ is $\BC^{\times}$-equivariant with respect to the $\BC^{\times}$-action on $\BX \underset{\BA^1}\times (\BA^1/\widetilde{G})$
via the action of $\BC^\times$ on $\BX$ and trivial action on $\BA^1 \times (\on{pt}\!/G)$. Hence, it defines a point of
\begin{equation*}
\on{Maps}_{\BA^1}(\BX\underset{\BA^1}\times(\BA^1/\widetilde{G}),\BA^1/G)^{\BC^{\times}}
\simeq
\on{Maps}_{\BA^1}(\BA^1/\widetilde{G},\on{Maps}_{\BA^1}(\BX,\BA^1/G)^{\BC^{\times}}),
\end{equation*}
i.e., we obtain a map of (pre)stacks over $\BA^1$:
$$\BA^1/\widetilde{G}\to \on{Maps}_{\BA^1}(\BX,\BA^1/G)^{\BC^{\times}},$$
which is the desired $\upalpha$.

From $\upalpha$ we obtain the morphism
\begin{multline*}
\Gr_{\widetilde{G}} = \on{Maps}_{\BA^1}(X\times \BA^1,\BA^1/\widetilde{G})
\underset{\on{Maps}_{\BA^1}((X\setminus \{x\})\times \BA^1,\BA^1/\widetilde{G})}\times\BA^1 \overset{\upalpha}\to\\
\ra \on{Maps}_{\BA^1}(X\times \BA^1,\on{Maps}_{\BA^1}(\BX,\BA^1/G)^{\BC^{\times}})
\underset{\on{Maps}_{\BA^1}((X\setminus \{x\})\times
  \BA^1,\on{Maps}_{\BA^1}(\BX,\BA^1/G)^{\BC^{\times}})}\times \BA^1\simeq \\
\simeq\on{Maps}_{\BA^1}(\BX,\on{Maps}_{\BA^1}(X\times \BA^1,\BA^1/G))^{\BC^{\times}}
\underset{\on{Maps}_{\BA^1}(\BX,\on{Maps}_{\BA^1}((X\setminus \{x\})\times
  \BA^1,\BA^1/G))^{\BC^{\times}}}\times \BA^1
= \widetilde{\Gr}_{G}.
\end{multline*}
This is the desired morphism $\upeta$ in~(\ref{12}).
The diagram~(\ref{don't copy diagrams!}) is commutative by construction.

Note that the composition
\begin{equation*}
\Gr_{\widetilde{G}} \xrightarrow{\upeta} \widetilde{\Gr}_G
\xrightarrow{\gamma} \Gr_G \times \Gr_G \times \BA^1
\end{equation*}
coincides with the restriction of the morphism
$\vartheta\colon \on{VinGr}^{\on{princ}}_{G,x} \ra \Gr_G \times \Gr_G \times \BA^1$
to
the open ind-subscheme $_{0}\!\!\on{VinGr}^{\on{princ}}_{G,x} \simeq \Gr_{\widetilde{G}}$.
Note that the morphism
$\gamma \circ \upeta\colon \Gr_{\widetilde{G}} \ra \Gr_G \times \Gr_G \times \BA^1 = \Gr_{G\times G \times \BA^1}$
also coincides with the morphism induced by the closed embedding $\widetilde{G} \hookrightarrow G \times G \times \BA^1$ of group schemes over $\BA^1$.

It follows from Lemma~\ref{embedding into product}
that the morphism $\gamma \circ \upeta$ is a locally closed embedding.
The morphism $\gamma$ is a locally closed embedding by~\cite[\S2.5.11]{dg2}.
So the morphism $\upeta$ is a locally closed embedding.
It follows from Remark~\ref{sp fiber def free for vingr}, \S\ref{degen of the affine Gr} and \S\ref{zero fiber of DG}, \S\ref{main prop}
that the morphism $\upeta$ is bijective on the level of $\BC$-points. Thus $\upeta$ is
an isomorphism between the corresponding
reduced ind-schemes.

To show that $\upeta$
is an isomorphism of ind-schemes let us
note that the ind-scheme $\Gr_{\widetilde{G}}$
is formally smooth over $\BA^1$ and the morphism $\upeta$
induces the isomorphism between scheme fibers
of the families $\Gr_{\widetilde{G}}$ and $\widetilde{\Gr}_G$
over $\BA^1$.

Proposition~\ref{DG completion} is proved.

\end{document}